\pgfplotsset{compat=1.15}
\newtheorem{theorem}{Theorem}[section]
\newtheorem{lemma}[theorem]{Lemma}
\newtheorem{proposition}[theorem]{Proposition}
\newtheorem{cor}[theorem]{Corollary}
\newtheorem{defn}[theorem]{Definition}
\newtheorem{claim}[theorem]{Claim}
\theoremstyle{definition}
\newtheorem{remark}[theorem]{Remark}
\newcounter{tenumerate}
\def\P{\mathbb{P}}
\newcommand{\one}{\1}
\renewcommand{\epsilon}{\varepsilon}
\newcommand{\1}{\mathbf{1}}
\DeclareMathOperator{\var}{Var}
\newcommand{\E}{{\mathbb E}}
\newcommand{\remove}[1]{}
\renewcommand{\leq}{\leqslant}
\renewcommand{\geq}{\geqslant}
\newcommand{\diam}{\mathsf{diam}}
\def\XXint#1#2#3{{\setbox0=\hbox{$#1{#2#3}{\int}$}
		\vcenter{\hbox{$#2#3$}}\kern-.5\wd0}}
\newcommand\numberthis{\addtocounter{equation}{1}\tag{\theequation}}
\begin{document}
	
\title{Correlation length of the two-dimensional random field Ising model via greedy lattice animal}

\author{Jian Ding\thanks{Partially supported by NSF grant DMS-1757479 and DMS-1953848.}  \\ Peking University \and Mateo Wirth\footnotemark[1]  \\ University of Pennsylvania}

\maketitle

\begin{abstract}
For the two-dimensional random field Ising model where the random field is given by i.i.d.\ mean zero Gaussian variables with variance $\epsilon^2$, we study (one natural notion of) the correlation length,
which is the critical size of a box at which the influences of the random field and of the boundary condition on the spin magnetization are comparable.
We show that as $\epsilon \to 0$, at zero temperature the correlation length scales as
$e^{\Theta(\epsilon^{-4/3})}$ (and our upper bound applies for all positive temperatures).
\end{abstract}

\definecolor{qqqqcc}{rgb}{0,0,0.8}
\definecolor{rvwvcq}{rgb}{0.08235294117647059,0.396078431372549,0.7529411764705882}
\definecolor{wrwrwr}{rgb}{0.3803921568627451,0.3803921568627451,0.3803921568627451}
\definecolor{cqcqcq}{rgb}{0.7529411764705882,0.7529411764705882,0.7529411764705882}
\maketitle

\section{Introduction}
Let $\{h_v \,:\, v \in \mathbb{Z}^2\}$ be i.i.d.\ Gaussian random variables with mean zero and variance $1$.
For $N \geq 1$, let $\Lambda_N = \{v\in \mathbb Z^2: |v|_\infty \leq N\} \subset \mathbb Z^2$ be the box
of side length $2N$ centered at the origin $o$. For $u,v \in \mathbb Z^2$ with $|u-v| = 1$ (where $|\cdot|$
denotes the Euclidean norm), we say $u$ and $v$ are adjacent and write $u \sim v$. For $\epsilon \geq 0$, the
random field Ising model (RFIM) Hamiltonian $H^{\pm}$ on the configuration space $\{-1, 1\}^{\Lambda_N}$ with
plus (respectively, minus) boundary condition and external field $\{\epsilon h_v \,:\, v \in \Lambda_N\}$ is
defined to be
\begin{equation}\label{eq-def-hamiltonian}
H^{\pm}(\sigma,\Lambda_N, \epsilon h) =
	- \Big(
		\sum_{u \sim v, u,v\in \Lambda_N} \sigma_u \sigma_v \pm
		\sum_{u \sim v, u \in \Lambda_N, v \notin \Lambda_N} \sigma_u +
		\sum_{u \in \Lambda_N}\epsilon h_u \sigma_u
	\Big)\,,
\end{equation}
for $\sigma\in \{-1, 1\}^{\Lambda_N}$, where in the first sum each unordered edge appears once. 
For $\beta \geq 0$, let $\mu^\pm_{\beta, \Lambda_N, \epsilon h}$ be the Gibbs measure on $\{-1, 1\}^{\Lambda_N}$ at inverse-temperature $\beta$, defined as
\begin{equation}\label{eq-def-mu}
\mu^\pm_{\beta, \Lambda_N, \epsilon h}(\sigma) = \tfrac{1}{Z} e^{-\beta H^{\pm}(\sigma, \Lambda_N, \epsilon h)}\,,
\end{equation}
where $Z$ is the partition function so that $\mu^\pm_{\beta, \Lambda_N, \epsilon h}(\sigma)$ is a probability measure.
Note that $\mu^\pm_{\beta, \Lambda_N, \epsilon h}$ is a random measure which itself depends on $\{h_v\}$.
To clearly separate the two different sources of randomness, we will use $\P$ and $\E$ to refer to the
probability measure with respect to the external field $\{h_v\}$; we use $\mu^\pm_{\beta, \Lambda_N, \epsilon h}$ to denote the
 Ising measures and $\langle \cdot \rangle_{\mu^\pm_{\beta, \Lambda_N, \epsilon h}}$ to
denote the expectations with respect to the Ising measures.
For instance, $\langle \sigma^+_o\rangle_{\mu^+_{\beta, \Lambda_N, \epsilon h}}$ denotes the average value of the spin at the origin when
we sample $\sigma^+\in \{-1, 1\}^{\Lambda_N}$ according to $\mu^+_{\beta, \Lambda_N, \epsilon h}$.
We are interested in the following quantity which measures the influence of the boundary condition:
\begin{equation}\label{eq-def-magnetization}
 \mathtt m_{\beta, \Lambda_N, \epsilon} =
 	\frac{1}{2} \mathbb{E} \Big[
 		\langle \sigma^+_o\rangle_{\mu^+_{\beta, \Lambda_N, \epsilon h}} -
 		\langle \sigma^-_o\rangle_{\mu^-_{\beta, \Lambda_N, \epsilon h}}
 		\Big]\,.
\end{equation}
For $\mathtt m\in (0, 1)$, we consider the following notion of correlation length:
\begin{equation}\label{eq-def-correlation-length}
\psi(\beta, \mathtt m, \epsilon) = \min\{N:  \mathtt m_{\beta, \Lambda_N, \epsilon} \leq \mathtt m\}\,,
\end{equation}
which (for large $\beta$) amounts to the critical scale where the random field has a comparable influence as the boundary condition on
the spin at the origin. Here we use the convention that $\min \emptyset = \infty$.
\begin{theorem}\label{thm :: main result}
 For every $\mathtt m\in (0, 1)$, there exists $C = C(\mathtt m)>0$ such that $\psi(\beta, \mathtt m, \epsilon) \leq e^{C \epsilon^{-4/3}}$
 for all $\beta \geq 0$ (including $\beta = \infty$), and that
 $\psi(\infty, \mathtt m, \epsilon) \geq e^{C^{-1}\epsilon^{-4/3}}$ for $\beta = \infty$.
\end{theorem}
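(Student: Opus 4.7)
My plan is to first analyze the upper bound at zero temperature and then transfer it to positive $\beta$. Using a monotone coupling of the $\pm$ Gibbs measures at finite $\beta$ with the $\pm$ ground states at $\beta=\infty$, together with the fact that the boundary condition has maximal influence at zero temperature, it should suffice to prove $\mathtt m_{\infty,\Lambda_N,\epsilon}\le \mathtt m$ for $N=e^{C\epsilon^{-4/3}}$. At $\beta=\infty$, RFIM ground states are monotone in boundary condition, so $\sigma^+\ge\sigma^-$ pointwise and $\mathtt m_{\infty,\Lambda_N,\epsilon}=\mathbb{P}[\sigma^+_o=1,\sigma^-_o=-1]$. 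To bound this probability I would produce, with high probability, a connected region $R\subset\Lambda_N$ containing $o$ such that $\epsilon\bigl|\sum_{v\in R}h_v\bigr| > |\partial R|$: flipping $R$ strictly lowers the energy of either ground state, which pins $\sigma^+_o=\sigma^-_o$ to the sign of $\sum_{v\in R}h_v$ regardless of the boundary.

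\textbf{Upper bound via greedy lattice animals.} Producing such an $R$ is precisely a \emph{greedy lattice animal} problem, which is the point of the title. The naive one-scale Imry--Ma argument, taking $R$ to be the best of $\sim (N/L)^2$ disjoint $L\times L$ boxes, compares a random-field gain $\sim \epsilon L\sqrt{\log(N/L)}$ against a perimeter cost $\sim L$ and gives only $\psi\le e^{O(\epsilon^{-2})}$. To reach the $\epsilon^{-4/3}$ exponent I would use the fact that over connected $R\subset\Lambda_N$ of size $n$ there are exponentially many choices, so $\max\sum_{v\in R}h_v$ is of order $n$ rather than $\sqrt{n}$. The optimizing contour is not a single square but a multiscale object built by stitching together moderately greedy sub-animals at several scales, with perimeter amortized against the amplified random-field gain; choosing the scale parameters to balance these two contributions produces the exponent $4/3$.

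\textbf{Lower bound and main obstacle.} For the matching lower bound at $\beta=\infty$, I would show that when $N\le e^{c\epsilon^{-4/3}}$ with $c$ small, with probability bounded away from zero \emph{no} profitable contour around $o$ exists in $\Lambda_N$, and consequently the ground state at the origin is determined by the boundary. This calls for a complementary, uniform bound on $\epsilon\bigl|\sum_{\mathrm{int}(\gamma)}h_v\bigr|$ in terms of $|\gamma|$: enumerate contours by length via the standard lattice animal count $\le e^{O(L)}$, combine with Gaussian concentration of the random-field sum on the interior, and take a union bound across scales. The main obstacle is making the upper and lower greedy animal estimates match at exponent $4/3$ rather than at the naive Imry--Ma exponent $2$: this requires a careful multiscale accounting of how greedy animal gains at different scales can be assembled and of how perimeter costs can be shared across those scales, and that multiscale greedy-animal analysis is the technical core of the proof.
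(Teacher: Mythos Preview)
Your approach to the \emph{lower} bound on $\psi$ is essentially the paper's: the implication $\sigma^-_o=1\Rightarrow \max_{A\in\mathcal A_N}\epsilon\sum_{v\in A}h_v/|\partial A|\ge 1$ (equation~\eqref{eq :: ground state flip condition}) reduces the lower bound to an \emph{upper} bound on the greedy lattice animal normalized by boundary size, which is $O((\log N)^{3/4})$.

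Your approach to the \emph{upper} bound on $\psi$, however, has a genuine gap at its core. The claim ``flipping $R$ strictly lowers the energy of either ground state, which pins $\sigma^+_o=\sigma^-_o$ to the sign of $\sum_{v\in R}h_v$'' is false. If $\sigma$ is a ground state and you flip all spins in $R$, the energy change is
\[
\Delta H \;=\; 2\sum_{(u,v)\in\partial R}\sigma_u\sigma_v \;+\; 2\epsilon\sum_{v\in R}h_v\sigma_v,
\]
which depends on the \emph{configuration} $\sigma$ on $R$, not only on $\sum_{v\in R}h_v$. The condition $\epsilon|\sum_{v\in R}h_v|>|\partial R|$ controls the flip cost only when $\sigma$ is constant on $R$; the ground state need not be. The paper raises exactly this objection in Section~\ref{sec :: overview-upper}: the existence of $A\ni o$ with $\epsilon\sum_{v\in A}h_v>|\partial A|$ is \emph{not} sufficient for $\sigma^-_o=1$ (take $\epsilon h_v=20$ for some $v\sim o$ and $\epsilon h_o=-5$; then $A=\{o,v\}$ is profitable yet the strong local field forces $\sigma^\pm_o=-1$, not $+1=\mathrm{sign}\sum h$). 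So the greedy-animal \emph{lower} bound, while true, does not by itself deliver the upper bound on $\psi$.

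The paper's actual mechanism is quite different and works directly at every $\beta$, with no reduction to $\beta=\infty$ (your proposed reduction via ``boundary influence is maximal at zero temperature'' is not justified and is not used). One introduces the free-energy difference $\Gamma(A,\Omega,f)=F^+(\Omega\setminus A,f)-F^-(\Omega\setminus A,f)$, which always satisfies $|\Gamma|\le 2|\partial(\Omega\setminus A)|$. Assuming $\mathtt m_{\beta,\Lambda_{4N},\epsilon}>\mathtt m$, the monotonicity of increments of $\Gamma$ (Lemma~\ref{lm :: Gamma increment monotonicity}) guarantees that each triangular perturbation in the multiscale construction of Section~\ref{sec :: emergence} has, with uniformly positive probability, expected $\Gamma$-increment exceeding its perimeter cost; iterating over $\Theta(\log N)$ scales then forces $\mathbb E[\Gamma(\mathsf P^*)-2|\partial(\Lambda_{2N}\setminus\mathsf P^*)|]>0$ for $N\ge e^{C\epsilon^{-4/3}}$, a contradiction. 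The multiscale geometry you sketch is indeed the source of the $4/3$ exponent, but it enters through $\Gamma$ and the contradiction argument, not through a direct Peierls flip.

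A smaller point: your remark that ``$\max\sum_{v\in R}h_v$ is of order $n$'' conflates the greedy lattice animal normalized by \emph{volume} (which is $\Theta(n)$) with the one normalized by \emph{boundary size}; the latter is the relevant quantity here, and its maximum over $\Lambda_N$ is $\Theta((\log N)^{3/4})$ (Theorem~\ref{thm-greedy-lattice-animal}), which is precisely where $4/3$ comes from.
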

\begin{remark}
The emergence of the $4/3$ exponent is somewhat unexpected, and it is reminiscent of the $4/3$-exponent in
upper bounds on distances for Liouville quantum gravity at high temperatures \cite{DG19}: the $4/3$-exponent arises from ``back-of-the-envelope'' computations that are similar in spirit for both scenarios (an interested reader may compare \cite[Section 2]{DG19} with Subsection~\ref{sec :: emergence}).
However, the random field Ising model and Liouville quantum gravity are two drastically different models, and as a result their mathematical treatments are different except that they both employ a framework of multi-scale analysis.
\end{remark}
\begin{remark}
During the submission of this paper, the lower bound here was extended to low temperatures (i.e., to large finite $\beta$) by \cite{DZ22} which takes the result at $\beta = \infty$ as an input. The key idea in \cite{DZ22} is to extend the Peierls argument in the construction of the Peierls mapping, where the additional novelty is to  also flip the signs of the disorder when flipping the signs of spins on a simply connected component. In addition, by \cite[Corollary 1.6]{DSS22} there is an exponential decay for $\beta < \beta_c$ and moreover the decaying rate is upper-bounded by that for $\epsilon = 0$. Furthermore, the behavior for moderate $\beta > \beta_c$ seems rather challenging and currently we have a weak belief that the $e^{\epsilon^{-4/3}}$-scaling for the correlation length holds for all $\beta > \beta_c$. Ultimately, it would be very interesting to completely understand the phase diagram of the mapping from $(\beta, \epsilon)$ to the rate of exponential decay (as proved in \cite{DingXia19, AHP20}), but this seems out of reach for now and we do not have any intuition beyond what has been discussed.
\end{remark}
\begin{remark}
More than one year after the arXiv post of this paper and more than half a year after the arXiv post of \cite{DZ22}, a paper \cite{BN22} was posted which proved an upper bound of  $\exp(e^{O(\epsilon^{-2})})$ and a lower bound of (the type of) $e^{\epsilon^{-2/3}}$ for the correlation length. In addition, we note that the notion for the correlation length in the upper bound of \cite{BN22} governs the rate of exponential decay and thus in terms of upper bound it is a stronger notion than the one used in this paper.
\end{remark} 
\begin{remark}\label{rem-exponential-decay-rate}
A very natural question is whether one can prove the scaling of $e^{\epsilon^{-4/3}}$ for the correlation length that governs the rate of exponential decay. As far as we can tell, to this end one needs to combine the techniques from \cite{DingXia19, AHP20} (see also\cite{BN22}) with methods in this paper. This does not seem to be trivial since the key point of \cite{DingXia19, AHP20} is to prove that the boundary influence has a polynomial decay with a large power, while in this paper in order to derive a contradiction currently it seems inevitable to assume in the contradiction hypothesis that the boundary influence is lower-bounded by a constant. Maybe, a vaguely plausible approach is to show that once the side length exceeds $e^{\epsilon^{-4/3}}$ the boundary influence will start seeing a decay and that also the tortuosity assumption employed in \cite{DingXia19, AHP20} for disagreement percolation would hold. But by all means this is a highly-nontrivial task and we feel better to leave it for future study and advise an interested researcher to keep their mind open.
\end{remark}

This result lies under the umbrella of the general Imry–Ma \cite{ImryMa75} phenomenon on the effect of disorder on phase transitions in two-dimensional physical systems.
We next give a brief review of the development in the particular case of RFIM.
In the limit of $N\to \infty$ with small fixed $\epsilon>0$, it was shown in \cite{AizenmanWehr89, AizenmanWehr90} that $\mathtt m_{\beta, \Lambda_N, \epsilon}$ decays to 0 for all $\beta \geq 0$, which also implies the uniqueness of the Gibbs state.
The decay rate was then improved to $1/\sqrt{\log \log N}$ in \cite{Chatterjee18}, to $N^{-c}$ (for some small $c > 0$) in \cite{AizenmanPeled19}, and finally to $e^{-cN}$ in \cite{DingXia19, AHP20}
(previously, exponential decay was shown in \cite{Berretti85, FI84, vDHKP95, CJN18} for large $\epsilon$).

In three dimensions and above, however, the behavior is drastically different from that in two dimensions: it was shown in \cite{Imbrie85} that long range order exists at zero temperature with weak disorder, i.e., $\mathtt m_{\infty, \Lambda_N, \epsilon}$ does not vanish as $N$ grows; later an analogous result was proved in \cite{BK88} (see also \cite[Chapter 7]{bovier2006}) at low temperatures.
A heuristic explanation for the different behaviors is as follows: in two dimensions the fluctuation of (the sum of) the random field in a box is of the same order as the size of the boundary,
while in three dimensions and above the fluctuation of the random field is substantially smaller than the size of the boundary.

In the limit as $\epsilon \to 0$, the scaling of the correlation length in both two dimensions and three
dimensions (at some ``critical'' temperature) has remained largely elusive even from the point of view of physics predictions despite extensive studies.
Previous works include (a partial list of) numeric studies
\cite{YN85, SPA98, FV99, Rieger_1993, Rigger95, SA01, PS02,SKBP14} and non-rigorous derivations
\cite{PIM81,GristenMa82, Binder83,GM83, DS84,Bray_1985, BricmontKupiainen88}. It is worth noting that
most of the studies in two dimensions were at zero temperature, but even in this case there was no consensus
on the scaling of the correlation length: while a common belief seemed to be that it scales like $e^{\epsilon^{-2}}$ (or
upper bounded by $e^{O(\epsilon^{-2})}$) as argued in \cite{GristenMa82, Binder83, BricmontKupiainen88, SPA98, SA01}, there were also other predictions including a scaling of $e^{\epsilon^{-1}}$ in a more recent work \cite{SKBP14}.
(We note that some of these papers studied our notion of correlation length, and some studied the notion which is the inverse of the rate of exponential decay, and some were not very careful in distinguishing these two notions.)
Prior to our work, the only mathematical result on the correlation length was (as far as we know) an upper bound of $e^{e^{O(\epsilon^{-2})}}$ from \cite{Chatterjee18, AizenmanPeled19}.

Our proof method for the upper bound on the correlation length shares the underlying philosophy of ``using the fluctuation of the random field to fight against the influence from the boundary'' with previous works \cite{AizenmanWehr90, Chatterjee18, AizenmanPeled19, DingXia19, AHP20}, and in particular in the sense that the proof strategy shares some similarity with
\cite{AizenmanWehr90} for deriving a contradiction for lower and upper bounds on difference of free energies. However, our strategy of deriving the lower bound on the difference of free energies (which is the key point for both \cite{AizenmanWehr90} and our proof for upper bound on the correlation length) is very different from that in \cite{AizenmanWehr90}. The proof of the lower bound of the correlation length is completely different from \cite{AizenmanWehr90, Chatterjee18, AizenmanPeled19, DingXia19, AHP20} since this is a bound in a different direction from these works. In fact, it shares some similarity with \cite{Chalker83, FFS84} in terms of a connection to greedy lattice animals, as we elaborate in what follows. 
Let $\mathcal A_N$ be the collection of all connected subsets of $\Lambda_N$ (i.e., lattice animals) that contain the origin and let $\mathfrak A_N \subset \mathcal A_N$ be the collection of all simply connected subsets in $\mathcal A_N$.
We define (the value of) the greedy lattice animal normalized by its boundary size as
\begin{equation}\label{eq-greedy-lattice-animal}
\mathcal S_N = \max_{A\in \mathcal A_N} \frac{\sum_{v\in A} h_v}{|\partial A|} \mbox{ and } \mathfrak S_N = \max_{A\in \mathfrak A_N} \frac{\sum_{v\in A} h_v}{|\partial A|}\,,
\end{equation}
where $|\partial A|$ is the number of edges with exactly one endpoint in $A$. Theorem~\ref{thm :: main result}
is deeply connected to the following result (see Section~\ref{sec :: Overview} for an extensive discussion).
\begin{theorem}\label{thm-greedy-lattice-animal}
There exists a constant $C>0$ such that for all $N \geq 3$ we have
$$C^{-1} (\log N)^{3/4} \leq \mathbb E[\mathfrak S_N] \leq \mathbb E [\mathcal S_N] \leq C(\log N )^{3/4}\,.$$
\end{theorem}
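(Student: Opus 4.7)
I would treat the two bounds separately, both reducing to Gaussian extreme-value problems on the Gaussian process $\{X_A := \sum_{v\in A} h_v\}_{A\in\mathcal A_N}$. For each fixed $A$, $X_A$ is centered Gaussian of variance $|A|$, and the edge-isoperimetric inequality in $\mathbb Z^2$ gives $|A|\le|\partial A|^2/16$, so
\[
\P\Big(\tfrac{X_A}{|\partial A|}\ge t\Big)\le \exp\Big(-\tfrac{t^2|\partial A|^2}{2|A|}\Big)\le e^{-8t^2}.
\]
This inequality, one $A$ at a time, is the core Gaussian input; the entire challenge is to promote it to a bound on the maximum over $\mathcal A_N$ (or $\mathfrak A_N$) by carefully accounting for both the combinatorial richness and the correlation structure of the family.

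For the upper bound $\mathbb E[\mathcal S_N]\le C(\log N)^{3/4}$, the plan is a union bound over $\mathcal A_N$ organized by boundary size $b=|\partial A|$. A classical polyomino-enumeration bound (based on counting self-avoiding boundary curves through a fixed cell) gives $\#\{A\in\mathcal A_N:|\partial A|=b\}\le C\mu^b$ for some $\mu>1$. Substituting yields the naive bound $\P(\mathcal S_N\ge t)\le\sum_{b\ge 4}C\mu^b e^{-8t^2}$, which at each scale only gives $\max\sim\sqrt b$; this alone would yield $O(N)$ instead of $O((\log N)^{3/4})$, so it is wasteful at scales $b\gg(\log N)^{3/2}$. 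The refinement is to supplement the union bound with a chaining argument (Dudley's inequality or Talagrand's generic chaining) in the natural Gaussian metric $d(A,B)^2=\sum_v\bigl(\mathbf 1_A/|\partial A|-\mathbf 1_B/|\partial B|\bigr)^2$, which captures the heavy correlation between overlapping animals: two polyominoes both containing the origin must overlap significantly, so the effective number of independent ``samples'' saturates. The critical scale emerges at $b\sim(\log N)^{3/2}$, where the entropic gain $\sqrt{b\log\mu}$ in the Gaussian extreme value matches the denominator $b$, producing $\sqrt b\sim(\log N)^{3/4}$; larger scales add nothing new because of the correlations.

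For the lower bound $\mathbb E[\mathfrak S_N]\ge C^{-1}(\log N)^{3/4}$, I would exhibit an approximately-independent subfamily of $\mathfrak A_N$ at the critical scale. Take $b\sim(\log N)^{3/2}$ and consider simply connected polyominoes of perimeter $b$ and area close to the isoperimetric maximum $b^2/16$, containing the origin. There are $\gtrsim c^b$ such candidates through the origin (a lower bound from explicit shape constructions---for example, perturbations of a near-square anchored near the origin), and each $X_A$ is a centered Gaussian with variance $\Theta(b^2)$. A first-moment / Gaussian-extreme-value calculation suggests $\E[\max X_A]\gtrsim b\sqrt b = b^{3/2}$ over this subfamily, giving ratio $\sim \sqrt b = (\log N)^{3/4}$. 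A Borell–TIS concentration estimate (or a second-moment argument over a well-chosen subfamily where the sums are approximately decorrelated) converts the extreme-value heuristic into a bound on $\mathbb E[\mathfrak S_N]$. If the extremal animal does not contain the origin directly, a thin connecting path of length $O(b)$ adds only lower-order boundary and does not affect the ratio.

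The main obstacle in both directions is controlling the correlation structure of $\{X_A/|\partial A|\}_{A\in\mathcal A_N}$: independent-Gaussian intuition gives bounds both much better than $(\log N)^{3/4}$ in some regimes and much worse in others, and the sharp scaling requires sharp metric-entropy estimates for $\mathcal A_N$ under $d$. A secondary difficulty for the lower bound is isolating a subfamily of $\mathfrak A_N$ on which the Gaussian-extremal heuristic is provably sharp---i.e., a family of simply connected fat polyominoes through the origin whose pairwise overlap is controlled uniformly.
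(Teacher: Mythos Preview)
Your upper-bound sketch points in the right direction---chaining is indeed the key---but the heuristic you give is muddled. The claim that ``the critical scale emerges at $b\sim(\log N)^{3/2}$'' and that ``larger scales add nothing new because of the correlations'' is not how the argument goes. In the paper's approach (following Talagrand), one fixes a perimeter scale $2^k$ and shows $\mathbb E[\max_{|\partial A|\le 2^k} X_A]\le C\,k^{3/4}2^k$; dividing by $|\partial A|\ge 2^{k-1}$ gives $O(k^{3/4})$ at \emph{every} scale, and the worst case $k\sim\log N$ yields $(\log N)^{3/4}$. The hard step is the per-scale bound, which is not a naive Dudley integral but rather the interpolation $\gamma_2(T,\sqrt d)\le n^{3/4}\gamma_{1,2}(T,d)^{1/2}$ for $|T|\le 2^{2^n}$, together with a bound $\gamma_{1,2}(\mathfrak A_{v,k},d_Y^2)\le C\,2^{2k}$ (both from \cite{Talagrand14}). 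Your sketch does not indicate how to get the $3/4$ power out of the chaining, and standard Dudley applied to the metric you wrote will not produce it without this interpolation idea.

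Your lower-bound plan has a genuine gap. A single-scale Sudakov or second-moment argument at perimeter $b\sim(\log N)^{3/2}$ cannot reach $(\log N)^{3/4}$. If the $c^b$ ``near-square perturbations'' through the origin all have area $\Theta(b^2)$, they live in an $O(b)\times O(b)$ window, so the entire process $\{X_A\}$ is a function of only $O(b^2)$ i.i.d.\ Gaussians; by the very upper bound you are trying to match, $\mathbb E[\max_A X_A/|\partial A|]$ over animals in that window is $O((\log b)^{3/4})=O((\log\log N)^{3/4})$, not $(\log N)^{3/4}$. If instead you spread the animals over $\Lambda_N$ and connect by thin paths, the number of essentially disjoint placements is at most $N^2/b^2$, so Sudakov gives at best $b\sqrt{\log(N^2/b^2)}/b=O((\log N)^{1/2})$. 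Either way you fall short. The paper's lower bound is inherently multi-scale: starting from a large square, one recursively adds outward isosceles triangles of height $\delta\cdot(\text{side length})$ along each side, choosing $\delta\sim\epsilon^{2/3}$ so that the standard deviation of the added white noise matches the perimeter increase; each of $\sim\log N$ scales contributes a fixed positive amount to $\mathbb E[\epsilon W(P)-l(\partial P)]$, and summing over scales produces the $3/4$ exponent. The accumulation over $\log N$ scales is exactly what a single-scale packing argument cannot supply.
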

\begin{remark}
In Theorem~\ref{thm-greedy-lattice-animal} we described the maxima over both connected subsets and simply connected subsets for the following reasons: (1) Both upper and lower bounds can be obtained for simply connected subsets first and then it is relatively easy to translate the bound to connected subsets; (2) while it is easier to prove the lower bound on the correlation length using the upper bound for the maximum over connected subsets, fundamentally what governs the behavior seems to be the maximum over simply connected subsets as we see in three dimensions (see also the proof in \cite{DZ22} where the maximum over simply connected subsets plays a fundamental role). 
\end{remark}
There is an interesting {\bf historical development} on Theorem~\ref{thm-greedy-lattice-animal}. The formulation of the statement immediately reminded the authors of the greedy lattice animal normalized by its volume (either normalized by the volume of the animal or by the volume of the box which contains the animal); this has been extensively studied for general disorder distributions (see \cite{Lee93, CGGK93, GK94, Lee97, Lee97b, DGK01, Martin02, Hammond06}). In particular, a rather precise description was obtained for the greedy lattice animal in \cite{Hammond06}, including that for rather general distributions (including the Gaussian distribution) the greedy lattice animal in a $d$-dimensional box of side length $N$ normalized by $N^{d}$ converges to a fixed constant (where the limiting constant depends on the distribution and the dimension).
Despite a high degree of similarity in the definitions between the greedy lattice animal normalized by its boundary size and the version normalized by its volume, their behaviors seem to be quite different and the mathematical proofs in these two scenarios are largely different too: in some sense such difference is suggested in the $(\log N)^{3/4}$ growth of $\mathcal S_N$ whereas in the version normalized by its volume this was known to converge to a constant. 

In three dimensions and higher, it was shown in \cite{Chalker83, FFS84} that the simply connected greedy lattice animal normalized by its boundary size (i.e., the analogue of $\mathfrak S_N$ in higher dimensions) is $O(1)$, which played a useful role in the proof for the existence of long range order at zero temperature in \cite{Imbrie85, BK88}.
The $O(1)$ bound in three dimensions and higher and the $(\log N)^{3/4}$ growth in two dimensions for $\mathfrak S_N$ can be seen as a stronger version of the intuition underlying the Imry-Ma argument for the transition in dimension for statistical physics models with random field.
Finally, we remark that in retrospect the proof in \cite{Chalker83, FFS84} amounts to a non-trivial application of Dudley's integral bound \cite{Dudley67} (note that the actual proof was implemented in a self-contained manner).  

Initially, the authors thought that Theorem~\ref{thm-greedy-lattice-animal} was new and as a result provided a self-contained proof (for a slightly weaker version of Theorem~\ref{thm-greedy-lattice-animal}) in the first version of this paper. During the submission, we discovered \emph{in the literature} a non-obvious but deep connection between the greedy lattice animal normalized by its boundary size and the matching problem in Euclidean spaces. A fundamental problem is to match i.i.d.\ uniform points $X_1, \ldots, X_{N^d}$ in a $d$-dimensional box containing $N^d$ lattice points $y_1, \ldots, y_{N^d}$ (that is, to find a bijection $\pi$ between these two set of points) in a certain optimal way. A classic result of \cite{AKT84} proved that $\E [\min_\pi \frac{1}{N^d}\sum_{i=1}^{N^d} |X_{\pi(i)} - y_{i}|] = \Theta(\sqrt{\log N})$ for $d=2$. Since \cite{AKT84} there has been extensive work on matching problems, and one is encouraged to see \cite{Talagrand14} for an excellent account on the topic, which presents a unified proof via the majorizing measure theory. Of particular relevance to Theorem~\ref{thm-greedy-lattice-animal} is the celebrated work of \cite{LS89} which showed that $\E [\min_\pi\max_{1\leq i\leq N^d} |X_{\pi(i)} - y_{i}|] = O((\log N)^{3/4})$ for $d=2$. The power of $3/4$ is deeply connected to the power in Theorem~\ref{thm-greedy-lattice-animal} via Hall's marriage lemma as we next explain.

Putting Halls's marriage lemma into the context of the matching problem, it states that if for each lattice point $y_i$ there exists a collection of random points $A_i$ such that 
\begin{equation}\label{eq-Hall-condition}
|\cup_{i\in I}A_i| \geq |I| \mbox{ for all } I\subset \{1, \ldots, N^d\},
\end{equation}
 then there exists a bijection $\pi$ such that $X_{\pi(i)}\in A_i$. In light of this, a natural choice of $A_i$ is the collection of all random points in a ball of radius $r$ centered at $y_i$. As such, the result of \cite{LS89} essentially reduces to showing that \eqref{eq-Hall-condition} holds for $r = O((\log N)^{3/4})$. It is plausible that in order to verify \eqref{eq-Hall-condition} one \emph{essentially} only needs to consider $I$ when $I$ is the set of lattice points in a simply connected subset $\mathsf I\subset \mathbb R^d$. Since the union of the balls centered at $I$ is an expansion of $\mathsf I$, that is, the union of $\mathsf I$ and all points with distance at most $r$ from $\mathsf I$, a moment of thinking should lead to that with high probability for typical $I$ (which turns out to be the ones we care most)
 $$\lambda(\mathsf I) + c r \mathrm{length}(\partial \mathsf I) \leq  |\cup_{i\in I}A_i| \leq \lambda(\mathsf I) + C r \mathrm{length}(\partial \mathsf I) $$
 where $c, C>0$ are constants, $\lambda(\mathsf I)$ is the number of random points in $\mathsf I$, and  $\mathrm{length}(\partial \mathsf I)$ is the length of the boundary curve for $\mathsf I$. Since $\lambda(\mathsf I) - |I|$ is a mean-zero random variable, which can be roughly regarded as a Gaussian variable, and thus in spirit $\{\lambda(\mathsf I) - |I|\}_I$ resembles the lattice animal process. In light of this discussion, heuristically the result of \cite{LS89} reduces to $\max_I \frac{\lambda(\mathsf I) - |I|}{\mathrm{length}(\partial \mathsf I)} = O((\log N)^{3/4})$ which resembles the upper bound in Theorem~\ref{thm-greedy-lattice-animal}. Indeed, this connection was nicely explained in \cite{Talagrand14}, which also nicely explains the conceptual difference for the behavior between $d = 2$ and $d\geq 3$.
 
Having explained the connection to the matching problem, we come back to what is most relevant to us, i.e., the proof of Theorem~\ref{thm-greedy-lattice-animal}. It turns out that a proof of Theorem~\ref{thm-greedy-lattice-animal} was essentially contained in \cite{Talagrand14}, and in Section~\ref{sec :: Lower bound proof} we present this in a more explicit manner without claiming any credit. In addition to that, in Section~\ref{sec :: Lower bound proof original} we still keep our ``original'' proof since we feel that our proof seems to explain some of the geometric intuition in an arguably more intuitive way and thus we feel that this framework of multi-scale analysis may turn out to be useful in some related problems (e.g., random metric of Liouille quantum gravity).

\medskip

We conclude the introduction by some discussions on future research. As a natural question, one may ask what is the correlation length for the random field Potts model. We expect that the
same scaling of $e^{\epsilon^{-4/3}}$ should occur. The nontrivial part is the upper bound, for which our
proof uses monotonicity properties of the Ising model in a substantial manner.

\section{Overview of the proof}\label{sec :: Overview}
\label{Prelims}
In this section we introduce the main idea behind the proof of Theorem~\ref{thm :: main result},
and in particular we give some intuition for the exponent $4/3$. We will then discuss the obstacles
that arise in making this proof sketch rigorous.

\subsection{Notation}
For a real (or integer-valued) vector $\mathbf x$ (in any dimension), we denote
its Euclidean norm by $|\mathbf x|$. For a finite set $A$, we denote its cardinality
by $|A|$. For $A \subset \mathbb R^2$ we denote the Lebesgue measure of $A$ by
$\lambda(A)$. For a curve $\eta$, we denote its length by $l(\eta)$. We use $A^c$
to denote the complement of the set (or event) $A$. If $A$ is an event, we denote
its indicator by $\one_A$.

In what follows, we let $c, c', c'',C,C',C'' > 0$ be arbitrary constants whose
values may change each time they appear, and may depend on $\mathtt m$ but not on
$\epsilon$ or $N$. Numbered constants $c_1, c_2,\ldots$ may still depend on $\mathtt m$ but
their values will be fixed throughout the paper.

We say two points $u,v \in \mathbb Z^2$ are adjacent to each other if $|u -v| = 1$,
in which case we write $u \sim v$. When convenient, we will think of
$\mathbb Z^2$ as being embedded in $\mathbb R^2$ in the obvious way.
For any set $A\subset \mathbb Z^2$, we let
$\partial A = \{(u,v): u\sim v,\, u\in A, v \in \mathbb Z^2 \setminus A\}$ denote the
edge boundary of $A$ in the nearest neighbor graph on $\mathbb Z^2$.

\subsection{Emergence of the $3/4$ exponent} \label{sec :: emergence}

Let $\sigma^{\pm}(\Lambda_N, \epsilon h)$ be the ground states with respect to the plus and minus boundary conditions, i.e., they are minimizers of the Hamiltonians $H^{\pm}(\Lambda_N, \epsilon h)$ respectively.
(Since our field $h$ has a continuous distribution, the ground state with respect to each boundary condition is unique with probability 1).
Suppose $\sigma^-_o(\Lambda_N, \epsilon h) = 1$ and $S$ is the connected component of $\{ v\in \Lambda_N : \sigma^-_v(\Lambda_N,\epsilon h) = 1\}$ that contains $o$.
Then necessarily we have $\sum_{v\in S} \epsilon h_v \geq |\partial S|$, because otherwise flipping spins on $S$ would decrease the Hamiltonian and contradict the definition of the ground state.
In other words,
\begin{equation}\label{eq :: ground state flip condition}
\sigma^-_o(\Lambda_N, \epsilon h) = 1 \mbox{ implies that }
\max_{A\in \mathcal A_N} \frac{\sum_{v\in A} \epsilon h_v}{|\partial A|} \geq 1\,.
\end{equation}
This explains why the greedy lattice animal normalized by its boundary size is connected to the random field Ising model.
From the discussion above, an upper bound on the greedy lattice animal directly gives a lower bound on the correlation length for $\beta = \infty$.
In what follows, we will sketch an argument leading to the emergence of $3/4$-exponent in the lower bound of Theorem~\ref{thm-greedy-lattice-animal}.

For convenience of exposition, we will pass to the continuum.
To each vertex $v \in \mathbb Z^2$ we can associate the axis-aligned unit square $R_v$ centered at $v$, and to each subset $A \subset \mathbb{Z}^2$ the set $\mathtt A = \cup_{v \in A} R_v$.
Notice that the perimeter of $\mathtt A$ (which we denote by $l(\partial \mathtt A))$ is equal to the boundary size $|\partial A|$.
Next, we let $W$ be a standard white noise on $\mathbb R^2$ such that $W(R_v) = h_v$ for each $v \in \mathbb Z^2$.
In particular, for any $A \subset \mathbb Z^2$ we have $\sum_{v \in A} h_v = W(\mathtt A)$.
We will sketch a procedure to construct a polygon $P \subset \mathbb [-N,N]^2$ (for $N \geq  e^{C \epsilon^{-4/3}})$ such that each side of $P$ has length at least 1 (we will refer to this as a polygon animal in what follows) and $\epsilon W(P) > l(\partial P)$.
The idea is to recursively expand $P$ by possibly joining to it a triangle $T$ such that the standard deviation of $\epsilon W(T)$ is of the same order as $l(\partial(P \cup T)) - l(\partial P)$.
We remark that we choose to add triangles instead of rectangles for the reason that
adding a triangle with the same area results in a substantially smaller increase in the perimeter.

We begin with the polygon $P_1 = [-N/2,N/2]^2$.
Having constructed $P_k$, we construct $P_{k+1}$ as follows.
For each side $s$ of $P_k$, we consider the isosceles triangle $T_s$ with base given by the ``middle'' segment of $s$ of length $l(s)/2$ and of height $\epsilon^{2/3} l(s)/8$ that points out of $P_k$.
We add $T_s$ to the polygon if $W(T_s) > 0$ (which occurs with probability $1/2$).
If we do not add $T_s$, we split $s$ into four sides of equal length.
We let $P_{k+1}$ be the polygon obtained by applying this procedure to each side of $P_k$.
See Figure~\ref{fig :: random set illustration} for an illustration of the process.

\begin{figure}
	\centering
	\begin{subfigure}{.33\textwidth}
		\centering
		\includegraphics[width=.6\linewidth]{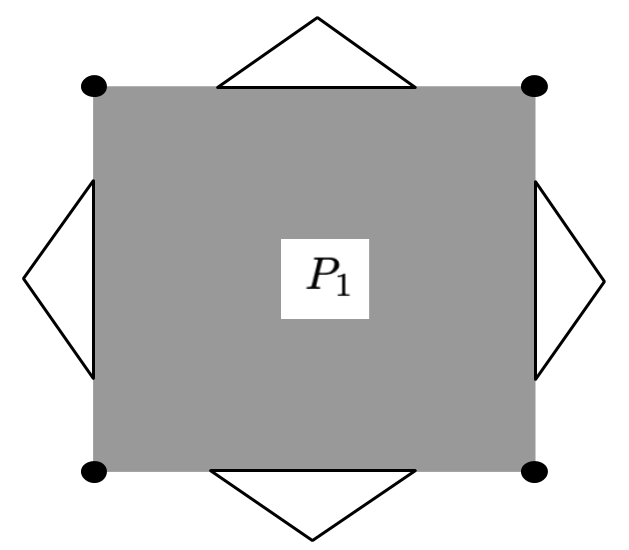}
	\end{subfigure}%
	\begin{subfigure}{.33\textwidth}
		\centering
		\includegraphics[width=.6\linewidth]{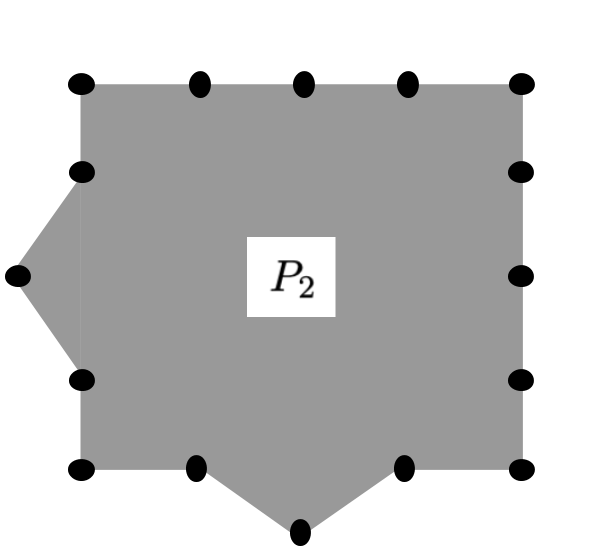}
	\end{subfigure}
	\begin{subfigure}{.33\textwidth}
		\centering
		\includegraphics[width=.6\linewidth]{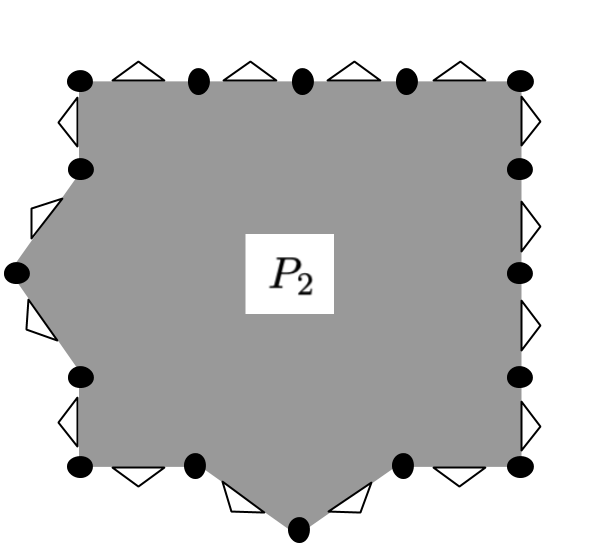}
	\end{subfigure}
	\caption{From left to right: $P_1$ with potential triangles to be added; $P_1$ with triangles added (i.e. $P_2$); $P_2$ with potential triangles to be added.}
	\label{fig :: random set illustration}
\end{figure}

Next, we let $a_k = \mathbb E[\epsilon W(P_k) - l(\partial P_k)]$.
Our goal is to lower bound $a_{k+1} - a_k$.
For each side $s$ of $P_k$, we have $\lambda(T_s) = \epsilon^{2/3} l(s)^2/32$ (recall that $\lambda$ denotes the Lebesgue measure on $\mathbb R^2$),
and an elementary calculation shows that adding $T_s$ to $P_k$ increases the perimeter by $\Delta_s < \epsilon^{4/3}l(s)/16$.
If we ignore the potential overlap between the triangles corresponding to different iterations of the scheme we would have $\mathbb{E}[\epsilon W(T_s) \mid W(T_s) > 0] > 2\Delta_s$.
Summing over all sides of $P_k$, we get that
\[
a_{k+1} - a_k \geq \tfrac{1}{16}\epsilon^{4/3} \mathbb E[l(\partial P_k)]
	\geq \tfrac{1}{16} \epsilon^{4/3} l(\partial P_1) \,.
\]
Further, since at each step each side $s$ is split into four sides of length at least $l(s)/4$ we see that for $k^* = \lfloor \log_{16} N \rfloor$ each side of $P_{k^*}$
 has length at least 1 deterministically.
This implies that for $N \geq 10^5\exp(10^5\epsilon^{-4/3})$, we have (noting that $a_1 = -l(\partial P_1)$)
\[
a_{k^*} = a_1 + \sum _{k = 1}^{k^*-1} (a_{k+1} - a_k) \geq \frac{1}{16} (k^*-1) \epsilon^{4/3} l(\partial P_1) - l(\partial P_1) \geq  l(\partial P_1) = 4N\,.
\]

The construction above captures the main idea of the proof for the lower bound in Theorem~\ref{thm-greedy-lattice-animal}: while we ignored a number of technical details and we carried out the analysis in the continuum, it is straightforward to complete a formal argument.
We will not do so since the proof of the upper bound on the correlation length contains a complete argument which is strictly more involved than the proof of the lower bound on the greedy lattice animal (formally one can follow the proof in Section~\ref{sec :: Upper bound proof} with $\Gamma(A) = \sum_{v\in A} \epsilon h_v$).

While the above construction suggests the emergence of the $4/3$ exponent in RFIM, it falls short of establishing either the upper or lower bound on the correlation length in Theorem~\ref{thm :: main result}.
In the next two subsections, we will point out the main obstacles and describe at an overview level our approaches to address these challenges.

\subsection{Upper bound on correlation length}\label{sec :: overview-upper}
Our goal is to prove that for every $\mathtt m \in (0,1)$ there exists $C_1 = C_1(\mathtt m) > 0$ (independent of $\beta$) such that for all $\epsilon \in (0,1)$ and $N \geq \exp(C_1 \epsilon^{-4/3})$,
\begin{equation} \label{eq-goal-upper-bound}
\mathtt m_{\beta, \Lambda_{4N}, \epsilon} \leq \mathtt m.
\end{equation}
(We have used $4N$ instead of $N$ in the above for later notational convenience.)
While the construction in Section~\ref{sec :: emergence} hints at the emergence of the $4/3$ exponent,
the following is a main obstacle in making this a rigorous proof for the upper bound on the correlation length even in the special case when $\beta = \infty$:
the existence of $A \in \mathfrak A_{4N}$ such that $\epsilon \sum_{v \in A} h_v > |\partial A|$ is not sufficient for $\sigma^{-}_o(\Lambda_{4N},\epsilon h) = 1$ (e.g., if $\epsilon h_v = 20$ for some $v\sim o$ and $\epsilon h_o = -5$, then $A = \{v, o\}$ satisfies the desired property but $\sigma^{-}_o(\Lambda_{4N},\epsilon h) = -1$; this is because when $|\epsilon h_v| > 4$ the ground state at $v$ agrees with the sign of $h_v$).
To overcome this challenge, we will define a suitable $\Gamma$-function for general $\beta$, and in the special case of $\beta = \infty$ the function (very roughly speaking) can certify $\sigma^{-}_o(\Lambda_{4N},\epsilon h) = 1$ (the rigorous meaning of this is via an argument by contradiction).
For $\Omega \subset \mathbb Z^2$ and an external field $f: \mathbb Z^2 \mapsto \mathbb R$, we define $H^\pm(\sigma, \Omega, f)$ and $\mu^\pm_{\beta, \Omega, f}$ as in \eqref{eq-def-hamiltonian} and \eqref{eq-def-mu} except replacing $\Lambda_N, \epsilon h$ by $\Omega, f$.
Define the free energy
\begin{equation}\label{eq-def-free-energy}
F^\pm(\Omega, f) = F^\pm(\beta, \Omega, f) = -\frac{1}{\beta} \log \sum_{\sigma\in \{-1, 1\}^\Omega} e^{-\beta H^\pm(\sigma, \Omega, f)}\,.
\end{equation}
For $A\subset \Omega$, our $\Gamma$-function is defined to be the difference of the free energies on $\Omega\setminus A$ with respect to the positive and negative boundary conditions, as follows:
\begin{equation}\label{eq :: Gamma def}
\Gamma(A, \Omega, f) = \Delta F(\Omega \setminus A, f) \mbox{ where } \Delta F(B,f) = F^+(B, f) - F^-(B, f)\,.
\end{equation}
Before proceeding, we make a few remarks about why we choose the $\Gamma$-function as the difference of free energies on $\Omega\setminus A$ instead of $A$.
In our analysis, we will let the reference domain be $\Omega = \Lambda_{2N}$ and construct
a sequence $(A_n)_{n \geq 1}$ with increasing (expected) value of $\Gamma$.
To this end, we need the increment $\Gamma(A \cup B) - \Gamma(A)$ to have nice monotonicity properties as a function of $\epsilon h$ so that we can keep track of the probabilistic behavior of the increment when employing a recursive construction as in Subsection~\ref{sec :: emergence}. The choice of $\Omega\setminus A$ gives the desired direction of monotonicity; see Lemma~\ref{lm :: Gamma increment monotonicity}.

With $\Gamma$ defined as in \eqref{eq :: Gamma def}, our proof proceeds by demonstrating a contradiction if we assume \eqref{eq-goal-upper-bound} fails.
On the one hand, we have the following upper bound (c.f. \cite[Proposition 5.2(iii)]{AizenmanWehr90}).
\begin{lemma}\label{lm :: Gamma bound}
$|\Gamma(A,\Omega,f)| \leq 2|\partial(\Omega\setminus A)|$ for all $(A, \Omega,f)$ with $A\subset \Omega$.
\end{lemma}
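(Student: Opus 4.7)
The plan is to prove the bound by a direct configuration-by-configuration comparison of the two Hamiltonians that appear in $\Delta F(B,f)$, where $B = \Omega\setminus A$.

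First, I would unwind the definition: from \eqref{eq-def-hamiltonian} (with $\Lambda_N,\epsilon h$ replaced by $B,f$), the plus and minus Hamiltonians differ only through the boundary term, so for every $\sigma\in\{-1,1\}^B$,
\begin{equation*}
H^+(\sigma,B,f) - H^-(\sigma,B,f) \;=\; -2\!\sum_{\substack{u\sim v \\ u\in B,\, v\notin B}} \sigma_u.
\end{equation*}
Since each $\sigma_u\in\{-1,1\}$ and the sum is indexed by the edges in $\partial B$, this yields the pointwise bound
\begin{equation*}
\bigl|H^+(\sigma,B,f) - H^-(\sigma,B,f)\bigr| \;\leq\; 2|\partial B|
\end{equation*}
uniformly in $\sigma$.

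Next I would transfer this pointwise inequality to the free energies. For $\beta<\infty$, writing $e^{-\beta H^+(\sigma,B,f)} = e^{-\beta H^-(\sigma,B,f)}\cdot e^{-\beta(H^+-H^-)(\sigma)}$ and using the pointwise bound factor-by-factor gives
\begin{equation*}
e^{-2\beta|\partial B|} \sum_{\sigma} e^{-\beta H^-(\sigma,B,f)} \;\le\; \sum_{\sigma} e^{-\beta H^+(\sigma,B,f)} \;\le\; e^{2\beta|\partial B|} \sum_{\sigma} e^{-\beta H^-(\sigma,B,f)}.
\end{equation*}
Applying $-\beta^{-1}\log(\cdot)$ (which reverses the inequalities) then gives $|F^+(B,f)-F^-(B,f)|\le 2|\partial B|$. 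For the case $\beta=\infty$, where $F^\pm(B,f)=\min_\sigma H^\pm(\sigma,B,f)$, the same conclusion follows from the pointwise bound by choosing $\sigma^\star$ minimizing $H^+$ (respectively $H^-$) and estimating $F^-\le H^-(\sigma^\star) \le H^+(\sigma^\star)+2|\partial B| = F^+ + 2|\partial B|$, and symmetrically. Substituting $B=\Omega\setminus A$ into the definition \eqref{eq :: Gamma def} of $\Gamma$ yields the claim.

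There is no real obstacle here: the lemma is essentially a statement that boundary conditions perturb free energies by at most the boundary energy, and the proof reduces to the deterministic pointwise inequality for the Hamiltonian together with a standard monotonicity argument for $-\beta^{-1}\log Z$. The only minor point to watch is that the bound must be uniform in $\beta$ (including $\beta=\infty$), which is why I would explicitly handle the $\beta=\infty$ case via minimizers rather than taking a limit.
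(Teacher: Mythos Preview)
Your proof is correct and follows essentially the same approach as the paper: both rest on the pointwise bound $|H^+(\sigma,B,f)-H^-(\sigma,B,f)|\le 2|\partial B|$, and the paper simply packages your partition-function ratio as $\Delta F(B,f)=-\beta^{-1}\log\langle e^{-\beta(H^+-H^-)}\rangle_{\mu^-_{\beta,B,f}}$ before applying that bound. Your explicit treatment of $\beta=\infty$ via minimizers is a harmless addition that the paper leaves implicit.
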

The proof in the case of the Ising model is elementary. It follows from the fact that $|H^+(\sigma,B,f) - H^{-}(\sigma,B,f)| \leq 2|\partial B|$ and
\[
\Delta F(\beta,B, f) = -\frac{1}{\beta}\log\left( \langle \exp(-\beta[H^{+}(\sigma, B, f) - H^{-}(\sigma, B, f)])\rangle_{\mu^{-}_{\beta,B,f}}\right)\,.
\]
On the other hand, assuming \eqref{eq-goal-upper-bound} fails, we will show that the variance of the increment $\Gamma(A \cup B, \Lambda_{2N}, \epsilon h) - \Gamma(A,  \Lambda_{2N}, \epsilon h)$  is comparable to that of $\sum_{v \in B} \epsilon h_v$ and then we can hope to follow the argument in Subsection~\ref{sec :: emergence} to construct a set whose $\Gamma$-function value is larger than its boundary size.
As mentioned, a crucial feature we use in proving this is a monotonicity property for the increment of the $\Gamma$-function, as incorporated in Lemma~\ref{lm :: Gamma increment monotonicity}.

With all these intuitions in place, the actual proof in Section 3 is written in a way that both fills in the gaps left by the heuristics from Section 2.2 and addresses the challenges from the random field Ising model. For the former, for instance, Figure~\ref{fig :: P, T, and T*} illustrates how we address the gap from correlations between different rounds of recursive constructions by making the decision for the triangle $T_{1, i}$ only based on disorder in the smaller blue triangle $T^*_{1, i}$. For the latter,  Lemma~\ref{lm :: stochastic domination} manifests the power of  Lemma~\ref{lm :: Gamma increment monotonicity} and says that the correlation through the Ising measure is in our desirable direction and Lemma~\ref{lm :: lower bound probability of inclusion} says that the marginal effect from the disorder in a triangle to our observable is similar to the white noise value of this triangle.

\subsection{Lower bound on correlation length} \label{sec:overview-lower}

In light of \eqref{eq :: ground state flip condition}, the lower bound on the correlation length for $\beta = \infty$  can be proved via an upper bound on the greedy lattice animal.
This is an example of the classic question of computing the (expected) supremum of a Gaussian process.
This has been well-understood in general, culminating in Talagrand's majorizing measure theorem in \cite{Talagrand87}, which improved previous results in \cite{Dudley67, Fernique71}:
as a highlight, an up-to-constant estimate for the supremum of a general Gaussian process was provided in terms of the (so-called) $\gamma_2$-functional associated with this process. Specifically for the example of our lattice animal process, the upper bound was already hinted in \cite{LS89} as we explained earlier, whose proof together with proofs for various results on matching problems were unified and streamlined in \cite{Talagrand14}. In particular, the following result was essentially contained in \cite{Talagrand14}.
\begin{proposition}\label{prop :: correlation length lower bound}
	Let $\mathfrak B_N$ be the collection of simply connected lattice animals contained in $\Lambda_N$. There exists a constant $C_1 > 0$ such that for
	$N > 1$ we have
	\[
	\mathbb{P} \big(
	\max_{B \in \mathfrak B_{N}} \frac{\sum_{v \in B}h_v}{|\partial B|} > C_1 (\log N)^{3/4} + u
	\big) \leq \exp (-u^2/2) \quad \forall u > 0\,.
	\]
\end{proposition}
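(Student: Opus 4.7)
The statement splits naturally into a ``location'' bound (the $C_1(\log N)^{3/4}$ centering) and a ``concentration'' bound (the sub-Gaussian tail $\exp(-u^2/2)$), and I would handle the concentration part first via the Borell--TIS inequality. Let $Z := \max_{B \in \mathfrak B_N} Y_B$ where $Y_B := \sum_{v\in B} h_v / |\partial B|$. Each $Y_B$ is centered Gaussian with $\var(Y_B) = |B|/|\partial B|^2$, and the discrete isoperimetric inequality on $\mathbb Z^2$ (squares are essentially extremal) gives $|\partial B|^2 \geq 16|B|$, so $\sup_{B\in\mathfrak B_N}\var(Y_B) \leq \sigma^2$ for some $\sigma^2 \leq 1$. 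Borell--TIS then immediately yields $\P(Z > \E[Z] + u) \leq \exp(-u^2/(2\sigma^2)) \leq \exp(-u^2/2)$, so it remains to prove $\E[Z] \leq C(\log N)^{3/4}$.

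For the expected supremum, I would apply Talagrand's generic chaining bound $\E[Z] \leq C \gamma_2(\mathfrak B_N, d)$, where $d(B,B') := \|Y_B - Y_{B'}\|_{L^2}$, and estimate $\gamma_2$ by exhibiting an admissible partition sequence $(\mathcal A_n)_{n \geq 0}$ of $\mathfrak B_N$ with $|\mathcal A_n| \leq 2^{2^n}$. The construction is the one implicit in the Leighton--Shor analysis of the two-dimensional matching problem, unified in Talagrand's treatment that is cited in the introduction: at each scale $n$, partition animals by first coarsening them on a carefully chosen mesh $\delta_n$, then grouping together all $B$ sharing the same coarsening. One needs (i) the $L^2$ diameter of each partition class at scale $n$ to be $O(\delta_n)$, coming from the contribution of $h$ on the symmetric difference of $B$ and its coarsening, normalized by $|\partial B|$; and (ii) the number of distinct coarsenings of simply connected animals inside $\Lambda_N$ at scale $\delta_n$ to be at most $2^{2^n}$. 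Summing $2^{n/2}\delta_n$ over $n$ with the appropriate choice of $\delta_n$ (logarithmic in $N$ at the coarsest scale, refining geometrically) produces the $(\log N)^{3/4}$ bound.

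The main obstacle is calibrating the partition scales and exploiting simple connectivity sharply. The $3/4$-exponent, rather than the $1/2$ that Dudley's integral alone would give, comes precisely from the fact that a simply connected coarse animal is determined by a single closed lattice curve, so its entropy grows only exponentially in the perimeter rather than in the area; balancing this entropy against the $L^2$ diameter at each scale has to be done with care. Additional attention is required for animals with very small $|\partial B|$, where the normalization blows up and one should excise a handful of trivial cases by brute force, and for animals touching $\partial \Lambda_N$, where the coarsening has to be truncated without spoiling simple connectivity. Beyond these points, the argument is book-keeping that follows Talagrand's matching-problem proof line for line.
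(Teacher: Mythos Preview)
Your proposal points to the correct reference (Talagrand's treatment of the Leighton--Shor argument) and handles the concentration step exactly as the paper does, via Borell--TIS with $\sigma^2 \le 1$. The expected-supremum step, however, differs from the paper's in two ways, and your sketch of it contains a real gap.

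First, the paper does \emph{not} chain on the normalized process $Y_B = \sum_{v\in B} h_v/|\partial B|$. It decomposes $\mathfrak B_N$ dyadically by boundary length, fixing a boundary vertex $v$ and $2^{k-1}\le |\partial B|\le 2^k$, and chains on the \emph{unnormalized} process $\sum_{v\in B} h_v$ over each class $\mathfrak A_{v,k}$; only after obtaining $\E\bigl[\max_{B\in\mathfrak A_{v,k}}\sum_v h_v\bigr]\le Ck^{3/4}2^k$ does it divide by $2^{k-1}$ and take a union bound over $(v,k)$. The canonical metric on the unnormalized process is simply $d_Y(B,B')^2=|B\mathbin{\triangle}B'|$, whereas on your normalized process the two differing denominators make $d$ awkward to analyze.

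Second, and more substantively, your explanation of the $3/4$ exponent is not the mechanism that actually produces it. The fact that a simply connected animal is encoded by a single boundary loop gives the cardinality bound $|\mathfrak A_{v,k}|\le 2^{2^{k+1}}$, but that bound plugged into Dudley's integral (or a direct $\gamma_2$ construction of the kind you outline) does not yield $k^{3/4}$. The paper instead invokes Talagrand's interpolation lemma \cite[Lemma~4.4.6]{Talagrand14}: for $|T|\le 2^{2^n}$ one has $\gamma_2(T,\sqrt d)\le n^{3/4}\,\gamma_{1,2}(T,d)^{1/2}$. Applying this with $d=d_Y^2$ and $n\asymp k$, together with \cite[Proposition~4.4.5]{Talagrand14} which gives $\gamma_{1,2}(\mathfrak A_{v,k},d_Y^2)\le C\,2^{2k}$, is what produces $\gamma_2(\mathfrak A_{v,k},d_Y)\le Ck^{3/4}2^k$. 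Your mesh-coarsening sketch is closer in spirit to how one establishes the $\gamma_{1,2}$ bound than to a direct $\gamma_2$ witness; without the interpolation step the ``careful balancing'' you allude to does not give $3/4$.
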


To conclude this section, we prove the lower bound in Theorem~\ref{thm :: main result} and the upper bound in Theorem~\ref{thm-greedy-lattice-animal} using  Proposition~\ref{prop :: correlation length lower bound}.
\begin{proof}[Proof of lower bound in Theorem~\ref{thm :: main result} and upper bound in Theorem~\ref{thm-greedy-lattice-animal}]
The main step of the proof is relating the bound on simply connected lattice animals to a bound on lattice animals. Let $\mathcal B_N$ be the collection of connected lattice animals contained in $\Lambda_N$. We claim that 
\begin{equation}\label{eq-from-simply-connected-to-connected}
\max_{B \in \mathfrak B_N} \frac{|\sum_{v \in B}h_v|}{|\partial B|} = 
	\max_{B' \in \mathcal B_N} \frac{|\sum_{v \in B'} h_v|}{|\partial B'|}\,.
\end{equation}
For any lattice animal $B$, let $\tilde B$ be the collection of vertices that is enclosed by $B$, i.e., disconnected by $B$ from $\infty$.
Let $B_1, \ldots, B_k$ be the connected components of $\tilde B \setminus B$.
Note that $B_1, \ldots, B_k$ are simply connected, since if $v$ is separated from $\infty$ by $B_i$ then $v \in \tilde B$, and in addition since $B$ is connected it follows $v\not\in B$.
Since $\partial B$ is the disjoint union of $\partial \tilde B$ and $\partial B_1,\dots,\partial B_k$, we have
\begin{align*}
\frac{|\sum_{v \in B} h_v|}{|\partial B|} 
	&\leq \frac{1}{|\partial B|} \sum_{i = 1}^k \big|\sum_{v \in B_i}h_v \big| \\
	&= \sum_{i = 1}^k \frac{|\partial B_i|}{|\partial B|} \frac{|\sum_{v \in B_i}h_v|}{|\partial B_i|} \\
	&\leq \max_{i = 1,\dots,k} \frac{|\sum_{v \in B_i}h_v|}{|\partial B_i|},
\end{align*}
where the last inequality follows from the fact that the coefficients $|\partial B_i|/|\partial B|$ sum up to 1.
This completes the verification of \eqref{eq-from-simply-connected-to-connected}.
By Proposition~\ref{prop :: correlation length lower bound} (and the fact that $h$ is symmetric), the maximum on the left-hand side of \eqref{eq-from-simply-connected-to-connected} is of order $(\log N)^{3/4}$, which proves the upper bound in Theorem~\ref{thm-greedy-lattice-animal}. This also shows that it is less than $\epsilon^{-1}$ with high probability as long as $N \leq \exp(\epsilon^{4/3}/C)$. 
By \eqref{eq :: ground state flip condition} (and a symmetric condition for $\sigma^+_o(\Lambda_N, \epsilon h)$) this implies that $\sigma_o^{\pm}(\Lambda_N, \epsilon h)  = \pm1$ with high probability and thus completes the proof of the lower bound in Theorem~\ref{thm :: main result}.
\end{proof}
\section{Upper bound on correlation length}\label{sec :: Upper bound proof}

This section is devoted to the proof of the upper bound on the correlation length, as incorporated in \eqref{eq-goal-upper-bound}.
Recall the definition of $\Gamma$-function given in \eqref{eq :: Gamma def}.
Recall from Lemma~\ref{lm :: Gamma bound} that $\Gamma(A,\Omega, f) \leq 2|\partial (\Omega\setminus A)|$ for all $(A, \Omega,f)$.
With this at hand, the bulk of this section is to show that if \eqref{eq-goal-upper-bound} fails, there exists a random subset $\mathsf P^* \subset \Lambda_{2N}$ such that
\begin{equation}\label{eq :: contradiction}
\mathbb E[ \Gamma(\mathsf P^*, \Lambda_{2N},\epsilon h) - 2 |\partial (\Lambda_{2N}\setminus\mathsf P^*)|] > 0\,,
\end{equation}
which is a contradiction.
As mentioned in Subsection~\ref{sec :: overview-upper}, a key element of our analysis is a monotonicity property of the $\Gamma$-function which we incorporate in Lemma~\ref{lm :: Gamma increment monotonicity}.
In Subsection~\ref{sec :: randomized geometric construction}, we construct $\mathsf P^*$ by enhancing the procedure in Subsection~\ref{sec :: emergence} in order to address additional
 complications due to the complexity of the $\Gamma$-function.
In Subsection~\ref{sec :: probability analysis} we carry out the probabilistic analysis and prove \eqref{eq :: contradiction} under the assumption that \eqref{eq-goal-upper-bound} fails.

\subsection{Monotonicity property of the $\Gamma$-function}\label{sec :: Gamma property}

\begin{lemma}\label{lm :: Gamma increment monotonicity}
For disjoint subsets $A,B \subset \Omega$, we have that
	$
	(\Gamma(A \cup B, \Omega,f) - \Gamma(A, \Omega ,f))
	$
	is increasing in $\{f_v \,:\, v \in B\}$, decreasing in
	$\{f_v \,:\, v \notin A \cup B\}$ and does not depend on $\{f_v: v\in A\}$.
\end{lemma}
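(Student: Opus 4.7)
The plan is to reduce the three assertions to (i) the trivial fact that $F^{\pm}(B,f)$ depends on $f$ only through $\{f_v : v \in B\}$, (ii) a standard differentiation of the log-partition function, and (iii) the classical monotonicity of the Ising model in its boundary conditions (Holley/FKG). Set $B' = \Omega\setminus A$, $B'' = \Omega\setminus(A\cup B) = B'\setminus B$, and $G(f) := \Delta F(B'',f) - \Delta F(B',f)$. Since $H^{\pm}(\sigma,B,f)$ involves $f$ only through $\sum_{v\in B} f_v\sigma_v$, observation (i) is immediate; because $A$ is disjoint from both $B'$ and $B''$, it follows that $G$ does not depend on $\{f_v : v \in A\}$, yielding the independence part of the lemma.

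For monotonicity I differentiate. A one-line computation gives $\partial F^{\pm}(B,f)/\partial f_v = -\langle\sigma_v\rangle_{\mu^{\pm}_{\beta,B,f}}$ for $v \in B$. If $v \in B$ (so $v \in B'\setminus B''$) only the second term of $G$ depends on $f_v$ and
\[
\frac{\partial G}{\partial f_v} \;=\; \langle\sigma_v\rangle_{\mu^+_{\beta,B',f}} - \langle\sigma_v\rangle_{\mu^-_{\beta,B',f}} \;\ge\; 0
\]
by FKG, giving the first (increasing) assertion. For $v \in B''\subset B'$, both terms contribute, and writing $m^{\pm}_B(v) := \langle\sigma_v\rangle_{\mu^{\pm}_{\beta,B,f}}$ one finds
\[
\frac{\partial G}{\partial f_v} \;=\; \bigl[m^+_{B'}(v) - m^-_{B'}(v)\bigr] - \bigl[m^+_{B''}(v) - m^-_{B''}(v)\bigr].
\]
So the lemma reduces to the ``shrinking-domain'' inequality for the magnetization gap, namely $m^+_{B'}(v) - m^-_{B'}(v) \le m^+_{B''}(v) - m^-_{B''}(v)$ for $v \in B''\subset B'$.

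I would establish this gap inequality by stochastic domination. Conditioning $\mu^+_{\beta,B',f}$ on the spin configuration $\sigma_B$ on $B = B'\setminus B''$, the conditional law on $B''$ is itself an Ising Gibbs measure on $B''$ with external field $f$ and boundary condition equal to $+1$ on $\partial B''\cap\partial B'$ and to $\sigma_B\le +1$ on the edges connecting $B''$ to $B$. By Holley's inequality, which remains valid for Ising with arbitrary external field (the ferromagnetic pair interaction is unchanged), this conditional measure is stochastically dominated by $\mu^+_{\beta,B'',f}$; averaging over $\sigma_B$ gives $m^+_{B'}(v) \le m^+_{B''}(v)$. The symmetric argument yields $m^-_{B'}(v) \ge m^-_{B''}(v)$, and subtracting proves the gap inequality and hence the lemma. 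The only non-mechanical step is this stochastic-domination argument, but given the standard validity of FKG/Holley in the presence of an external field I do not anticipate a serious obstacle.
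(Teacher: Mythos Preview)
Your proof is correct and follows essentially the same approach as the paper: both differentiate $G = \Delta F(\Omega\setminus(A\cup B),f) - \Delta F(\Omega\setminus A,f)$ in $f_v$, obtain the same expressions in terms of magnetization gaps, and reduce the decreasing part to the domain monotonicity $m^+_{B'}(v) - m^-_{B'}(v) \le m^+_{B''}(v) - m^-_{B''}(v)$ for $B''\subset B'$. The only difference is cosmetic: the paper simply cites this domain monotonicity as ``monotonicity of the Ising model'' (referring to \cite[Section~2.2]{AizenmanPeled19}), whereas you spell out the Holley/stochastic-domination argument explicitly.
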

\begin{proof}
Recall the definition of $\Delta F$ in \eqref{eq :: Gamma def}. Write
\[
\Delta \langle \sigma_v \rangle_{\beta,\Omega, f} =
	\frac{1}{2}(\langle\sigma^+_v\rangle_{\mu^+_{\beta,\Omega, f}} - \langle\sigma^-_v\rangle_{\mu^-_{\beta,\Omega, f}})\,.
\]
 We compute partial derivatives and get that
\begin{equation}\label{eq-derivative-free-energy}
\partial_{f_v}\Delta F(A') = -2\Delta \langle \sigma_v \rangle_{\beta, A', f} \mathbf 1_{v\in A'}
\end{equation}
 for any $A'\subset \mathbb Z^2$ (where the minus sign inherits from that in the definition of free energy). Write
\begin{align*}
	G(A, B, \Omega, f)& = \Gamma(A \cup B, \Omega, f) - \Gamma(A , \Omega, f) \\
		&= \Delta F(\Omega \setminus (A \cup B), f) - \Delta F(\Omega \setminus A, f). \numberthis \label{eq-Gamma-expression}
\end{align*}
Using \eqref{eq-derivative-free-energy} and the  monotonicity of the Ising model (c.f. \cite[Section 2.2]{AizenmanPeled19}) we get that for $v \in \Omega \setminus (A \cup B)$
\[
\partial_{f_v}G(A, B, \Omega, f) = 2(\Delta \langle \sigma_v \rangle_{\beta,\Omega\setminus A, f} - \Delta \langle \sigma_v \rangle_{\beta,\Omega\setminus (A\cup B), f})  \leq 0\,,
\]
for $v \in B$
\[
\partial_{f_v}G(A, B, \Omega, f) = 2\Delta \langle \sigma_v \rangle_{\beta,\Omega \setminus A, f} \geq 0\,,
\]
and for $v \in A$, $\partial_{f_v}G(A, B, \Omega, f) = 0$.
This completes the proof of the lemma.
\end{proof}
It is also worth noting that it follows from the expressions obtained for the partial derivatives of $G$ that
\begin{equation}\label{eq-Gamma-partial-bound}
|\partial_{f_v}G(A,B,\Omega,f)| \leq 2 \mbox{ for all } A,B \subset \Omega \mbox{ and } v\in \Omega.
\end{equation}

\subsection{Randomized geometric constructions} \label{sec :: randomized geometric construction}

In this subsection we give the details of the construction of the random set $\mathsf P^*$ (following Subsection~\ref{sec :: emergence}) and prove a few geometric lemmas.

\subsubsection{Construction of $\mathsf P^*$}

In order to construct $\mathsf P^*$, we will recursively construct a sequence of polygons $(P_n)_{n\geq 1}$ contained in $[-2N,2N]^2$ and a corresponding sequence of subsets $(\mathsf P_n)_{n \geq 1}$ given by $\mathsf P_n = P_n \cap \mathbb Z^2$.
Let $\mathtt m \in (0,1)$ and let $\delta = 10^{-2}(\epsilon \mathtt m)^{2/3}$ (where $10^{-2}$ is chosen as a small but otherwise arbitrary constant).
As initialization for our procedure, we set $P_1 = [-N,N]^2$ and let $(S_{1,i})_{i = 1}^4$ be the sides of $P_1$, numbered in counter-clockwise order with $S_{1,1}$ being the bottom side.
We next describe our recursive construction.

For $n \geq 1$, assume $P_{n}$ has been constructed and that $P_n$ has $4^n$ sides $(S_{n,i})_{i=1}^{4^{n}}$ numbered in counter-clockwise order.
For each $i$, let $r_{n,i} = l(S_{n,i})/4$ and partition $S_{n,i}$ into four segments of length $r_{n,i}$.
Let $T_{n,i}$ be the isosceles triangle with base given by the two middle segments of $S_{n,i}$ and height $\delta r_{n,i}$ such that $T_{n,i}$ points out from $P_n$ (note that $T_{n, i}$ is measurable with respect to $P_n$; see Remark~\ref{remark-construction} (ii)).
Let $\mathsf T_{n,i} = T_{n,i}\cap \mathbb Z^2$.
Further, let $T^*_{n,i} \subset T_{n,i}$ be the triangle consisting of all points in $T_{n,i}$ which have distance at least $2\delta r_{n,i}/3$ from the base and let $\mathsf T^*_{n,i} = T^*_{n,i} \cap \mathbb Z^2$.
See Figure~\ref{fig :: P, T, and T*} for an illustration. We will decide whether to add the triangle $T_{n, i}$ to the polygon based on the current polygon and the field in $T^*_{n,i}$ only (instead of the field
in $T_{n, i}$); this ensures that
our construction explores disjoint regions in different iterations (see Lemma~\ref{lm :: Independence}).

\begin{figure}
\centering
\begin{subfigure}{.5\textwidth}
\centering
\includegraphics[width=.6\linewidth]{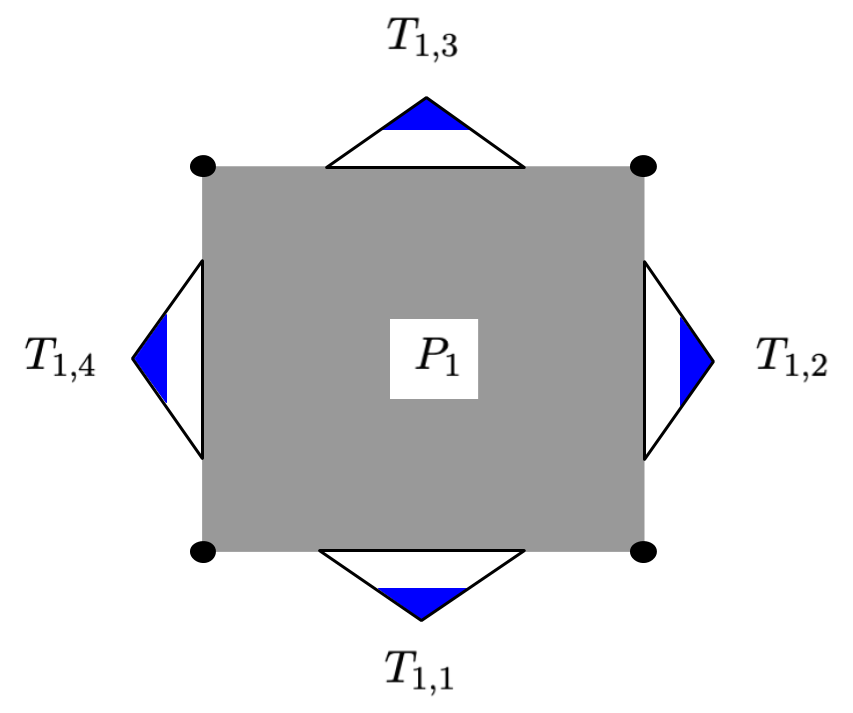}
\end{subfigure}%
\begin{subfigure}{.5\textwidth}
\centering
\includegraphics[width=.6\linewidth]{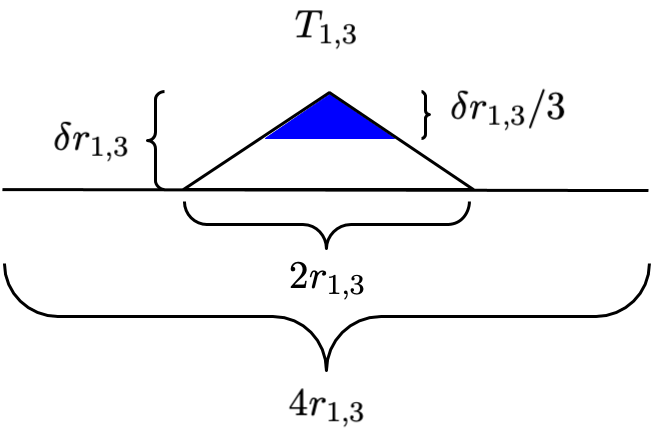}
\end{subfigure}
\caption{$P_1$ with $(T_{1,i})_{i = 1}^4$. The blue triangles are $(T^*_{1,i})_{i = 1}^4$.}
\label{fig :: P, T, and T*}
\end{figure}

In order to construct $P_{n+1}$, we will decide whether to add the triangle $T_{n,i}$ for $1\leq i\leq 4^n$ depending on whether the expected increase to the value of the $\Gamma$-function is larger than the resulting increase in the boundary size of the polygon.
To formalize this idea, we will recursively define a sequence of polygons $(P_{n,i})_{i =0}^{4^n}$ and their corresponding lattice subsets $\mathsf P_{n,i} = P_{n,i}\cap \mathbb Z^2$. For the base case, we let $P_{n,0} = P_n$.
For $1 \leq i \leq 4^n$ let $\mathcal F_{n,i}$ be the $\sigma$-algebra generated by $P_{n,i-1}$ and $\{h_v : v \in \mathsf T^*_{n,i}\}$ (by definition $T_{n, i}$ is measurable with respect to $P_{n}$ as mentioned earlier, and thus from our recursive construction below $T_{n, i}$ is also measurable with respect to $P_{n, i-1}$, as elaborated in Remark~\ref{remark-measurability}).
Note that $\mathcal F_{n, i}$ is not increasing.
In particular, $\mathcal F_{n, i}$ contains information about $\{h_v: v\in \cup_{i'=1}^{i-1} \mathsf T^*_{n, i'}\}$ only via $P_{n, i-1}$.
Define
\begin{equation}\label{eq-def-gamma-n-i}
\gamma_{n,i} =
\mathbb E [
\Gamma(\mathsf P_{n,i-1} \cup \mathsf T_{n,i}, \Lambda_{2N}, \epsilon h) -
\Gamma(\mathsf P_{n,i-1}, \Lambda_{2N}, \epsilon h) \mid \mathcal F_{n,i}
]\,,
\end{equation}
as the aforementioned expected increment of $\Gamma$ (see Remark~\ref{remark-construction} (iv)).
Then we define
\begin{align*}
P_{n,i} &= \begin{cases}
P_{n,i-1} \cup T_{n,i}, & \mbox{ if } \gamma_{n,i} \geq 10\delta^2r_{n,i}\,, \\
P_{n,i-1}, & \mbox{ if } \gamma_{n,i} < 10\delta^2r_{n,i}\,.
\end{cases}
\end{align*}
We let $P_{n+1} = P_{n,4^n}$ and let $(S_{n+1,i})_{i = 1}^{4^{n+1}}$ be the sides of $P_{n+1}$, numbered in counter-clockwise order so that $S_{n+1,j} \subset S_{n,i} \cup T_{n,i}$ for $1 \leq i \leq 4^n$ and $4(i-1) + 1 \leq j \leq 4i$.
That is, for each $i$ the sides of $P_{n+1}$ that ``come from'' $S_{n,i}$ are $(S_{n+1,j})_{j = 4(i-1)+1}^{4i}$. This concludes the construction of $P_{n+1}$.

Finally, we let $n^* = \lfloor \log_{16}(N)\rfloor$, $P^* = P_{n^*}$, and $\mathsf P^* = \mathsf P_{n^*}$.
This choice of $n^*$ ensures that $\delta r_{n,i}$ is large for all $n \leq n^*$, which will allow us to approximate $|\mathsf T^*_{n,i}|$ by the area of $T^*_{n,i}$.

Before proceeding, we make a few expository remarks on our construction.

\begin{remark}\label{remark-construction}
(i) We have assumed that for each $n \geq 1$, the triangles $(T_{n,i})_{i = 1}^{4^n}$
are disjoint and $T_{n,i} \cap P_n \subset S_{n,i}$ for all $i$. This is justified
by Lemma~\ref{lm :: No self-crossing}. We also note that if
$\gamma_{n,i} \leq 10\delta^2r_{n,i}$ (i.e. $T_{n,i}$ is not included in $P_{n+1}$),
then $S_{n,i}$ is split into four sides of $P_{n+1}$ with internal angle $\pi$ between
them. These two assumptions ensure $P_{n+1}$ is a polygon with $4^{n+1}$
sides.

(ii) It will be useful in our proof that the numbering of the sides of $P_n$
is deterministic so that the sequence $(T_{n,i})_{i = 1}^{4^n}$ is measurable with
respect to $P_n$. The specific choice given in the construction is made for convenience.

(iii) Our choice of $\delta$ is based on similar considerations to those given in Subsection~\ref{sec :: emergence}.
The condition $\gamma_{n,i} > 10 \delta^2 r_{n,i}$ is based on the following calculation.
Since $l(\partial (P_{n,i-1} \cup T_{n,i})) - l(\partial P_{n,i-1}) \leq \delta^2r_{n,i}$, if adding $T_{n,i}$ to $P_{n,i-1}$ increases $\Gamma$ by $10 \delta^2 r_{n,i}$ then the difference between $\Gamma$ and $8l(\partial P)$ will increase (the constant 8 will be explained in subsection~\ref{sec :: Gaussian volume proofs}).

(iv)  Note that $\gamma_{n,i}$ depends on $\{h_v: v\in \cup_{(k,j) < (n,i)}\mathsf T^*_{k,j}\}$ only through $P_{n,i-1}$ due to our particular choice of $\mathcal F_{n, i}$ (here $(k, j) < (n, i)$ if $k<n$, or $k=n$ and $j<i$).
The reason we choose $\mathcal F_{n, i}$ this way is that we can show the expected value of the derivative of the increment with respect to $h_v$ for $v \in \mathsf T^*_{n,i}$ is bounded from below
by $\mathtt m_{\beta, \Lambda_{4N}, \epsilon}$, which is at least $\mathtt m$ by our assumption that \eqref{eq-goal-upper-bound} fails. Therefore, the lower bound on the variance obtained this way is comparable to the upper bound
on the variance obtained from general Gaussian concentration inequality, and this is a useful property in our analysis later. If instead $\gamma_{n,i}$ was defined by conditioning on the field in $T^*_{k,j}$ for
$(k,j) \leq (n,i)$, then the field in previously rejected triangles would affect $\gamma_{n,i}$ but potentially only very weakly.
 This would mean our lower bound on the variance of $\gamma_{n,i}$ would be much smaller than the upper bound from Gaussian concentration inequality since now the upper
  bound would be from the field in a much larger region.
\end{remark}

\subsubsection{Geometric lemmas}
In this subsection we prove a few lemmas which ensure that the polygons $(P_n)_{n = 1}^\infty$ have desirable geometric properties.
\begin{lemma}\label{lm :: No self-crossing}
	For all $n \geq 1$, the triangles $(T_{n,i})_{n = 1}^{4^n}$ are disjoint and
	$T_{n,i} \cap P_n \subset S_{n,i}$ for $1 \leq i \leq 4^n$.
\end{lemma}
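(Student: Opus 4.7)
Proof plan. I will argue by induction on $n$. For the base case $n=1$, $P_1=[-N,N]^2$ is a square and the four triangles $T_{1,i}$ sit on the middle halves of its sides pointing in the four cardinal directions with height $\delta N/2$; since the bases lie on pairwise disjoint sides and $\delta<1$, both claims are immediate.

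For the inductive step, assume the lemma at levels $n'<n$, so that $P_n$ is a well-defined simple polygon with $4^n$ sides. Two structural observations about $P_n$ drive the argument. First, every interior angle of $P_n$ belongs to the finite set
\[
\Theta=\{\pi/2,\ \pi,\ \pi-2\arctan\delta,\ \pi+\arctan\delta\},
\]
corresponding respectively to corners of $P_1$, straight subdivision vertices (from iterations at which the relevant triangle was not added), apexes of previously added triangles, and feet of previously added triangles. Second, each $T_{n,i}$ has base the middle half of $S_{n,i}$ (leaving a buffer of length $r_{n,i}=l(S_{n,i})/4$ at each endpoint) and height $\delta r_{n,i}$.

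To verify $T_{n,i}\cap P_n\subset S_{n,i}$, I will place an endpoint of $S_{n,i}$ at the origin with $S_{n,i}$ along the positive $x$-axis, so that $T_{n,i}\subset\{r_{n,i}\le x\le 3r_{n,i},\ 0\le y\le\delta r_{n,i}\}$. Each adjacent side then emanates from the origin at interior angle in $\Theta$ with $S_{n,i}$; a short trigonometric check places the adjacent side entirely in $\{x\le 0\}$ in all four cases, the reflex case $\pi+\arctan\delta$ being the tightest (the adjacent direction normalizes to $(-1,-\delta)/\sqrt{1+\delta^2}$, whose $x$-component is strictly negative). Hence adjacent sides miss $T_{n,i}$. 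Non-adjacent sides of $P_n$ are separated from the outward slab over $S_{n,i}$ by a positive distance far exceeding $\delta r_{n,i}$, because the iterative construction keeps $P_n$ within an $O(\delta N)$-neighborhood of $P_1$ and admits no thin necks.

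For $T_{n,i}\cap T_{n,i'}=\emptyset$ with $i\ne i'$: if $S_{n,i}$ and $S_{n,i'}$ share a vertex $V$, placing $V$ at the origin and repeating the coordinate setup shows $T_{n,i}\subset\{x\ge r_{n,i}\}$ while $T_{n,i'}\subset\{x\le 0\}$, since in the reflex sub-case the apex of $T_{n,i'}$ has $x$-coordinate $-(2+\delta^2)r_{n,i'}/\sqrt{1+\delta^2}<0$ by direct computation; if the sides share no vertex, the non-adjacent separation argument from the previous paragraph applies. The main obstacle is the reflex-vertex case since the exterior wedge there has angle only $\pi-\arctan\delta$ (the narrowest among the four), but the buffers of length $r_{n,i}$ at each endpoint of $S_{n,i}$ together with the smallness $\delta=10^{-2}(\epsilon\mathtt m)^{2/3}\ll 1$ suffice to carry the explicit computation through.
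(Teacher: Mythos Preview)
Your local analysis for adjacent sides is reasonable and the classification of interior angles into the set $\Theta$ is correct. However, there is a genuine gap in the treatment of non-adjacent sides. The sentence ``Non-adjacent sides of $P_n$ are separated from the outward slab over $S_{n,i}$ by a positive distance far exceeding $\delta r_{n,i}$, because the iterative construction keeps $P_n$ within an $O(\delta N)$-neighborhood of $P_1$ and admits no thin necks'' is not a proof --- it is a restatement of exactly what must be shown. The $O(\delta N)$-neighborhood claim, even if granted, says nothing about how close two \emph{different} pieces of $\partial P_n$ can come to one another; and ``no thin necks'' is precisely the conclusion of the lemma, not an available hypothesis. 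Concretely, after several rounds of adding triangles, sides that are far apart along $\partial P_n$ (e.g.\ descendants of the two legs of some earlier $T_{k,j}$) can be geometrically close near the old apex, and nothing in your argument rules out their associated triangles overlapping.

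The paper handles this global issue with a single structural device you are missing: it introduces for each side an \emph{enlarged} isosceles triangle $\mathcal T_{n,i}$ (base $S_{n,i}$, height $2\delta r_{n,i}$) and first proves the containment lemma that every descendant's $\mathcal T$ lies inside its ancestor's $\mathcal T$ (Lemma~\ref{lm :: Bound on children}). One then shows inductively the stronger statement $\mathcal T_{n,i}\cap\mathcal T_{n,i'}=S_{n,i}\cap S_{n,i'}$ for all $i,i'$: non-siblings inherit disjointness from their parents via the containment lemma, and siblings are checked by the angle inequality $3\arctan\delta<\pi$. This hierarchical containment is what replaces your ``no thin necks'' assertion with an actual argument; your coordinate computations for adjacent sides are essentially the sibling step, but you have no substitute for the non-sibling step.
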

\begin{lemma}\label{lm :: polygons are in box}
	$T_{n,i} \subset [-2N,2N]^2$ for all $n \geq 1$ and $1 \leq i \leq 4^n$.
\end{lemma}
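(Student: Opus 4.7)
The plan is to show that every point of $T_{n,i}$ lies within Euclidean distance $\delta N$ of $\partial P_1 = \partial[-N,N]^2$. Since $\delta = 10^{-2}(\epsilon\mathtt m)^{2/3} \leq 10^{-2}$ (as $\epsilon,\mathtt m\in(0,1)$), this is strictly less than $N$, which together with $T_{n,i}$ lying outside $P_1$ immediately yields $T_{n,i}\subset [-N(1+\delta),N(1+\delta)]^2 \subset [-2N,2N]^2$.

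The first step is a recursive bound on side lengths. A side of $P_{n+1}$ obtained from $S_{n,i}$ is either one of four equal-length pieces of length $r_{n,i}$ (when $T_{n,i}$ is rejected, or as one of the two outer pieces when it is accepted) or a slanted side of $T_{n,i}$ of length $r_{n,i}\sqrt{1+\delta^2}$ (when accepted). In either case $l(S_{n+1,j}) \leq r_{n,i}\sqrt{1+\delta^2}$, so $r_{n+1,j} \leq \rho\, r_{n,i}$ with $\rho := \sqrt{1+\delta^2}/4 < 1/2$. Iterating along any ancestry chain yields $r_{k,i_k}\leq (N/2)\rho^{k-1}$.

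Next I would prove by induction on $n$ that every point of $T_{n,i}$ has Euclidean distance at most $\sum_{k=1}^n \delta r_{k,i_k}$ from $\partial P_1$, where $(i_1,\ldots,i_n)$ is the ancestry chain connecting $S_{1,i_1}$ to $T_{n,i}$. For the inductive step, I would decompose $x\in T_{n,i}$ as $x = y+\vec v$ for some $y\in S_{n,i}$ with $|\vec v|\leq \delta r_{n,i}$ (the height of $T_{n,i}$), and observe that $y$ either lies on a straight sub-segment of some earlier side (same distance to $\partial P_1$ as that earlier side) or on a slanted side of an ancestor triangle (whose points are within the claimed distance of $\partial P_1$ by the induction hypothesis). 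Summing the geometric series then gives the total bound $\delta(N/2)\sum_{k\geq 0}\rho^k = \delta N/(2(1-\rho)) \leq \delta N$.

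The main (mild) obstacle is bookkeeping: both kinds of child sides (the straight pieces and the slanted triangle edges) must be treated together in the inductive step, and one must correctly account for how the displacement at level $k$ is at most $\delta r_{k,i_k}$ in the outward normal direction to the level-$k$ base. Because both side lengths are controlled by the same factor $\sqrt{1+\delta^2}$ and $\rho<1/2$ makes the geometric series trivially summable, no analytic work beyond this geometric accounting is needed; this is why I would expect the entire argument to fit in roughly a page.
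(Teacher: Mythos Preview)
Your argument is correct, but it takes a different route from the paper. The paper introduces auxiliary triangles $\mathcal T_{n,i}$ (isosceles with base $S_{n,i}$ and height $2\delta r_{n,i}$, i.e.\ double the height of $T_{n,i}$) and proves the containment lemma $\mathcal T_{k,j}\subset\mathcal T_{n,i}$ for every descendant $(k,j)$ of $(n,i)$ (Lemma~\ref{lm :: Bound on children}). Once that is in hand, the present lemma is immediate: $\mathcal T_{1,j}\subset[-2N,2N]^2$ because $\delta<1/2$, and every $T_{n,i}\subset\mathcal T_{n,i}\subset\mathcal T_{1,j}$ for the appropriate ancestor $j$.

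Your approach instead tracks the cumulative outward displacement directly via a geometric series in the side lengths, avoiding the auxiliary triangles altogether. This is perfectly valid and arguably more elementary for this lemma in isolation. The trade-off is that the paper's containment lemma $\mathcal T_{k,j}\subset\mathcal T_{n,i}$ is reused several times (for disjointness of the $T_{n,i}$'s in Lemma~\ref{lm :: No self-crossing}, and for disjointness of the $T^*_{n,i}$'s in Lemma~\ref{lm :: Independence}), so in context it is the more economical investment; your displacement bound would not by itself yield those other facts.
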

We first state and prove a lemma which easily implies Lemmas~\ref{lm :: No self-crossing} and~\ref{lm :: polygons are in box}.
We begin with some notation.
Let $\mathcal I = \{(n,i) \,:\, n \geq 1,\, 1 \leq i \leq 4^n\}$ and $\mathcal{G}$ be the directed forest with vertex set $\mathcal I$ and edge set
\[
\{((n,i), (n+1,j)) \,:\, 4(i-1)+1 \leq j \leq 4i\}\,.
\]
That is, there is an edge from $(n,i)$ to $(n+1,j)$ if $S_{n+1,j} \subset S_{n,i} \cup T_{n,i}$.
In this case we say $S_{n+1,j}$ is a child of $S_{n,i}$ (or $S_{n,i}$ is the parent of $S_{n+1,j}$).
We let $\mathcal G_{n,i}$ be the subtree of $\mathcal G$ rooted at $(n,i)$.
That is, the subgraph of $\mathcal G$ on the vertices $(k,j) \in \mathcal I$ for which there exists a directed path from $(n,i)$ to $(k,j)$.
If $(k,j)\in \mathcal G_{n,i}$ we call $(k,j)$ a descendant of $(n,i)$.
\begin{lemma}\label{lm :: Bound on children}
	Let $(n,i)\in \mathcal I$ and $\mathcal{T}_{n,i}$ be the isosceles triangle
	with base $S_{n,i}$ and height $2\delta r_{n,i}$ that contains $T_{n,i}$. Then
	for every $(k,j) \in \mathcal G_{n,i}$ we have
	$\mathcal T_{k,j} \subset \mathcal T_{n,i}$.
\end{lemma}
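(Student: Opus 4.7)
The plan is to induct on the depth of $(k,j)$ in the subtree $\mathcal{G}_{n,i}$. The base case $(k,j) = (n,i)$ is immediate. For the inductive step it suffices to show that whenever $(k+1, j')$ is a child of $(k,j)$ in $\mathcal{G}$, we have $\mathcal{T}_{k+1, j'} \subset \mathcal{T}_{k,j}$; chaining these containments along any directed path from $(n,i)$ down to $(k,j)$ then yields the conclusion. So the whole lemma reduces to a purely local, deterministic geometric claim about the parent--child relation.

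To prove the parent--child containment I split into two cases depending on whether $T_{k,j}$ was included in $P_{k+1}$. When $T_{k,j}$ is rejected, $S_{k,j}$ is divided into four equal sub-segments of length $r_{k,j}$, so $r_{k+1,j'} = r_{k,j}/4$ and each $S_{k+1,j'}$ sits on $S_{k,j}$ with the same outward direction as $S_{k,j}$. Then $\mathcal{T}_{k+1,j'}$ has base contained in $S_{k,j}$ and height $2\delta r_{k+1,j'} = \delta r_{k,j}/2$, well under the height $2\delta r_{k,j}$ of $\mathcal{T}_{k,j}$, and the containment follows by checking the apex and the two base endpoints. When $T_{k,j}$ is accepted, two of the children (the outer segments of $S_{k,j}$) are handled exactly as in the rejected case; the remaining two children are the equal-length slanted sides of $T_{k,j}$. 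For these I use explicit coordinates with $S_{k,j}$ on the $x$-axis from $(-2r_{k,j},0)$ to $(2r_{k,j},0)$ and outward direction $+y$, so that the left slanted side runs from $(-r_{k,j},0)$ to $(0,\delta r_{k,j})$, the outward unit normal is $(-\delta,1)/\sqrt{1+\delta^2}$, and a direct calculation places the apex of the corresponding $\mathcal{T}_{k+1,j'}$ at $\bigl(-r_{k,j}(1+\delta^2)/2,\; \delta r_{k,j}\bigr)$.

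The main obstacle, and really the only point where anything nontrivial happens, is verifying that this apex sits inside $\mathcal{T}_{k,j}$, whose relevant bounding edge lies on the line $y = \delta(x+2r_{k,j})$. Substituting the apex shows this amounts to $\delta r_{k,j} \leq \delta r_{k,j}(3-\delta^2)/2$, i.e.\ $\delta^2 \leq 1$, which holds comfortably since $\delta = 10^{-2}(\epsilon \mathtt{m})^{2/3}$ with $\epsilon,\mathtt{m} \in (0,1)$. Since the base of $\mathcal{T}_{k+1,j'}$ lies inside $T_{k,j}\subset \mathcal{T}_{k,j}$, convexity of $\mathcal{T}_{k,j}$ together with the apex check gives $\mathcal{T}_{k+1,j'}\subset \mathcal{T}_{k,j}$, and the symmetric argument handles the right slanted child. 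This completes the inductive step and hence the lemma; note as a byproduct that the disjointness assumption feeding Lemma~\ref{lm :: No self-crossing} and the containment in $[-2N,2N]^2$ of Lemma~\ref{lm :: polygons are in box} are immediate consequences of this uniform ``funnel'' structure.
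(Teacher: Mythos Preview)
Your proof is correct and follows essentially the same approach as the paper: both reduce to the parent--child containment $\mathcal T_{k+1,j'}\subset\mathcal T_{k,j}$, handle the rejected case and the outer children trivially, and for the slanted children verify containment using the condition $\delta<1$. The only cosmetic difference is that the paper argues via the perpendicular distance between the parallel slanted sides of $T_{k,j}$ and $\mathcal T_{k,j}$ (namely $\delta r_{k,j}/\sqrt{1+\delta^2}$) compared with the child height $\delta r_{k,j}\sqrt{1+\delta^2}/2$, whereas you compute the apex explicitly and check it against the bounding line; both computations reduce to $\delta^2\le 1$.
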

See Figure~\ref{fig :: P, T, and fancy T} for an illustration of $(\mathcal T_{1,i})_{i = 1}^4$.
\begin{figure}
	\centering
	\begin{subfigure}{.5\textwidth}
		\centering
		\includegraphics[width=.6\linewidth]{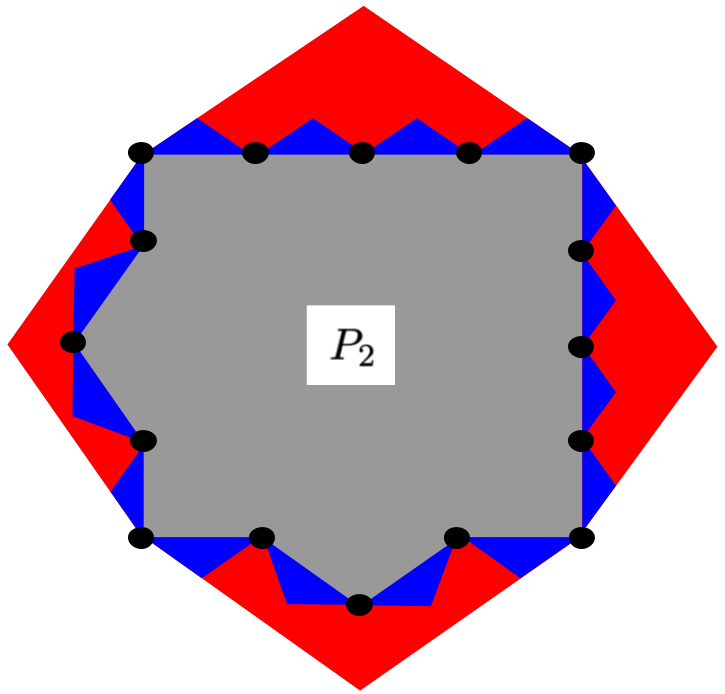}
	\end{subfigure}%
	\begin{subfigure}{.5\textwidth}
		\centering
		\includegraphics[width=.6\linewidth]{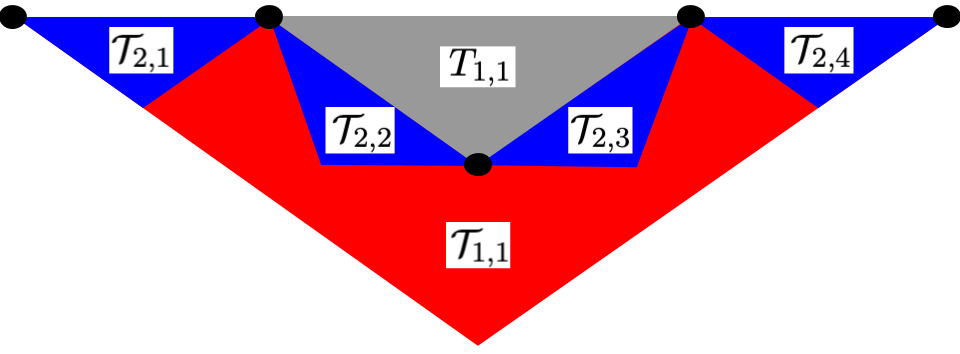}
	\end{subfigure}
	\caption{$P_2$ with $\cup_{i=1}^{16}\mathcal T_{2,i}$ in blue and $\cup_{i=1}^4\mathcal T_{1,i}\setminus \cup_{i=1}^{16}\mathcal T_{2,i}$ in red.}
	\label{fig :: P, T, and fancy T}
\end{figure}
\begin{proof}
	It suffices to show that if $(n+1,j)$ is a child of $(n,i)$ then $\mathcal{T}_{n,i}$ contains $\mathcal T_{n+1,j}$.
	For concreteness, we take $i = 1$ and therefore $1 \leq j \leq 4$.
	It is immediate that $\mathcal{T}_{n+1,j} \subset \mathcal{T}_{n,1}$ for $ j\in \{1,4\}$ and that $\mathcal{T}_{n+1,j} \subset \mathcal{T}_{n,1}$ for $j \in \{2,3\}$ if $T_{n,1}$ is not contained in $P_{n+1}$.
	Assuming $T_{n,1} \subset P_{n+1}$, we can use the fact that $T_{n,1}$ is similar to $\mathcal{T}_{n,1}$ (and in
	fact their sides are parallel) to show that the distance between $\partial T_{n,1} \setminus S_{n,1}$ and $\partial \mathcal{T}_{n,1} \setminus S_{n,1}$ is given by
	$
	d_{n,1} = \frac{\delta}{\sqrt{1 + \delta^2}}r_{n,1}.
	$
	See Figure~\ref{fig :: d_n} for an illustration.
	Further, the height of $\mathcal{T}_{n+1,2}$ and $\mathcal{T}_{n+1,3}$ is given by $\frac{\delta\sqrt{1+\delta^2}}{2}  r_{n,1}$.
	Since $\delta < 1$, this height is strictly smaller than $d_{n,1}$ and therefore $\mathcal{T}_{n+1,2}$ and $\mathcal{T}_{n+1,3}$ are contained in $\mathcal T_{n,1}$ as claimed.
\end{proof}
\begin{figure}
	\centering
	\includegraphics[width=.6\linewidth]{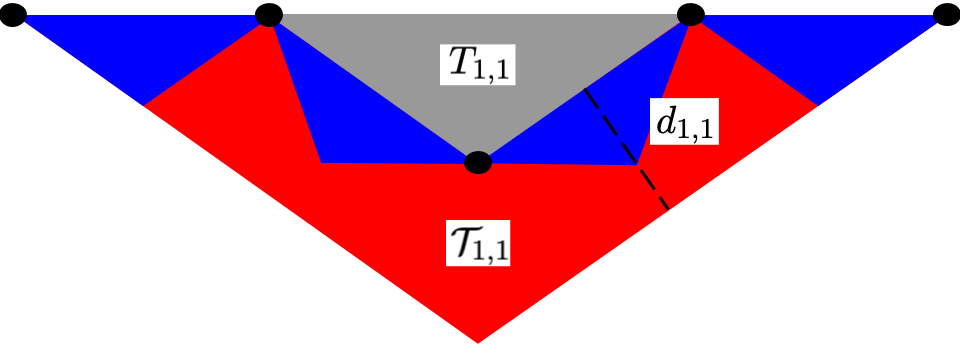}
	\caption{$d_{1,1}$ is the distance between $\partial T_{1,1} \setminus S_{1,1}$ and $\partial \mathcal T_{1,1}\setminus S_{1,1}$.}
	\label{fig :: d_n}
\end{figure}
\begin{proof}[Proof of Lemma~\ref{lm :: No self-crossing}]
	Let $\theta = \arctan(\delta)$ and note that $\theta$ is the internal angle (with respect to $\mathcal{T}_{n,i}$) between $S_{n,i}$ and the other sides of $\mathcal{T}_{n,i}$.
	The same holds for $T_{n,i}$.
	Since $\delta < 1$, we have $\theta < \pi/4$.
	
	It suffices to show that for every $(n,i), (n,j) \in \mathcal I$ we have $\mathcal T_{n,i} \cap \mathcal T_{n,j} = S_{n,i} \cap S_{n,j}$.
	We prove this by induction.
	It clearly holds for $P_1 = [-N,N]^2$.
	By Lemma~\ref{lm :: Bound on children}, if it holds for $P_n$ then
	$\mathcal{T}_{n+1,i} \cap \mathcal{T}_{n+1,j} = S_{n+1,i} \cap S_{n+1,j}$ when $(n+1,i)$ and $(n+1,j)$ are not siblings (i.e. they do not have the same parent).
	When $(n+1,i)$ and $(n+1,j)$ are siblings, it is immediate that $\mathcal{T}_{n+1,i} \cap \mathcal{T}_{n+1,j} = \emptyset$ unless $S_{n+1,i}$ and $S_{n+1,j}$ are adjacent (i.e. $|i - j|$ = 1).
	Assuming $S_{n+1,i}$ and $S_{n+1,j}$ are adjacent we note that the external (with respect to $P_{n+1}$) angle between them is at least $\pi - \theta$.
	Recall that the internal (with respect to $\mathcal{T}_{n+1,i}$) angle between $S_{n+1,i}$ and the other sides of $\mathcal{T}_{n+1,i}$ is $\theta$ and the same holds for $j$.
	Since $3 \theta < \frac{3\pi}{4} < \pi$, we see that $\mathcal{T}_{n+1,i} \cap \mathcal{T}_{n+1,j} = S_{n+1,i} \cap S_{n+1,j}$ (see Figure~\ref{fig :: 3 theta} for an illustration of this argument).
\end{proof}
\begin{figure}
	\centering
	\includegraphics[width=.6\linewidth]{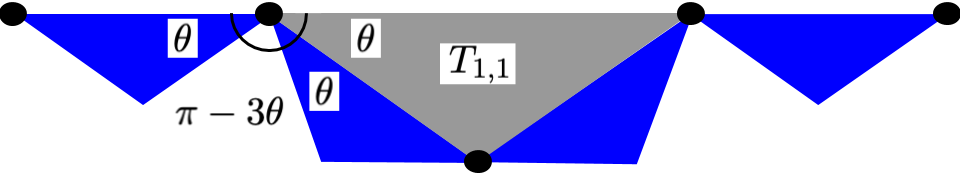}
	\caption{The fact $\theta < \pi/3$ ensures that $\mathcal T_{2,1}$ and $\mathcal T_{2,2}$ intersect only at their common vertex.}
	\label{fig :: 3 theta}
\end{figure}

\begin{proof}[Proof of Lemma~\ref{lm :: polygons are in box}]
	Since $\delta < 1/2$ we have $\mathcal T_{1,j} \subset [-2N,2N]^2$ for $j = 1,2,3,4$ so the conclusion follows from Lemma~\ref{lm :: Bound on children}.
\end{proof}

We prove a few more lemmas that will be useful for probabilistic analysis in Subsection~\ref{sec :: probability analysis}.
\begin{lemma}\label{lm :: Perimeter lemma}
	Let $P$ be a polygon with $q$ sides and $\mathsf P = P \cap \mathbb{Z}^2$. Then
	$
	|\partial \mathsf P| \leq \sqrt{2} l(\partial P) + 2q
	$.
\end{lemma}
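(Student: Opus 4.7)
\textbf{Proof plan for Lemma \ref{lm :: Perimeter lemma}.} The key observation is that every lattice edge $(u,v) \in \partial \mathsf P$ has one endpoint in $P$ and one outside $P$, so the open segment joining $u$ and $v$ must intersect $\partial P$ (in fact cross it an odd number of times). Consequently I would bound $|\partial \mathsf P|$ above by the total number of (edge, side) pairs where the lattice edge meets the side, which overcounts but gives a clean per-side bound: letting $s_1, \ldots, s_q$ denote the sides of $P$,
\[
|\partial \mathsf P| \leq \sum_{i=1}^{q} \#\{e \in E(\mathbb Z^2) : e \cap s_i \neq \emptyset\}.
\]

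For a single side $s$ of length $L$ from $(x_1,y_1)$ to $(x_2,y_2)$, I would count horizontal and vertical lattice edges separately. A horizontal lattice edge meeting $s$ corresponds to $s$ crossing a horizontal integer line $y = m$ at some point whose $x$-coordinate lies strictly between two consecutive integers; the number of integer values of $m$ between $\min(y_1,y_2)$ and $\max(y_1,y_2)$ is at most $|y_2 - y_1| + 1$, and each such line contributes at most one crossed horizontal edge. Symmetrically, the number of vertical edges $s$ meets is at most $|x_2 - x_1| + 1$. Combining and applying Cauchy--Schwarz,
\[
\#\{e : e \cap s \neq \emptyset\} \leq |x_2 - x_1| + |y_2 - y_1| + 2 \leq \sqrt{2}\, L + 2.
\]

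Summing over the $q$ sides gives $|\partial \mathsf P| \leq \sqrt{2}\, l(\partial P) + 2q$ as claimed. The only mild subtlety is the degenerate case where $s$ passes through a lattice point or overlaps part of a lattice edge; this can be handled by an arbitrarily small perturbation of $P$ (which only decreases $|\partial \mathsf P|$ in the limit), or by observing directly that in such degenerate configurations the bounds above remain valid with the same constants. There is no real obstacle here: the whole lemma is a direct geometric counting argument, and the appearance of $\sqrt{2}$ is forced by the Cauchy--Schwarz step, achieved when $s$ has slope $\pm 1$.
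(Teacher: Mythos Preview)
Your proposal is correct and follows essentially the same approach as the paper: bound $|\partial \mathsf P|$ by the number of lattice edges meeting $\partial P$, then for each side bound this count by the $\ell_1$ distance between its endpoints plus $2$, and finally use $\ell_1 \le \sqrt{2}\,\ell_2$ and sum over the $q$ sides. The paper's proof is just a terser version of exactly this argument.
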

\begin{proof}
	Note that $|\partial \mathsf P|$ is bounded above by the number of edges that intersect $\partial P$ (if $\partial P$ contains a vertex in $\mathbb{Z}^2$ we count this as two intersections).
	In addition, the number of edges intersecting any line segment is upper bounded by 2 plus the $\ell_1$ distance between its endpoints which is in turn bounded by 2 plus $\sqrt{2}$ times the Euclidean length of the segment.
	This yields the desired bound.
\end{proof}

\begin{lemma}\label{lm :: Independence}
	Let $(n,i)$, $(k,j) \in \mathcal I$ be such that $(n,i) \neq (k,j)$.
	Then $T^*_{n,i} \cap T^*_{k,j} = \emptyset$.
\end{lemma}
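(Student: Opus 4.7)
The plan is to distinguish cases based on the tree structure of $\mathcal G$ from the proof of Lemma~\ref{lm :: No self-crossing}.

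\textbf{Case 1 (non-ancestral).} If neither of $(n,i), (k,j)$ is an ancestor of the other, let $(m,p)$ and $(m,q)$ with $p\neq q$ be their lowest-level distinct ancestors (taking $(m,p) = (n,i), (m,q) = (n,j)$ when $n = k$). Iterating Lemma~\ref{lm :: Bound on children} yields $\mathcal T_{n,i} \subset \mathcal T_{m,p}$ and $\mathcal T_{k,j} \subset \mathcal T_{m,q}$. A direct computation using that the non-base sides of $T_{n,i}$ and $\mathcal T_{n,i}$ are parallel lines of slope $\pm\delta$ separated by a distance of order $\delta r_{n,i}$ shows that $T^*_{n,i}$ lies at positive distance from $\partial \mathcal T_{n,i}$, hence $T^*_{n,i} \subset \mathrm{int}(\mathcal T_{n,i}) \subset \mathrm{int}(\mathcal T_{m,p})$, and analogously for $T^*_{k,j}$. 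The proof of Lemma~\ref{lm :: No self-crossing} gives that $\mathcal T_{m,p} \cap \mathcal T_{m,q}$ consists of at most one vertex lying on the common boundary of both, so the interiors are disjoint, yielding $T^*_{n,i} \cap T^*_{k,j} = \emptyset$.

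\textbf{Case 2 (descendant).} WLOG $(k,j)$ is a proper descendant of $(n,i)$, so $k > n$; let $(n+1,j')$ be the unique child of $(n,i)$ whose subtree contains $(k,j)$. I would further split based on whether $T_{n,i}$ was added to the polygon.

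If $T_{n,i}$ was \emph{not} added, then $S_{n+1,j'} \subset S_{n,i}$ and $r_{n+1,j'} = r_{n,i}/4$, so $\mathcal T_{n+1,j'}$ has base on $S_{n,i}$ and height $\delta r_{n,i}/2$ on the same side as $T_{n,i}$. Lemma~\ref{lm :: Bound on children} then gives $T^*_{k,j} \subset \mathcal T_{k,j} \subset \mathcal T_{n+1,j'}$, so every point of $T^*_{k,j}$ has perpendicular distance at most $\delta r_{n,i}/2$ from $S_{n,i}$; since $T^*_{n,i}$ consists only of points at perpendicular distance at least $2\delta r_{n,i}/3 > \delta r_{n,i}/2$ from $S_{n,i}$, the two sets are disjoint.

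If $T_{n,i}$ \emph{was} added, the construction only grows the polygon, so every triangle $T_{m,l}$ built after $T_{n,i}$ has interior disjoint from $T_{n,i}$; in particular this applies to $T_{k,j}$. The only portion of $\partial T_{k,j}$ that can lie along $\partial T_{n,i}$ is its base $S_{k,j}$ (since $T_{k,j}$ is erected outward on a boundary side of the then-current polygon), and $T^*_{k,j}$ is at perpendicular distance at least $2\delta r_{k,j}/3$ from the line through $S_{k,j}$. Combining these yields $T^*_{k,j} \cap T_{n,i} = \emptyset$, and therefore $T^*_{k,j} \cap T^*_{n,i} = \emptyset$. The main subtlety here is verifying that $\partial T_{k,j} \cap \partial T_{n,i} \subset S_{k,j}$, which requires an inductive bookkeeping of the construction but is essentially a consequence of the fact that $T_{k,j}$ is always built as an outward-pointing triangle on a free boundary side of the current polygon, combined with the strict separation built into the definition of $T^*_{k,j}$.
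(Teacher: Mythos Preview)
Your proof is correct and follows the same case structure as the paper: reduce to a common level via Lemma~\ref{lm :: Bound on children} and the disjointness from the proof of Lemma~\ref{lm :: No self-crossing}, then in the ancestral case split on whether $T_{n,i}$ was added. The only notable difference is in the ``added'' sub-case: the paper observes directly that $T^*_{n,i}\subset T_{n,i}\subset P_{n+1}\subset P_k$, while Lemma~\ref{lm :: No self-crossing} gives $T_{k,j}\cap P_k\subset S_{k,j}$ and hence $T^*_{k,j}\cap P_k=\emptyset$, finishing in one line; your route through $T_{k,j}\cap T_{n,i}\subset S_{k,j}$ reaches the same conclusion but the detour about which parts of $\partial T_{k,j}$ meet $\partial T_{n,i}$ (and the ``inductive bookkeeping'' you flag) is unnecessary once you use $P_k$ as the separating set.
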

\begin{proof}
	We assume without loss of generality that $n \leq k$.
	Let $j'$ be the unique integer such that $(k,j)$ is a descendant of $(n,j')$ (if $k = n$ then $j = j'$).
	By Lemma~\ref{lm :: Bound on children} we have $T^*_{n,i} \subset \mathcal T_{n,i}$ and $T^*_{k,j} \subset \mathcal T_{n,j'}$.
	We showed in the proof of Lemma~\ref{lm :: No self-crossing} that if $j' \neq i$ then $\mathcal T_{n,i} \cap \mathcal T_{n,j'} = S_{n,i} \cap S_{n,j'}$, which implies $T^*_{n,i} \cap T^*_{k,j} = \emptyset$ since $T^*_{n,i} \cap S_{n,i} = \emptyset$.
	Therefore, we assume $j' = i$ (that is, $S_{k,j}$ is a descendant of $S_{n,i}$).
	Note that this implies that $k > n$.
	To conclude the proof, we consider separately the case that $T_{n,i}$ is contained in $P_{n+1}$ and the case that it is not.
	For concreteness, we let $i = 1$.
	If $T_{n,1}$ is not contained in $P_{n+1}$, then $T^*_{n,i}$ is disjoint from $\mathcal T_{n+1,a}$ for $a \in \{1,2,3,4\}$ since the base of $\mathcal T_{n+1,a}$ is a subset of $S_{n,1}$, the height of $\mathcal T_{n+1,a}$ is $\delta r_{n,i}/2$, and $T^*_{n,i}$ consists of points with distance at least $2 \delta r_{n,1}/3$ from $S_{n,1}$ (see Figure~\ref{fig :: Tstar_out} for an illustration).
	By Lemma~\ref{lm :: Bound on children}, $T^*_{k,j} \subset \mathcal T_{n+1,a}$ for some $a \in \{1,2,3,4\}$ so it follows $T^*_{n,i}$ is disjoint from $T^*_{k,j}$.
	If $T_{n,1}$ is a subset of $P_{n+1}$, then so is $T^*_{n,1}$. By Lemma~\ref{lm :: No self-crossing}, $T^*_{k,j}$ is disjoint from $P_{k}$ which contains $P_{n+1}$ (because $k \geq n+1$), so $T^*_{n,i}$ and $T^*_{k,j}$ are disjoint.
\end{proof}
For the next lemmas, we consider $\mathcal I$ to be ordered by lexicographical ordering (i.e. $(n', i') < (n, i)$ if $n'<n$, or $n'=n$ and $i'<i$).
For $(n,i) \in \mathcal I$, let
\begin{equation}\label{eq-def-Z-n-i}
Z_{n,i} = \one_{\gamma_{n,i} > 10 \delta^2 r_{n,i}}\,.
\end{equation}
\begin{lemma}\label{lm :: T star is in or out}
	Let $(k,j),(n,i) \in \mathcal I$.
	If $(k,j) \leq (n,i)$ and $Z_{k,j} = 0$ then $T^*_{k,j} \cap P_{n,i} = \emptyset$.
\end{lemma}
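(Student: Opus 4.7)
I would argue by induction on $(n,i)$ in lexicographic order, with $(k,j)$ fixed so that $Z_{k,j}=0$. For the base case $(n,i)=(k,j)$, the hypothesis $Z_{k,j}=0$ gives $P_{k,j}=P_{k,j-1}$, which is $P_k$ possibly enlarged by some of the triangles $T_{k,j'}$ with $j'<j$. By Lemma~\ref{lm :: No self-crossing}, each such $T_{k,j'}$ is disjoint from $T_{k,j}$, and $T_{k,j}\cap P_k\subset S_{k,j}$, so $T_{k,j}\cap P_{k,j-1}\subset S_{k,j}$. Since $T^*_{k,j}$ lies at distance at least $2\delta r_{k,j}/3$ from its base $S_{k,j}$, this intersection is empty and the base case is done.

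For the inductive step, suppose the conclusion holds for every $(n',i')$ with $(k,j)\le(n',i')<(n,i)$. If $Z_{n,i}=0$ then $P_{n,i}=P_{n,i-1}$ (with the convention $P_{n,0}=P_{n-1,4^{n-1}}=P_n$), and the claim follows directly from the inductive hypothesis. Otherwise $P_{n,i}=P_{n,i-1}\cup T_{n,i}$, and in view of the inductive hypothesis it suffices to prove $T^*_{k,j}\cap T_{n,i}=\emptyset$. Let $(k,a)$ denote the unique level-$k$ ancestor of $(n,i)$; by Lemma~\ref{lm :: Bound on children}, $T_{n,i}\subset\mathcal T_{k,a}$, so it is enough to show $T^*_{k,j}\cap\mathcal T_{k,a}=\emptyset$. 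When $a\neq j$ (which in particular covers $n=k$, since then $a=i\neq j$), the proof of Lemma~\ref{lm :: No self-crossing} gives $\mathcal T_{k,j}\cap\mathcal T_{k,a}=S_{k,j}\cap S_{k,a}$, at most a single point on $S_{k,j}$; since $T^*_{k,j}\subset\mathcal T_{k,j}$ is strictly separated from $S_{k,j}$, disjointness follows.

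The main obstacle, and the only place where $Z_{k,j}=0$ is really used in the inductive step, is the subcase $a=j$ with $n>k$, when $(n,i)$ is a strict descendant of $(k,j)$ and both $T^*_{k,j}$ and $T_{n,i}$ sit on the same outward side of $S_{k,j}$. The hypothesis $Z_{k,j}=0$ guarantees that each child side $S_{k+1,4(j-1)+b}$ ($b=1,\ldots,4$) is a subsegment of $S_{k,j}$, with outward normal pointing in the same direction as the original $T_{k,j}$; consequently $\mathcal T_{k+1,4(j-1)+b}$ lies within distance $2\delta r_{k+1,4(j-1)+b}=\delta r_{k,j}/2$ of $S_{k,j}$ on that same side. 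By Lemma~\ref{lm :: Bound on children}, $T_{n,i}\subset\mathcal T_{k+1,4(j-1)+b}$ for some $b$, and since $T^*_{k,j}$ is at distance at least $2\delta r_{k,j}/3>\delta r_{k,j}/2$ from $S_{k,j}$, the quantitative separation forces $T^*_{k,j}\cap T_{n,i}=\emptyset$. The delicate geometric point I would verify carefully is precisely that, under $Z_{k,j}=0$, the outward normals to the four child sides $S_{k+1,4(j-1)+b}$ point to the same half-plane as the outward normal to $S_{k,j}$ did in $P_k$, so that $\mathcal T_{k+1,4(j-1)+b}$ and $T^*_{k,j}$ genuinely lie on the same side of $S_{k,j}$ and the $2\delta/3$ versus $\delta/2$ inequality carries real content.
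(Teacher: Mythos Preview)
Your proof is correct and uses the same geometric core as the paper's: the key inequality $2\delta r_{k,j}/3 > \delta r_{k,j}/2$ separating $T^*_{k,j}$ from each child's bounding triangle $\mathcal T_{k+1,b}$ when $Z_{k,j}=0$. The paper packages this more tersely, asserting directly that $P_{n,i}\cap\mathcal T_{k,j}\subset\bigcup_{a=4(j-1)+1}^{4j}\mathcal T_{k+1,a}$ via Lemma~\ref{lm :: Bound on children} and then invoking the same height comparison (already done in the proof of Lemma~\ref{lm :: Independence}); your induction on $(n,i)$ and case split on the level-$k$ ancestor of $(n,i)$ is a careful unpacking of exactly that containment, and your attention to the outward-normal direction under $Z_{k,j}=0$ makes explicit a point the paper leaves to the reader.
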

\begin{proof}
	If $Z_{k,j} = 0$, then by Lemma~\ref{lm :: Bound on children} we have $P_{n,i} \cap \mathcal T_{k,j} \subset \bigcup_{a = 4(j-1) + 1}^{4j} \mathcal T_{k+1,a}$.
	Since $T^*_{k,j}$ is contained in $\mathcal T_{k,j}$, it suffices to show that if $(k+1,a)$ is a child of $(k,j)$ and $Z_{k,j} = 0$ then $T^*_{k,j} \cap \mathcal T_{k+1,a} = \emptyset$, which was shown in the proof of Lemma~\ref{lm :: Independence} (see Figure~\ref{fig :: Tstar_out} for an illustration).
\end{proof}

\begin{figure}
	\centering
	\includegraphics[width=.6\linewidth]{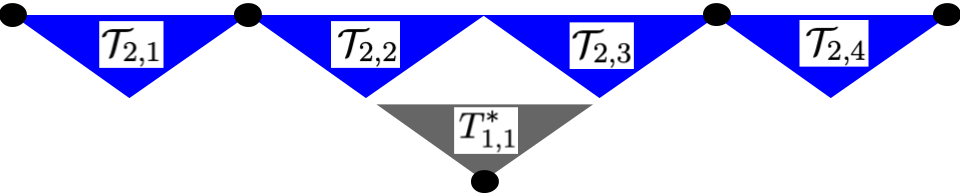}
	\caption{If $Z_{1,1} = 0$, then $T^*_{1,1}$ is disjoint from $(\mathcal T_{2,i})_{i=1}^4$.}
	\label{fig :: Tstar_out}
\end{figure}

\begin{lemma}\label{lm :: encoding}
	For $(n,i) \in \mathcal I$ the collection $\{Z_{k,j} \,:\, (k,j) \leq (n,i)\}$ is measurable with respect to $P_{n,i}$.
\end{lemma}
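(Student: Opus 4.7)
The plan is to induct on $(k,j) \leq (n,i)$ in lexicographical order and show that each $Z_{k,j}$ is a measurable function of $P_{n,i}$. The workhorse will be Lemma~\ref{lm :: T star is in or out}. Combined with the fact that $P_{k,j} \subset P_{n,i}$ whenever $(k,j) \leq (n,i)$ (immediate from the construction, since the sequence of polygons is nondecreasing), the lemma yields the following clean dichotomy: if $Z_{k,j} = 0$ then $T^*_{k,j} \cap P_{n,i} = \emptyset$, whereas if $Z_{k,j} = 1$ then $T_{k,j} \subset P_{k,j} \subset P_{n,i}$ and hence $T^*_{k,j} \subset P_{n,i}$. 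Since $T^*_{k,j}$ is a non-empty subset of $\mathbb{R}^2$, these two alternatives are mutually exclusive, so once $T^*_{k,j}$ is known as a set, one can read off $Z_{k,j}$ from $P_{n,i}$ simply by inspecting $T^*_{k,j} \cap P_{n,i}$.

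For the base case $(k,j) = (1,1)$, the side $S_{1,1}$ and hence $T^*_{1,1}$ are deterministic since $P_1 = [-N,N]^2$ is deterministic, so the dichotomy immediately gives $Z_{1,1}$ as a function of $P_{n,i}$. For the inductive step I would assume $Z_{k',j'}$ is $P_{n,i}$-measurable for every $(k',j') < (k,j)$ and show the same for $Z_{k,j}$. The key observation is that the construction in Subsection~\ref{sec :: randomized geometric construction} is fully deterministic once one is given the $Z$-values and the initial square: starting from $P_1$, one iteratively sets $P_{k',j'} = P_{k',j'-1} \cup T_{k',j'}$ if $Z_{k',j'} = 1$ and $P_{k',j'} = P_{k',j'-1}$ otherwise, where $T_{k',j'}$ is determined by the side $S_{k',j'}$ of $P_{k'}$ under the prescribed counter-clockwise numbering. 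Running this forward with the already-determined $Z$-values recovers $P_{k,j-1}$, and in particular $P_k$, the side $S_{k,j}$, and the triangle $T^*_{k,j}$, as a measurable function of $P_{n,i}$; another application of the dichotomy then pins down $Z_{k,j}$.

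The only point that deserves care is verifying that this forward reconstruction produces exactly the same $T^*_{k,j}$ as the one appearing in the actual construction. This reduces to the fact, built into the construction, that the numbering of the sides of each $P_k$ is a deterministic function of $P_k$ itself, so the geometric data at step $(k,j)$ --- the side $S_{k,j}$, the outward isosceles triangle $T_{k,j}$ of height $\delta r_{k,j}$, and its truncation $T^*_{k,j}$ --- depend only on the $Z$-values strictly preceding $(k,j)$. Once this bookkeeping is in place the induction closes, and the desired measurability of $\{Z_{k,j} : (k,j) \leq (n,i)\}$ with respect to $P_{n,i}$ follows.
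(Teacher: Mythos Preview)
Your proposal is correct and follows essentially the same approach as the paper's proof: both argue by induction that the triangles $T_{k,j}$ (equivalently $T^*_{k,j}$) are $P_{n,i}$-measurable, and then use Lemma~\ref{lm :: T star is in or out} together with the monotonicity $P_{k,j}\subset P_{n,i}$ to read off $Z_{k,j}$ from whether the triangle lies in $P_{n,i}$. The only cosmetic difference is that the paper phrases the dichotomy as $Z_{k,j}=\one_{T_{k,j}\subset P_{n,i}}$ while you use $T^*_{k,j}$; both versions are immediate from Lemma~\ref{lm :: T star is in or out}.
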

\begin{remark}\label{remark-measurability}
Given $\{Z_{k,j} \,:\, (k,j) < (n,i)\}$, we can recover the construction up until the $(n,i)$-th step, so we can recover $\{P_{k,j} \,:\, (k,j) < (n,i)\}$ and in particular we can recover $\{P_1, \ldots, P_n\}$.
Since $T_{k, j}$ is measurable with respect to $P_{k}$, it follows from Lemma~\ref{lm :: encoding} that the collection $\{T_{k,j} \,:\, (k,j) \leq (n,i)\}$ is measurable with respect to $P_{n,i-1}$.
\end{remark}
\begin{proof}[Proof of Lemma~\ref{lm :: encoding}]
	First, we prove that $Z_{k,j} = \one_{T_{k,j} \subset P_{n,i}}$.
	By definition, if $Z_{k,j} = 1$ then $T_{k,j} \subset P_{k,j} \subset P_{n,i}$.
	By Lemma~\ref{lm :: T star is in or out}, if $Z_{k,j} = 0$ then $T_{k,j}$ is not contained in $P_{n,i}$.
		
	Therefore, it suffices to show that $T_{k,j}$ is measurable with respect to $P_{n,i}$.
	We prove this by induction on $(k,j)$.
	It clearly holds for $k = 1$ because $(T_{1,j})_{j = 1}^4$ are deterministic.
	If $k \geq 2$ and $T_{s,a}$ is measurable with respect to $P_{n,i}$ for all $(s,a) < (k,j)$, then it follows that $\{Z_{s,a}\,:\, (s,a) < (k,j)\}$ is measurable with respect to $P_{n,i}$ and in particular $P_{k}$ is measurable with respect to $P_{n,i}$.
	Since $T_{k,j}$ is measurable with respect to $P_k$ this concludes the proof.
\end{proof}

\subsection{Probabilistic analysis of the geometric construction} \label{sec :: probability analysis}

In this subsection, we provide the probabilistic analysis of our randomized geometric construction.
A key ingredient is a resampling inequality, leveraging the monotonicity of the increments of the $\Gamma$-function established in Lemma~\ref{lm :: Gamma increment monotonicity}.

\subsubsection{A resampling inequality}
For $(n,i) \in \mathcal I$, we let
\[
B_{n,i} = \bigcup_{(k,j) \in \mathcal I,\, (k,j) \leq (n,i)} \mathsf T^*_{k,j}\,,
\]
be the set of vertices in $\mathbb Z^2$ where the external field is explored for the construction of $P_{n,i}$.
By Lemma~\ref{lm :: Independence}, $\mathsf T^*_{n,i} \cap B_{n,i-1} = \emptyset$.
\begin{lemma}\label{lm :: stochastic domination}
	For $(n,i) \in \mathcal I$, let $g$ be a random field such that $g_v = h_v$ for $v \notin B_{n,i-1}$ and $\{g_v \,:\, v \in B_{n,i-1}\}$ is a collection of independent mean-zero Gaussian variables with variance 1 that is independent of $h$.
	Recall that $\mathcal F_{n,i}$ is the $\sigma$-algebra generated by $P_{n,i-1}$ and $\{h_v \,:\, v \in \mathsf T^*_{n,i}\}$.
	Let
	\[
	\tilde \gamma_{n,i} =
		\mathbb E[\Gamma(\mathsf P_{n,i-1}\cup \mathsf T_{n,i}, \Lambda_{2N}, \epsilon g) -
		\Gamma(\mathsf P_{n,i-1}, \Lambda_{2N}, \epsilon g) \mid \mathcal F_{n,i}]\,.
	\]
	Then $\gamma_{n,i} \geq \tilde \gamma_{n,i}$ almost surely.
\end{lemma}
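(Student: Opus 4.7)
The plan is to compare the conditional law of $h|_{B_{n,i-1}}$ given $\mathcal F_{n,i}$ with the unconditional i.i.d.\ Gaussian law (which is the law of $g|_{B_{n,i-1}}$), exploiting the monotonicity of the increment $G_{n,i}(f) := \Gamma(\mathsf P_{n,i-1} \cup \mathsf T_{n,i}, \Lambda_{2N}, \epsilon f) - \Gamma(\mathsf P_{n,i-1}, \Lambda_{2N}, \epsilon f)$ supplied by Lemma~\ref{lm :: Gamma increment monotonicity}. Since $\mathsf T^*_{n,i} \cap B_{n,i-1} = \emptyset$ by Lemma~\ref{lm :: Independence}, the values $\{h_v : v \in \mathsf T^*_{n,i}\}$ encoded in $\mathcal F_{n,i}$ are independent of $h|_{B_{n,i-1}}$ and contribute nothing to the relative conditioning; and by Lemma~\ref{lm :: encoding}, conditioning on $P_{n,i-1}$ is equivalent to conditioning on the decision variables $(Z_{k,j})_{(k,j) \leq (n,i-1)}$.

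The first step is geometric: identify where $G_{n,i}$ is monotone in the coordinates of $B_{n,i-1}$. Any $v \in B_{n,i-1}$ lies in a unique $\mathsf T^*_{k,j}$ with $(k,j) \leq (n,i-1)$. If $Z_{k,j} = 1$, then $\mathsf T^*_{k,j} \subset \mathsf T_{k,j} \subset \mathsf P_{k,j} \subset \mathsf P_{n,i-1}$, so by Lemma~\ref{lm :: Gamma increment monotonicity} the function $G_{n,i}$ does not depend on $f_v$. If $Z_{k,j} = 0$, then Lemma~\ref{lm :: T star is in or out} gives $v \notin \mathsf P_{n,i-1}$, and a short geometric computation (using Lemma~\ref{lm :: Bound on children} together with the fact that the side lengths along the ancestral chain from $(k,j)$ to $(n,i)$ shrink by a factor close to $4$ per level, so that $\mathsf T_{n,i}$ stays at distance strictly less than $2\delta r_{k,j}/3$ from $S_{k,j}$ while $T^*_{k,j}$ lies at distance at least $2\delta r_{k,j}/3$ from $S_{k,j}$) yields $\mathsf T^*_{k,j} \cap \mathsf T_{n,i} = \emptyset$, hence $v \notin \mathsf P_{n,i-1} \cup \mathsf T_{n,i}$. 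In this case Lemma~\ref{lm :: Gamma increment monotonicity} forces $G_{n,i}$ to be coordinatewise decreasing in $f_v$.

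The second step unwinds the conditional law of $h|_{B_{n,i-1}}$. By Lemma~\ref{lm :: Independence} the sets $\mathsf T^*_{k,j}$ are pairwise disjoint, so unconditionally $h|_{B_{n,i-1}}$ is a product of independent standard Gaussian vectors, one per $\mathsf T^*_{k,j}$. Reveal $(Z_{k,j})$ in lexicographic order: at stage $(k,j)$ the polygon $P_{k,j-1}$ is already determined by previously revealed decisions, and for this fixed $P_{k,j-1}$ the threshold quantity $\gamma_{k,j}$ is coordinatewise nondecreasing in $h|_{\mathsf T^*_{k,j}}$ by Lemma~\ref{lm :: Gamma increment monotonicity} applied to the pair $(\mathsf P_{k,j-1}, \mathsf T_{k,j})$ (recall $\mathsf T^*_{k,j} \subset \mathsf T_{k,j}$). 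Hence $\{Z_{k,j} = 1\}$ is an up-set and $\{Z_{k,j} = 0\}$ a down-set in the Gaussian coordinates on $\mathsf T^*_{k,j}$, and iterating this sequential conditioning shows that the conditional law of $h|_{B_{n,i-1}}$ given $(Z_{k,j})_{(k,j) \leq (n,i-1)}$ is a product measure whose marginal on each $\mathsf T^*_{k,j}$ is the standard Gaussian law restricted to the appropriate monotone event; by the standard stochastic-monotonicity fact for Gaussians conditioned on monotone events, this marginal is stochastically larger than the unconditional Gaussian if $Z_{k,j} = 1$ and stochastically smaller if $Z_{k,j} = 0$.

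Combining the two steps, the only coordinates in $B_{n,i-1}$ on which $G_{n,i}$ depends nontrivially are those sitting in $\mathsf T^*_{k,j}$'s with $Z_{k,j} = 0$; on these coordinates $G_{n,i}$ is coordinatewise decreasing and the conditional law of $h$ is stochastically smaller than that of $g$. Integrating $G_{n,i}$ against these two laws (while also integrating over $h$ on $\mathbb Z^2 \setminus (B_{n,i-1} \cup \mathsf T^*_{n,i})$, which has the same i.i.d.\ Gaussian law under both measures) therefore gives $\gamma_{n,i} \geq \tilde\gamma_{n,i}$ almost surely. The most delicate part of this plan is the geometric verification that $\mathsf T^*_{k,j} \cap \mathsf T_{n,i} = \emptyset$ whenever $(k,j) < (n,i)$ and $Z_{k,j} = 0$; once this is in hand, the FKG-type comparison is routine, since the correct direction of monotonicity has already been pinned down.
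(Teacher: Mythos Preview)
Your proposal is correct and follows essentially the same approach as the paper's proof: both compare the conditional law of $h|_{B_{n,i-1}}$ given $P_{n,i-1}$ with the fresh Gaussian law of $g|_{B_{n,i-1}}$ via an FKG-type monotonicity argument (the paper shows $\{P_{n,i-1}=Q\}$ is jointly monotone in $(h|_{C_{n,i-1}},-h|_{D_{n,i-1}})$ and applies FKG once, you factor the conditional law as a product of monotonely restricted Gaussians), and then feed this into Lemma~\ref{lm :: Gamma increment monotonicity}. One small bonus of your write-up is that you explicitly flag and sketch the geometric fact $\mathsf T^*_{k,j}\cap \mathsf T_{n,i}=\emptyset$ for $(k,j)<(n,i)$ with $Z_{k,j}=0$, which the paper's argument also needs (in order to place $D_{n,i-1}$ entirely inside $\Lambda_{2N}\setminus(\mathsf P_{n,i-1}\cup\mathsf T_{n,i})$) but does not spell out.
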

In words, the lemma states that if we resample the field on $B_{n,i-1}$ after constructing $P_{n,i-1}$, the expected increment to $\Gamma$ from adding $\mathsf T_{n,i}$ to $\mathsf P_{n,i-1}$ decreases.


\begin{proof}[Proof of Lemma~\ref{lm :: stochastic domination}]
Let $C_{n,i-1} = B_{n,i-1} \cap P_{n,i-1}$ and $D_{n,i-1} = B_{n,i-1} \setminus P_{n,i-1}$.
By Lemma~\ref{lm :: encoding}, $B_{n,i-1}$ is measurable with respect to $P_{n,i-1}$.
It follows that $C_{n,i-1}$ and $D_{n,i-1}$ are measurable with respect to $P_{n,i-1}$.
By Lemma~\ref{lm :: T star is in or out},
\begin{align*}
C_{n,i-1} &= \bigcup_{(k,j) \in \mathcal I,\, (k,j) < (n,i),\, Z_{k,j} = 1} \mathsf T^*_{k,j}\,, \\
D_{n,i-1} &= \bigcup_{(k,j) \in \mathcal I,\, (k,j) < (n,i),\, Z_{k,j} = 0} \mathsf T^*_{k,j}\,.
\end{align*}
Let $Q$ be a polygon such that $\mathbb P(P_{n,i-1} = Q) > 0$.
We let $B_{n,i-1}(Q)$ be the value of $B_{n,i-1}$ on the event $\{P_{n,i-1} = Q\}$, and similarly for $C_{n,i-1}(Q)$ and $D_{n,i-1}(Q)$. Note that the event $\{P_{n,i-1} = Q\}$ is measurable with respect to $h|_{B_{n,i-1}(Q)}$ (here $h|_A$ denotes the restriction of $h$ to $A$).

We claim that the event $\{P_{n,i-1} = Q\}$ is decreasing with respect to $h |_{D_{n,i-1}(Q)}$ and increasing with respect to $h|_{C_{n,i-1}(Q)}$.
That is, if $f$ is a realization of the field such that $P_{n,i-1}(f) = Q$ and $f'$ is a realization of the field such that
 $f'_v \geq f_v$ for all $v \in C_{n,i-1}(Q)$ and $f'_v \leq f_v$ for all $v \in D_{n,i-1}(Q)$, then $P_{n,i-1}(f') = Q$.
To see this, we prove inductively  $P_{k,j}(f) = P_{k,j}(f')$ for each $(k,j) \leq (n,i)$.
It clearly holds for $(k,j) = (1,0)$ since $P_{1,0} = [-N,N]^2$ deterministically.
 If $(k,j) \leq (n,i)$ and $P_{k,j-1}(f) = P_{k,j-1}(f')$,
then $\gamma_{k,j}(f) \leq \gamma_{k,j}(f')$ if $Z_{k,j}(f) = 1$ and $\gamma_{k,j}(f) \geq \gamma_{k,j}(f')$ if $Z_{k,j}(f) = 0$ (this is because  $\gamma_{k,j}$ is a function of $(P_{k, j-1}, h|_{\mathsf T^*_{k, j}})$ and is increasing in $h|_{\mathsf T^*_{k, j}}$ for fixed $P_{k, j-1}$).
This implies that $Z_{k,j}(f) = Z_{k,j}(f')$ and as a result $P_{k,j}(f) = P_{k,j}(f')$, completing the proof by induction.

By the FKG inequality for product measures \cite{FKG}, we get that conditional on $\{P_{n,i-1} = Q\}$ we have the following:
 $(h|_{C_{n,i-1}(Q)}, -h|_{D_{n,i-1}(Q)})$ stochastically dominates $(g|_{C_{n,i-1}(Q)}, -g|_{D_{n,i-1}(Q)})$ (note the minus sign for the field on $D_{n,i-1}(Q)$).
By construction, $h|_{\Lambda_{2N} \setminus B_{n,i-1}(Q)}= g|_{\Lambda_{2N} \setminus B_{n,i-1}(Q)}$ on $\{P_{n,i-1} = Q\}$.
Therefore, conditional on $\{P_{n,i-1} = Q\}$ and on $h|_{\mathsf T^*_{n,i}(Q)}$ (thus also conditional on $g|_{\mathsf T^*_{n,i}(Q)}$ since $h|_{\mathsf T^*_{n,i}(Q)} = g|_{\mathsf T^*_{n,i}(Q)}$), we deduce that the field $(h|_{\mathsf P_{n,i-1}(Q) \cup \mathsf T_{n,i}(Q)}, -h|_{\Lambda_{2N}\setminus (\mathsf P_{n,i-1}(Q) \cup \mathsf T_{n,i}(Q))})$ stochastically dominates the field
	$(g |_{\mathsf P_{n,i-1}(Q) \cup \mathsf T_{n,i}(Q)}, -g|_{\Lambda_{2N}\setminus (\mathsf P_{n,i-1}(Q) \cup \mathsf T_{n,i}(Q))})$.
Let $\Delta_Q: \mathbb R^{\Lambda_{2N}} \to \mathbb R$ be the function given by
\[
\Delta_Q(f) = \Gamma(\mathsf P_{n,i-1}(Q)\cup \mathsf T_{n,i}(Q), \Lambda_{2N}, f) -
	\Gamma(\mathsf P_{n,i-1}(Q), \Lambda_{2N}, f)\,.
\]
By Lemma~\ref{lm :: Gamma increment monotonicity}, $\Delta_Q$ is increasing in $f|_{\mathsf P_{n,i-1}(Q) \cup \mathsf T_{n,i}(Q)}$ and decreasing in $f|_{\Lambda_{2N}\setminus (\mathsf P_{n,i-1}(Q) \cup \mathsf T_{n,i}(Q))}$.
It follows that given $\{P_{n,i-1} = Q\}$ and $h|_{\mathsf T^*_{n,i}}$, we have that $\Delta_Q(\epsilon h)$ stochastically dominates $\Delta_Q(\epsilon g)$.
Since
\begin{align*}
\gamma_{n,i} \one_{P_{n,i-1} = Q} = \mathbb E [\Delta_Q(\epsilon h) \mid \mathcal F_{n,i}]\one_{P_{n,i-1} = Q}
\end{align*}
and 
\begin{align*}
\tilde \gamma_{n,i} \one_{P_{n,i-1} = Q} = \mathbb E [\Delta_Q(\epsilon g) \mid \mathcal F_{n,i}]\one_{P_{n,i-1} = Q}\,,
\end{align*}
this proves the lemma.
\end{proof}

\subsubsection{Quantitative probabilistic analysis}\label{sec :: Gaussian volume proofs}

We first show that each triangle $T_{n, i}$ has a decent probability to be included in $\mathsf P^*$.
Recall that $\delta = 10^{-2}(\mathtt m\epsilon)^{2/3}$.

\begin{lemma}\label{lm :: lower bound probability of inclusion}
For $\mathtt m \in (0,1)$, there exist constants $C_2, c_2>0$ (depending on $\mathtt m$) such that the following holds.
Suppose that \eqref{eq-goal-upper-bound} fails for some $N \geq e^{C_2 \epsilon^{-4/3}}$.
Then for all $1 \leq n \leq \log_{16}(N)$ and $1 \leq i \leq 4^n$
\[
\mathbb{P}(\gamma_{n,i} \geq 10\delta^2r_{n,i}) \geq c_2\,.
\]
\end{lemma}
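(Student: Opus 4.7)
The plan is to use the resampling inequality of Lemma~\ref{lm :: stochastic domination} to bound $\gamma_{n,i}$ below by $\tilde\gamma_{n,i}$ and then exploit that on $\{P_{n,i-1} = Q\}$ the resampled field $g$ is conditionally iid standard Gaussian: $g|_{B_{n,i-1}}$ is by construction iid standard Gaussian independent of $h$, while $g|_{\Lambda_{2N}\setminus B_{n,i-1}} = h|_{\Lambda_{2N}\setminus B_{n,i-1}}$ is independent of $h|_{B_{n,i-1}}$, hence of $P_{n,i-1}$. Writing $\mathsf T^* := \mathsf T^*_{n,i}(Q)$, on $\{P_{n,i-1} = Q\}$ we have $\tilde\gamma_{n,i} = \bar\gamma_Q(g|_{\mathsf T^*})$ where
\[
\bar\gamma_Q(y) := \mathbb E\bigl[G^Q(\epsilon g) \bigm| g|_{\mathsf T^*} = y\bigr], \quad G^Q(\epsilon g) := \Gamma(\mathsf P_{n,i-1}(Q) \cup \mathsf T_{n,i}(Q), \Lambda_{2N}, \epsilon g) - \Gamma(\mathsf P_{n,i-1}(Q), \Lambda_{2N}, \epsilon g),
\]
and the conditional expectation is over iid standard Gaussian $g$. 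It suffices to show $\mathbb P(\bar\gamma_Q(y) \geq 10\delta^2 r_{n,i}(Q)) \geq c_2$ uniformly over realizations $Q$, when $y$ is iid standard Gaussian.

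Four properties of $\bar\gamma_Q$ will be established. First, the spin-flip symmetry $\sigma \mapsto -\sigma$, $f \mapsto -f$ yields $\Gamma(A,\Omega,-f) = -\Gamma(A,\Omega,f)$, so $\bar\gamma_Q$ is an odd function of $y$; since $y \stackrel{d}{=} -y$, the law of $\bar\gamma_Q(y)$ is symmetric about $0$, and in particular has mean $0$. Second, from \eqref{eq-Gamma-partial-bound} and the chain rule, $|\partial_{y_v}\bar\gamma_Q(y)| \leq 2\epsilon$, so $\bar\gamma_Q$ is $2\epsilon\sqrt{|\mathsf T^*|}$-Lipschitz and Gaussian concentration gives $\mathbb E[\bar\gamma_Q(y)^4] \leq C\epsilon^4|\mathsf T^*|^2$. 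Third, for $v \in \mathsf T^*$ the formula $\partial_{f_v}G^Q = 2\Delta\langle\sigma_v\rangle_{\beta,\Lambda_{2N}\setminus \mathsf P_{n,i-1}(Q), f}$ from the proof of Lemma~\ref{lm :: Gamma increment monotonicity}, the domain monotonicity of the Ising model (larger domain yields smaller $\Delta\langle\sigma_v\rangle$), the inclusion $\Lambda_{2N}\setminus \mathsf P_{n,i-1}(Q) \subset v + \Lambda_{4N}$ for $v \in \Lambda_{2N}$, and translation invariance of the Gaussian law of $g$ combine to give
\[
\mathbb E[\partial_{y_v}\bar\gamma_Q(y)] = 2\epsilon\, \mathbb E[\Delta\langle\sigma_v\rangle_{\beta,\Lambda_{2N}\setminus\mathsf P_{n,i-1}(Q), \epsilon g}] \geq 2\epsilon\, \mathtt m_{\beta,\Lambda_{4N},\epsilon} \geq 2\epsilon\mathtt m,
\]
where the last inequality uses the assumption that \eqref{eq-goal-upper-bound} fails. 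Fourth, Stein's lemma yields $\Cov(\bar\gamma_Q(y), \sum_{v \in \mathsf T^*} y_v) = \sum_v \mathbb E[\partial_{y_v}\bar\gamma_Q(y)] \geq 2\epsilon\mathtt m|\mathsf T^*|$, and Cauchy--Schwarz (with $\var(\sum_v y_v) = |\mathsf T^*|$) gives
\[
\var(\bar\gamma_Q(y)) \geq 4\epsilon^2\mathtt m^2 |\mathsf T^*|.
\]

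Applying Paley--Zygmund to $\bar\gamma_Q(y)^2$ together with the fourth-moment bound gives $\mathbb P(|\bar\gamma_Q(y)| \geq \epsilon\mathtt m \sqrt{|\mathsf T^*|}) \geq c(\mathtt m) > 0$, and the distributional symmetry then yields $\mathbb P(\bar\gamma_Q(y) \geq \epsilon\mathtt m\sqrt{|\mathsf T^*|}) \geq c(\mathtt m)/2$. For the parameter check, the construction gives $r_{n,i}(Q) \geq 2N/4^n \geq 2\sqrt{N}$ for $n \leq n^* = \lfloor\log_{16}N\rfloor$, and for $N \geq e^{C_2\epsilon^{-4/3}}$ with $C_2$ sufficiently large, $\delta r_{n,i}(Q)$ is at least a large absolute constant, so approximating the lattice count of the triangle by its area yields $|\mathsf T^*| \geq c\,\lambda(T^*_{n,i}(Q)) \geq c' \delta\, r_{n,i}(Q)^2$. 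Substituting $\delta^{3/2} = 10^{-3}\mathtt m\epsilon$ then gives $10\delta^2 r_{n,i}(Q) = 10\delta^{3/2}\cdot\delta^{1/2}\, r_{n,i}(Q) \leq \epsilon\mathtt m\sqrt{|\mathsf T^*|}$ with ample room, which together with Lemma~\ref{lm :: stochastic domination} completes the proof. The main obstacle is the variance lower bound: converting the \emph{macroscopic} failure of \eqref{eq-goal-upper-bound} at scale $4N$ into a \emph{pointwise} lower bound on the average partial derivatives of $\bar\gamma_Q$ via domain monotonicity --- the step that motivates the specific choice of $\mathcal F_{n,i}$ emphasized in Remark~\ref{remark-construction}(iv).
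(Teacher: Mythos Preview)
Your proof is correct and follows essentially the same approach as the paper's own argument. The only cosmetic differences are that the paper invokes the Cacoullos inequality \cite{Cacoullos82} for the variance lower bound where you use Stein's lemma combined with Cauchy--Schwarz (these yield the identical bound $\var \geq (2\epsilon\mathtt m)^2|\mathsf T^*|$), and the paper writes out the second-moment tail estimate directly where you cite Paley--Zygmund; both pairs of arguments are equivalent formulations of the same idea.
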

\begin{proof}
In light of Lemma~\ref{lm :: stochastic domination}, in order to prove the lemma it suffices to show that for all $(n,i) \in \mathcal I$ with $n \leq \log_{16}(N)$ we have
\begin{equation}\label{eq :: tilde gamma tail bound}
\mathbb{P}(\tilde \gamma_{n,i} \geq 10\delta^2r_{n,i}) \geq c_2\,.
\end{equation}
As in the proof of Lemma~\ref{lm :: stochastic domination}, we will work conditionally on $P_{n,i-1}$.
We let $g$ be as in Lemma~\ref{lm :: stochastic domination}.
Note that $g$ is a collection of independent standard Gaussian random variables.
Recall the definition of $G$ given in \eqref{eq-Gamma-expression}. We have
\begin{equation}\label{eq :: tilde gamma}
\tilde \gamma_{n,i} = \mathbb E[
	G(\mathsf P_{n,i-1},\mathsf T_{n,i}, \Lambda_{2N}, \epsilon g) \big| P_{n,i-1}, g|_{\mathsf T^*_{n,i}}
	]\,.
\end{equation}
Since $G(A,B,\Omega,f)$ is an odd function of $f|_{\Omega\setminus A}$ for all fixed $(A,B,\Omega)$, we see $G(\mathsf P_{n,i-1},\mathsf T_{n,i}, \Lambda_{2N}, \epsilon g)$ is an odd function of $g|_{\Lambda_{2N}\setminus \mathsf P_{n,i-1}}$ when $P_{n, i-1}$ is fixed.
Since $g$ is independent of $P_{n,i-1}$ (because $P_{n,i-1}$ is measurable with respect to $h|_{B_{n,i-1}}$) and $g$ has a symmetric distribution, this implies that $\tilde \gamma_{n,i}$ is an odd function of $g|_{\mathsf T^*_{n,i}}$ when $P_{n, i-1}$ is fixed.
In particular we have
\begin{equation}\label{eq-tilde-gamm-meaan-0}
\mathbb E[\tilde \gamma_{n,i} \mid P_{n,i-1}] = 0\,.
\end{equation}
Next, we give a lower bound on the variance of $\tilde \gamma_{n,i}$.
By \eqref{eq :: tilde gamma} and the formulas derived in the proof of Lemma~\ref{lm :: Gamma increment monotonicity} for the partial derivatives of the increment of $\Gamma$ we obtain that for $v\in \mathsf T^*_{n, i}$
\[
\mathbb E[\partial_{g_v} \tilde \gamma_{n,i} \mid P_{n,i-1}] =
	2\epsilon \mathbb E \left[
		\Delta\langle\sigma_v\rangle_{\Lambda_{2N} \setminus \mathsf P_{n,i-1}, \epsilon g} \mid P_{n,i-1}
		\right]\,.
\]
Recall the definition of $\mathtt m_{\beta, \Lambda_N, \epsilon}$ in \eqref{eq-def-magnetization}.
For $\Omega \subset \mathbb Z^2$, define $\mathtt m_{\beta, \Omega, \epsilon}$ similarly by replacing $\Lambda_N$ with $\Omega$.
By monotonicity of the Ising model (c.f. \cite[Section 2.2]{AizenmanPeled19}), we have that $\mathtt m_{\beta, \Omega, \epsilon}$ is decreasing in $\Omega$, and therefore for $v\in \mathsf T^*_{n, i}$
\[
\mathbb E[\partial_{g_v} \tilde \gamma_{n,i} \mid P_{n,i-1}]  \geq
	2\epsilon \mathtt m_{\beta,  \Lambda_{2N} - v, \epsilon} \geq 2\epsilon \mathtt m_{\beta, \Lambda_{4N}, \epsilon} \geq 2\epsilon \mathtt m\,,
\]
where the last inequality follows from our assumption that \eqref{eq-goal-upper-bound} fails.
It then follows from \cite[Proposition~3.5]{Cacoullos82} that
\begin{equation}\label{eq :: gamma variance lower bound}
\E [\tilde \gamma_{n,i}^2\mid P_{n,i-1}] = \var[\tilde \gamma_{n,i} \mid P_{n,i-1}] \geq (2 \epsilon \mathtt m)^2 |\mathsf T^*_{n,i}|\,.
\end{equation}
In addition, by \eqref{eq-Gamma-partial-bound} we see that $\tilde \gamma_{n,i}$ is a Lipschitz function of $g|_{\mathsf T^*_{n,i}}$ with Lipschitz constant $2\epsilon \sqrt{|\mathsf T^*_{n, i}|}$ (with respect to the $\ell_2$ norm) for each fixed $P_{n,i-1}$.
Therefore, by \eqref{eq-tilde-gamm-meaan-0} and by the Gaussian concentration inequality (see \cite{Borell75, SudakovTsirelson74}, and see also \cite[Theorem 2.1]{Adler90} and \cite[Theorem 3.25]{Van16}) we get that
\begin{equation}\label{eq :: gamma tail upper bound}
\E [\tilde \gamma_{n,i}^4\mid P_{n,i-1}]
	\leq  10^5\epsilon^4|\mathsf T^*_{n,i}|^2\,.
\end{equation}
A simple computation gives that for any $t>0$
\begin{align*}
\E[\tilde \gamma_{n,i}^2\mid P_{n,i-1}] &\leq t^2  + \E(\tilde \gamma_{n,i}^2 \mathbf 1_{\tilde \gamma_{n,i}^2 \geq t^2}\mid P_{n,i-1})
\\
&\leq t^2  + \sqrt{\E(\tilde \gamma_{n,i}^4
\mid P_{n,i-1})} \sqrt{ \P(\tilde \gamma_{n,i}^2\geq t^2\mid P_{n,i-1})}\,.
\end{align*}
Setting $t =  \epsilon \mathtt m \sqrt{|\mathsf T^*_{n,i}|}$ and combining with \eqref{eq :: gamma tail upper bound} and
\eqref{eq :: gamma variance lower bound}, we obtain that
\begin{equation}\label{eq-prob-lower-bound-in-terms-area}
 \P\left(\tilde \gamma_{n,i}\geq  \epsilon \mathtt m \sqrt{|\mathsf T^*_{n,i}}| \mid P_{n,i-1} \right) = \frac{1}{2} \P(\tilde \gamma_{n,i}^2\geq ( \epsilon \mathtt m)^2 |\mathsf T^*_{n,i}| \mid P_{n,i-1}) \geq 10^{-5} \mathtt m^4\,,
\end{equation}
where the first equality follows from the fact that conditioned on $P_{n,i-1}$, the law of $\tilde \gamma_{n,i}$ is symmetric around 0.
It is obvious that the number of lattice points in any isosceles triangle in $\mathbb R^2$ with base length and height larger than 100 is at least half of the area of the triangle.
Since $N \geq e^{C_2 \epsilon^{-4/3}}$ and $1 \leq n \leq \log_{16}(N)$, we have that the base length and the height of $T^*_{n, i}$ (which are $\frac{2r_{n,i}}{3}$ and $\frac{\delta r_{n, i}}{3}$ respectively) are both larger than 100 as long as $C_2$ is a large enough constant.
Therefore,
\[
|\mathsf T^*_{n,i} | \geq 2^{-1}\lambda(T^*_{n,i}) =
	18^{-1}\delta r_{n,i}^2 \,.
\]
Combined with \eqref{eq-prob-lower-bound-in-terms-area} and $\delta = 10^{-2}(\epsilon \mathtt m)^{2/3}$, it completes the proof of \eqref{eq :: tilde gamma tail bound}.
\end{proof}

We are now ready to conclude the proof on the upper bound for the correlation length.
\begin{proof}[Proof of \eqref{eq-goal-upper-bound}]
 We will prove \eqref{eq :: contradiction} provided that \eqref{eq-goal-upper-bound} fails for $N \geq e^{C_1 \epsilon^{-4/3}}$ for a large enough constant $C_1$, and thus obtain a contradiction with Lemma~\ref{lm :: Gamma bound}.
 This in turn proves \eqref{eq-goal-upper-bound}, as required.

Since for each $n \geq 1$, $P_n$ has $4^n$ sides and $n^* \leq \log_{16}(N)$, we see that $P^*$ has at most $N^{1/2}$ sides.
By construction, $l(\partial P^*) \geq l(\partial P_1) = 8N$.
Therefore, by Lemma~\ref{lm :: Perimeter lemma} we have
\[
|\partial \mathsf P^*| \leq \sqrt{2} l(\partial P^*) + 2 N^{1/2} \leq
	2 l(\partial P^*)\,.
\]
In addition, $|\partial (\Lambda_{2N}\setminus\mathsf P^*)| = |\partial \mathsf P^*| + 16N$.
Therefore, it suffices to show that
\begin{equation}\label{eq :: upper bound final target}
\mathbb{E}[\Gamma(\mathsf P^*, \Lambda_{2N}, \epsilon h) - 8l(\partial P^*)] > 0\,.
\end{equation}
For $n \geq 1$, let $X_n = \Gamma(\mathsf P_n, \Lambda_{2N}, \epsilon h) - 8 l(\partial P_n)$.
For $(n,i) \in \mathcal I$ let $X_{n,i} = \Gamma(\mathsf P_{n,i}, \Lambda_{2N},\epsilon h) - 8l(\partial P_{n,i})$, and $Y_{n,i} = X_{n,i} - X_{n,i-1}$.
We assume from now on that $n < n^*$.
We have
\[
l(\partial (P_{n,i-1} \cup T_{n,i})) - l(\partial P_{n,i-1}) =
	2 \sqrt{1 + \delta^2}r_{n,i} - 2r_{n,i} \leq
	\delta^2 r_{n,i} \,.
\]
Recalling definition of $Z_{n, i}$ as in \eqref{eq-def-Z-n-i}, we get that
\begin{itemize}
\item if $Z_{n,i} = 0$ then $P_{n,i} = P_{n,i-1}$ and thus $Y_{n,i} = 0$;
\item if $Z_{n,i} = 1$ then 
\begin{align*}
	Y_{n,i} = &(\Gamma(\mathsf P_{n,i-1} \cup T_{n, i}, \Lambda_{2N},\epsilon h)- \Gamma(\mathsf P_{n,i-1}, \Lambda_{2N},\epsilon h)) \\
	&- 8(l(\partial (P_{n,i-1} \cup T_{n,i}))
 - l(\partial P_{n,i-1}))
\end{align*}
where the difference in the perimeter is bounded by $\delta^2 r_{n,i}$.
\end{itemize}
Altogether, we have that
\[
Y_{n,i} \geq
	Z_{n,i}\left[
		\Gamma(\mathsf P_{n,i-1} \cup \mathsf T_{n,i}, \Lambda_{2N}, \epsilon  h) -
		\Gamma(\mathsf P_{n,i-1}, \Lambda_{2N}, \epsilon  h) - 8\delta^2 r_{n,i}
		\right]\,.
\]
Recalling the definition of $\gamma_{n,i}$ as in \eqref{eq-def-gamma-n-i}, we obtain
\begin{align*}
&\mathbb{E}[
	Z_{n,i}(
		\Gamma(\mathsf P_{n,i-1} \cup \mathsf T_{n,i}, \Lambda_{2N}, \epsilon \cdot h) -
		\Gamma(\mathsf P_{n,i-1}, \Lambda_{2N}, \epsilon \cdot h)
		)]\\
 = &\mathbb{E}[\mathbb E(
	Z_{n,i}(
		\Gamma(\mathsf P_{n,i-1} \cup \mathsf T_{n,i}, \Lambda_{2N}, \epsilon \cdot h) -
		\Gamma(\mathsf P_{n,i-1}, \Lambda_{2N}, \epsilon \cdot h) \mid \mathcal F_{n, i})
		)
	] \\
=& \mathbb{E}[Z_{n,i}\gamma_{n,i}] \geq \mathbb{E}[10\delta^2 r_{n,i} Z_{n,i}]\,,
\end{align*}
where we used the fact that $Z_{n, i}$ is measurable with respect to $\mathcal F_{n, i}$.
Therefore,
\[
\mathbb E[Y_{n,i}] \geq
	2\delta^2 \mathbb E[r_{n,i}  Z_{n,i}]\,.
\]
It follows from the construction of $P_n$ that for every $(n,i) \in \mathcal I$ we have $l(S_{n,i}) \geq l(\partial P_1) 4^{-n}$.
Therefore, $r_{n,i} \geq l(\partial P_1) 4^{-n-1}$.
Plugging this into the previous display gives
\[
\mathbb E[Y_{n,i}] \geq
	2\delta^2 4^{-n-1}l(\partial P_1) \mathbb P(\gamma_{n,i} > 10\delta^2r_{n,i})\,.
\]
Finally, we will set $C_1 \geq C_2$ so we can apply Lemma~\ref{lm :: lower bound probability of inclusion} and get
\[
\mathbb E[Y_{n,i}] \geq
	2 c_2\delta^2 4^{-n-1}l(\partial P_1) \,.
\]
Summing over $i$ gives
\[
\mathbb E \left[ X_{n+1} - X_{n}\right]
	\geq  2^{-1}c_2\delta^2 l(\partial P_1)\,.
\]
Since $\Gamma$ is an odd function of $h$, we have $\mathbb E[X_{1}] = -l(\partial P_1)$.
Therefore
\begin{align*}
\mathbb E[X_{n^*}] = \mathbb E[X_1] + \sum_{n = 1}^{n^*-1} \mathbb E\left[X_{n+1} - X_{n}\right]
	\geq ( c_2\delta^2 (n^* - 1)/2 - 1)l(\partial P_1)\,.
\end{align*}
Plugging in $n^* = \lfloor \log_{16}(N) \rfloor$ and $\delta = 10^{-2}(\epsilon \mathtt m)^{2/3}$, we see  that $\mathbb E[X_{n^*}] > 0$
 (which is a rewrite of \eqref{eq :: upper bound final target}) for $N \geq e^{C_1 \epsilon^{-4/3}}$ provided that $C_1 \geq C_2$ is a large enough constant (depending on $\mathtt m$).
\end{proof}

\section{Upper bound on greedy lattice animal}\label{sec :: Lower bound proof}
This section is devoted to the proof of Proposition~\ref{prop :: correlation length lower bound}, which is essentially \cite[Theorem 4.4.2]{Talagrand14}. We make the connection between \cite[Theorem 4.4.2]{Talagrand14} and Proposition~\ref{prop :: correlation length lower bound} slightly more explicit and we claim no credit for material in this section. 

 For a Gaussian process $X$ indexed on a set $T$, define the canonical metric $d_X:T\times T \to [0,\infty)$ for $(T, X)$ by
\begin{equation}\label{eq :: canonical metric def}
	d_X(s,t) = \mathbb E[(X(s) - X(t))^2]^{1/2}\,.
\end{equation} 
Next, we review the $\gamma_{\alpha,\beta}$-functionals which measure the size of a metric space in a way that can be used to control the maximum of a Gaussian process. We begin with an auxiliary definition.
\begin{defn}\label{def :: admissible sequence}
	Given a set $T$, an admissible sequence on $T$ is an increasing sequence of partitions $(\Pi_n)_{n \geq 0}$ of $T$ such that $|\Pi_0|= 1$ and $|\Pi_n| \leq 2^{2^n}$ for $n \geq 1$.
\end{defn}
For a partition $\Pi_n$ of a set $T$ and an element $t \in T$, we will denote by $\pi_n(t)$ the element of $\Pi_n$ that contains $t$. Now we are ready to define the $\gamma_{\alpha,\beta}$ functionals.
\begin{defn}\label{def :: gamma2}
	Given a set $T$, a metric $d$ on $T$, and numbers $\alpha,\beta > 0$, let 
	\[
	\gamma_{\alpha,\beta}(T,d) = \inf_{(\Pi_n)} \sup_{t \in T} \left[\sum_{n \geq 0} \left(2^{n/\alpha} \diam(\pi_n(t),d)\right)^\beta \right]^{1/\beta}\,,
	\]
	where the infimum is taken over all admissible sequences and $\diam(\pi_n(t),d)$ is the $d$-diameter for $\pi_n(t)$. In addition, define  $\gamma_2(T, d) = \gamma_{2, 1}(T, d)$.
\end{defn}
With this definition in place, we can state Talagrand's majorizing measure theorem (see \cite{Talagrand87} and \cite[Theorem 2.2.22]{Talagrand14}) which gives a tight bound on the expectation of the supremum of a Gaussian process in terms of the $\gamma_2$-functional.  Write $\|X\|_T =\sup_{t\in T} X_t$.
\begin{theorem}\label{thm :: majorizing measure}
	There exists a universal constant $K$ such that the following holds. If $T$ is a set and $X$ is a centered Gaussian process indexed on $T$, we have
	\[
	\E[\|X\|_T] \leq K \gamma_2(T,d_{X})\,.
	\]
\end{theorem}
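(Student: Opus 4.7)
The plan is to prove this by classical generic chaining; this is the (comparatively easier) upper-bound half of Talagrand's majorizing measure theorem, the lower bound being the deep direction. By reducing to finite $T$ via monotone convergence (take an increasing sequence of finite subsets exhausting $T$ and note that both sides behave monotonically), we may fix a base point $t_0 \in T$ and bound $\mathbb{E}\sup_{t \in T}|X_t - X_{t_0}|$ instead of $\mathbb{E}\|X\|_T$; the two differ by at most $\sqrt{2/\pi}\,\diam(T,d_X) \leq \gamma_2(T,d_X)$ since $X$ is centered. Let $(\Pi_n)_{n\geq 0}$ be an admissible sequence nearly attaining the infimum in the definition of $\gamma_2(T,d_X)$; for each $A \in \Pi_n$ choose an arbitrary representative, and let $j_n(t) \in T$ denote the representative of the block $\pi_n(t)$. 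Since $T$ is finite, $j_n(t) = t$ for $n$ large and one has the termwise telescoping identity
\[
X_t - X_{t_0} = \sum_{n \geq 1}\bigl(X_{j_n(t)} - X_{j_{n-1}(t)}\bigr).
\]

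The key counting step is that as $t$ ranges over $T$, the pair $(j_n(t), j_{n-1}(t))$ takes at most $|\Pi_n|\cdot|\Pi_{n-1}| \leq 2^{2^{n+1}}$ distinct values. Combining the Gaussian tail $\mathbb{P}(|X_s - X_{s'}| \geq u\,d_X(s,s')) \leq 2e^{-u^2/2}$ with a union bound over these pairs at threshold $u = K_0(2^{n/2} + x\,2^{-n/2})$ for a universal constant $K_0$, summing $e^{-c(2^n + x^2)}$ over $n$ yields that outside an event of probability $\leq Ce^{-cx^2}$,
\[
|X_{j_n(t)} - X_{j_{n-1}(t)}| \leq K_0\bigl(2^{n/2} + x\,2^{-n/2}\bigr)\,d_X(j_n(t), j_{n-1}(t))
\]
holds simultaneously for all $n \geq 1$ and $t \in T$. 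Since $\Pi_n$ refines $\Pi_{n-1}$, both $j_n(t)$ and $j_{n-1}(t)$ lie in the block $\pi_{n-1}(t)$, so $d_X(j_n(t), j_{n-1}(t)) \leq \diam(\pi_{n-1}(t), d_X)$.

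Summing on the good event and taking $\sup_t$ on both sides gives
\[
\sup_{t \in T}|X_t - X_{t_0}| \leq K_0 \sup_t \sum_{n \geq 0} 2^{n/2}\diam(\pi_n(t), d_X) + K_0\,x \sup_t \sum_{n \geq 0} 2^{-n/2}\diam(\pi_n(t), d_X),
\]
where the first supremum is exactly $\gamma_2(T,d_X)$ (by the definition used in the paper, up to a factor absorbed in $K$) and the second is dominated by $\diam(T,d_X) \leq \gamma_2(T,d_X)$ (the $n = 0$ diameter controls the geometric series). Integrating the tail $Ce^{-cx^2}$ against $dx$ turns the deviation bound into the expectation bound $\mathbb{E}\sup_{t}|X_t - X_{t_0}| \leq K\,\gamma_2(T,d_X)$, which, combined with the initial reduction, yields $\mathbb{E}\|X\|_T \leq K\,\gamma_2(T,d_X)$.

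The main obstacle, and the reason the constants $|\Pi_n| \leq 2^{2^n}$ and the weight $2^{n/2}$ appear in the definition of $\gamma_2$, is the delicate union-bound bookkeeping across scales: the threshold $u \asymp 2^{n/2}$ is the smallest one at which a union bound over $2^{2^{n+1}}$ pairs is still affordable, and simultaneously small enough that the resulting chaining sum of $2^{n/2}\,\diam(\pi_n(t), d_X)$ is summable. Everything else (Gaussian tails, telescoping, refinement of partitions, reduction to finite $T$) is routine; this matching between the combinatorial complexity of the partitions and the scale of the Gaussian deviation is the conceptual heart of the argument.
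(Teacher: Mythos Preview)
The paper does not prove this theorem; it is stated as Talagrand's majorizing measure theorem with references to \cite{Talagrand87} and \cite[Theorem~2.2.22]{Talagrand14}, and then used as a black box in Section~\ref{sec :: Lower bound proof}. Your generic-chaining argument is essentially the standard proof found in those references, so you are supplying precisely the argument the paper defers to.

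The outline is correct, with one slip worth noting. With your threshold $u = K_0(2^{n/2} + x\,2^{-n/2})$ one has $u^2 = K_0^2(2^n + 2x + x^2 2^{-n})$, so after the union bound over $2^{2^{n+1}}$ pairs the level-$n$ summand is of order $\exp\bigl(-c(2^n + x)\bigr)$, not $\exp\bigl(-c(2^n + x^2)\bigr)$; summing over $n$ therefore gives a tail $Ce^{-cx}$ rather than $Ce^{-cx^2}$. This does not affect the conclusion: $e^{-cx}$ is still integrable, and on the good event you have $\sup_t|X_t - X_{t_0}| \le K_0 C\,\gamma_2 + K_0 C\, x\,\diam(T,d_X)$, so integrating in $x$ still yields $\mathbb{E}\|X\|_T \le K\gamma_2(T,d_X)$. (If one insists on the Gaussian tail $e^{-cx^2}$, the usual choice is $u=(K_0+x)2^{n/2}$, at the price that the $x$-contribution to the chaining sum becomes $x\,\gamma_2$ rather than $x\,\diam(T)$; either way the expectation bound follows.) A second minor point: the telescoping $X_t - X_{t_0} = \sum_n (X_{j_n(t)} - X_{j_{n-1}(t)})$ requires $j_n(t)=t$ eventually, which is not part of the definition of an admissible sequence; but for finite $T$ you may refine the near-optimal sequence to singletons at high levels without changing the $\gamma_2$ sum.
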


Next, we state the Borell–Sudakov-Tsirelson inequality. For a set $T$ and a Gaussian process $(X_t)_{t\in T}$ indexed on $T$, let $\sigma_{X}^2 =\sup_{t \in T} \var(X_t)$.  The Borell–Sudakov-Tsirelson inequality says that the tails of the maximum of $X$ behave roughly like those of a Gaussian random variable with variance $\sigma_{X}^2$ (see
\cite{Borell75, SudakovTsirelson74} or \cite[Theorem 2.1]{Adler90} for a proof):
\begin{equation}\label{eq :: Borel inequality}
	\mathbb P\left(\big|\|X_T\|  - \mathbb E[\|X_T\|] \big| > z \right)
	\leq 2\exp\left(-\frac{z^2}{2\sigma_{X}^2}\right) \mbox{ for all } z >0\,.
\end{equation}
Note that we do not need to assume $X$ is centered for \eqref{eq :: Borel inequality}. Note also that for any lattice animal $A$ we have $\var(\sum_{v \in A} h_v) = |A| \leq |\partial A|^2$, so if $T$ is a set of lattice animals and $X_t$ is the sum of the Gaussian variables in the lattice animal $t$  normalized by its boundary size, we have  $\sigma_{X}^2 \leq 1$.

Having introduced these tools, we turn to the proof of Proposition~\ref{prop :: correlation length lower bound}. It is more convenient to work with the unnormalized lattice animal processes, so we will partition the lattice animals by the lengths of their boundaries. The following lemma is the key to the proof of  Proposition~\ref{prop :: correlation length lower bound}.
\begin{lemma}\label{lm :: bound on animals of bounded length}
	For a vertex $v \in \mathbb Z^2$ and an integer $k \geq 2$, let $\mathfrak A_{v,k}$ be the collection of simply connected lattice animals $A$ such that $|\partial A| \leq 2^k$, $v \in A$, and there exists $u \sim v$ such that $u$ is not in $A$. For $A \in \mathfrak A_{v,k}$, let $Y_A = \sum_{v \in A} h_v$. Then there exists a constant $C > 0$ such that
	\[
	\P \left( \max_{A \in \mathfrak A_{v,k}} Y_A \geq C k^{3/4} 2^k + u 2^k\right) \leq 2 e^{-u^2/2} \quad \mbox{ for all } u > 0\,.
	\]
\end{lemma}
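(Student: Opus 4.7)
The plan is to combine Talagrand's majorizing measure theorem (Theorem~\ref{thm :: majorizing measure}) for the expected supremum with the Borell-Sudakov-Tsirelson inequality \eqref{eq :: Borel inequality} for concentration, and then to construct a multi-scale partition of $\mathfrak{A}_{v,k}$ by approximating each animal's boundary contour at a hierarchy of scales.

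First, I would reduce the tail bound to an expectation bound. The variance of $Y_A$ is $\mathrm{Var}(Y_A)=|A|$, and the discrete edge isoperimetric inequality on $\mathbb{Z}^2$ gives $|A|\leq |\partial A|^2/16\leq 2^{2k}/16$ for all $A\in\mathfrak{A}_{v,k}$. Thus the Gaussian process $(Y_A)_{A\in \mathfrak A_{v,k}}$ has $\sigma_X^2\leq 2^{2k}/16$, and applying \eqref{eq :: Borel inequality} with $z=u\cdot 2^k$ yields
\[
\P\!\left(\max_{A\in\mathfrak A_{v,k}} Y_A > \E\!\left[\max_{A\in\mathfrak A_{v,k}} Y_A\right]+u\cdot 2^k\right)\leq 2\exp(-u^2/2).
\]
Hence it suffices to show $\E[\max_{A\in\mathfrak A_{v,k}} Y_A]\leq C k^{3/4}\cdot 2^k$.

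Second, I would apply Theorem~\ref{thm :: majorizing measure} to the process $(Y_A)$ with canonical metric $d_X(A,B)=\sqrt{|A\triangle B|}$, reducing the problem to an upper bound $\gamma_2(\mathfrak A_{v,k},d_X)\leq C k^{3/4}\cdot 2^k$. To construct an admissible sequence, I would exploit the fact that each $A\in\mathfrak A_{v,k}$ is uniquely determined by its outer boundary, a single closed lattice contour of length $\leq 2^k$ passing through $v$ (the marking condition on $v$ gives a canonical basepoint for this contour). For a scale parameter $r_n=2^{k-n}$ with $n=0,1,\ldots,k$, define the level-$n$ partition by declaring two animals equivalent when their contour ``skeletons'' (obtained by retaining every $r_n$-th vertex along the contour) coincide. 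A standard Peierls-type count with fixed basepoint $v$ bounds the number of skeletons at level $n$ by $\exp(C \cdot 2^n)$, which is well within the admissibility budget $2^{2^n}$. On the other hand, two animals in the same level-$n$ class have contours within Hausdorff distance $O(r_n)$, so their symmetric difference lies in a tubular neighborhood of width $O(r_n)$ around a curve of length $\leq 2^k$, giving $|A\triangle B|\leq Cr_n\cdot 2^k$ and thus $d_X(A,B)\leq C\cdot 2^k\cdot 2^{-n/2}$.

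The main obstacle is the final step: the naive telescoping
\[
\gamma_2(\mathfrak A_{v,k},d_X)\leq \sum_{n=0}^{k} 2^{n/2}\cdot C\cdot 2^k\cdot 2^{-n/2}\leq Ck\cdot 2^k
\]
falls short by a factor of $k^{1/4}$. The sharp $k^{3/4}$ exponent — which is the content of \cite[Chapter~4]{Talagrand14} — requires a considerably more delicate multi-scale argument, where one refines the partition not uniformly but in a weighted manner: animals whose contours are ``smoother'' (use less of the perimeter budget) at a given scale are grouped into finer subclasses with correspondingly smaller diameters, while rarer ``rough'' contours tolerate coarser groupings. Balancing the resulting trade-off between partition size and within-class diameter across all scales, as done by Talagrand via his functionals combining an entropy term and a diameter term scale by scale, yields the improvement from $k$ to $k^{3/4}$. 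Once this refined majorizing-measure bound is in place, combining it with Steps~1 and~2 above concludes the proof.
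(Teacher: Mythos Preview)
Your reduction steps are correct and match the paper: first reduce the tail bound to an expectation bound via Borell--Sudakov--Tsirelson (using $\var(Y_A)=|A|\leq |\partial A|^2\leq 2^{2k}$), then reduce the expectation bound to $\gamma_2(\mathfrak A_{v,k},d_Y)\leq Ck^{3/4}2^k$ via the majorizing measure theorem. The paper does exactly this.

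The gap is in the third step. Your direct admissible-sequence construction gives only $\gamma_2\leq Ck\cdot 2^k$, as you acknowledge, and your description of the refinement needed for $k^{3/4}$ (``weighted refinement depending on contour smoothness'') is too vague to be a proof. The paper closes this gap not by refining your dyadic contour skeleton directly, but by passing through a different functional: it invokes \cite[Proposition~4.4.5]{Talagrand14}, which bounds
\[
\gamma_{1,2}(\mathfrak A_{v,k},d_Y^2)\leq C\,2^{2k},
\]
where $d_Y^2(A,B)=|A\triangle B|$ is the symmetric-difference metric, and then applies \cite[Lemma~4.4.6]{Talagrand14}, which states that for any metric space $(T,d)$ with $|T|\leq 2^{2^n}$ one has
\[
\gamma_2(T,\sqrt{d})\leq n^{3/4}\,\gamma_{1,2}(T,d)^{1/2}.
\]
Since $|\mathfrak A_{v,k}|\leq 2^{2^{k+1}}$ (each animal is encoded by a boundary loop of length $\leq 2^k$ with at most four choices per step from a fixed basepoint near $v$), this gives $\gamma_2(\mathfrak A_{v,k},d_Y)\leq C k^{3/4}2^k$ directly. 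The $3/4$ exponent thus emerges from the general interpolation Lemma~4.4.6, not from a bespoke admissible sequence; the work you are gesturing at is precisely what goes into proving that lemma, and the paper treats both results as black boxes from \cite{Talagrand14}.
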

In Lemma~\ref{lm :: bound on animals of bounded length}, we restricted to $A$ containing $v$ \emph{on its boundary} so that we have $|\mathfrak A_{k,v}| \leq 2^{2^{k+1}}$ (as explained in the proof of Lemma~\ref{lm :: bound on animals of bounded length} below).
Lemma~\ref{lm :: bound on animals of bounded length} can be deduced as a consequence of the following two lemmas in \cite{Talagrand14}.
 \begin{lemma}\cite[Lemma 4.4.6]{Talagrand14}\label{lem-4.4.6-talagrand}
 	Let $n \geq 1$ and $T$ be a set such that $|T| \leq 2^{2^n}$. Let $d$ be a metric on $T$. Then ($\sqrt{d}$ is also a metric and)
 	\[
 	\gamma_{2}(T, \sqrt{d}) \leq n^{3/4} \gamma_{1,2}(T,d)^{1/2}\,.
 	\]
 \end{lemma}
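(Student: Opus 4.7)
The plan is to transport a near-optimal admissible sequence for $\gamma_{1,2}(T,d)$ to one for $\gamma_2(T, \sqrt d)$ and then convert between the two functionals via two applications of the Cauchy--Schwarz inequality. I first fix $\varepsilon > 0$ and pick an admissible sequence $(\Pi_k)_{k \ge 0}$ on $T$ with $\sup_t \bigl(\sum_k (2^k D_k(t))^2\bigr)^{1/2} \le (1+\varepsilon)\gamma_{1,2}(T,d)$, where $D_k(t) := \diam(\pi_k(t), d)$. Because $|T| \le 2^{2^n} \le 2^{2^k}$ for every $k \ge n$, I may refine each $\Pi_k$ ($k \ge n$) into the partition of $T$ into singletons without violating admissibility; refining only decreases diameters, so the modified sequence remains $(1+\varepsilon)$-optimal for $\gamma_{1,2}$. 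The key consequence is that $D_k(t) = 0$ for every $t \in T$ and every $k \ge n$, so all infinite sums collapse to indices in $\{0, 1, \ldots, n-1\}$.

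Next, noting that $\sqrt d$ is a metric by subadditivity of $\sqrt{\cdot}$ and that $\diam(\pi_k(t), \sqrt d) = \sqrt{D_k(t)}$ by monotonicity of $\sqrt{\cdot}$, I estimate for each fixed $t \in T$:
\[
\sum_{k \ge 0} 2^{k/2} \sqrt{D_k(t)} = \sum_{k=0}^{n-1} 2^{k/2} \sqrt{D_k(t)} \le n^{1/2} \left( \sum_{k=0}^{n-1} 2^k D_k(t) \right)^{1/2}
\]
by Cauchy--Schwarz on the $n$-term sum; a second Cauchy--Schwarz yields
\[
\sum_{k=0}^{n-1} 2^k D_k(t) \le n^{1/2} \left( \sum_{k=0}^{n-1} (2^k D_k(t))^2 \right)^{1/2}.
\]
Chaining these produces
\[
\sum_{k \ge 0} 2^{k/2} \sqrt{D_k(t)} \le n^{3/4} \left( \sum_{k=0}^{n-1} (2^k D_k(t))^2 \right)^{1/4}.
\]
Taking the supremum over $t$ (which commutes with the monotone map $x \mapsto x^{1/4}$), then invoking the choice of $(\Pi_k)$ and letting $\varepsilon \to 0$, yields $\gamma_2(T, \sqrt d) \le n^{3/4} \gamma_{1,2}(T,d)^{1/2}$, as required.

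The only conceptual point is recognising that the cardinality hypothesis $|T| \le 2^{2^n}$ is the driver of the $n^{3/4}$ prefactor rather than a mere technical convenience: without the ability to truncate at level $n$, each of the two Cauchy--Schwarz steps would contribute a divergent factor $\sqrt{\#\text{terms}}$ and no such sub-square-root bound of $\gamma_{1,2}$ would emerge. Everything else is essentially mechanical manipulation of the definitions.
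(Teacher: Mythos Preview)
Your proof is correct. The paper does not actually supply its own proof of this lemma: it is quoted verbatim as \cite[Lemma 4.4.6]{Talagrand14} and used as a black box in the derivation of Lemma~\ref{lm :: bound on animals of bounded length}. Your argument---truncating a near-optimal admissible sequence at level $n$ using the cardinality bound $|T|\le 2^{2^n}$, then applying Cauchy--Schwarz twice to pass from the $\ell^1$ sum of $2^{k/2}\sqrt{D_k(t)}$ through the $\ell^1$ sum of $2^k D_k(t)$ to the $\ell^2$ sum---is exactly the proof Talagrand gives, and your closing remark correctly identifies why the cardinality hypothesis is essential rather than cosmetic.
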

 \begin{lemma}\cite[Proposition 4.4.5]{Talagrand14}\label{lem-4.4.5-talagrand}
	There exists a constant $C > 0$ such that
	\[
	\gamma_{1,2}(\mathfrak A_{v,k}, d_Y^2) \leq C 2^{2k}\,.
	\]
\end{lemma}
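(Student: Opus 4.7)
The plan is to build an explicit admissible sequence of partitions of $\mathfrak A_{v,k}$ by approximating the boundary curve of each simply connected lattice animal at a hierarchy of spatial scales, and then verify the $\gamma_{1,2}$-sum bound.

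The first observation is that each $A \in \mathfrak A_{v,k}$ is determined by its boundary, which (since $A$ is simply connected) is a single simply closed polygonal curve $\partial A$ on the dual lattice, of length $|\partial A|\leq 2^k$ and passing through a fixed dual edge adjacent to $v$. Working with the one-dimensional object $\partial A$ rather than with the two-dimensional set $A$ is crucial, because the number of such curves of a given length is only exponential in the length. A direct count gives $|\mathfrak A_{v,k}|\leq 2^{2^{k+1}}$, so I may take $\Pi_n$ to consist of singletons for $n\geq k+1$ and only need to do work at scales $0\leq n\leq k$.

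For $0\leq n\leq k$ I would fix a mesh $\ell_n \asymp 2^{k-n}$ and declare $A\sim A'$ whenever $\partial A$ and $\partial A'$ have the same trace at resolution $\ell_n$, i.e.\ the same ordered sequence of visited mesh-$\ell_n$ cells. The number of traces is bounded by the number of closed walks of at most $2^k/\ell_n \asymp 2^n$ steps on a bounded-degree graph starting from a fixed cell, and so (with a sufficiently large multiplicative constant in $\ell_n$) is at most $2^{2^n}$, giving an admissible sequence. Two animals with a common trace differ only in a tubular neighborhood of width $O(\ell_n)$ along their shared skeleton, so a routine geometric estimate yields $\diam(\pi_n(t), d_Y^2) = \max_{A,A' \in \pi_n(t)}|A\triangle A'|\leq C\,|\partial A|\,\ell_n \leq C\,2^{2k-n}$. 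The $n=0$ term is handled by the isoperimetric bound $|A|\leq C|\partial A|^2\leq C\,4^k$.

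Plugging these into the definition, the scale-$n$ contribution to $\gamma_{1,2}^2$ is $(2^n\cdot C\,2^{2k-n})^2 = C^2\cdot 4^{2k}$, a quantity that is \emph{constant} in $n$. Summed naively over $0\leq n\leq k$ this would lose an extraneous factor of $\sqrt{k}$, giving only $\gamma_{1,2}\leq C\sqrt{k}\cdot 2^{2k}$. Removing this logarithmic slack is the main obstacle and the heart of the argument. I would attempt this via a \emph{chained} encoding: at each scale record only the increment of the skeleton relative to the scale-$\ell_{n-1}$ approximation, using the one-dimensional nature of $\partial A$ (and the fact that most increments are concentrated at a handful of scales determined by the local regularity of the curve) to bound the total number of bits across all scales rather than at each scale separately. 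If carried out correctly, the per-scale contribution then decays geometrically, the sum collapses to $O(4^{2k})$, and taking the square root gives $\gamma_{1,2}(\mathfrak A_{v,k}, d_Y^2)\leq C\cdot 2^{2k}$ as required. This is precisely the place where one expects to lean on Talagrand's machinery for partitioning metric spaces derived from a one-parameter family of grid approximations.
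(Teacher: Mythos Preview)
The paper does not prove this lemma; it simply cites \cite[Proposition~4.4.5]{Talagrand14} and notes that the metric space there is isomorphic to $(\mathfrak A_{v,k},d_Y^2)$ via the symmetric-difference identification. So there is no proof in the paper to compare against, and your attempt is trying to supply what the paper outsources.

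Your framework---encode each animal by its dual boundary loop and partition by the trace at mesh $\ell_n\asymp 2^{k-n}$---is the natural first approach, and you correctly compute that it yields only $\gamma_{1,2}\leq C\sqrt{k}\,2^{2k}$. The gap is that you do not actually remove the $\sqrt{k}$. Your ``chained encoding'' proposal targets the wrong quantity: it aims to compress the \emph{entropy} $|\Pi_n|$ (record only increments, bound total bits across scales), but the bottleneck is the \emph{diameter}. The admissibility constraint $|\Pi_n|\leq 2^{2^n}$ already forces $\ell_n\gtrsim 2^{k-n}$, and at that mesh two curves sharing a trace can genuinely differ by a full cell in each of the $\sim 2^n$ visited cells, so $\diam(\pi_n(t))\asymp 2^{2k-n}$ is tight and $(2^n\cdot\diam)^2\asymp 4^{2k}$ at every level. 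More efficient labelling of the partition pieces does not shrink the pieces themselves, so your assertion that the per-scale contribution would then ``decay geometrically'' is unsupported. Getting the sharp bound requires a genuinely different construction of the admissible sequence (this is precisely the content of Talagrand's proposition), and your final sentence---``lean on Talagrand's machinery''---is at that point a citation, not a proof, which is exactly what the paper does.
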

Note that \cite[Proposition 4.4.5]{Talagrand14} was stated in a slightly different context but the metric space it applies to is easily seen to be isomorphic to $\mathfrak A_{v,k}$ with distance $d_Y^2$ since $d_Y^2(A, A')  = \E[(Y_A - Y_{A'})^2]$ is simply the cardinality of the symmetric difference of $A$ and $A'$.  

\begin{proof}[Proof of Lemma~\ref{lm :: bound on animals of bounded length}]
In order to apply Lemma~\ref{lem-4.4.6-talagrand}, we need a bound on the cardinality of $\mathfrak A_{k,v}$. By considering a simply connected lattice animal $A$ as the lattice points enclosed by a loop consisting of $|\partial A|$ edges of the dual lattice $(1/2,1/2)  + \mathbb Z^2$, it is easy to see that $|\mathfrak A_{k,v}| \leq 2^{2^{k+1}}$ (this is because one can construct a loop by starting an edge near $v$ and adding new edges sequentially where each new edge has at most  4 choices). At this point, it is immediate from Lemmas~\ref{lem-4.4.6-talagrand} and \ref{lem-4.4.5-talagrand} that $\gamma_2(\mathfrak A_{k,v},d_Y) \leq C k^{3/4}2^k$. Thus  by Theorem~\ref{thm :: majorizing measure}, we have that $\E[\max_{A \in \mathfrak A_{v,k}} Y_A] \leq C k^{3/4} 2^k$.
Therefore, we can obtain Lemma~\ref{lm :: bound on animals of bounded length} by \eqref{eq :: Borel inequality} and the fact that $\var(Y_A) \leq 2^{2k}$ for all $A \in \mathfrak A_{v,k}$.
\end{proof}

\begin{proof}[Proof of Proposition~\ref{prop :: correlation length lower bound}]
	The proof is the same as the proof of \cite[Theorem 4.4.2]{Talagrand14} using \cite[Proposition 4.4.3]{Talagrand14}.  Note that the total length of all edges in $\Lambda_N$ is $2 \cdot (2N+1)\cdot 2N$ and let $k^* = \min\{k \,:\, 2^k \geq 2\cdot (2N+1) \cdot 2N  \}$. We have $k^* \leq C \log N$, and for any $A \in \mathfrak A_N$ there exists $2 \leq k \leq k^*$ such that $2^{k-1} \leq |\partial A| \leq 2^k$. Therefore, using Lemma~\ref{lm :: bound on animals of bounded length} and a union bound over $v$ and $k$ we have for some constant $C > 0$
	\[
	\P\left( \max_{A \in \mathfrak A_N} \frac{Y_A}{|\partial A|} \geq C (\log N)^{3/4} + x \right) \leq C e^{\log N-x^2/2}\,.
	\] 
	Letting $x = C'(\log N)^{3/4}+ u$ for a large enough constant $C'$ concludes the proof.
\end{proof}

\section{Alternative Proof of the Upper bound on greedy lattice animal}\label{sec :: Lower bound proof original}

This section presents an alternative proof of the lower bound in Theorem~\ref{thm :: main result}. 
As in the previous proof, it consists of an upper bound on the greedy lattice animal process. 
The proof is more geometric and for that reason it is easier to do the analysis in the continuum. 
For notational convenience, we use a curve to refer to an oriented piecewise linear curve (unless otherwise specified), and we call a maximal line segment in a curve (in terms of inclusion) a side of the curve and the endpoints of such segments the vertices of the curve. 
For $R > 1$, let $\mathcal P_R $denote the collection of positively oriented simple closed curves with sides of length at least 1 that are contained in $[-R, R]^2$. 
For $\eta  \in \mathcal P_R$, we let $P(\eta)$ denote the polygon enclosed by
$\eta$ (i.e. the set of points $\eta$ separates from infinity) and $\nu(\eta) = W(P(\eta))$ (where as in Section~\ref{sec :: emergence} $W$ is a white noise).  We will prove the following
\begin{proposition}
	There exists a constant $c_1 > 0$ such that $0  < \epsilon < c_1$ and $R \leq \exp(c_1 \epsilon^{4/3}/\log \epsilon
	^{-1})$ we have the following
	\[
	\P\left(\sup_{\eta \in \mathcal \epsilon P_R} \nu(\eta)) - \ell(\eta) \geq 0\right) \leq \exp(-c_1/\epsilon^2)
	\]
\end{proposition}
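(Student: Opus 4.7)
This proposition is the continuum analogue of the upper bound in Theorem~\ref{thm-greedy-lattice-animal}, and the plan is a multi-scale chaining argument that runs the construction of Subsection~\ref{sec :: emergence} \emph{in reverse}: instead of growing a polygon with $\epsilon \nu > \ell$, we approximate every $\eta \in \mathcal P_R$ by a sequence of coarsenings and apply a union bound. Set $\delta = c_0 \epsilon^{2/3}$ for a small absolute constant $c_0$, define scales $L_k = \delta^{-k}$ for $k = 0, 1, \ldots, K$ with $K$ maximal subject to $L_K \leq R$ (so $K \asymp \log R / \log \epsilon^{-1}$), and let $\mathcal Q_k$ be the finite collection of positively oriented simple piecewise linear closed curves with vertices in $L_k \mathbb Z^2 \cap [-2R, 2R]^2$ and sides of length $\geq L_k$.

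For each $\eta \in \mathcal P_R$, associate a sequence $\eta = \eta^{(0)}, \eta^{(1)}, \ldots, \eta^{(K)}$ with $\eta^{(k)} \in \mathcal Q_k$, constructed greedily by replacing any sub-arc whose maximum distance from its chord is at most $\delta$ times the chord length by that chord. Two estimates form the geometric core: (i) $\lambda(P(\eta^{(k)}) \triangle P(\eta^{(k+1)})) \leq C \delta L_{k+1} \ell(\eta)$, because the symmetric difference decomposes into at most $\ell(\eta)/L_{k+1}$ bumps, each inscribed in an $L_{k+1} \times \delta L_{k+1}$ rectangle; and (ii) given $\eta^{(k+1)} \in \mathcal Q_{k+1}$, the number of $\eta^{(k)} \in \mathcal Q_k$ that coarsen to it is at most $e^{C \ell(\eta^{(k+1)}) / L_k}$, by a local description-complexity count per side. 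The chaining decomposition $\nu(\eta) = \nu(\eta^{(K)}) + \sum_{k=0}^{K-1} [\nu(\eta^{(k)}) - \nu(\eta^{(k+1)})]$ then writes $\nu(\eta)$ as a sum of centered Gaussian increments with variances $\leq C \delta L_{k+1} \ell$. Applying Gaussian concentration at each scale and a union bound over the coarsening tree yields, uniformly in $\eta$, a deviation bound which is a chaining sum of $\sqrt{\sigma_k^2 \log N_k} + t\sqrt{\sigma_k^2}$ terms; the base term $\nu(\eta^{(K)})$ is handled separately via the isoperimetric inequality $\lambda(P(\eta^{(K)})) \leq R^2$ combined with the small cardinality of $\mathcal Q_K$. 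Setting $t = \Theta(1/\epsilon)$ gives failure probability $\exp(-c/\epsilon^2)$, and requiring the deterministic part of the chaining sum to be strictly less than $\ell/\epsilon$ translates, after optimizing over $K$ and $\delta$, to the stated range of $R$.

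The main obstacle is the coarsening construction itself. One must arrange simultaneously that (a) $\ell(\eta^{(k)})$ does not grow with $k$, which follows since chords are never longer than the arcs they replace; (b) the bumps at distinct scales are genuinely disjoint, so variances aggregate cleanly in the chaining sum; (c) the resulting $\eta^{(k)}$ remains a \emph{simple} closed curve (chord-replacement of nearby sub-arcs could in principle introduce self-intersections, requiring a ``fattening'' of the elimination rule); and (d) the combinatorial count in (ii) is sharp enough to recover the $3/4$-type exponent, which is the geometric content of the $\gamma_{1,2}$-to-$\gamma_2$ conversion from Section~\ref{sec :: Lower bound proof}. Items (c) and (d) are the genuinely subtle parts; the $\log \epsilon^{-1}$ loss in the proposition, relative to the optimal $\log R \leq c \epsilon^{-4/3}$, can be attributed to the sub-optimality of a hands-on chaining that does not fully exploit Talagrand's majorizing measure technology.
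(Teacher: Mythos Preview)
Your proposal outlines a plausible chaining strategy but leaves the two hardest steps unresolved and, more importantly, the quantitative bounds you write down do not deliver the $\epsilon^{-4/3}$ exponent.

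On the construction: the map $\eta\mapsto\eta^{(k)}$ is not well-defined as stated. ``Greedily replacing any sub-arc whose maximum distance from its chord is at most $\delta$ times the chord length'' does not specify which sub-arcs, in what order, or why the output has vertices in $L_k\mathbb Z^2$ and sides $\geq L_k$. You correctly flag simplicity (c) and the entropy count (d) as ``genuinely subtle'', but these are precisely the load-bearing steps, and you do not carry them out.

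On the arithmetic: with your bounds (i) and (ii), the per-scale chaining contribution is
\[
\sqrt{\sigma_k^2\log N_k}\;\leq\;\sqrt{C\delta L_{k+1}\,\ell\cdot C\ell/L_k}\;=\;C\ell\sqrt{\delta\cdot L_{k+1}/L_k}\;=\;C\ell,
\]
since $L_{k+1}/L_k=\delta^{-1}$. Summing over $K\asymp\log R/\log\epsilon^{-1}$ scales gives a chaining sum of order $K\ell$, and $\epsilon\cdot K\ell<\ell$ forces only $\log R\lesssim\epsilon^{-1}\log\epsilon^{-1}$, which is much weaker than the target $\epsilon^{-4/3}/\log\epsilon^{-1}$. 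The $3/4$-type improvement you invoke in (d) does not fall out of a uniform chaining with a single aspect ratio $\delta$; it requires either the $\gamma_{1,2}$-to-$\gamma_2$ conversion of Lemma~\ref{lem-4.4.6-talagrand} or an equivalent mechanism, and you do not implement one.

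The paper's route is structurally different. Rather than coarsening closed curves, it passes to non-closed \emph{good curves} (Definition~\ref{def :: good curve}) and uses winding numbers (Subsection~\ref{sec :: winding numbers}) to control variances when such curves are closed up by a chord---this is how the simplicity problem you raise in (c) is bypassed. The multi-scale argument is an induction on $R$ (Lemma~\ref{lm :: lower bound inductive step}): each curve in $\mathcal H(R)$ is cut at its crossings of horizontal strips of height $\rho R$ with $\rho=\epsilon^{2/3}\log\epsilon^{-1}$, then again at vertical strips of width $R/20$, and the pieces are fed back into the induction hypothesis. The ingredient that replaces your missing $3/4$-conversion is Claim~\ref{clm :: skeleton length bound}: a curve making $\kappa$ vertical oscillations of amplitude $\rho R$ has length at least $(1+c\rho^2\kappa)R$, so the \emph{mean} of $X(\eta)/R$ decreases linearly in $\kappa$. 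This length penalty, aggregated via Lemma~\ref{lm :: generic bound for partitioned collection}, balances the entropy growth and produces the $C\epsilon^{4/3}\log\epsilon^{-1}$ increment per scale that yields the claimed range of $R$.
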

Our proof consists of a multi-scale analysis where we bound the value of polygon animals of growing diameters in an inductive manner.
Crucially, this requires a hierarchical structure on the polygon animal process (i.e, the Gaussian process associated with polygon animals), where ideally we can decompose a polygon animal into a sum of polygon animals in smaller scales.
Such decomposition is more or less obvious for convex polygon animals, but in general it seems non-trivial to have a schematic decomposition.
In order to address this, in Subsection~\ref{sec :: winding numbers} we will instead consider a decomposition for non-closed curves (as opposed to closed curves which are boundaries of polygon animals) for which we rely on the geometric notion of winding number in order to keep track of ``signs'' in the decomposition.

In Subsection~\ref{sec :: Gaussian process} we review a number of useful tools from the theory of Gaussian processes, including the Gaussian concentration inequality and Dudley's integral bound on the supremum.
To prepare for applications of Dudley's integral bound that will arise in our analysis, we formulate Lemma~\ref{lm :: generic bound for partitioned collection} which we will repeatedly apply later in order to bound the supremum of a Gaussian process using bounds on the suprema of its sub-components.
Having introduced these tools, we show in Subsection~\ref{sec :: reduction to first moment} that Proposition~\ref{prop :: correlation length lower bound} follows from a first moment bound, as incorporated in Proposition~\ref{prop :: lower bound reduction}.

The rest of the section is then devoted to the proof of Proposition~\ref{prop :: lower bound reduction}.
As a preliminary analysis, we prove in Subsection~\ref{sec:regularity} a regularity bound by an application of Dudley's integral bound, and in particular we control the supremum over a collection of polygon animals (in fact, curves) which are all perturbations of a certain animal.
Besides its application in the multi-scale analysis, this regularity bound plays a key role in verifying the base case for our inductive argument, as detailed in Subsection~\ref{sec :: lower base case}.
The core induction argument is then carried out in Subsections~\ref{sec :: lower induction} and \ref{sec :: lower deferred}, where we use the full power of the tools we have reviewed and developed thus far.
In this proof, we also see the main advantage of working with non-centered Gaussian variables (as opposed to working with centered Gaussian variables as given by polygon animals normalized by their
 boundary sizes): when decomposing a curve into sub-curves, the entropy grows in the number of ``significant turns'' the curve makes, but the mean decreases with the number of turns since significant turns lead to increments of the boundary length compared to a straight line (see Claim~\ref{clm :: skeleton length bound}).
Our decompositions of curves (and correspondingly our inductive proofs) are implemented in two similar steps, where we first decompose the curves in terms of their vertical oscillations and then in terms of their horizontal oscillations.
The first step is more important and requires more delicate analysis, which is carried out in Subsection~\ref{sec :: lower induction} with some lemmas deferred to the next subsection.
The second step, formulated as Lemma~\ref{lm :: strip bound}, is proved at the end of the paper.
On the one hand it employs a similar analysis with some simplifications since the bounds required are not as sharp as in the first step.
On the other hand, it includes a new ingredient (Claim~\ref{clm :: lower bound induction hypothesis}) which finally allows us to invoke the induction hypothesis.

\subsection{A multi-scale representation for polygon animals via winding numbers}\label{sec :: winding numbers}

The first challenge for a multi-scale analysis proof is a multi-scale representation for the polygon animal process.
That is, we wish to decompose a polygon animal in a big scale into polygon animals in a small scale.
However, it is not obvious how to carry this out, since we are unable to decompose a closed curve into a union of closed curves.
Therefore, we prefer to work with non-closed curves which can be decomposed into a concatenation of shorter curves.
To this end, we extend the definition of $\nu$ to non-closed curves, in a natural way where we add to a non-closed curve a line segment that goes from its end point to its start point.
Since our polygons can be arbitrary and in particular not necessarily convex, when decomposing a polygon by decomposing its boundary curve, the aforementioned extension for a (simple) piece of boundary may result in a non-simple closed curve (see the right picture of Figure~\ref{fig :: coarsening}) and thus we will also have to take into account the signs for regions in smaller polygons in the decomposition.
In order to do this, we will use the notion of winding number (for curves that are not necessarily simple), via which we obtain a decomposition of polygon animals as stated in Corollary~\ref{cor :: coarsening}.
In order to bound the variance of the Gaussian random variables associated with the polygon animals in the decomposition, we will bound corresponding winding numbers, as incorporated in Lemmas~\ref{lm :: splitting curve w bound} and \ref{lm :: good curve w bound}.

We now briefly review the notion of winding number, which counts the number of revolutions (in the counter-clockwise direction) a closed curve $\eta$ completes around a point.
For a curve $\eta : [0,1] \to \mathbb R^2$, we let $\eta^* = \eta([0,1])$ denote the points in $\eta$.
Note that if $\eta$ is a closed curve and $z \in \mathbb R^2 \setminus \eta^*$, then there exists a continuous parametrization of $\eta - z$ in polar coordinates.
If $(r,\theta):[0,1] \to \mathbb R^2$ is such a parametrization, then the winding number of $\eta$ around $z$, which we denote by $w(z,\eta)$, is given by (note that the winding number does not depend on the choice of the parametrization)
\[
w(z,\eta) = \frac{\theta(1) - \theta(0)}{2\pi}\,.
\]
For any closed curve $\eta$,  let $A = \mathbb R^2 \setminus \eta^*$.
Then $w(\cdot, \eta)$ is an integer-valued function on $A$ that is constant on each connected component of $A$ and is zero on the unbounded component of $A$.
See Figure~\ref{fig :: winding number def} for an illustration and \cite[Theorem 10.10]{Rudin87} for a proof.
\begin{figure}[h]
	\centering
	\includegraphics[scale = 0.35]{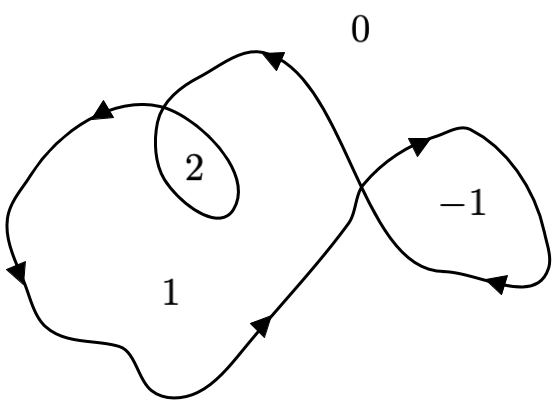}
	\caption{An illustration of the winding number of a curve on each component of its complement}
	\label{fig :: winding number def}
\end{figure}
For a closed curve $\eta$, we let $\mathfrak C(\eta)$ denote the collection of connected components of $\mathbb R^2 \setminus \eta^*$.
Since $w$ is constant on every element of $\mathfrak C(\eta)$, for $\mathcal C \in \mathfrak C(\eta)$ we abuse the notation and write $w(\mathcal C,\eta)$ for the value of $w$ on $\mathcal C$.
Finally, we let
\begin{equation}\label{eq-def-nu}
\nu(\eta) = \sum_{\mathcal C \in \mathfrak C(\eta)} w(\mathcal C,\eta)W(\mathcal C)\,.
\end{equation}
Note that this is consistent with $\nu(\eta) = W(P(\eta))$ for $\eta \in \mathcal P_R$.
We can think of $\nu(\eta)$ as the integral of $w(\cdot,\eta)$ with respect to $d W$.

Having defined $\nu$ for arbitrary closed curves, it remains to extend the definition to non-closed curves.
For a non-closed curve $\eta$, we let $a$ and $b$ be the start and end points of $\eta$ and as mentioned earlier we extend $\eta$ to a closed curve $\eta_{\circ}$ by concatenating $\eta$ with the line segment from $b$ to $a$.
We let $w(\cdot,\eta) = w(\cdot,\eta_{\circ})$ and $\nu(\eta) = \nu(\eta_{\circ})$.
So, for example, if $\eta$ is a straight line from $a$ to $b$, we have $\nu(\eta) = 0$.

The following notation will be useful later in this section.
For a (not necessarily closed) curve $\eta$, a point $z \notin \eta^*$, and any continuous parametrization $(r,\theta): [0,1] \to \mathbb R \times \mathbb R$ of $\eta - z$, we let
\[
\Delta(z,\eta) = \frac{\theta(1) - \theta(0)}{2\pi}\,.
\]
Note that $\Delta$ does not depend on the choice of the parametrization and it coincides with the winding number if and only if $\eta$ is closed.

For a curve $\eta$, we let $\eta^-$ denote the curve obtained by reversing the orientation of $\eta$.
Since $w(\cdot, \eta^-) = -w(\cdot,\eta)$, we have $\nu(\eta^-) = - \nu(\eta)$.
For two curves $\eta_1$ and $\eta_2$ such that the endpoint of $\eta_1$ coincides with the start point of $\eta_2$, we let $\eta_1 \eta_2$
denote their concatenation.
Since we want to decompose long curves into short ones, we need to relate $\nu(\eta_1 \eta_2)$ to $\nu(\eta_1)$ and $\nu(\eta_2)$.
To this end, we first prove the following lemma which allows us to calculate the change in $\nu(\eta)$ that results from changing a segment of $\eta$.
See the left picture of Figure~\ref{fig :: coarsening} for an illustration.
\begin{lemma}\label{lm :: changing arc}
For $a,b,u,v \in \mathbb R^2$, let $\eta_1$ be a curve from $a$ to $u$, let $\eta_2, \eta_3$ be curves from $u$ to $v$, and let $\eta_4$ be a curve from $v$ to $b$.
Let $\eta = \eta_1\eta_2 \eta_4$ and $\gamma = \eta_1 \eta_3 \eta_4$, and let $\ell$ be the line segment from $b$ to $a$.
Then for all $z\in \mathbb R^2 \setminus (\eta^* \cup \gamma^* \cup \ell^*)$,
\[
w(z,\eta) - w(z,\gamma) = w(z, \eta_2\eta_3^-)\,.
\]
In particular, we have $\nu(\eta) - \nu(\gamma) = \nu(\eta_2 \eta_3^-)$.
\end{lemma}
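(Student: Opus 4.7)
My plan is to reduce the identity to the additivity of the signed angular change $\Delta$ under concatenation of curves. As preliminaries I would record two elementary facts about $\Delta$: (i) if $\alpha$ ends at the start of $\beta$ and $z \notin \alpha^* \cup \beta^*$, then continuous polar-angle lifts of $\alpha - z$ and $\beta - z$ can be glued at the meeting point, yielding $\Delta(z, \alpha\beta) = \Delta(z, \alpha) + \Delta(z, \beta)$; (ii) reversing orientation negates the angular change, so $\Delta(z, \eta_3^-) = -\Delta(z, \eta_3)$. I would also note the direct consequence of the definition that whenever a curve is already closed, the winding number coincides with $\Delta$.

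With these in hand, fix $z \in \mathbb R^2 \setminus (\eta^* \cup \gamma^* \cup \ell^*)$. By the definition of $w$ on non-closed curves, $w(z, \eta) = \Delta(z, \eta_1 \eta_2 \eta_4 \ell)$ and $w(z, \gamma) = \Delta(z, \eta_1 \eta_3 \eta_4 \ell)$. Applying (i) to split each of these into four summands gives
\[
w(z, \eta) - w(z, \gamma) = \Delta(z, \eta_2) - \Delta(z, \eta_3) = \Delta(z, \eta_2) + \Delta(z, \eta_3^-),
\]
using (ii). Since $\eta_2 \eta_3^-$ starts and ends at $u$, it is already closed, so the closing segment in the definition of $w$ is degenerate and $w(z, \eta_2 \eta_3^-) = \Delta(z, \eta_2 \eta_3^-) = \Delta(z, \eta_2) + \Delta(z, \eta_3^-)$ (the last step again by (i), valid because $\eta_2^*, \eta_3^* \subset \eta^* \cup \gamma^*$). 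This proves the pointwise identity.

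For the $\nu$-identity, I would take $\mathcal A$ to be the set of connected components of $\mathbb R^2 \setminus (\eta^* \cup \gamma^* \cup \ell^*)$. This is a common refinement: each element of $\mathfrak C(\eta)$, $\mathfrak C(\gamma)$, and $\mathfrak C(\eta_2\eta_3^-)$ differs from a union of elements of $\mathcal A$ only by a subset of the piecewise linear (hence Lebesgue null) sets $\eta^*, \gamma^*, \ell^*$. Since white noise assigns mass zero to null sets and is additive, each $\nu(\cdot)$ can be rewritten as $\sum_{\mathcal D \in \mathcal A} w(\mathcal D, \cdot)\, W(\mathcal D)$, and the pointwise identity applied on every $\mathcal D \in \mathcal A$ yields $\nu(\eta) - \nu(\gamma) = \nu(\eta_2 \eta_3^-)$.

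I do not anticipate any serious obstacle; the only points requiring genuine care are the consistent choice of polar lifts at the concatenation points (so that the $\theta$ increments really add), and the bookkeeping of Lebesgue null sets when transporting the winding-number identity through the white-noise pairing.
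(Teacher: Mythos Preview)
Your proof is correct and follows essentially the same route as the paper: both arguments reduce the winding-number identity to the additivity of the angular increment $\Delta$ under concatenation, cancel the shared pieces $\Delta(z,\eta_1)$, $\Delta(z,\eta_4)$, $\Delta(z,\ell)$, and identify the remainder $\Delta(z,\eta_2)-\Delta(z,\eta_3)$ with $w(z,\eta_2\eta_3^-)$. Your treatment of the $\nu$-identity via a common refinement of the component partitions is in fact more explicit than the paper's one-line appeal to the definition.
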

\begin{figure}	
	\centering
	\begin{subfigure}{.5\textwidth}
		\centering
		\includegraphics[width=.6\linewidth]{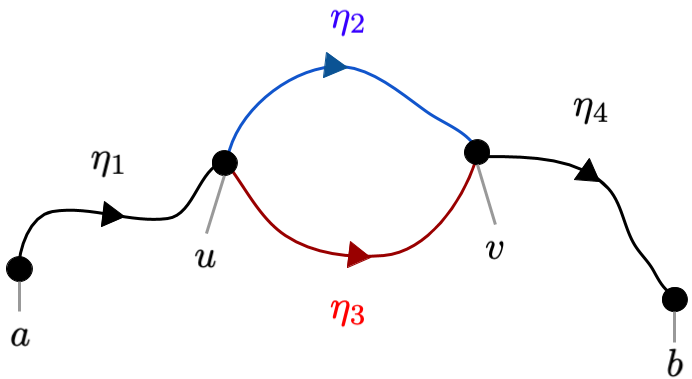}
	\end{subfigure}%
	\begin{subfigure}{.5\textwidth}
		\centering
		\includegraphics[width=.6\linewidth]{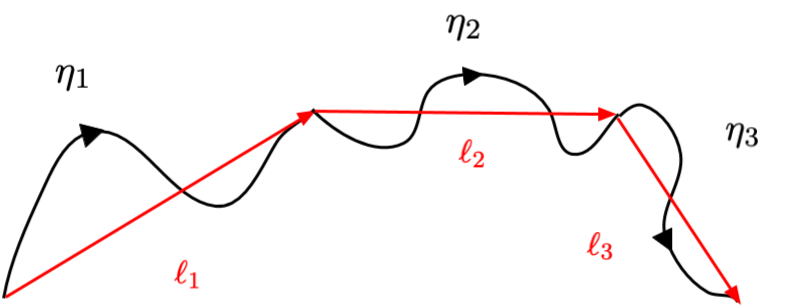}
	\end{subfigure}
\caption{The left picture is on changing a segment of a curve and the right picture is on partitioning a curve into segments.}
\label{fig :: coarsening}
\end{figure}
\begin{proof}
Let $\Delta_i = \Delta(z,\eta_i)$ for $1 \leq i \leq 4$.
We have
\begin{align*}
\Delta(z,\eta) = \Delta_1 + \Delta_2 + \Delta_3\,, \mbox{ and } \Delta(z,\gamma)
	= \Delta_1 + \Delta_3 + \Delta_4\,.
\end{align*}
Recall that $\ell$ is the line segment from $b$ to $a$.
We have
\[
w(z,\eta) - \Delta(z,\eta) = w(z,\gamma) - \Delta(z, \gamma) = \Delta(z,\ell)\,.
\]
Therefore
\begin{equation*}
w(z,\eta) - w(z,\gamma) = \Delta(z,\eta) - \Delta(z,\gamma) =
	\Delta_2 - \Delta_3 = w(z,\eta_2\eta_3^-)\,.
\end{equation*}
Recalling \eqref{eq-def-nu},
it follows that $\nu(\eta) - \nu(\gamma) = \nu(\eta_2 \eta_3^-)$ as claimed.
\end{proof}

The following corollary of Lemma~\ref{lm :: changing arc} will allow us to split
$\eta$ into segments which we can analyze separately. See
Figure~\ref{fig :: coarsening} for an illustration.
\begin{cor}\label{cor :: coarsening}
For $n \geq 1$, let $(\eta_1,\dots,\eta_n)$ be a partition
	of a curve $\eta$ (i.e., $\eta = \eta_1\dots\eta_n$). Let
	$\mathbf x = (x_1,\dots,x_{n+1})$ be such that $\eta_i$ is a curve from $x_i$
	to $x_{i+1}$. For $1 \leq i \leq n$, let $\ell_i$ be the line segment from $x_i$
	to $x_{i+1}$ and let $\eta' = \ell_1\dots\ell_n$ be the curve obtained by concatenating
	all these line segments.
	Let $\ell_{n+1}$ be the line segment from $x_{n+1}$ to $x_1$.
	Then for all $z\in \mathbb R^2 \setminus (\eta^* \cup (\eta')^* \cup  \ell_{n+1}^*)$,
	\[
	w(z,\eta) = w(z,\eta') + \sum_{i = 1}^n w(z,\eta_i)\,.
	\]
	In particular, we have $\nu(\eta) = \nu(\eta') + \sum_{i = 1}^n \nu(\eta_i)$.
\end{cor}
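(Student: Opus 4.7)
The plan is to work directly with the angle-change functional $\Delta(z,\cdot)$ introduced just above the corollary. Two properties are key. First, $\Delta(z,\cdot)$ is additive under concatenation of curves: if one parametrizes $\eta_1\eta_2-z$ continuously in polar coordinates, the angular coordinate $\theta$ is continuous at the join, so the total angular increment is the sum of the increments along $\eta_1$ and $\eta_2$. Second, by the very definition of $w$ on non-closed curves, for any curve $\gamma$ from $a$ to $b$ with $z\notin\gamma^*$ one has $w(z,\gamma)=\Delta(z,\gamma)+\Delta(z,\overline{ba})$, where $\overline{ba}$ denotes the straight segment from $b$ back to $a$.

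I apply this to the three objects in the corollary. Both $\eta$ and $\eta'$ run from $x_1$ to $x_{n+1}$, hence share the same closing segment $\ell_{n+1}$. Subtracting the two expressions for $w$ gives $w(z,\eta)-w(z,\eta')=\Delta(z,\eta)-\Delta(z,\eta')$, which by additivity over the partitions $\eta=\eta_1\cdots\eta_n$ and $\eta'=\ell_1\cdots\ell_n$ equals $\sum_{i=1}^{n}\bigl(\Delta(z,\eta_i)-\Delta(z,\ell_i)\bigr)$. On the other hand, each $\eta_i$ runs from $x_i$ to $x_{i+1}$, so its natural closure appends $\ell_i^{-}$; therefore $w(z,\eta_i)=\Delta(z,\eta_i)+\Delta(z,\ell_i^{-})=\Delta(z,\eta_i)-\Delta(z,\ell_i)$. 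Summing over $i$ matches the previous display and yields the winding-number identity.

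For the $\nu$-identity I refine the planar partitions: on each connected component of the complement of $\eta^*\cup(\eta')^*\cup\ell_{n+1}^*\cup\bigcup_i\eta_i^*\cup\bigcup_i\ell_i^*$, all three of $w(\cdot,\eta)$, $w(\cdot,\eta')$, and $w(\cdot,\eta_i)$ are constant, so rewriting the definition of $\nu$ on this common refinement and invoking the pointwise identity gives $\nu(\eta)=\nu(\eta')+\sum_{i=1}^n\nu(\eta_i)$. An equivalent route is to telescope by repeated application of Lemma~\ref{lm :: changing arc}: setting $\gamma_k=\ell_1\cdots\ell_k\eta_{k+1}\cdots\eta_n$ and swapping one piece at a time, the lemma produces $w(z,\gamma_{k-1})-w(z,\gamma_k)=w(z,\eta_k\ell_k^{-})=w(z,\eta_k)$, and the sum telescopes. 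I do not expect a genuine obstacle; the only subtlety is careful orientation bookkeeping so that the minus signs on the reversed segments $\ell_i^{-}$ cancel as intended.
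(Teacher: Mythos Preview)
Your proposal is correct. Your secondary ``telescoping'' route is exactly the paper's proof: the paper sets $\gamma_0=\eta$, $\gamma_n=\eta'$, $\gamma_i=\ell_1\cdots\ell_i\,\eta_{i+1}\cdots\eta_n$, and applies Lemma~\ref{lm :: changing arc} to each consecutive pair.

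Your primary route---working directly with the additivity of $\Delta(z,\cdot)$ under concatenation and the identity $w(z,\gamma)=\Delta(z,\gamma)+\Delta(z,\overline{ba})$---is a mild streamlining: it effectively inlines the content of Lemma~\ref{lm :: changing arc} and avoids the intermediate sequence $(\gamma_i)$ altogether. The two arguments are equivalent in substance, since the lemma itself is proved by exactly this $\Delta$-bookkeeping; your version just makes the cancellation of the closing segments $\ell_{n+1}$ and $\ell_i^{-}$ explicit in one pass rather than step by step. One cosmetic remark: in your $\nu$-paragraph the union $\eta^*\cup(\eta')^*$ already contains $\bigcup_i\eta_i^*$ and $\bigcup_i\ell_i^*$, so the refinement you describe is over the same set as in the statement.
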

\begin{proof}
Let $\gamma_0 = \eta$, $\gamma_n = \eta'$, and for $1 \leq i < n$ let
$\gamma_i = \ell_1 \dots \ell_i \eta_{i+1} \dots \eta_n$. We have
\[
w(z,\eta) - w(z,\eta') = \sum_{i = 1}^{n} (w(z,\gamma_{i-1}) - w(z,\gamma_{i}))\,.
\]
Therefore, it suffices to show
\[
w(z,\gamma_{i-1}) - w(z,\gamma_{i}) = w(z,\eta_i) \quad  1 \leq i \leq n\,.
\]
This follows by applying Lemma~\ref{lm :: changing arc} to $\gamma_i$ and $\gamma_{i-1}$
with $a  = x_0$, $b = x_n$, $u = x_i$, and $v = x_{i+1}$.
\end{proof}
\begin{figure}
	\centering
	\begin{subfigure}{.32\textwidth}
		\centering
		\includegraphics[width=.6\linewidth]{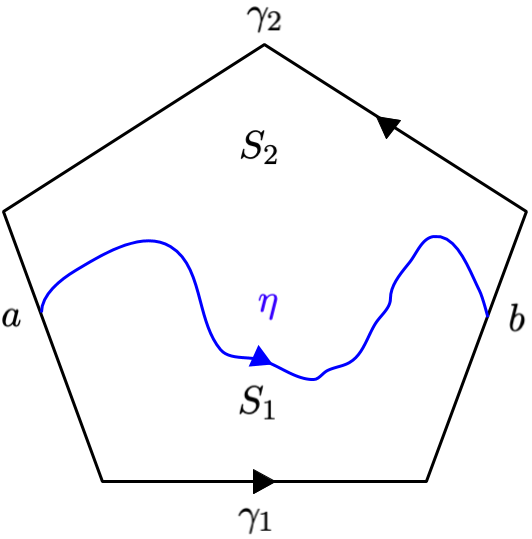}
	\end{subfigure}%
	\begin{subfigure}{.32\textwidth}
		\centering
		\includegraphics[width=.6\linewidth]{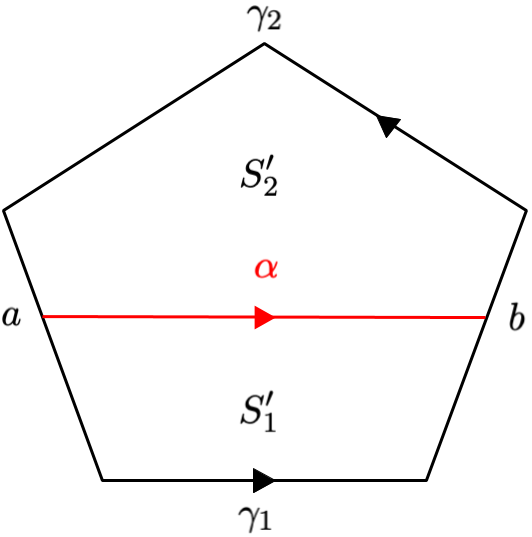}
	\end{subfigure}
	\begin{subfigure}{.32\textwidth}
	\centering
	\includegraphics[width=.6\linewidth]{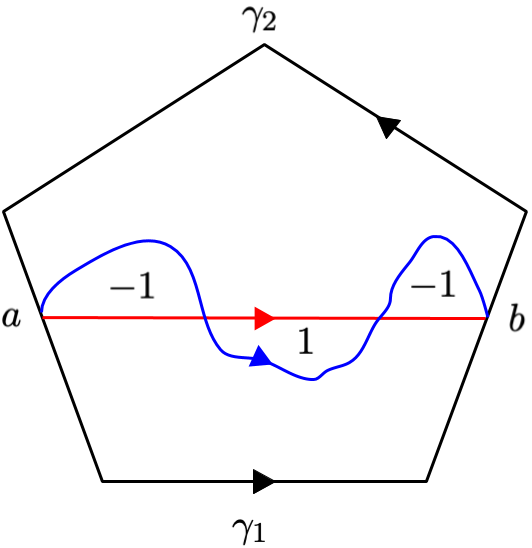}
	\end{subfigure}
	\caption{Illustration of Lemma~\ref{lm :: splitting curve w bound}. In all pictures, $S$ is the whole polygon.}
	\label{fig :: splitting curve}
\end{figure}
As hinted at the beginning of this subsection, in order to control the maximum of $\nu$ over a set of simple non-closed curves
$\mathcal H$, we need to control $\var(\nu(\eta))$ for
$\eta \in \mathcal H$. We cannot simply bound $\var(\nu(\eta))$ by the area enclosed
when we extend $\eta$ into a closed curve because the extended curve need not be simple (and therefore its winding numbers need not be bounded by 1).
Therefore, we will introduce some geometric conditions on curves that, when satisfied,
yield desirable bounds on winding numbers.
\begin{defn}\label{def :: splitting curve}
	We say a curve $\eta \in \mathcal H$ from $a$ to $b$ is a \emph{splitting curve} if there exists an
	open, bounded convex set $S$ such that $a,b \in \partial S$ and
	$\eta^* \setminus \{a,b\} \subset S$. In addition, we say $\eta$ splits $S$.
	See Figure~\ref{fig :: splitting curve} for an illustration of a splitting curve.
\end{defn}
\begin{lemma}\label{lm :: splitting curve w bound}
For a splitting curve $\eta$, we have $\max_{z \notin \eta^*} |w(z,\eta)| \leq 1$.
\end{lemma}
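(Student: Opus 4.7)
The plan is to study the closed extension $\eta_\circ = \eta \cdot \ell$, where $\ell$ is the line segment from $b$ to $a$. Since $S$ is convex and $a, b \in \partial S$, the chord $\ell$ is contained in $\bar S$, so $\eta_\circ^* \subset \bar S$. I would then split the argument into two cases according to whether $z$ lies outside or inside $\bar S$.

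For $z \notin \bar S$: by the separating hyperplane theorem there is a half-plane that contains $\bar S$ but excludes $z$, so $z$ lies in the unbounded component of $\mathbb R^2 \setminus \eta_\circ^*$. Concretely, a ray from $z$ pointing away from $\bar S$ meets $\eta_\circ^*$ in zero points, hence $w(z, \eta_\circ) = 0$.

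For $z \in \bar S \setminus \eta_\circ^*$: my plan is to decompose $\eta_\circ$ into Jordan loops governed by the self-intersections of $\eta_\circ$. These self-intersections coincide with the interior crossings of $\eta$ with $\ell$, say $p_1, \dots, p_k$, since $\eta$ and $\ell$ are each simple. Let $L$ be the line carrying $\ell$, cutting $\bar S$ into the two convex pieces $\bar S_+$ and $\bar S_-$. The crossings partition $\eta$ into arcs that lie alternately in $\bar S_+$ and $\bar S_-$. At each self-intersection I perform the standard strand-switch resolution, producing $k+1$ Jordan closed curves, each contained in one of the half-planes, and each of which (taken with the orientation inherited from $\eta_\circ$) encloses a region in $\bar S_+$ or $\bar S_-$ with winding $\pm 1$.

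The heart of the argument, and the main obstacle, is to show that the Jordan loops in the same half-plane have pairwise disjoint interiors. The intended mechanism is that the simplicity of $\eta$ together with $\eta^* \setminus \{a, b\} \subset S$ prevents nesting: if two arcs of $\eta$ in, say, $\bar S_+$ had nested endpoint intervals on $L$, the portion of $\eta$ leaving the inner arc and eventually reaching $b$ would have to cross the outer arc (since $\bar S$ is convex and $\eta$ cannot leave $\bar S$), contradicting simplicity of $\eta$. Granting this non-nesting property, every $z \in \bar S \setminus \eta_\circ^*$ lies in the interior of at most one Jordan loop, so $w(z, \eta_\circ)$ equals the winding contribution of that single loop and hence lies in $\{-1, 0, +1\}$. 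The other delicate point is to check that the strand-switch resolution does yield loops with consistent orientations, which can be verified inductively starting from the simple case $k = 0$ (a Jordan $\eta_\circ$) and propagating across each added crossing.
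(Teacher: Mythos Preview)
Your central claim---that the Jordan loops in the same half-plane have pairwise disjoint interiors---is false, and the mechanism you propose for it does not work. Take $a=(0,0)$, $b=(10,0)$, so $\ell$ lies on the $x$-axis, and let $\eta$ consist of: a large arch $A_1$ in $\bar S_+$ from $(0,0)$ to $(8,0)$; a small arch $B_1$ in $\bar S_-$ from $(8,0)$ to $(6,0)$; an arch $A_2$ in $\bar S_+$ from $(6,0)$ to $(2,0)$ drawn low enough to stay under $A_1$; and a large arch $B_2$ in $\bar S_-$ from $(2,0)$ to $(10,0)$ drawn deep enough to pass under $B_1$. One checks directly that $\eta$ is simple. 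The two $\bar S_+$-loops $J_{A_1}$ and $J_{A_2}$ have nested interiors. Your argument (``the portion of $\eta$ leaving the inner arc and eventually reaching $b$ would have to cross the outer arc'') fails because after the inner arc $A_2$, $\eta$ drops into $\bar S_-$, where $A_1$ is absent, and reaches $b$ without ever meeting $A_1$.

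What actually saves the winding bound here is a different phenomenon: $A_1$ is traversed left-to-right while $A_2$ is traversed right-to-left, so $J_{A_1}$ and $J_{A_2}$ carry opposite orientations and their winding contributions cancel. To make your approach work you would need to prove that nested same-side loops always have opposite orientations (equivalently, that the signed nesting depth is at most one), which is a more delicate combinatorial statement than disjointness, and not what you argued.

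The paper's proof sidesteps all of this with a short angular-displacement computation. Since $\eta$ is simple, $S\setminus\eta^*$ has exactly two simply connected components $S_1,S_2$; similarly the chord $\alpha$ from $a$ to $b$ splits $S$ into $S_1',S_2'$. Writing $\partial S=\gamma_1\gamma_2$ and using that $w(z,\gamma)=1$, $w(z,\gamma_1\eta^-)=\mathbf 1_{z\in S_1}$, $w(z,\eta\gamma_2)=\mathbf 1_{z\in S_2}$ for $z\in S$, one solves for $\Delta(z,\eta)$ on each side, does the same for $\alpha$, and reads off $w(z,\eta)=\Delta(z,\eta)-\Delta(z,\alpha)\in\{-1,0,1\}$ depending on which of the four regions $S_i\cap S_j'$ contains $z$. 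No loop decomposition is needed.
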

\begin{proof}
Let $S$ be such that $\eta$ splits $S$. First, note that $w(z, \eta) = 0$ for all
$z \notin S$, so we only need to show $|w(z, \eta)| \leq 1$ for $z \in S$. Let $\gamma$
denote the (not necessarily piecewise linear) curve obtained by giving $\partial S$
the positive orientation. Let $\gamma_1$ be the segment of $\gamma$ from $a$ to
$b$ and $\gamma_2$ be the segment of $\gamma$ from $b$ to $a$. Let
$\eta_1 = \gamma_1\eta^- $ and $\eta_2 = \eta \gamma_2 $. Note that $S \setminus \eta^*$
is the disjoint union of two simply connected sets $S_1$ and $S_2$ such that $\eta_1$
and $\eta_2$ are obtained by giving $\partial S_1$ and $\partial S_2$ the positive
orientation (see Figure~\ref{fig :: splitting curve} for an illustration). Thus,
\begin{align*}
\Delta(z,\gamma_1) + \Delta(z, \gamma_2) = w(z, \gamma) &= 1 \quad \mbox{ for } z \in S\,,\\
\Delta(z,\gamma_1) - \Delta(z,\eta) = w(z,\eta_1) &= 1 \quad \mbox{ for } z \in S_1\,, \\
\Delta(z,\eta) + \Delta(z,\gamma_2) = w(z,\eta_2) &= 1 \quad \mbox{ for } z \in S_2\,.
\end{align*}
Therefore,
\[
\Delta(z, \eta) = \begin{cases}
-\Delta(z,\gamma_2) &z \in S_1,\\
\Delta(z,\gamma_1) & z \in S_2.
\end{cases}
\]
That is, for every $z \in S_1$ (i.e., ``to the right'' of $\eta$), the angular
displacement is the same for $\eta$ and $\gamma_2^-$ and similarly for $z \in S_2$.
Next, let $\alpha$ be the line segment from $a$ to $b$ and note that since $S$ is convex
$\alpha$ splits $S$. Therefore, $S \setminus \alpha$ is the disjoint union of two convex
sets $S_1'$ and $S_2'$ analogous to $S_1$ and $S_2$ (see the middle picture of Figure~\ref{fig :: splitting curve}).
By the same argument as above, we get
\[
\Delta(z,\alpha) =  \begin{cases}
-\Delta(z,\gamma_2) &z \in S'_1,\\
\Delta(z,\gamma_1) & z \in S'_2.
\end{cases}
\]
Taking differences we obtain (see the right picture of Figure~\ref{fig :: splitting curve})
\begin{equation}\label{eq :: splitting curve w}
w(z,\eta) = \Delta(z,\eta) - \Delta(z,\alpha) = \begin{cases}
	-1 &z \in S_1 \cap S'_2,\\
	0 &z \in (S_1 \cap S'_1) \cup (S_2 \cap S'_2),\\
	1 &z \in S_2 \cap S'_1.
\end{cases}
\end{equation}
This concludes the proof.
\end{proof}
As we will show in Section~\ref{sec :: reduction to first moment}, in principle we only
need to control $\eta$ over the collection of splitting curves. However, our analysis
will proceed by partitioning curves into segments. This raises a difficulty because
it is not obvious how to partition a splitting curve $\eta$ into segments such that each
segment is also a splitting curve. Therefore, we need a weaker assumption on $\eta$ that
is preserved when partitioning $\eta$ into segments in some reasonable way.
\begin{figure}
	\centering
	\begin{subfigure}{.3\textwidth}
		\centering
		\includegraphics[width=.9\linewidth]{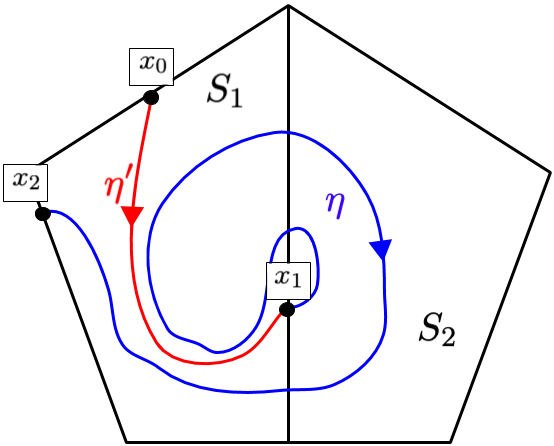}
	\end{subfigure}%
	\begin{subfigure}{.3\textwidth}
		\centering
		\includegraphics[width=.9\linewidth]{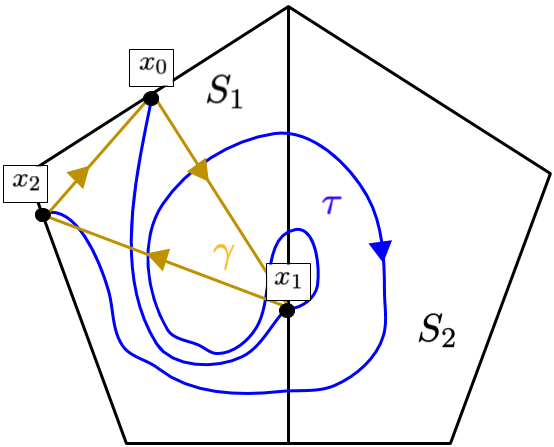}
	\end{subfigure}
	\begin{subfigure}{.3\textwidth}
		\centering
		\includegraphics[width=.9\linewidth]{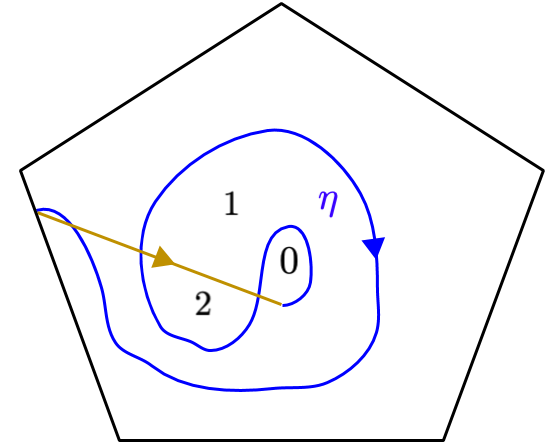}
	\end{subfigure}
	\caption{A good curve has winding numbers bounded by 3. In the pictures, $S_1$ is the polygon on the left and $S_2$ is the whole polygon.}
	\label{fig :: good curve}
\end{figure}
\begin{defn}\label{def :: good curve}
	Let $\eta$ be a simple curve from $a$ to $b$. We say $\eta$ is a good curve if there
	exist open, bounded convex sets $S_1$ and $S_2$ and a splitting curve $\eta'$ such that
	$\eta'$ splits $S_1$ and $\eta'\eta$ splits $S_2$. We call $(\eta', S_1, S_2)$
	a witness for $\eta$. See Figure~\ref{fig :: good curve} for an illustration of a good curve and a witness.
\end{defn}
\begin{lemma}\label{lm :: good curve w bound}
Let $\eta$ be a good curve. Then
$\max_{z \notin \eta^*} |w(z,\eta)| \leq 3$.
\end{lemma}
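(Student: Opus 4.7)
The plan is to apply Corollary~\ref{cor :: coarsening} to the two-piece partition $(\eta',\eta)$ of the concatenated curve $\eta'\eta$. Let $a'$ and $b'$ denote the start and end of the splitting curve $\eta'$ in the witness, so that $\eta$ runs from $b'$ to its endpoint $b$, and $\eta'\eta$ runs from $a'$ to $b$. Taking $x_1 = a'$, $x_2 = b'$, $x_3 = b$ in Corollary~\ref{cor :: coarsening}, and letting $\ell_{a'b'}$, $\ell_{b'b}$, $\ell_{ba'}$ denote the line segments between the consecutive vertices, the corollary yields
\[
w(z,\eta'\eta) \;=\; w\bigl(z,\ell_{a'b'}\ell_{b'b}\bigr) \;+\; w(z,\eta') \;+\; w(z,\eta),
\]
valid for every $z$ outside the union of the curves and of $\ell_{ba'}$. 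Rearranging,
\[
w(z,\eta) \;=\; w(z,\eta'\eta) \;-\; w(z,\eta') \;-\; w\bigl(z,\ell_{a'b'}\ell_{b'b}\bigr).
\]

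Each of the three terms on the right has absolute value at most $1$. By hypothesis $\eta'$ splits $S_1$ and $\eta'\eta$ splits $S_2$, so Lemma~\ref{lm :: splitting curve w bound} yields $|w(z,\eta')|\le 1$ and $|w(z,\eta'\eta)|\le 1$. For the third term, observe that $\ell_{a'b'}\ell_{b'b}$ is the polygonal path $a'\to b'\to b$; closing it (as required by the definition of $w$ on non-closed curves) adjoins the segment $\ell_{ba'}$ and produces the oriented boundary of the (possibly degenerate) triangle with vertices $a',b',b$. The winding number of such a boundary around any point off it lies in $\{-1,0,1\}$, so $|w(z,\ell_{a'b'}\ell_{b'b})|\le 1$. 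The triangle inequality then gives $|w(z,\eta)|\le 3$ on the cofinite set where the identity holds.

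To extend the bound to every $z\notin\eta^*$, recall that $w(\cdot,\eta)$ is integer valued and constant on each connected component of $\mathbb R^2\setminus\eta^*$. Since the set excluded by Corollary~\ref{cor :: coarsening} is a finite union of line segments and piecewise linear curves, any $z\notin\eta^*$ admits an arbitrarily small perturbation $z'\notin\eta^*$ that also lies outside this exceptional set, and $w(z,\eta)=w(z',\eta)$ by local constancy; hence $|w(z,\eta)|\le 3$ holds universally. I do not anticipate any real obstacle: once one recognizes that the ``closing error'' appearing in Corollary~\ref{cor :: coarsening} is exactly the winding number of a triangle (and therefore is trivially at most $1$ in absolute value), the rest is the triangle inequality together with the two splitting guarantees built into the definition of a good curve.
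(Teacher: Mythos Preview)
Your proof is correct and follows essentially the same approach as the paper's: apply Corollary~\ref{cor :: coarsening} to the two-piece partition $(\eta',\eta)$ of $\eta'\eta$, identify the coarsened curve's winding number with that of the triangle on $a',b',b$, and bound each of the three resulting terms by $1$ via Lemma~\ref{lm :: splitting curve w bound}. Your extra paragraph handling the exceptional set where the corollary's identity is not stated is a point the paper leaves implicit, so if anything your writeup is slightly more careful.
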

\begin{proof}
Let $(\eta', S_1, S_2)$ be a witness for $\eta$ and $(x_0,x_1,x_2)$ be such that
$x_0$ and $x_1$ are the start and end points of $\eta'$ and $x_1$ and $x_2$ are
the start and end points of $\eta$. Let $x_3 = x_0$ and for $1 \leq i \leq 3$
let $\ell_i$ be the line segment from $x_{i-1}$ to $x_i$. Finally, let
$\gamma = \ell_1\ell_2 \ell_3$ and $\tau = \eta'\eta$ (see the middle picture of Figure~\ref{fig :: good curve}
for an illustration). By Corollary~\ref{cor :: coarsening},
\[
w(z, \tau) = w(z, \gamma) + w(z, \eta') + w(z, \eta)\,.
\]
Note that $\tau$ splits $S_2$, $\eta'$ splits $S_1$, and $\gamma$ is a triangle. Thus, an application of Lemma~\ref{lm :: splitting curve w bound} yields that
$\max\{|w(z,\eta)|, |w(z,\gamma)|, |w(z,\eta')|\}\leq 1$.
This concludes the proof.
\end{proof}
\begin{figure}
	\centering
	\includegraphics[width=\linewidth]{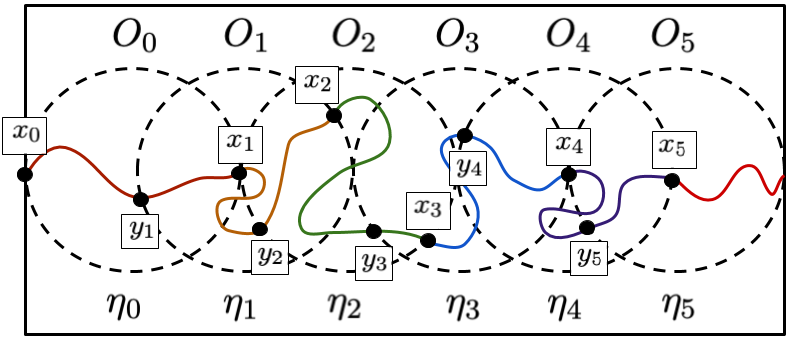}
	\caption{A good curve can be partitioned into good segments}
	\label{fig :: good curve decomposition}
\end{figure}
As mentioned, the advantage of working with good curves over splitting curves is
that it is possible to partition a good curve into segments such that each segment
is also a good curve. This is shown in the next lemma (see
Figure~\ref{fig :: good curve decomposition} for an illustration).
\begin{lemma}\label{lm :: splitting good curves}
	For $n \geq 1$ and a good curve $\eta$, let $(\eta_1,\dots,\eta_n)$ be a partition of $\eta$
	and let $(x_1,\dots,x_{n+1})$ be such that $x_i$ and $x_{i+1}$ are the start and
	end points of $\eta_i$ for $1\leq i\leq n$. Let $(O_1,\dots,O_n)$ be a sequence of
	bounded and convex open sets such that for $1 \leq i \leq n$ we have $x_i \in O_i$,
	$x_{i+1}$ is the first exit of $\eta_i$ from $O_i$, and $x_{i-1} \notin O_i$ if $i > 1$. Then $(\eta_1,\dots,\eta_n)$
	are good curves.
\end{lemma}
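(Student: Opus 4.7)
The plan is to exhibit, for each $i\in\{1,\dots,n\}$, an explicit witness $(\tilde\eta_i,\tilde S_1^i,\tilde S_2^i)$ certifying that $\eta_i$ is a good curve. The natural outer convex set is $\tilde S_2^i:=O_i$: the first-exit hypothesis immediately yields $x_{i+1}\in\partial O_i$ and $\eta_i^*\setminus\{x_{i+1}\}\subset O_i$, which is exactly half of what is needed for $\tilde\eta_i\eta_i$ to split $O_i$. It remains to construct a splitting curve $\tilde\eta_i$ from some $q_i\in\partial O_i$ to $x_i$, lying in $O_i$ except at $q_i$, and such that $\tilde\eta_i\eta_i$ is simple.

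The cleanest attempt is to take $\tilde\eta_i:=[q_i,x_i]$, a straight segment, together with $\tilde S_1^i$ the open disk having $[q_i,x_i]$ as a diameter; then $\tilde\eta_i$ splits $\tilde S_1^i$ trivially. By convexity of $O_i$ and the fact that $x_i\in O_i$, we automatically have $(q_i,x_i]\subset O_i$ for every $q_i\in\partial O_i$, so the only remaining issue is to choose $q_i$ so that the open segment $(q_i,x_i)$ avoids $\eta_i^*\setminus\{x_i\}$. Since $\eta_i$ is already simple, such a $q_i$ simultaneously makes $\tilde\eta_i\eta_i$ simple.

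To produce $q_i$, I would radially project $\eta_i^*\setminus\{x_i\}$ from $x_i$ onto $\partial O_i$: the obstruction set is a finite union of closed arcs of $\partial O_i$ (since $\eta_i$ is piecewise linear with finitely many sides), and any $q_i$ in its complement works. In the generic case this image is a proper subset of $\partial O_i$ and the existence of $q_i$ is automatic. The pathological case, and the main technical hurdle, is when $\eta_i$ spirals around $x_i$ so that every ray from $x_i$ meets $\eta_i^*$; in that scenario I would fall back on the fact that $O_i\setminus\eta_i^*$ is path-connected (a simple arc with only one endpoint on $\partial O_i$ does not separate the convex set $O_i$), replace $\tilde\eta_i$ by a piecewise linear detour in $O_i\setminus(\eta_i^*\setminus\{x_i\})$ that skirts the folds of $\eta_i$, and arrange the detour to lie on one side of its own chord $[q_i,x_i]$, so that the interior of its convex hull can serve as $\tilde S_1^i$ with $q_i,x_i$ on the boundary.

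Once $(\tilde\eta_i,\tilde S_1^i,O_i)$ is in hand, verification is immediate: $\tilde\eta_i$ splits $\tilde S_1^i$ by design; the concatenation $\tilde\eta_i\eta_i$ is simple by our choice of $q_i$, has endpoints $q_i,x_{i+1}\in\partial O_i$, and has interior in $O_i$ by the first-exit hypothesis for $\eta_i$ together with convexity of $O_i$ for $\tilde\eta_i$. The auxiliary hypothesis $x_{i-1}\notin O_i$ for $i>1$ does not seem to intervene in the witness for a single $\eta_i$; I expect it is used elsewhere in the multi-scale analysis to keep the successive $O_i$'s geometrically separated rather than in this lemma itself.
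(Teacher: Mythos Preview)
Your approach diverges from the paper's and leaves a real gap in the ``pathological'' case. The paper does not build $\tilde\eta_i$ from scratch inside $O_i$; instead it takes the prefix curve to be the terminal portion of the \emph{previous} piece $\eta_{i-1}$. For $i=1$ one first fixes a witness $(\eta_0,O_0,S)$ for $\eta$, replaces each $O_j$ by $O_j\cap S$, and lets $\eta_0$ play the role of $\eta_{i-1}$; note that the start $x_0$ of $\eta_0$ lies on $\partial S$, hence $x_0\notin O_1$. For each $i$ one then sets $y_i\in\partial O_i$ to be the last entrance of $\eta_{i-1}$ into $O_i$ and lets $\gamma_i$ be the sub-arc of $\eta_{i-1}$ from $y_i$ to $x_i$. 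The concatenation $\gamma_i\eta_i$ is simple because $\eta_0\eta$ is, it splits $O_i$ by the first-exit hypothesis, and $\gamma_i$ itself splits the convex set $O_{i-1}\cap O_i$ since $\gamma_i\setminus\{y_i,x_i\}\subset O_{i-1}\cap O_i$ while $y_i\in\partial O_i$ and $x_i\in\partial O_{i-1}$. Thus the hypothesis $x_{i-1}\notin O_i$ is exactly what guarantees that $y_i$ exists; contrary to your guess, it is used in this very lemma.

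The gap in your fallback is the following. For $\tilde\eta_i$ to be a splitting curve one needs $x_i$ to be an extreme point of $\mathrm{conv}(\tilde\eta_i^*)$: otherwise any open convex set containing $\tilde\eta_i^*\setminus\{q_i,x_i\}$ necessarily contains $x_i$ in its interior as well. When $\eta_i$ spirals around $x_i$, a simple arc in $O_i$ from $\partial O_i$ to $x_i$ that avoids $\eta_i$ may be forced to wind around $x_i$ too, placing $x_i$ in the interior of its convex hull. Your instruction to ``arrange the detour to lie on one side of its own chord'' is precisely the condition that would prevent this, but you give no argument that such a one-sided detour exists, and for $\eta_i$ wound tightly enough one can check that no arc from $\partial O_i$ to $x_i$ avoiding $\eta_i$ stays in any half-plane through $x_i$. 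Ruling out those bad configurations would itself require the hypotheses you set aside. The paper's $\gamma_i$ sidesteps all of this not by analysing $\eta_i$ but because $\gamma_i$ is confined to $O_{i-1}$, a convex set with $x_i\in\partial O_{i-1}$, which forces $x_i$ to be extreme in $\mathrm{conv}(\gamma_i^*)$ automatically.
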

\begin{proof}
	Let $(\eta_0, O_0,S)$ be a witness for $\eta$. Since by our assumption $\eta_0\eta$ is
	contained in $S$ (except for its start and end points), we can assume
	$O_i \subset S$ for all $i$ (otherwise we can replace $O_i$ with $O_i \cap S$).
	Let $x_0$ be the start point of $\eta_0$ and note that $x_0 \notin O_1$. For
	$1 \leq i \leq n$, let $y_i \in \partial O_i$ be the last entrance point of $\eta_{i-1}$
	into $O_{i}$. Such a point exists because the start point of $\eta_{i-1}$
	(namely $x_{i-1}$) is not contained in $O_{i}$ (note that it is possible $y_{i} = x_{i-1}$). Let $\gamma_i$ be the segment of
	$\eta_{i-1}$ from $y_i$ to $x_i$. Since $\gamma_i$ is a segment of $\eta_{i-1}$,
	it is contained in $O_{i-1}$ (except for its end point). By construction, $\gamma_i$
	is also contained in $O_i$ (except for its start point). Therefore, $\gamma_i$
	splits $O_{i-1} \cap O_i$. Additionally, $\gamma_i \eta_i$ splits $O_i$. Therefore,
	$\eta_i$ is a good curve.
\end{proof}

\subsection{Further Gaussian process tools}\label{sec :: Gaussian process}
In this subsection we introduce one more from the theory of (sub)Gaussian processes.
In addition, we formulate Lemma~\ref{lm :: generic bound for partitioned collection},
which allows us to bound the supremum of an inhomogeneous Gaussian process by aggregating
bounds on the suprema of homogeneous components in a (suitably chosen) decomposition
of the whole process; this will be repeatedly applied in our multi-scale analysis later.

For a set $T$ and a Gaussian process $(X_t)_{t\in T}$ indexed on $T$, recall that $d_X$ denotes the cannonical metric on $T$ defined by $X$. For $q > 0$, we let $\mathcal N_{T,X}(q)$ be the smallest number of closed $d_X$
balls of radius $q$ that cover $T$. We say $T$ is totally bounded (with respect
to $X$) if $\mathcal N_{T,X}(q)$ is finite for all $q > 0$. Note that this implies
$T$ is separable with respect to $d_X$. As shown in \cite{Dudley67}
(see also \cite[Corollary 4.14]{Adler90}), there exists a universal constant $K$
such that for a centered Gaussian process $X$
\begin{equation}\label{eq :: entropy integral generic bound}
\mathbb{E}\left[\|X\|_S\right] \leq K \int_0^\infty \sqrt{\log \mathcal N_{X,S}(q)}dq\,.
\end{equation}

The following consequence of \eqref{eq :: Borel inequality} will be useful in the proof (c.f. \cite[Lemma 1.5]{Rigollet15}).
Let $Y = \|X\|_S - \mathbb E[\|X\|_S]$, then the following holds for $\sigma^2 = 8 \sigma_{X,S}^2$
\begin{equation}\label{eq :: max is subGaussian}
\mathbb{E} \left[e^{\theta Y}\right] \leq e^{\frac{\sigma^2 \theta^2}{2}} \mbox{ for all } \theta\in \mathbb R\,.
\end{equation}
We say a random variable $Y$ is sub-Gaussian with variance proxy $\sigma^2$ if $\mathbb E[Y] = 0$ and \eqref{eq :: max is subGaussian} holds.
Thus, $\|X\|_S - \mathbb E[\|X\|_S]$ is sub-Gaussian with variance proxy $8 \sigma_{X,S}^2$.
If $Y_1,\dots,Y_n$ are sub-Gaussian random variables with variance proxy $\sigma^2$,
then it is straightforward that (c.f. \cite[Theorem 1.14]{Rigollet15})
\begin{equation}\label{eq :: subGaussian max}
\mathbb E \left[ \max_{1 \leq i \leq n} Y_i \right] \leq \sigma\sqrt{2\log(n)}\,.
\end{equation}

As we will show in Subsection~\ref{sec :: reduction to first moment}, the key to prove Proposition~\ref{prop :: correlation length lower bound} is to bound the expected supremum of the polygon animal process.
This requires us to control all polygon animals including the ones with very complicated boundaries which are unlikely to be maximizers (e.g., a polygon animal that is fractal and thus has small volume to boundary ratio).
Despite the apparently low probability that they are maximizers, these complicated polygon animals raise a challenge for a rigorous bound since their entropy is very large
(if we group polygons by complexity, collections with higher complexity will have higher entropy).
A natural way to deal with this is to partition polygon animals into components depending on the level of complexity, and then to bound the supremum over each component, and finally to aggregate the bounds together.
The following lemma is formulated in order to accomplish this aggregation step (in various settings of our upcoming multi-scale analysis).
An important feature for the setting of the lemma is that the Gaussian variables that come from complicated objects have larger variances but smaller expectations.

\begin{lemma}\label{lm :: generic bound for partitioned collection}
Let $Y$ be a Gaussian process indexed on a set $\mathcal G$, and let $(\mathcal G_a)_{a \in A}$ be a partition
of $\mathcal G$ indexed by a set $A$. Suppose that $(A_n)_{n \geq 0}$ is a partition of $A$ such that
$|A_n| < \infty$ for all $n$. Let
\[
\mu_n := \max_{a \in A_n}\mathbb E \left[\|Y\|_{\mathcal G_a}\right] \mbox{ and }
	\sigma_n^2 := \sup_{a \in A_n, \, \eta \in \mathcal G_a} \var[Y(\eta)]\,.
\]
For $\mu\in \mathbb R$  and $\alpha,\beta,\gamma > 0$, suppose that the following holds for all $n$:
\begin{align*}
\mu_n \leq \mu - 2\alpha n\,, \quad \sigma_n^2 \leq \beta (n+1) \,\mbox{ and }\,
\sigma_n \sqrt{\log(|A_n|)} \leq \frac{\gamma + \alpha n}{4}\,.
\end{align*}
Then
\[
\mathbb E\left[\|Y\|_{\mathcal G}\right] \leq
	\mu + \gamma + \sqrt{2\pi \beta}  +
		4 \frac{\beta}{\alpha} \frac{e^{-\alpha^2/4\beta}}{1 - e^{-\alpha^2/4\beta}}\,.
\]
\end{lemma}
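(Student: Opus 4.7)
The plan is to decompose the supremum over $\mathcal G$ according to the partition $(A_n)_{n\ge 0}$, bound the expected supremum at each level separately using the sub-Gaussian maximal inequality and Borell--Sudakov--Tsirelson (BST), and then aggregate via a tail-integration argument. Throughout, set $M_n=\sup_{a\in A_n}\|Y\|_{\mathcal G_a}$ so that $\|Y\|_{\mathcal G}=\sup_{n\ge 0} M_n$.

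\emph{Bounding $\mathbb E[M_n]$.} Fix $n$ and write $Z_a=\|Y\|_{\mathcal G_a}$, so $M_n=\max_{a\in A_n} Z_a$. Applying BST separately to the restriction of $Y$ to each $\mathcal G_a$ and invoking the formulation \eqref{eq :: max is subGaussian}, each centred variable $Z_a-\mathbb E[Z_a]$ is sub-Gaussian with variance proxy $8\sigma_n^2$. Evaluating at an argmax $a^\ast$ gives $M_n\le \mu_n+\max_{a\in A_n}(Z_a-\mathbb E[Z_a])$, so \eqref{eq :: subGaussian max} yields
\[
\mathbb E[M_n]\;\le\;\mu_n+\sqrt{8\sigma_n^2}\sqrt{2\log|A_n|}\;=\;\mu_n+4\sigma_n\sqrt{\log|A_n|}\;\le\;\mu+\gamma-\alpha n,
\]
where the last inequality combines the hypotheses $\mu_n\le\mu-2\alpha n$ and $4\sigma_n\sqrt{\log|A_n|}\le\gamma+\alpha n$.

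\emph{Tail of $M_n$.} Applying \eqref{eq :: Borel inequality} directly to $Y$ restricted to $\bigcup_{a\in A_n}\mathcal G_a$, whose supremum of variances is at most $\sigma_n^2$, and using the bound on $\mathbb E[M_n]$ above, for all $s\ge 0$
\[
\mathbb P\bigl(M_n>\mu+\gamma+s\bigr)\;\le\;\mathbb P\bigl(M_n-\mathbb E[M_n]>\alpha n+s\bigr)\;\le\;2\exp\!\Bigl(-\frac{(\alpha n+s)^2}{2\sigma_n^2}\Bigr).
\]

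\emph{Aggregation.} Tail integration plus a union bound over $n$ give
\[
\mathbb E[\|Y\|_{\mathcal G}]\;\le\;\mu+\gamma+\sum_{n\ge 0}\int_0^\infty 2\exp\!\Bigl(-\frac{(\alpha n+s)^2}{2\sigma_n^2}\Bigr)\,ds.
\]
The $n=0$ term contributes at most $\sigma_0\sqrt{2\pi}\le\sqrt{2\pi\beta}$ by the standard Gaussian integral. For $n\ge 1$, substituting $u=\alpha n+s$, applying the Mills-type bound $\int_t^\infty e^{-u^2/(2\sigma^2)}du\le(\sigma^2/t)e^{-t^2/(2\sigma^2)}$, and then using the elementary inequalities $\sigma_n^2\le\beta(n+1)\le 2\beta n$ (so $\sigma_n^2/(\alpha n)\le 2\beta/\alpha$) and $n^2/(n+1)\ge n/2$ (so $\alpha^2 n^2/(2\sigma_n^2)\ge \alpha^2 n/(4\beta)$), each term is bounded by $(4\beta/\alpha)e^{-\alpha^2 n/(4\beta)}$. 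Summing the geometric series in $n\ge 1$ yields $(4\beta/\alpha)\,e^{-\alpha^2/(4\beta)}/(1-e^{-\alpha^2/(4\beta)})$, which combined with the $n=0$ contribution gives exactly the claimed bound.

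No step is conceptually deep; the thing to watch is that the three hypotheses are balanced so the entropy term $4\sigma_n\sqrt{\log|A_n|}$ is absorbed by $\gamma+\alpha n$ while still leaving a $-\alpha n$ margin driving the Gaussian tail, and that comparing $n^2/(n+1)$ with $n$ is what converts the tail estimates into a convergent geometric series of ratio $e^{-\alpha^2/(4\beta)}$.
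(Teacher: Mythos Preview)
Your proof is correct and follows essentially the same route as the paper: bound $\mathbb E[M_n]$ via the sub-Gaussian maximal inequality to get the $\mu+\gamma-\alpha n$ margin, apply Borell--Sudakov--Tsirelson for the tail, then integrate and sum over $n$ using the Mills bound together with $n+1\le 2n$ and $n^2/(n+1)\ge n/2$ to arrive at the geometric series. The only cosmetic difference is that the paper rescales the integral to a standard Gaussian before applying Mills, whereas you keep the $\sigma_n^2$ parameter; the arithmetic is the same.
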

\begin{proof}
The proof  consists of routine applications of the Gaussian process tools introduced earlier. To begin, we let
\[
Z_a := \|Y\|_{\mathcal G_a} - \mathbb E\left[\|Y\|_{\mathcal G_a}\right]\,.
\]
By \eqref{eq :: max is subGaussian}, we have that $Z_a$ is sub-Gaussian with
variance proxy $8 \sigma_n^2$ for $a \in A_n$. Therefore, by \eqref{eq :: subGaussian max},
\begin{equation}
\mathbb E \left[ \max_{a \in A_n} \|Y\|_{\mathcal G_a}\right]
	\leq \mu_n + \mathbb E \left[\max_{a \in A_n}Z_a \right]
	\leq \mu_n + 4\sigma_n\sqrt{\log(|A_n|)}
	\leq \mu + \gamma - \alpha n\,.  \label{eq :: expectation on order n elements}
\end{equation}
It remains to take a maximum over $n$. Write
$M_n = \max_{a \in A_n} \|Y\|_{\mathcal G_a}$.
By \eqref{eq :: expectation on order n elements} and \eqref{eq :: Borel inequality},
for $t\geq 0$ and $n \geq 0$ we have
\begin{align*}
\mathbb P(M_n \geq \mu + \gamma + t) \leq
	2 \exp\left(-\frac{(t + \alpha n)^2}{2\sigma_n^2} \right)\leq
	2 \exp\left(-\frac{(t + \alpha n)^2}{2\beta (n+1)}\right)\,.
\end{align*}
Since $\|Y\|_{\mathcal G} = \max_{n \geq 0} M_n$, we get from a union bound that
\begin{align*}
\mathbb P(\|Y\|_{\mathcal G} \geq \mu + \gamma + t)
	\leq \sum_{n = 0}^\infty\mathbb P(M_n \geq \mu + \gamma + t) 	\leq 2 \sum_{n = 0}^\infty \exp\left(-\frac{(t + \alpha n)^2}{2\beta (n+1)}\right)\,.
\end{align*}
Integrating the preceding bound on the tail probability of $(\|Y\|_{\mathcal G} - \mu - \gamma)$, we obtain that
\begin{align*}
\mathbb E [\|Y\|_{\mathcal G} ] - \mu - \gamma
	\leq 2 \sum_{n = 0}^\infty \int_{0}^\infty \exp\left(-\frac{(t + \alpha n)^2}{2\beta (n+1)}\right) dt
	= 2 \sum_{n = 0}^\infty \sqrt{\beta (n+1)} \int_{\alpha n/\sqrt{\beta(n+1)}}^\infty e^{-u^2/2} du\,.
\end{align*}
Using the simple facts that
$
\int_x^\infty e^{-u^2/2}du \leq \frac{1}{x} e^{-x^2/2}$ for $x \geq 0$
and
$
\int_0^\infty e^{-u^2/2}du = \sqrt{\frac{\pi}{2}}$,
we get that
\begin{align*}
\mathbb E [\|Y\|_{\mathcal G} ] - \mu - \gamma
	&\leq \sqrt{2\pi \beta} +
		2\sqrt{2\beta}\sum_{n = 1}^\infty \sqrt{n} \int_{\alpha\sqrt{n/2\beta}}^\infty e^{-u^2/2}du \\
	&\leq \sqrt{2\pi \beta} + \frac{4\beta}{\alpha} \sum_{n = 1}^\infty e^{-\alpha^2 n/4\beta} = \sqrt{2\pi \beta} + \frac{4\beta}{\alpha} \frac{e^{-\alpha^2/4\beta}}{1 - e^{-\alpha^2/4\beta}}\,. \qedhere
\end{align*}
\end{proof}

\subsection{Reduction to first moment analysis}\label{sec :: reduction to first moment}
In this subsection we prove that Proposition~\ref{prop :: correlation length lower bound}
follows from a bound on the first moment of the supremum of $\eta$ over a
collection of non-closed curves, which is chosen as follows. For
$x,y \in \mathbb R^2$, we let $\mathcal{H}(x,y,\epsilon)$ denote the set of good
curves $\eta$ that satisfy the following conditions:
\begin{itemize}
	\item $\eta$ is contained in the open $19 |x-y| \times 18|x-y|$ rectangle centered around the line segment from $x$ to $y$;
	\item All sides of $\eta$, except possibly the first and last one, are of length
	at least 1;
	\item Let $u$ and $v$ be the start and end points of $\eta$, then
	$|u - x| \leq \epsilon^4|x-y|$and $|v - y| \leq \epsilon^4|x-y|$.
\end{itemize}
(In the above, the choices for constants $19, 18, 4$ are flexible as long as they are reasonably large.)
We let
\begin{equation}\label{eq-def-X-eta-epsilon}
X(\eta,\epsilon) = \epsilon \nu(\eta) - l(\eta)\,,
\end{equation}
and
\[
\mathfrak X(x,y,\epsilon) = \sup_{\eta \in \mathcal{H}(x,y, \epsilon)} X(\eta,\epsilon)\,.
\]
For notational convenience, we will suppress the dependence of $\mathcal H$, $\mathfrak X$, and $X$
on $\epsilon$. In addition, we will write $\mathcal H(R)$ and $\mathfrak X(R)$ for
$\mathcal H(o,(R,0))$ and $\mathfrak X(o,(R,0))$, respectively. Note that (by translation
and rotation invariance of the white noise) $\mathfrak X(x,y)$ has the same distribution as
$\mathfrak X(|x-y|)$. The following monotonicity property of $\mathfrak X$ will be useful in later analysis.
\begin{claim}\label{clm :: f monotonicity}
$\mathfrak X(R)/R$ is stochastically dominated by $\mathfrak X(S)/S$ for $R\leq S$.
\end{claim}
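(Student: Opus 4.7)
The plan is to exploit the scaling invariance of two-dimensional white noise. Setting the scaling factor $\kappa = S/R \geq 1$, I would consider the dilation map $\eta \mapsto \kappa \eta$ acting pointwise on curves, and establish two facts: first, that $\kappa \cdot \mathcal H(R) \subseteq \mathcal H(S)$, and second, that jointly in $\eta \in \mathcal H(R)$, the Gaussian process $X(\kappa \eta, \epsilon)$ has the same law as $\kappa X(\eta, \epsilon)$. From these,
\[
\mathfrak X(S) \ \geq\ \sup_{\eta \in \mathcal H(R)} X(\kappa \eta, \epsilon) \ \stackrel{d}{=}\ \kappa\, \mathfrak X(R),
\]
and dividing by $S$ yields the stated stochastic domination.

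For the inclusion step, I would verify each defining condition of $\mathcal H$ under dilation by $\kappa \geq 1$: the enclosing $19R \times 18R$ rectangle about the segment from $o$ to $(R,0)$ scales to the $19S \times 18S$ rectangle about the segment from $o$ to $(S,0)$; every side length is multiplied by $\kappa \geq 1$ and hence stays at least $1$, with the first and last sides preserved as such; and the endpoint constraints $|u - o| \leq \epsilon^4 R$ and $|v - (R,0)| \leq \epsilon^4 R$ rescale to $|\kappa u - o| \leq \epsilon^4 S$ and $|\kappa v - (S,0)| \leq \epsilon^4 S$. Moreover, the good-curve property of Definition~\ref{def :: good curve} is dilation-invariant: a witness $(\eta', S_1, S_2)$ for $\eta$ yields the witness $(\kappa \eta', \kappa S_1, \kappa S_2)$ for $\kappa \eta$, since convexity and the splitting-curve property are preserved by dilation.

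For the distributional step, I would observe that length scales deterministically, $l(\kappa \eta) = \kappa\, l(\eta)$, so only $\nu(\kappa \eta)$ requires attention. The connected components of $\mathbb R^2 \setminus (\kappa \eta)^*$ are precisely $\{\kappa \mathcal C : \mathcal C \in \mathfrak C(\eta)\}$ and winding numbers are invariant under dilation, so by \eqref{eq-def-nu},
\[
\nu(\kappa \eta) \ =\ \sum_{\mathcal C \in \mathfrak C(\eta)} w(\mathcal C, \eta)\, W(\kappa \mathcal C).
\]
The scaling property of two-dimensional white noise gives the joint equality in distribution $(W(\kappa A))_{A} \stackrel{d}{=} (\kappa W(A))_{A}$, which then yields the process-level identity $(X(\kappa \eta, \epsilon))_{\eta \in \mathcal H(R)} \stackrel{d}{=} (\kappa X(\eta, \epsilon))_{\eta \in \mathcal H(R)}$. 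Taking suprema over $\eta \in \mathcal H(R)$ and combining with the inclusion from the previous step delivers the displayed inequality above.

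The argument is essentially structural rather than technical, and I do not foresee a substantive obstacle: the main work is bookkeeping to confirm that the good-curve witness structure and all quantitative constraints (rectangle dimensions, side-length lower bound, endpoint deviations) survive dilation, which reduces to the elementary facts that convexity is preserved by $z \mapsto \kappa z$ and that lengths only grow when $\kappa \geq 1$.
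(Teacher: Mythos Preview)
Your proposal is correct and takes essentially the same approach as the paper: both arguments use the two-dimensional white-noise scaling identity $(W(\kappa A))_A \stackrel{d}{=} (\kappa W(A))_A$ to deduce $X(\kappa\eta)\stackrel{d}{=}\kappa X(\eta)$ jointly, together with the inclusion $\kappa\cdot\mathcal H(R)\subseteq\mathcal H(S)$ (which the paper phrases equivalently as ``$t^{-1}\mathcal H(t)$ is increasing in $t$''). Your write-up is simply more explicit in checking that the good-curve witness and the defining constraints of $\mathcal H$ survive dilation, which the paper leaves as ``clear from the definition.''
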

\begin{proof}
	Using the simple fact that in two dimensions the area of a region grows quadratically with the scaling, we observe that
 $\{\nu(\eta)/t: \eta\in \mathcal H\}$ has the same distribution as $\{\nu(\eta/t): \eta \in \mathcal H\}$ for $t>0$ and any collection of curves $\mathcal H$.
	Since $l(\eta)/t = l(\eta/t)$, it follows that $\mathfrak X(t)/t$ has the same distribution as the supremum of $X(\eta)$ over $t^{-1} \mathcal H(t)$.
	It is clear from the definition that $t^{-1}\mathcal H(t)$ is increasing in $t$ so the conclusion follows.
\end{proof}

We are now ready to state the major ingredient for the proof of
Proposition~\ref{prop :: correlation length lower bound}.
\begin{proposition}\label{prop :: lower bound reduction}
There exists a small constant $c_3 > 0$ such that for all $0 < \epsilon < c_3$ and
$R \leq 16 \exp(c_3\epsilon^{-4/3}/\log(\epsilon^{-1}))$ we have
$\mathbb E (\mathfrak X(R)/R) \leq -\frac{1}{2}$.
\end{proposition}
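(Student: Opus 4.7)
My plan is to establish Proposition~\ref{prop :: lower bound reduction} by a multi-scale induction on the scale $R$, in the spirit of the heuristic in Subsection~\ref{sec :: emergence}. The base case, covering $R$ up to some polynomial in $\epsilon^{-1}$, will use the regularity estimate from Subsection~\ref{sec:regularity}: Dudley's integral bound \eqref{eq :: entropy integral generic bound} applied to the metric entropy of $\mathcal H(R)$ shows that $\mathbb E[\sup_\eta \epsilon\nu(\eta)]$ is controlled by something of order $\epsilon\sqrt{R}\cdot\mathrm{polylog}$, which is crushed by the deterministic lower bound $\ell(\eta)\gtrsim R$ as long as $\epsilon$ is small and $R$ is not too large. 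For the inductive step, the goal is to lift the bound $\mathbb E[\mathfrak X(R')/R']\le -\tfrac12$ from a smaller scale $R'$ to a scale $R$ with a constant doubling ratio $R/R'$, and to iterate this doubling up to the final allowed scale $16\exp(c_3\epsilon^{-4/3}/\log(\epsilon^{-1}))$.

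The inductive step will proceed by a two-phase geometric decomposition of an arbitrary $\eta\in\mathcal H(R)$. First, in the ``vertical'' phase (Subsection~\ref{sec :: lower induction}), one chooses a family of horizontal levels inside the bounding box of $\eta$ and partitions $\eta$ at its successive crossings; Lemma~\ref{lm :: splitting good curves}, combined with a careful choice of the convex sets $O_i$ along these crossings, guarantees that each sub-curve is still a good curve, so that Lemma~\ref{lm :: good curve w bound} pins down its winding numbers by $3$. Corollary~\ref{cor :: coarsening} then gives $\nu(\eta)=\nu(\eta')+\sum_i\nu(\eta_i)$, where $\eta'$ is the polygonal skeleton through the selected vertices and the $\eta_i$ are the sub-curves; and $\ell(\eta)\ge\ell(\eta')+\sum_i\ell(\eta_i)$ modulo small endpoint corrections. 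Second, inside each horizontal strip, Lemma~\ref{lm :: strip bound} performs an analogous horizontal decomposition that rescales each sub-curve into $\mathcal H(R')$, at which point the induction hypothesis (invoked via Claim~\ref{clm :: lower bound induction hypothesis}) produces the needed $-\tfrac12$ contribution per sub-piece.

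The aggregation across all possible skeletons is tailor-made for Lemma~\ref{lm :: generic bound for partitioned collection}. Partition the skeletons by their complexity level $n$ (essentially the number of ``significant turns''): the cardinality of the $n$-th class grows like $C^n$, the variance of $\nu(\eta')$ for such a skeleton is bounded by the enclosed area and hence grows at worst linearly in $n$, and, crucially, the mean $\mu_n$ \emph{decreases} linearly in $n$ because every turn of the skeleton forces $\ell(\eta')$ to strictly exceed the straight-line distance between its endpoints by a positive constant, which is exactly the content of Claim~\ref{clm :: skeleton length bound}. Plugging these into Lemma~\ref{lm :: generic bound for partitioned collection} with suitable $\alpha,\beta,\gamma$ lets the three effects balance. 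Iterating the doubling step $\Theta(\log R)$ times, the accumulated mean drift is of order $\epsilon^{4/3}\log R$ (matching the ``back-of-the-envelope'' heuristic in Subsection~\ref{sec :: emergence}), which explains both the $4/3$ exponent and the additional $\log(\epsilon^{-1})$ factor in the allowed upper bound on $R$.

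The main obstacle will be the winding-number control under decomposition: without the good-curve framework one cannot keep $\var(\nu(\eta_i))$ proportional to the \emph{area} of the relevant sub-box, and a worse variance bound would inflate the Dudley-style term $\sigma_n\sqrt{\log|A_n|}$ in Lemma~\ref{lm :: generic bound for partitioned collection} enough to destroy the balance between entropy, variance and mean drift. A secondary difficulty will be coupling the vertical and horizontal phases so that each horizontal sub-piece is genuinely an element of $\mathcal H(R')$ (up to the allowed endpoint slack $\epsilon^4|x-y|$) and that the induction hypothesis for $\mathfrak X(R')/R'$ can be applied uniformly over the skeleton; this is the delicate bookkeeping Claim~\ref{clm :: lower bound induction hypothesis} is designed to absorb.
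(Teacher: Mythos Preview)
Your proposal is correct and follows essentially the same route as the paper's own proof: a base case via Dudley's bound (Corollary~\ref{cor :: lower bound base case}), an inductive step from scale $10^{-1}R$ to $R$ (Lemma~\ref{lm :: lower bound inductive step}) built on the two-phase vertical/horizontal decomposition using good curves, Corollary~\ref{cor :: coarsening}, Lemma~\ref{lm :: splitting good curves}, Lemma~\ref{lm :: strip bound}, and Claim~\ref{clm :: lower bound induction hypothesis}, with aggregation via Lemma~\ref{lm :: generic bound for partitioned collection} and mean drift from Claim~\ref{clm :: skeleton length bound}. One small correction: the decomposition is $X(\eta)=\epsilon\nu(\gamma(\eta))+\sum_j X(\eta_j)$ with $\ell(\eta)=\sum_j\ell(\eta_j)$ exactly (no $\ell(\eta')$ term), and the per-turn drift in Claim~\ref{clm :: skeleton length bound} is of order $\rho^2=\epsilon^{4/3}\log^2(\epsilon^{-1})$, not a fixed positive constant---this $\epsilon$-dependence is precisely what produces the $\epsilon^{4/3}\log(\epsilon^{-1})$ increment per doubling and hence the $\log(\epsilon^{-1})$ in the final bound on $R$.
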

We next prove Proposition~\ref{prop :: correlation length lower bound}
assuming Proposition~\ref{prop :: lower bound reduction}. Note that if
Proposition~\ref{prop :: lower bound reduction} holds for some constant $c_3$, it also
holds if we decrease the value of $c_3$, and thus we may assume $c_3 \leq 10^{-10}$.
For the rest of this subsection, we fix $0 < \epsilon < c_3$ and $R>0$ such that
\begin{equation}\label{eq :: reduction R bound}
R \leq \exp\left(\frac{c_3}{\epsilon^{4/3}\log(\epsilon^{-1})}\right)\,.
\end{equation}

\noindent \textbf{Step 1: a partition of $\mathcal P_R$.}
Recall that $\mathcal P_R$ is the collection of oriented simple closed curves with sides of length at least 1 that are contained in $[-R,R]^2$.
For a curve $\eta$, we define its diameter by
\[
\diam(\eta) = \max \{|x - y| \,:\, x,y \in \eta^*\}\,.
\]
For $0 \leq k < \infty$, we let $\mathcal P_{R,k}$ be
the collection of curves $\eta \in \mathcal P_{R}$ with $\diam(\eta) \in [2^k, 2^{k+1})$.
Note that since the diameter of $[-R,R]^2$ is $2^{3/2}R$, we have $\mathcal P_{R,k} = \emptyset$ for all
$k > K = \lfloor \log_2(2^{3/2}R)\rfloor$.
We let $E_k$ be the event that there exists $\eta \in \mathcal P_{R,k}$ such that
$X(\eta)> 0$. We have
\begin{equation}\label{eq :: polygon sup union bound}
\mathbb P(X(\eta) > 0 \mbox{ for some } \eta \in \mathcal P_R)
	\leq \sum_{k = 0}^K\mathbb{P}(E_k)\,.
\end{equation}

\noindent \textbf{Step 2: from closed curves to non-closed curves.}  To bound
the probability of $E_k$, we will show that if $E_k$ occurs then there exist
$x,y \in [-2R,2R]^2 \cap ( \epsilon^4 2^{k} \cdot \mathbb Z^2)$ with $|x-y| \leq 2^{k+2}$
such that $\mathfrak X(x,y) > 0$. If $E_k$ occurs, there exists $\eta \in \mathcal P_{R,k}$ such that
$X(\eta) > 0$. Let $u,v \in \eta^*$ be such that $|u-v| = \diam(\eta)$. Let $\eta_1$ be
the segment of $\eta$ from $u$ to $v$ and $\eta_2$ be the segment of $\eta$ from $v$ to $u$.
Note that $\eta = \eta_1 \eta_2$ and that by Corollary~\ref{cor :: coarsening},
$\nu(\eta) = \nu(\eta_1) + \nu(\eta_2)$. Therefore, $X(\eta) = X(\eta_1) + X(\eta_2)$
so we have (possibly after relabeling $u$ and $v$) that $X(\eta_1) > 0$.
Note that if we let $S$ be the box obtained by translating and rotating $[0,|u-v|]\times[-|u-v|,|u-v|]$ so that it is centered around the line segment from $u$ to $v$, then $\eta_1$ splits $S$.
(See Figure~\ref{fig :: closed to open} for an illustration.)
Therefore, $\eta_1$ is a good curve.
\begin{figure}
	\centering
	\includegraphics[scale = 0.25]{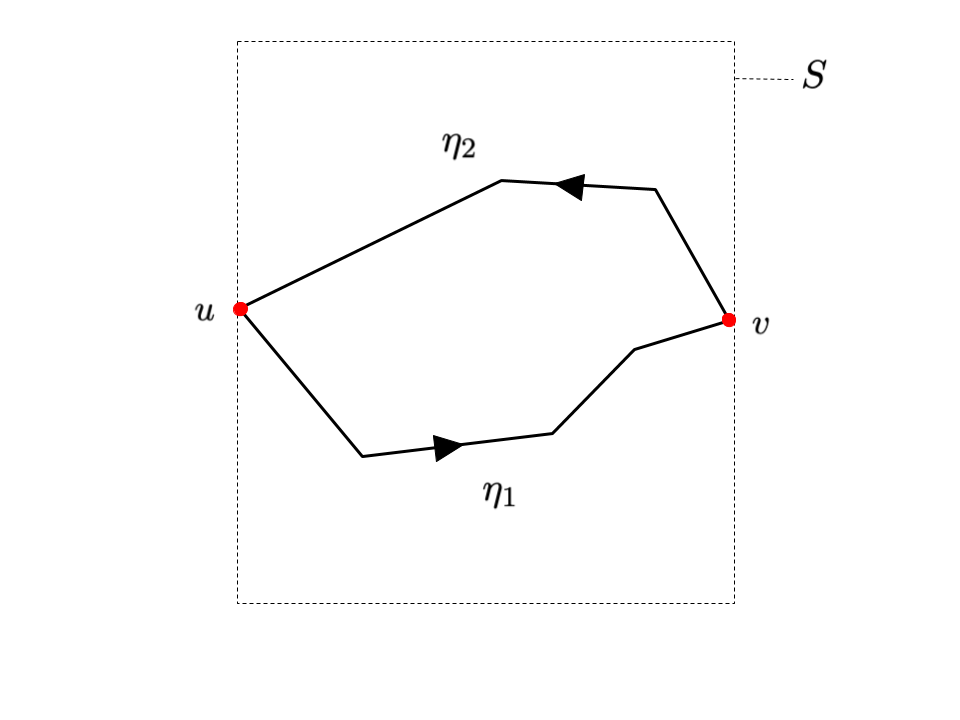}
	\caption{A closed curve partitioned into two splitting curves}
	\label{fig :: closed to open}
\end{figure}
Let $x,y \in \epsilon^4 2^k \cdot \mathbb Z^2$ be such that $u$ is in the axis-aligned
square of side length $\epsilon^4 2^k$  centered at $x$ and $v$ is in the corresponding
square centered at $y$.
Since by assumption $|u-v| =\diam(\eta) \in [2^{k}, 2^{k+1})$, we have $\eta_1 \in \mathcal H(x,y)$ and therefore $\mathfrak X(x,y) > 0$.
In addition, since we can assume $\epsilon < 1/2$ we have $|x-y| \leq 2^{k+2}$.
This concludes the verification of our claim at this step.

\noindent \textbf{Step 3: applying Proposition~\ref{prop :: lower bound reduction}.}
In order to conclude the proof, we need to bound the number of possible pairs of
$(x,y)$ that arise in {\bf Step 2}. Let $N_k = |(\epsilon^4 2^k \cdot \mathbb Z^2) \cap [-2R,2R]^2 |$.
Then the number of possible pairs of $(x,y)$ is at most $N_k^2$. We have
\[
N_k \leq
	\left(2[2R \epsilon^{-4} 2^{-k}] +1\right)^2 \leq
	20 R^2\epsilon^{-8} 4^{-k}\,.
\]
Therefore, we obtain by Claim~\ref{clm :: f monotonicity} and a union bound
\begin{equation}\label{eq :: Ek union bound}
\mathbb{P}\left(E_k \right) \leq
	400 R^4\epsilon^{-16} 16^{-k}\mathbb P(\mathfrak X(2^{k+2}) > 0)\,.
\end{equation}
Plugging this into \eqref{eq :: polygon sup union bound} and using
Claim~\ref{clm :: f monotonicity} again, we conclude
\begin{equation}\label{eq :: reduction second to last bound}
\mathbb P(X(\eta) > 0 \mbox{ for some } \eta \in \mathcal P_R) \leq
	800 R^4\epsilon^{-16}\mathbb P( \mathfrak X(2^{K+2}) > 0)\,.
\end{equation}
Since $2^{K+2} \leq 16 R$, by \eqref{eq :: reduction R bound} and Proposition~\ref{prop :: lower bound reduction} we have $\mathbb E (\mathfrak X(2^{K+2})/2^{K+2}) \leq -\frac{1}{2}$.
At this point, it is natural to apply Gaussian concentration inequality.
To this end, we need a bound on $\var[\nu(\eta)]$ for $\eta \in \mathcal H(S)$ (here $S$ is any positive number).
Since every curve $\eta \in \mathcal{H}(S)$ is contained in $(-9S,10S)\times (-9S,9S)$ and has winding number bounded by 3 (by Lemma~\ref{lm :: good curve w bound}), we have
\begin{equation}\label{eq :: H variance bound}
\var[\nu(\eta)] < (100 S)^2.
\end{equation}
Combined with the first moment bound, \eqref{eq :: Borel inequality}, and the fact
that $\var[X(\eta)] = \epsilon^2 \var[\nu(\eta)]$ this yields that
 \[
\mathbb P(\mathfrak X(2^{K+2}) > 0 )\leq 2\exp(-10^{-5}\epsilon^{-2})\,.
 \]
Plugging this into \eqref{eq :: reduction second to last bound} gives
\[
\mathbb P(X(\eta) > 0 \mbox{ for some } \eta \in \mathcal P_R) \leq
	1600 R^4\epsilon^{-16} \exp(-10^{-5}\epsilon^{-2}) \leq \exp(-c_3\epsilon^{-2})\,,
\]
where the last step used $\epsilon \leq c_3 \leq 10^{-10}$ and
 \eqref{eq :: reduction R bound}. This concludes the proof of Proposition~\ref{prop :: correlation length lower bound}.

\subsection{A regularity bound for highly correlated polygon animals}\label{sec:regularity}

In this subsection, we prove a bound on the supremum of $\nu$ over a collection of highly correlated polygon animals that can all be seen as perturbations of a single animal (see Lemma~\ref{lm :: max over simple collections}); this bound will be repeatedly applied in our multi-scale analysis (for instance, it will be the main ingredient in the proof of the base case of Proposition~\ref{prop :: lower bound reduction}).
We first introduce some notation.
For a sequence of points $\mathbf v = (v_0,\dots,v_n)$, let $\eta = \eta(\mathbf v)$ be the curve obtained by concatenating the line segments between the neighboring points in the sequence.
That is, $\eta = \ell_1\dots\ell_n$ where $\ell_i$ is the line segment from $v_{i-1}$ to $v_i$ for $1 \leq i \leq n$.
For convenience of exposition, we say that $\mathbf v$ describes $\eta$. Also recall that  $d_\nu(\eta^{i},\eta^{i+1}) =
	\mathbb E \left[(\nu(\eta^{i}) - \nu(\eta^{i+1}))^2\right]^{1/2}$.

\begin{lemma}\label{lm :: max over simple collections}
For $n \geq 1$, let $\mathcal G$ be a collection of sequences of $(n+1)$
points in $\mathbb R^2$. Let $\Delta_2 \geq \Delta_1 > 0, \Delta_3>0$ and $\sigma > 0$
be such that the following holds for all $\mathbf v, \mathbf w \in \mathcal G$:
\begin{itemize}
	\item $|v_i - v_{i-1}| \leq \Delta_1$ for $1 \leq i \leq n$;
	\item $|v_{n} - w_{0}| \leq \Delta_2$;
    \item $|v_i - w_i| \leq \Delta_3$ for $1\leq i\leq n$ and $|v_0 - w_0| \leq \min(\Delta_1,\Delta_3)$;
	\item $d_\nu(\eta(\mathbf v), \eta(\mathbf w)) \leq \sigma$.
\end{itemize}
Let
\begin{align*}
\Delta_4 =
	\min\big(\Delta_1 + \tfrac{1}{2\Delta_1} \big(\tfrac{\sigma}{n+1}\big)^2, \Delta_3\big) +
		\tfrac{1}{\Delta_1} \big(\tfrac{\sigma}{n+1}\big)^2 \mbox{ and }
\Delta_5 = \sqrt{2\pi \Delta_1^{(n-1)/(n+1)} \Delta_4} \Delta_2^{1/(n+1)}\,.
\end{align*}
Then the following holds for an absolute constant $C_4>0$
\[
\mathbb E \big[\sup_{\mathbf v \in \mathcal G} \nu(\eta(\mathbf v)) \big] \leq
	C_4 \sigma\sqrt{(n+1)\log\big(\tfrac{\Delta_5 (n+1)}{\sigma}\big)}.
\]
\end{lemma}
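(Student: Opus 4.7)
The plan is to bound $\mathbb E[\sup_{\mathbf v\in\mathcal G}\nu(\eta(\mathbf v))]$ by Dudley's integral bound \eqref{eq :: entropy integral generic bound} applied to the Gaussian process $\{\nu(\eta(\mathbf v)) : \mathbf v\in\mathcal G\}$. The target bound $\sigma\sqrt{(n+1)\log(\Delta_5(n+1)/\sigma)}$ is exactly the shape of the Dudley integral when the log-covering grows like $(n+1)\log(\Delta_5(n+1)/q)$ on $q\in(0,\sigma]$, so the heart of the proof lies in the covering estimate.

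First I would bound $d_\nu$ in terms of coordinate-wise perturbations. Writing $\nu(\eta)=\int w(z,\eta_\circ)\,dW(z)$ and noting that winding numbers remain uniformly bounded on the relevant curves, the set where $w(\cdot,\eta(\mathbf v)_\circ)$ differs from $w(\cdot,\eta(\mathbf w)_\circ)$ decomposes over the $n$ bounded edges (each of length $\leq\Delta_1$) together with the closing segment (of length $\leq\Delta_2$). Writing $\delta_i=|v_i-w_i|$, the area of each thin quadrilateral in this decomposition is at most $\Delta_1(\delta_{i-1}+\delta_i)$, respectively $\Delta_2(\delta_0+\delta_n)$ for the closing piece, yielding
\[
d_\nu(\eta(\mathbf v),\eta(\mathbf w))^2 \;\lesssim\; \Delta_1\sum_{i=0}^n\delta_i + \Delta_2(\delta_0+\delta_n).
\]

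Next, fix a reference $\mathbf v^*\in\mathcal G$ and build a covering of $\mathcal G$ at scale $q$ by discretizing each coordinate $v_i$ on a grid of mesh $\delta_i$. The effective radius of the ball accessible to $v_i$ is $\asymp\Delta_1\wedge\Delta_3$ in all cases — from $|v_0-w_0|\leq\Delta_1\wedge\Delta_3$ for $i=0$, and $|v_i-v_{i-1}|\leq\Delta_1$ combined with $|v_i-v_i^*|\leq\Delta_3$ for $1\leq i\leq n$ — but it must be inflated to $\Delta_4$ at $v_0$ to accommodate the irreducible slack $\sim(\sigma/(n+1))^2/\Delta_1$ that the optimal mesh forces on the closing segment. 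Minimizing $\prod_i(R_i/\delta_i)^2$ subject to the constraint $\Delta_1\sum_i\delta_i+\Delta_2(\delta_0+\delta_n)\lesssim q^2$ via Lagrange multipliers gives $\delta_i\propto 1/a_i$, where $a_0=a_n\asymp\Delta_2$ and $a_i\asymp\Delta_1$ otherwise. An AM-GM computation then produces the stated expression $\Delta_5=\sqrt{2\pi\,\Delta_1^{(n-1)/(n+1)}\Delta_4}\,\Delta_2^{1/(n+1)}$ as the natural length scale, and yields $\log\mathcal N_{\mathcal G,\nu}(q)\lesssim (n+1)\log(\Delta_5(n+1)/q)$.

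Finally I plug into Dudley: the substitution $u=\Delta_5(n+1)/q$ in $\int_0^\sigma\sqrt{(n+1)\log(\Delta_5(n+1)/q)}\,dq$ shows this is $O(\sigma\sqrt{(n+1)\log(\Delta_5(n+1)/\sigma)})$, since the integrand concentrates near $q=\sigma$. The main obstacle is the Lagrange/AM-GM step: the three classes of coordinates — the start $v_0$, the interior vertices, and the end $v_n$ — enter asymmetrically into both the covering count and the metric constraint, so one must be careful to identify the correct effective radius and weight for each in order that the optimization returns $\Delta_5$ in its geometric-mean form. The inflation $(\sigma/(n+1))^2/\Delta_1$ in $\Delta_4$ is exactly what is required so that the covering of $v_0$ remains valid even when $\Delta_1$ is as small as $\sigma/(n+1)$.
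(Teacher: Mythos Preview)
Your overall plan---bound $d_\nu$ in terms of coordinate displacements, turn this into a covering-number estimate, and feed it into Dudley's integral \eqref{eq :: entropy integral generic bound}---is exactly what the paper does. The gap is in the first step.

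You assert
\[
d_\nu(\eta(\mathbf v),\eta(\mathbf w))^2 \;\lesssim\; \Delta_1\sum_{i=0}^n\delta_i + \Delta_2(\delta_0+\delta_n),
\]
on the grounds that the symmetric-difference region decomposes into thin strips of total area of this order and that winding numbers stay bounded. But nothing in the hypotheses of the lemma forces the curves $\eta(\mathbf v)$ to be simple or to have bounded winding; the sequences in $\mathcal G$ are arbitrary. If one writes $\nu(\eta(\mathbf v))-\nu(\eta(\mathbf w))=\sum_{i=0}^n\nu(\xi_i)$ with $\xi_i$ the quadrilateral recording the change at vertex $i$, the cross-terms $\mathbb E[\nu(\xi_i)\nu(\xi_j)]$ need not vanish, and the pointwise difference $\sum_i w(z,\xi_i)$ can be as large as $n+1$. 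The only generally valid bound is therefore the triangle inequality on the $L^2$-norm $d_\nu$:
\[
d_\nu(\eta(\mathbf v),\eta(\mathbf w))\;\le\;\sum_{i=0}^n\sqrt{\mathrm{area}(\xi_i)}\;\lesssim\;\sum_{0<i<n}\sqrt{\Delta_1\delta_i}+\sqrt{\Delta_2\delta_0}+\sqrt{\Delta_2\delta_n},
\]
a sum of square roots rather than a square root of a sum. This is precisely Claim~\ref{clm :: canonical distance bound}, proved by interpolating one vertex at a time. With the correct bound, achieving $d_\nu\le s$ requires $\delta_i\lesssim s^2/\big((n+1)^2\Delta_1\big)$ for interior $i$ and $\delta_0,\delta_n\lesssim s^2/\big((n+1)^2\Delta_2\big)$---an extra factor of $(n+1)$ in each denominator compared to what your variance inequality would give. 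Once this mesh is fixed the covering number is obtained by a direct tile count; no Lagrange or AM--GM optimization is needed, and $\Delta_5$ drops out of the arithmetic.

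Your reading of $\Delta_4$ is also slightly off: the additive term $\Delta_1^{-1}(\sigma/(n+1))^2$ is not special to $v_0$. It appears uniformly for every $i$ because, given the tile containing $v_{i-1}$, the tiles $v_i$ can occupy are those meeting a $\Delta_1$-neighbourhood of that \emph{tile} (not of the point $v_{i-1}$), and the tile has diameter of order $\Delta_1^{-1}\delta(\sigma)$.
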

\begin{remark}
We have formulated Lemma~\ref{lm :: max over simple collections} in a slightly cumbersome way in order
to make it flexible enough that we can apply it in various settings throughout the proof.
\end{remark}

Our proof of Lemma~\ref{lm :: max over simple collections} is based on an application of the Dudley integral bound.
To this end, a major ingredient is a bound on covering numbers with respect to the canonical distance.
Thus, we first provide a bound on the canonical distance between (Gaussian variables associated with)
two curves, each described by a sequence of points.
\begin{claim}\label{clm :: canonical distance bound}
For  $n \geq 1$, let $\eta$ and $\eta'$ be curves described by $\mathbf v = (v_0,\dots,v_n)$ and
$\mathbf w = (w_0,\dots,w_n)$, respectively.
Write $w_{-1} = v_n$ and $v_{n+1} = w_0$. Then,
\[
d_\nu(\eta,\eta') \leq
	\sum_{i = 0}^{n} \sqrt{2^{-1}(|w_i - w_{i-1}| + |v_{i+1} - v_i|) |v_i - w_i|}\,.
\]
\end{claim}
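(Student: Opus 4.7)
The plan is to interpolate between $\eta$ and $\eta'$ one vertex at a time, apply the triangle inequality for the canonical metric $d_\nu$, and control each elementary step using Lemma~\ref{lm :: changing arc}. For $0 \leq i \leq n+1$, let $\eta^{(i)}$ be the curve described by $(w_0,\dots,w_{i-1},v_i,\dots,v_n)$, so that $\eta^{(0)} = \eta$ and $\eta^{(n+1)} = \eta'$. The step from $\eta^{(i)}$ to $\eta^{(i+1)}$ replaces the vertex $v_i$ by $w_i$ in the cyclic vertex list of the corresponding closed curve; its two neighbors there are $w_{i-1}$ and $v_{i+1}$ (under the conventions $w_{-1}=v_n$, $v_{n+1}=w_0$ given in the statement, which is exactly why those conventions appear). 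It therefore suffices to prove, for each $0 \leq i \leq n$,
\begin{equation*}
d_\nu(\eta^{(i)},\eta^{(i+1)}) \leq \sqrt{\tfrac{1}{2}\bigl(|w_i - w_{i-1}| + |v_{i+1} - v_i|\bigr)|v_i - w_i|},
\end{equation*}
and then sum.

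For a single step, Lemma~\ref{lm :: changing arc} applied to the two-edge detours $w_{i-1}\to v_i\to v_{i+1}$ and $w_{i-1}\to w_i\to v_{i+1}$ yields $\nu(\eta^{(i)}) - \nu(\eta^{(i+1)}) = \nu(\gamma_i)$, where $\gamma_i$ is the closed four-edge curve $w_{i-1}\to v_i\to v_{i+1}\to w_i\to w_{i-1}$. By the white-noise representation $\nu(\gamma_i) = \int w(z,\gamma_i)\,dW(z)$, so $d_\nu(\eta^{(i)},\eta^{(i+1)})^2 = \int w(z,\gamma_i)^2\,dz$. Let $T_1$ be the triangle with vertices $w_{i-1}, v_i, w_i$ and $T_2$ the one with vertices $v_i, v_{i+1}, w_i$. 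Each of the four edges of $\gamma_i$ has both endpoints in one of these (convex) triangles, so $\gamma_i \subset T_1 \cup T_2$ and $w(\cdot,\gamma_i)$ vanishes off this set. A four-edge closed polygonal loop is combinatorially either a simple (possibly non-convex) quadrilateral or a bow-tie, so the non-zero values of $w(\cdot,\gamma_i)$ lie in $\{-1,+1\}$, giving $d_\nu(\eta^{(i)},\eta^{(i+1)})^2 \leq \lambda(T_1) + \lambda(T_2)$. An elementary base-times-height estimate (base $w_{i-1}w_i$, resp.\ $v_iv_{i+1}$, with height bounded by the adjacent side of length $|v_i - w_i|$) yields $\lambda(T_1) \leq \tfrac{1}{2}|w_i - w_{i-1}||v_i - w_i|$ and $\lambda(T_2) \leq \tfrac{1}{2}|v_{i+1} - v_i||v_i - w_i|$, which is the desired bound for this step.

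The only geometrically delicate point, and the main subtlety, is the uniform winding-number bound $|w(z,\gamma_i)| \leq 1$: it rests on the elementary fact that the image of a four-edge polygonal loop has at most two bounded complementary components, carrying winding numbers of opposite sign in the self-intersecting (bow-tie) case. This is what justifies replacing $w(\cdot,\gamma_i)^2$ by the indicator of $T_1 \cup T_2$ inside the area integral. Everything else is a clean combination of Lemma~\ref{lm :: changing arc}, the triangle inequality for $d_\nu$, and elementary planar geometry.
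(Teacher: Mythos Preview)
Your proof is correct and follows essentially the same route as the paper: both interpolate one vertex at a time, apply the triangle inequality for $d_\nu$, identify each increment as $\nu$ of a closed four-edge curve, and bound its variance by $\tfrac{1}{2}(|w_i-w_{i-1}|+|v_{i+1}-v_i|)|v_i-w_i|$ via the two triangles $T_1,T_2$.

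The only (minor) difference is in how the variance of the four-gon is controlled. The paper writes $\nu(\xi_i)=\nu(\tau_{i,1})+\nu(\tau_{i,2})$ for the two oriented triangles sharing the edge $v_iw_i$ with opposite orientation, observes that this forces $\mathbb E[\nu(\tau_{i,1})\nu(\tau_{i,2})]\le 0$, and hence $\mathrm{Var}\,\nu(\xi_i)\le \lambda(T_1)+\lambda(T_2)$. You instead argue directly that a four-edge closed loop has winding number in $\{-1,0,1\}$ and that $\gamma_i^*\subset T_1\cup T_2$, giving $\int w(\cdot,\gamma_i)^2\le \lambda(T_1\cup T_2)\le \lambda(T_1)+\lambda(T_2)$. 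These are two faces of the same coin: the shared-edge-with-opposite-orientation observation is exactly what makes the two triangle contributions cancel rather than add on any overlap, which is equivalent to $|w|\le 1$. Your casework ``simple quadrilateral or bow-tie'' is correct (for four vertices at most one pair of opposite sides can cross), though you assert it without proof; the paper's negative-correlation phrasing sidesteps that small geometric lemma.
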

\begin{proof}
Our proof is based on the idea of interpolation.
For $0 \leq i \leq n+1$, define the vector $\mathbf z^i$ by
\[
z_{j}^{i} = \begin{cases}
w_j & 0\leq j < i,\\
v_j &i \leq j \leq n.
\end{cases}
\]
Let $\eta^i$ be the curve described by $\mathbf z^{i}$. We have
$\eta^0 = \eta$, $\eta^{n+1} = \eta'$, and by the triangle inequality
\[
d_\nu(\eta, \eta') \leq \sum_{i = 0}^{n} d_\nu(\eta^{i},\eta^{i+1})\,,
\]
where we recall that  $d_\nu(\eta^{i},\eta^{i+1}) =
	\mathbb E \left[(\nu(\eta^{i}) - \nu(\eta^{i+1}))^2\right]^{1/2}$.
Let $\xi_i$ be the curve described by $(w_{i-1},w_i,v_{i+1},v_i, w_{i-1})$. By
Corollary~\ref{cor :: coarsening} we have $\nu(\eta^{i+1}) - \nu(\eta^i) = \nu(\xi_i)$.
Next, we let $\tau_{i,1}$ be the triangle described by $(w_{i-1}, w_i, v_i, w_{i-1})$ and
$\tau_{i,2}$ be the triangle described by $(v_i, w_i, v_{i+1}, v_i)$ and note that
$\nu(\xi_i) = \nu(\tau_{i,1}) + \nu(\tau_{2,i})$. See
Figure~\ref{fig :: change from one point} for an illustration.
\begin{figure}
	\centering
	\begin{subfigure}{.32\textwidth}
		\centering
		\includegraphics[width=.6\linewidth]{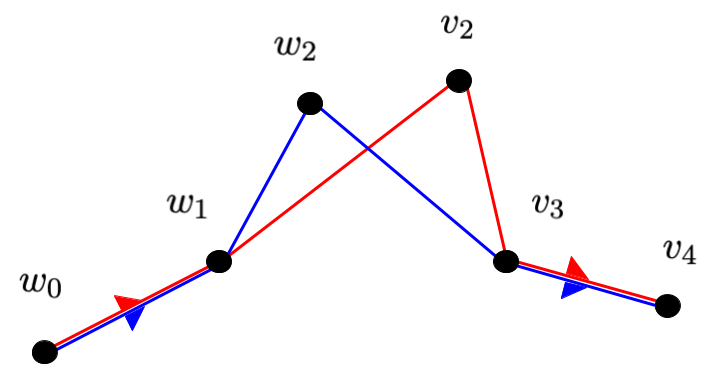}
	\end{subfigure}%
	\begin{subfigure}{.32\textwidth}
		\centering
		\includegraphics[width=.6\linewidth]{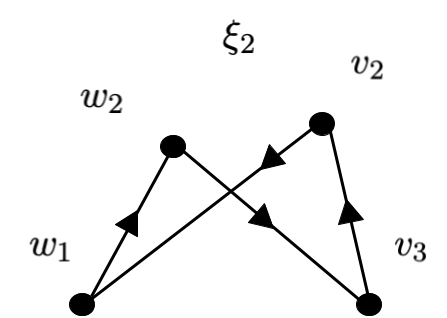}
	\end{subfigure}
	\begin{subfigure}{.32\textwidth}
		\centering
		\includegraphics[width=.6\linewidth]{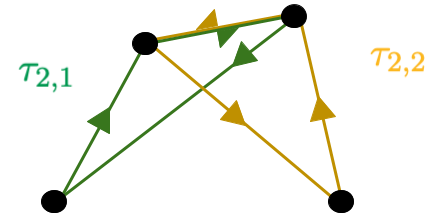}
	\end{subfigure}
	\caption{The difference between two curves that differ by one vertex is given by two triangles}
	\label{fig :: change from one point}
\end{figure}
Since $\tau_{1,i}$ and $\tau_{2,i}$ share a side (namely the segment between $w_i$
and $v_i$) but with opposite orientation, we have
$
\mathbb E[\nu(\tau_{1,i})\nu(\tau_{2,i}))] \leq 0$.
Altogether, this implies that
\[
\mathbb E[\nu(\xi_i)^2] \leq \mathbb E[\nu(\tau_{1,i})^2] + \mathbb E[\nu(\tau_{2,i})^2] \leq
	2^{-1}(|w_i - w_{i-1}| + |v_{i+1} - v_i|) \cdot |v_i - w_i|\,,
\]
where the second inequality used the fact that a triangle with two sides of length
$s_1$ and $s_2$ has area at most $s_1s_2/2$. Taking a square root and summing over
$i$ concludes the proof.
\end{proof}

\begin{proof}[Proof of Lemma~\ref{lm :: max over simple collections}]
As mentioned earlier, the key to the proof is a bound on the $s$-covering numbers of (the collection of curves
described by) $\mathcal G$ with respect to $d_\nu$, which we denote by
$\mathcal N_{\nu,\mathcal G}(s)$. Since by assumption
$d_\nu(\eta(\mathbf v),\eta(\mathbf w)) \leq \sigma$ for all sequences in $\mathcal G$, we have
$\mathcal N_{\nu,\mathcal G}(s) = 1$ for $s \geq \sigma$. Consequently, we assume
$s < \sigma$ throughout. Let $\delta(s) = s^2/(n+1)^2$. If
$\mathbf v, \mathbf w \in \mathcal G$ are such that
$|v_i - w_i| \leq \Delta_1^{-1} \delta(s)$ for $0 < i < n$ and
\[
\max(|v_0 - w_0|, |v_n - w_n|) \leq \Delta_2^{-1} \delta(s)\,,
\]
then by Claim~\ref{clm :: canonical distance bound}
\begin{equation}\label{eq :: canonical distance bound}
d_\nu(\eta,\eta') \leq
	\sqrt{\Delta_1}\sum_{i = 1}^{n-1} \sqrt{|v_i - w_i|} +
		\sqrt{\Delta_2}(\sqrt{|v_0 - w_0|} + \sqrt{|v_n - w_n|}) \leq s\,.
\end{equation}
Let $\mathcal T$ be a tiling of $\mathbb R^2$ by squares of side-length
$1$ centered on points in $\mathbb Z^2$. If $x$ and $y$ are in the same tile of
$\mathcal T$, then $|x-y| \leq \sqrt{2}$. Thus, by \eqref{eq :: canonical distance bound}
a sufficient condition for $d_\nu(\eta,\eta') \leq s$ is that  $v_i$ and $w_i$ are in the same tile of
$2^{-1/2}\Delta_1^{-1} \delta(s) \cdot\mathcal T$ for $0 < i < n$ and that $v_0$
and $w_0$ as well as $v_n$ and $w_n$ are in the same tile of
$2^{-1/2}\Delta_2^{-1}\delta(s) \cdot \mathcal T$. Therefore, it suffices to bound
the number of sequences of tiles $\mathbf v$ can occupy (i.e., $v_i$ is contained in the $i$-th tile in the sequence). To this end, we note that the number
of tiles in $a \cdot \mathcal T$ that intersect a given ball of radius $b$ is at most
\[
g(a,b) = \tfrac{\pi(b + \sqrt{2}a)^2}{a^2}\,.
\]
This can be seen by noting that all such tiles are contained in a ball of radius
$b + \sqrt{2}a$. We let
\[
b = \min \big(\Delta_1 + \tfrac{1}{2\Delta_1} \left(\tfrac{\sigma}{n+1}\right)^2, \Delta_3\big)\,,
\]
and $a_j(s) = 2^{-1/2}\Delta_j^{-1} \delta(s)$ for $j = 1,2$.
By assumption, $v_0$ is contained in a ball of radius $\min(\Delta_1,\Delta_3) \leq b$ so
the number of tiles of size $a_2(s)$ which $v_0$ can possibly occupy is at most $g(a_2(s), b)$. Next we wish to determine the number of possible tiles which $v_{i}$ can occupy given the tile $v_{i-1}$ occupies.
Let $T$ be the tile in $a_j(s) \cdot \mathcal T$ that $v_{i-1}$ occupies, where $j = 0$ if $i = 1$ and $j = 1$ otherwise.
Because $|v_{i} - v_{i-1}| \leq \Delta_1$, the tile  $v_i$ occupies must intersect the neighborhood of $T$ of radius $\Delta_1$.
Because $T$ itself has radius $2^{-1/2} a_j(s) = (2\Delta_1)^{-1} \delta(s)$ and $\delta(s) \leq (\frac{\sigma}{n+1})^2$, we conclude that the tile $v_i$ occupies must intersect the ball of radius
$(\Delta_1 + (2\Delta_1)^{-1} (\frac{\sigma}{n+1})^2)$ centered at $T$.
On the other hand, by our assumption, $v_i$ is contained in a ball of radius $\Delta_3$.
Altogether, given the tile $v_{i-1}$ occupies, we see that the number of tiles of size $a_1(s)$ which $v_i$ can possibly occupy is at most
$g(a_{1}(s), b)$. Similarly, given the tile $v_{n-1}$ occupies, we see that the number of tiles  of size $a_2(s)$ which  $v_n$ can possibly occupy is
at most $g(a_2(s), b)$. Using again the fact that $a_2(s) \leq a_1(s)$ and the fact that
$
b + \sqrt{2}a_1(s) \leq b + \sqrt{2}a_1(\sigma) = \Delta_4$,
we have
\begin{align*}
\mathcal N_\nu(\mathcal G, s) \leq g(a_1(s),b)^{n-1} g(a_2(s), b)^2\leq \pi^{n+1} \frac{\Delta_4^{2(n+1)}}{a_1(s)^{2(n-1)}a_2(s)^4}
	&\leq \left(\frac{2\pi \Delta_4^2}{\delta(s)^2}\right)^{n+1} \Delta_1^{2(n-1)}\Delta_2^4\,.
\end{align*}
Recalling that $\delta(s) = s^2/(n+1)^2$, we can further write
\begin{align*}
\mathcal N_\nu(\mathcal G, s)& \leq (2\pi \Delta_1^{2(n-1)/(n+1)}\Delta_2^{4/(n+1)}\Delta_4^2 (\delta(s))^{-2})^{n+1} \\
	&\leq (\sqrt{2\pi \Delta_1^{(n-1)/(n+1)} \Delta_4} \Delta_2^{1/(n+1)}(n+1)s^{-1})^{4(n+1)}  = (\Delta_5(n+1)s^{-1})^{4(n+1)}\,.
\end{align*}
Plugging this into
\eqref{eq :: entropy integral generic bound} gives that for a universal constant $K>0$
\begin{align*}
\mathbb{E}\big[\|\nu\|_{\mathcal G}\big]
\leq  2K \sqrt{n+1} \int_0^\sigma \sqrt{\log\big(\tfrac{\Delta_5(n+1)}{s}\big)}ds
= 2K \Delta_5 (n+1)^{3/2} \int_0^{\sigma/\Delta_5(n+1)} \sqrt{\log(s^{-1})}ds\,.
	\numberthis \label{eq :: supremum ugly bound}
\end{align*}
An elementary calculation shows that for $\delta \in (0,1)$,
\begin{align*}
\int_0^\delta \sqrt{\log(x^{-1})}dx
= 2\int_{\sqrt{\log(\delta^{-1})}}^\infty u^2e^{-u^2}du
\leq \delta \big(\sqrt{\log(\delta^{-1})} +
\tfrac{\sqrt{\pi}}{2}\big)\,. \numberthis \label{eq :: log int}
\end{align*}
Since $\Delta_4 \geq \Delta_1^{-1}\delta(\sigma)$ and $\Delta_2 \geq \Delta_1$, we have
\[
\tfrac{\Delta_5(n+1)}{\sigma} \geq
\tfrac{\sqrt{2\pi\Delta_1 \Delta_4}(n+1)}{\sigma }\geq
\sqrt{2\pi} > 2 \,.
\]
Therefore, $\sigma/\Delta_5(n+1) \leq 1/2$ and thus there exists a constant $c > 0$
such that
\[
\int_0^{\sigma/\Delta_5(n+1)} \sqrt{\log(s^{-1})}ds \leq
	c \tfrac{\sigma}{\Delta_5(n+1)} \sqrt{\log\big(\tfrac{\Delta_5 (n+1)}{\sigma}\big)}\,.
\]
Plugging this into \eqref{eq :: supremum ugly bound} completes the proof of the lemma.
\end{proof}

\subsection{Proof of Proposition~\ref{prop :: lower bound reduction}: base case} \label{sec :: lower base case}
In this subsection, we prove the base case bound for Proposition~\ref{prop :: lower bound reduction}
(see Corollary~\ref{cor :: lower bound base case}).

For our analysis, it will be useful to associate to each curve $\eta$ in $\mathcal H(R)$
a sequence of points $\mathbf v(\eta) = (v_0,\dots,v_m)$ such that $\mathbf v(\eta)$
describes $\eta$ and $|v_{i} - v_{i-1}| \leq 2$ for $1 \leq i \leq m$ and $|v_i - v_{i-1}| \geq 1$ for $2 \leq i \leq m-1$.
We define the sequence $\mathbf v(\eta)$ by the following recursive procedure. We let $v_0$ be the start point of $\eta$.
If the first side of $\eta$ has length at most 2, we let $v_1$ be the end point of this side; otherwise we let $v_1$ be the point on the first side of $\eta$ that is distance 1 away from $v_0$ (or the endpoint of the first side of $\eta$ if it is has length less than 1).
Then, we let $\eta'$ be the segment of $\eta$ from $v_1$ to its end point and let $\mathbf v(\eta)$
be the concatenation of $v_0$ and $\mathbf v(\eta')$.
That is, if a side of $\eta$ has length $l \geq 1$, it is split into $\lceil l -1 \rceil$ segments by $\mathbf v(\eta)$.

We let $\mathcal H(R, m)$ be the set of curves in $\mathcal H(R)$
such that $\mathbf v(\eta)$ has length $m+1$ (i.e. curves that are split into $m$
segments). Note that since each segment described by
$\mathbf v$ has length at most 2 and each curve in $\mathcal H(R)$ has length at
least $(1 - 2\epsilon^4)R$, we see that $\mathcal H(R, m)$ is
empty if $2m < (1-2\epsilon^4)R$. The following is a main step in verifying the base case bound.

\begin{lemma}\label{lm :: base case technical result}
There exists a constant $C_5 > 1$ such that the following holds. For $\epsilon < C_5^{-1}$,
$0 \leq R \leq 2 \epsilon^{-4}$, and $m$ satisfying that $\mathcal H(R, m)$ is non-empty, we have
\[
\mathbb E [ \|\nu\|_{\mathcal H(R, m)}]
	\leq C_5 R\sqrt{m \log(C_5 m/R)}\,.
\]
\end{lemma}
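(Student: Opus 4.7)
The plan is to reduce to Lemma~\ref{lm :: max over simple collections} applied to the collection of vertex sequences $\mathcal G := \{\mathbf v(\eta) : \eta \in \mathcal H(R,m)\}$. Since $\eta(\mathbf v(\eta)) = \eta$ by construction, we have $\sup_{\eta \in \mathcal H(R,m)} \nu(\eta) = \sup_{\mathbf v \in \mathcal G} \nu(\eta(\mathbf v))$, so it suffices to verify the hypotheses of that lemma with the parameters $n = m$, $\Delta_1 = 5$, $\Delta_2 = \Delta_3 = \max(5, 30R)$, and $\sigma = C R$ for a universal constant $C$ to be fixed below.

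The geometric hypotheses follow directly from the structure of $\mathcal H(R,m)$. The construction of $\mathbf v(\eta)$ gives $|v_i - v_{i-1}| \leq 2 \leq \Delta_1$. The definition of $\mathcal H(R) = \mathcal H(o,(R,0))$ gives $|v_0| \leq \epsilon^4 R$, hence $|v_0 - w_0| \leq 2\epsilon^4 R \leq 4 \leq \Delta_1$ using the hypothesis $R \leq 2\epsilon^{-4}$. Every curve in $\mathcal H(R)$ is contained in a $19R \times 18R$ rectangle around the segment from $o$ to $(R,0)$, so the bounds involving $\Delta_2$ and $\Delta_3$ are immediate. For $\sigma$, every curve in $\mathcal H(R)$ is a good curve, so Lemma~\ref{lm :: good curve w bound} yields $|w(z,\eta)| \leq 3$ for all $z \notin \eta^*$; since $\eta$ is supported in a rectangle of area $19R \cdot 18R$, this gives $\var(\nu(\eta)) \leq 9 \cdot 19 \cdot 18 \cdot R^2$, and the triangle inequality in $L^2(\mathbb P)$ then produces $d_\nu(\eta(\mathbf v), \eta(\mathbf w)) \leq 2\sqrt{\var(\nu(\eta))} \leq CR$ for a universal $C$.

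It remains to compute $\Delta_4$, $\Delta_5$ and conclude. The non-emptiness of $\mathcal H(R,m)$ combined with $l(\eta) \geq (1-2\epsilon^4)R$ for $\eta \in \mathcal H(R)$ and the fact that segments in $\mathbf v(\eta)$ have length at most $2$ implies $m \geq (1-2\epsilon^4)R/2 \geq R/3$ for $\epsilon$ small enough. Hence $R/(m+1) = O(1)$, so $(\sigma/(m+1))^2 = O(1)$ and $\Delta_4 = O(1)$. The only non-trivial factor in $\Delta_5$ is $\Delta_2^{1/(m+1)} = \exp(\log \Delta_2 / (m+1))$: when $R < 1$ we have $\Delta_2 = 5$ and the exponent is trivially bounded, while when $R \geq 1$ the exponent $\log(30R)/(m+1) \leq 3 \log(30R)/R$ is bounded uniformly. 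Thus $\Delta_5 = O(1)$, and Lemma~\ref{lm :: max over simple collections} gives
$$\mathbb E\big[\|\nu\|_{\mathcal H(R,m)}\big] \leq C_4 \sigma \sqrt{(m+1)\log(\Delta_5(m+1)/\sigma)} \leq C' R \sqrt{m \log(C' m/R)}$$
for a universal constant $C'$, which is the claim upon choosing $C_5 := C'$. The argument is essentially mechanical once Lemma~\ref{lm :: max over simple collections} is available; the only slightly delicate point I expect is ensuring that $\Delta_2^{1/(m+1)}$ stays bounded across the full admissible range $R \leq 2\epsilon^{-4}$, which relies on the non-emptiness bound $m \gtrsim R$ to absorb the potential growth of $\log R$, and on the uniform variance bound $\sigma = O(R)$ coming from Lemma~\ref{lm :: good curve w bound}.
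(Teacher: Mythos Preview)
Your proof is correct and follows essentially the same approach as the paper: reduce to Lemma~\ref{lm :: max over simple collections} applied to $\mathcal G=\{\mathbf v(\eta):\eta\in\mathcal H(R,m)\}$, verify the geometric hypotheses using the construction of $\mathbf v(\eta)$ and the $19R\times 18R$ containment, bound $\sigma$ via the winding-number estimate, and use the non-emptiness constraint $m\gtrsim R$ to show $\Delta_4,\Delta_5=O(1)$. The paper uses the specific choices $\Delta_1=2$, $\Delta_2=2R$, $\sigma=\Delta_3=200R$ (citing \eqref{eq :: H variance bound} for the variance bound), while you use $\Delta_1=5$, $\Delta_2=\Delta_3=\max(5,30R)$; both work, and your use of the $\max$ is slightly more careful about the hypothesis $\Delta_2\geq\Delta_1$ when $R$ is small. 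One cosmetic slip: your case split ``$R<1\Rightarrow\Delta_2=5$'' should read $R<1/6$, but for $1/6\leq R<1$ one has $\Delta_2=30R<30$ and $m\geq 1$, so $\Delta_2^{1/(m+1)}$ is still $O(1)$ and the conclusion is unaffected.
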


\begin{proof}
Without loss of generality we assume $C_5 > 10$. Let
$\mathcal G = \{ \mathbf v(\eta) \,:\, \eta \in \mathcal H(R,m)\}$ denote the sequences associated with the curves in $\mathcal H(R,m)$.
For all $\mathbf v, \mathbf w \in \mathcal G$, we have $|v_0| \leq R \epsilon^4 \leq 2$, $|v_{i} - v_{i-1}| \leq 2$ for $1 \leq i \leq m$, and $|v_m - w_0| \leq R(1+2\epsilon^4) \leq 2R$.
By \eqref{eq :: H variance bound} we have $\sigma_{\eta, \mathcal H(R)}^2 < (100 R)^2$, which implies
\[
d_\nu(\mathbf v, \mathbf w) \leq 2 \sigma_{\eta,\mathcal H(R)} \leq 200 R\,.
\]
Therefore, we can apply Lemma~\ref{lm :: max over simple collections} with
$\Delta_1 = 2$, $\Delta_2 = 2R$, $\sigma = \Delta_3 = 200 R$. Since $\mathcal H(R,m)$
is non-empty, we have $2m \geq (1-2\epsilon^4)R \geq R/2$ and thus
$\sigma/m \leq 800$. This implies that
\begin{align*}
\Delta_4 =
	\min\big(\Delta_1 + \tfrac{1}{2\Delta_1} \big(\tfrac{\sigma}{m+1}\big)^2, \Delta_3\big) +
		\tfrac{1}{\Delta_1} \big(\tfrac{\sigma}{m+1}\big)^2 \leq C\,.
\end{align*}
Additionally, $R^{1/m} \leq R^{4/R} \leq C$ and thus
\begin{align*}
\Delta_5 = \sqrt{2\pi \Delta_1^{(m-1)/(m+1)} \Delta_4} \Delta_2^{1/(m+1)} \leq C\,.
\end{align*}
Therefore, we conclude from Lemma~\ref{lm :: max over simple collections} that
\begin{equation*}
\mathbb E \big[ \|\nu\|_{\mathcal H(R,m)} \big]
	\leq C_4 \sigma\sqrt{(m+1)\log\big(\tfrac{\Delta_5 (m+1)}{\sigma}\big)}
	\leq C R \sqrt{m \log\big(\tfrac{Cm}{R}\big)}\,. \qedhere
\end{equation*}
\end{proof}

We next state and prove
the base case bound as a corollary.
\begin{cor}\label{cor :: lower bound base case}
There exists a constant $c_6 \in (0,1)$ such that
$\mathbb{E}[\mathfrak X(R)/R] \leq -\frac{3}{4}$ for $\epsilon \leq c_6^2$ and $R \leq c_6 \epsilon^{-1}$.
\end{cor}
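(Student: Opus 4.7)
The plan is to decompose $\mathcal H(R) = \bigsqcup_m \mathcal H(R,m)$ by the number $m$ of segments in the canonical subdivision $\mathbf v(\eta)$, apply Lemma~\ref{lm :: base case technical result} to each piece, and then union-bound over $m$ via Gaussian concentration. Every $\eta \in \mathcal H(R,m)$ satisfies the deterministic lower bound $l(\eta) \geq L_m := \max\bigl((1-2\epsilon^4)R,\, m-2\bigr)$, coming from the chord between the near-endpoints and from the $m-2$ middle segments of $\mathbf v(\eta)$ each of length at least $1$. Writing $M_m^\ast := \sup_{\eta \in \mathcal H(R,m)} \nu(\eta)$ and $Z_m := \epsilon M_m^\ast - L_m$, we then have $\mathfrak X(R) \leq \sup_m Z_m$, so it suffices to show $\mathbb E[\sup_m Z_m] \leq -\tfrac{3}{4} R$.

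For the per-$m$ mean bound, set $u := m/R$ and $a := \epsilon\sqrt R$; the hypothesis $\epsilon \leq c_6^2$ and $R \leq c_6/\epsilon$ forces $a^2 = \epsilon^2 R \leq c_6\epsilon \leq c_6^3$, so both $a$ and $\epsilon$ can be made arbitrarily small by shrinking $c_6$. Lemma~\ref{lm :: base case technical result} gives $\epsilon \mathbb E[M_m^\ast] \leq a C_5\, R\, \sqrt{u \log(C_5 u)}$. In the chord-dominated regime $u \leq 1 + 2/R$, the square-root factor is bounded by an absolute constant whenever $R \geq 1$ (for $R < 1$ the curve family is essentially trivial and the bound is immediate from the smallness of $\epsilon\nu$), so $\epsilon\mathbb E[M_m^\ast] = O(a)\cdot R$ while $L_m \geq (1-2\epsilon^4) R$, giving $\mathbb E[Z_m] \leq -\bigl(1 - O(a + \epsilon^4)\bigr) R$. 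In the long-curve regime $u > 1 + 2/R$, $L_m = m - 2$ and $\mathbb E[Z_m]/R \leq -h(u) + 2/R$ where $h(u) := u - aC_5 \sqrt{u \log(C_5 u)}$; since the unique minimum of $h$ sits at $u = O(a^2 \log(1/a)) \ll 1$, $h$ is increasing on $[1,\infty)$ for small $a$ and $h(u) \geq h(1) = 1 - O(a)$. In both regimes, for $c_6$ sufficiently small we get $\mathbb E[Z_m] \leq -\tfrac{15}{16} R$ uniformly in $m$.

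To control the fluctuations uniformly over $m$, each centered $M_m^\ast$ is sub-Gaussian with variance proxy at most $8(100R)^2$ by \eqref{eq :: H variance bound} combined with \eqref{eq :: max is subGaussian}. Since a simple piecewise-linear curve with sides of length $\geq 1$ contained in the $O(R)\times O(R)$ bounding box admits at most polynomially-in-$R$ many segments, the set of relevant $m$ has size $\mathrm{poly}(R)$, and \eqref{eq :: subGaussian max} yields $\mathbb E[\sup_m(Z_m - \mathbb E[Z_m])] \leq C \epsilon R \sqrt{\log R}$. The hypotheses give $\epsilon \sqrt{\log R} \leq \epsilon \sqrt{\log(c_6/\epsilon)} \leq c_6^2 \sqrt{2\log(1/c_6)}$, which vanishes as $c_6 \to 0$, so this fluctuation contributes at most $R/16$. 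Altogether $\mathbb E[\mathfrak X(R)/R] \leq -\tfrac{15}{16} + \tfrac{1}{16} = -\tfrac{7}{8} < -\tfrac{3}{4}$. The main obstacle is choosing $c_6$ to simultaneously satisfy the two smallness conditions $a = \epsilon\sqrt R \leq c_6^{3/2}$ (which controls the per-$m$ entropic sup through Lemma~\ref{lm :: base case technical result}) and $\epsilon\sqrt{\log R} \leq c_6^2 \sqrt{\log(1/c_6)}$ (which controls the union bound over $m$); the asymmetric coupling $\epsilon \leq c_6^2$ together with $R\epsilon \leq c_6$ in the hypotheses is calibrated exactly so that both hold for a single choice of $c_6$.
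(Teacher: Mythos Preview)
Your overall strategy---partition $\mathcal H(R)$ by $m$, bound each piece via Lemma~\ref{lm :: base case technical result}, then aggregate---matches the paper's. The per-$m$ analysis is essentially the same, and your observation that $\epsilon\sqrt{\log R}\le c_6^2\sqrt{\log(1/c_6)}$ on the allowed parameter range is correct.

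There is, however, a genuine gap in the aggregation step. You assert that ``a simple piecewise-linear curve with sides of length $\ge 1$ contained in the $O(R)\times O(R)$ bounding box admits at most polynomially-in-$R$ many segments,'' and then apply \eqref{eq :: subGaussian max} over those finitely many $m$. This geometric claim is not obvious: simplicity alone does not bound the length of a piecewise-linear curve in a bounded region, and while the side-length constraint $\ge 1$ plausibly forces $m=O(R^2)$, you give no argument, and I do not see a two-line proof. If the set $\{m:\mathcal H(R,m)\neq\emptyset\}$ is unbounded (or merely grows faster than any polynomial in $R$), your sub-Gaussian union bound fails.

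The paper sidesteps this entirely. It first uses Claim~\ref{clm :: f monotonicity} to reduce to $R=c_6\epsilon^{-1}\ge 10$ (which also cleanly handles the small-$R$ regime you handwave), and then derives the \emph{linearly decreasing} bound $\mathbb E[\mathfrak X(R,m)/R]\le -7/8 - m/(30R)$ rather than your uniform $-15/16$. That linear decay is exactly the input needed for Lemma~\ref{lm :: generic bound for partitioned collection}, which is designed to aggregate over an unbounded index set by trading the decreasing mean against the growing variance and entropy. You actually derive the same linear decay implicitly (your $h(u)\sim u$ for large $u$), so the fix is straightforward: either justify the polynomial bound on $m$, or feed your $m$-dependent bound into Lemma~\ref{lm :: generic bound for partitioned collection} as the paper does.
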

\begin{proof}
We assume without loss of generality that $c_6 \leq 10^{-1}$. By
Claim~\ref{clm :: f monotonicity}, $\mathfrak X(R)/R$ is stochastically increasing in $R$,
so we can assume without loss of generality that
$R = c_6 \epsilon^{-1} \geq c_6^{-1}  \geq 10$. Let $\mathfrak X(R,m) = \|X\|_{\mathcal H(R, m)}$
and note that if $\eta \in \mathcal H(R, m)$ then
$l(\eta) \geq \max(m - 2, R(1-2\epsilon^4))$. Since we assume $\epsilon < 10^{-2}$,
we have $(1-2\epsilon^4) \geq 15/16$ and thus
\[
\tfrac{\mathfrak X(R, m)}{R}
	\leq \tfrac{\epsilon\|\nu\|_{\mathcal H(R, m)}}{R} -
	\big(\tfrac{m-2}{R} \vee \tfrac{15}{16}\big)\,.
\]
Applying
Lemma~\ref{lm :: base case technical result} and the assumption that
$\sqrt{R}\epsilon \leq c_6$ we obtain (note that $m \geq R/4$, since otherwise $\mathcal H(R, m)$ is empty)
\begin{align}\label{eq-bound-mu-n-base-case}
\mathbb E \big[ \tfrac{\mathfrak X(R, m)}{R}\big]
	&\leq 2C_5 c_6 \sqrt{\tfrac{m}{R}\log\big(\tfrac{2C_5m}{R}\big)} -
		\big(\tfrac{m-2}{R} \vee \tfrac{15}{16}\big) \leq -\tfrac{7}{8} - \tfrac{m}{30R}\,,
	\end{align}
provided that $c_6$ is chosen small enough.
Next, we bound $\frac{\mathfrak X(R)}{R} = \frac{1}{R}\sup_{m\,:\, \mathcal H(R, m) \neq \emptyset} \mathfrak X(R, m)$ by applying Lemma~\ref{lm :: generic bound for partitioned collection} (this is a relatively simple example of an application of Lemma~\ref{lm :: generic bound for partitioned collection}).
We need to put notations and set parameters in the context of Lemma~\ref{lm :: generic bound for partitioned collection}.
To this end, we let $\mathcal G = \mathcal H(R)$ and $Y_\eta = R^{-1} X(\eta, \epsilon)$ (recall the definition of $X(\eta, \epsilon)$ in \eqref{eq-def-X-eta-epsilon}).
We let $A = \{m \,:\, \mathcal H(R,m) \neq \emptyset\}$, let $k$ be the minimum of $A$, and let $A_n = \{k + n \}$ for $n \geq 0$. Further, we write $\mu = -7/8$ and $\alpha = 1/(60R) = \epsilon/(60 c_6)$, and thus recalling \eqref{eq-bound-mu-n-base-case} we have $\mu_n \leq \mu - 2\alpha n$ for $n \geq 0$.
In addition, by \eqref{eq :: H variance bound}, we have $\sigma_{X, \mathcal H(R)}^2 \leq (100 R \epsilon)^2$.
Therefore $\sigma_n^2 \leq (100 \epsilon)^2$ for all $n$ and thus we let $\beta = (100 \epsilon)^2$.
Finally, since $|A_n| = 1$ for all $n$, we have $\log(|A_n|) = 0$ and thus we let $\gamma = 0$.
We have verified that our choice of parameters satisfy all assumptions in Lemma~\ref{lm :: generic bound for partitioned collection}, and therefore an application of Lemma~\ref{lm :: generic bound for partitioned collection} yields that
\begin{align}
\mathbb E\big[\tfrac{\mathfrak X(R)}{R}\big]
	\leq \mu + \sqrt{2\pi \beta} + \tfrac{4\beta}{\alpha}\tfrac{e^{-\alpha^2/4\beta}}{1 - e^{-\alpha^2/4\beta}}\,. \label{eq-f-R-base-bound-1}
\end{align}
Note that there exists a constant $C > 0$ (which does not depend on $c_6$) such that
$\alpha^2/\beta \geq 1/(C  c_6^2)$, $\beta/\alpha \leq C \epsilon c_6 \leq C  c_6^3$, and
$\sqrt{\beta} \leq C \epsilon \leq C c_6^2$. Therefore, if we take $c_6$ to be a sufficiently small constant we have
\[
\sqrt{2\pi \beta} + \tfrac{4\beta}{\alpha}\tfrac{e^{-\alpha^2/4\beta}}{1 - e^{-\alpha^2/4\beta}} \leq \tfrac{1}{8}\,.
\]
Plugging this into \eqref{eq-f-R-base-bound-1}, we conclude that
\begin{equation*}
\mathbb E\big[ \tfrac{\mathfrak X(R)}{R}\big]  \leq \mu + \tfrac{1}{8} = -\tfrac{3}{4}\,. \qedhere
\end{equation*}
\end{proof}

\subsection{Proof of Proposition~\ref{prop :: lower bound reduction}: inductive step} \label{sec :: lower induction}
In this subsection we carry out the inductive step in the proof of
Proposition~\ref{prop :: lower bound reduction}, which is formulated in the next lemma.
\begin{lemma}\label{lm :: lower bound inductive step}
	There exists a constant
	$C_7 > 1$ such that if
	$\epsilon \in (0, C_7^{-1})$, and
	\[
	\mathbb E\big[\tfrac{\mathfrak X(10^{-1}R)}{10^{-1}R} \big] \leq
		-\tfrac{1}{2}\,,
	\]
	then
	\[
	\mathbb E\big[\tfrac{\mathfrak X(R)}{R}\big] \leq
		\mathbb E \big[\tfrac{\mathfrak X(10^{-1} R)}{10^{-1} R}\big] +
			C_7 \epsilon^{4/3}\log(\epsilon^{-1})\,.
	\]
\end{lemma}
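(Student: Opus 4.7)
The plan is to partition $\mathcal H(R)$ into classes indexed by a coarse-grained skeleton that encodes the trajectory of each curve at scale $R/10$, apply the induction hypothesis segment-by-segment, and aggregate via Lemma~\ref{lm :: generic bound for partitioned collection}. Concretely, to each $\eta\in\mathcal H(R)$ I will associate a sequence $\mathbf s(\eta)=(s_0,\dots,s_m)$ of points drawn from a suitably fine lattice, with $s_0$ near $o$, $s_m$ near $(R,0)$, consecutive distances at most $R/10$, and such that the portion of $\eta$ between $s_{i-1}$ and $s_i$ is a good curve in $\mathcal H(s_{i-1},s_i,\epsilon)$. Following the roadmap in Subsection~\ref{sec:overview-lower}, this first decomposition handles vertical oscillations; the horizontal refinement for curves confined to thin strips is deferred to Lemma~\ref{lm :: strip bound}.

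Given a fixed skeleton $\mathbf s$, Corollary~\ref{cor :: coarsening} supplies the additive decomposition $\nu(\eta)=\nu(\eta')+\sum_i\nu(\eta_i)$, where $\eta'$ is the polygonal curve through $\mathbf s$ and $\eta_i$ is the $i$-th segment (a good curve by Lemma~\ref{lm :: splitting good curves}). By translation/rotation invariance of the white noise, $\sup_{\eta_i}\bigl(\epsilon\nu(\eta_i)-l(\eta_i)\bigr)$ is distributed as $\mathfrak X(|s_i-s_{i-1}|)$, so Claim~\ref{clm :: f monotonicity} together with the induction hypothesis yields $\mathbb E[\mathfrak X(|s_i-s_{i-1}|)]\le -\tfrac{1}{2}|s_i-s_{i-1}|$. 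Summing gives a conditional mean bound of $-\tfrac{1}{2}\ell(\mathbf s)$ plus the skeleton-dependent correction $\epsilon\nu(\eta')-l(\eta')$, whose fluctuation across skeletons is exactly the sort of highly correlated family handled by the regularity estimate Lemma~\ref{lm :: max over simple collections}.

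The geometric input that drives the inductive step, to be formalized as Claim~\ref{clm :: skeleton length bound}, is that a skeleton with $n$ vertically oscillating sides satisfies $\ell(\mathbf s)\ge R+c\alpha n$ for some explicit $\alpha$ (coming from $\sqrt{a^2+h^2}-a\ge ch^2/a$ applied side-by-side). Grouping skeletons into classes $A_n$ indexed by this complexity and feeding everything into Lemma~\ref{lm :: generic bound for partitioned collection} with $\mu=\mathbb E[\mathfrak X(R/10)/(R/10)]$, $\mu_n\le \mu-2\alpha n$, a variance proxy $\sigma_n^2\le \beta(n+1)$ obtained from sub-Gaussian sub-additivity across the at most $10+n$ segments (using \eqref{eq :: max is subGaussian} and the winding-number bound Lemma~\ref{lm :: good curve w bound}), and $\log|A_n|\lesssim n\log(\epsilon^{-1})$ from counting discretized skeletons, will yield the announced bound. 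The lattice spacing for $\mathbf s$ is chosen so that the ratios $\alpha/\sqrt\beta$ and $\alpha/\sqrt{\log|A_n|}$ land precisely at the scale predicted by the Imry--Ma computation in Subsection~\ref{sec :: emergence}, producing $\gamma$ and $\sqrt\beta$ of order $\epsilon^{4/3}\log(\epsilon^{-1})$ in Lemma~\ref{lm :: generic bound for partitioned collection}.

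The main obstacle is calibrating the definition of the skeleton and of a ``significant'' vertical oscillation so that the mean savings $\alpha n$ dominates the entropy $\sqrt{\log|A_n|}\,\sigma_n$ precisely at the $\epsilon^{4/3}$ scale, without incurring a larger loss from the Gaussian-concentration tail $\sqrt{2\pi\beta}$; this is where the careful two-step (vertical then horizontal) structure becomes essential. A secondary obstacle is that the segment suprema are not independent, so $\sigma_n^2$ cannot be read off from a na\"ive per-segment estimate and instead must be controlled by sub-additivity of variance proxies together with the winding-number bound. Finally, segments whose vertical oscillation is below the skeleton resolution are captured by the complementary horizontal refinement of Lemma~\ref{lm :: strip bound}, which couples back into the same aggregation framework in Subsection~\ref{sec :: lower deferred} and provides the second invocation of the induction hypothesis required to close the argument.
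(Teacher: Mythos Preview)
Your high-level architecture matches the paper's---skeleton decomposition via Corollary~\ref{cor :: coarsening}, goodness of segments via Lemma~\ref{lm :: splitting good curves}, skeleton fluctuations via Lemma~\ref{lm :: max over simple collections}, mean savings via Claim~\ref{clm :: skeleton length bound}, aggregation via Lemma~\ref{lm :: generic bound for partitioned collection}, and the two-step vertical/horizontal structure with Lemma~\ref{lm :: strip bound}. However, the description of the first decomposition is internally inconsistent and, if taken literally, would not work.

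The first decomposition in the paper is \emph{not} by consecutive distances at most $R/10$. It records the successive crossings of horizontal lines spaced $\rho R$ apart, where $\rho=\epsilon^{2/3}\log(\epsilon^{-1})$. Thus consecutive skeleton points $v_{j-1},v_j$ satisfy $|y_{v_j}-y_{v_{j-1}}|=\rho R$ but can have horizontal separation up to $\sim 20R$, so in general $|v_j-v_{j-1}|$ is nowhere near $R/10$ and the segment $\eta_j$ is certainly not in $\mathcal H(v_{j-1},v_j)$. Consequently you cannot invoke the induction hypothesis at this stage as you do in your second paragraph; the per-segment bound here is exactly Lemma~\ref{lm :: strip bound}, which performs the horizontal refinement inside the thin strip and only then applies the hypothesis (this is where Claim~\ref{clm :: lower bound induction hypothesis} is needed to verify $\tau_i\in\mathcal H(\mathsf u_{i-1},\mathsf u_i)$ with $|\mathsf u_i-\mathsf u_{i-1}|\le S$). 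Two further points of calibration: the savings in Claim~\ref{clm :: skeleton length bound} is $c\rho^2$ per vertical crossing (from $\sqrt{a^2+(\rho R)^2}-a\gtrsim\rho^2 R^2/a$), which is what forces $\alpha\sim\rho^2$ in Lemma~\ref{lm :: generic bound for partitioned collection}; and the variance $\sigma_\kappa^2$ is not obtained by sub-Gaussian sub-additivity across segments but directly from containment---a curve with $\kappa$ crossings lives in a box of height $\lesssim\kappa\rho R$ and width $19R$, so $\var(X(\eta)/R)\le C\epsilon^2\rho\kappa$ (cf.\ \eqref{eq :: induction horizontal variance bound}). With these corrections the parameters $\alpha\sim\rho^2$, $\beta\sim\epsilon^2\rho$, $\gamma\sim\rho^2/\log(\epsilon^{-1})$ slot into Lemma~\ref{lm :: generic bound for partitioned collection} and yield the $C_7\epsilon^{4/3}\log(\epsilon^{-1})$ increment.
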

Combined with Corollary~\ref{cor :: lower bound base case},
Lemma~\ref{lm :: lower bound inductive step} implies that if
$\epsilon < \min(C_7^{-1}, c_6)$ and
\[
R \leq \exp(1/(4C_7 \epsilon^{4/3}\log(\epsilon^{-1})))\,,
\]
then $\mathbb E \left[\frac{\mathfrak X(R)}{R}\right] \leq - \frac{1}{2}$
which concludes the proof of Proposition~\ref{prop :: lower bound reduction}.

The rest of the subsection is devoted to the proof of Lemma~\ref{lm :: lower bound inductive step}.
We first provide a short outline of the proof.
As hinted in Section~\ref{sec :: Overview}, in order to strike the optimal balance between the variance of the sum of the Gaussian variables and the increment of the boundary length, the optimal curve connecting $(0, 0)$ and $(R, 0)$ should have oscillations in the vertical direction in the order of $\epsilon^{2/3} R$.
A big chunk of our proof is devoted to making this intuition rigorous.
To this end, we will show that oscillations larger than $\epsilon^{2/3}\log(\epsilon^{-1})R$ are too
``costly'' and as a result $X(R)/R$ is approximately optimized (say within an additive error of
$C\epsilon^{4/3}\log(\epsilon^{-1})$) at a curve in $\mathcal H(R)$ contained in a horizontal strip
of height $4\epsilon^{2/3}\log(\epsilon^{-1})R$.
To do this, we decompose each curve $\eta \in \mathcal H(R)$ into segments contained in horizontal strips of height $4\epsilon^{2/3}\log(\epsilon^{-1}) R$ and then bound the supremum of $X/R$ (recall the definition of $X$ in \eqref{eq-def-X-eta-epsilon}) over curves contained in each such strip.
We achieve this by decomposing such curves into segments to which our induction hypothesis applies---since each such curve $\tau$ is contained in a narrow strip, we have very good control on the variance of $\nu(\tau)$, which is crucial for effective applications of the Gaussian concentration inequality.

Let us elaborate our proof strategy in more detail.
For each $\eta \in \mathcal H(R)$, we will construct a sequence of points $\mathbf v(\eta) = (v_0,\dots,v_{\kappa})$ (where $\kappa = \kappa(\eta)$ depends on $\eta$) which decomposes $\eta$ into segments with appropriate vertical oscillations.
For $1 \leq j \leq \kappa$, we let $\eta_j$ be the segment of $\eta$ from $v_{j-1}$ to $v_{j}$ and $\gamma_j$ be the line segment from $v_{j-1}$ to $v_{j}$.
We note that $\eta = \eta_1 \ldots \eta_\kappa$ and let $\gamma(\eta) = \gamma_1  \dots \gamma_\kappa$.
By Corollary~\ref{cor :: coarsening}, we get that
\begin{equation}\label{eq :: induction decomposition}
X(\eta) = \epsilon \nu(\gamma(\eta)) + \sum_{j = 1}^{\kappa} X(\eta_j)\,.
\end{equation}
We will then partition $\mathcal H(R)$ by grouping curves in terms of their associated sequences $\mathbf v$.
For each subset in the partition, we will bound the supremum of $X(\eta_j)$ (for $1\leq j \leq \kappa(\eta)$)
as $\eta$ ranges through the subset, by applying the induction hypothesis.
For this purpose, it is necessary that $\eta_1,\dots, \eta_\kappa$ are good curves, which is ensured by Lemma~\ref{lm :: splitting good curves}.
We then bound the supremum of $\nu(\gamma(\eta))$ as $\eta$ ranges through a subset in the partition by Lemma~\ref{lm :: max over simple collections}.
Finally, we apply Lemma~\ref{lm :: generic bound for partitioned collection} to aggregate the bounds obtained for each subset in the partition and obtain an upper bound for the supremum over the whole space.

Next, we precisely describe the necessary constructions for our multi-scale analysis, for which the key
task is to define $\mathbf v$.  Let $\rho = \epsilon^{2/3}\log(\epsilon^{-1})$. For $k \in \mathbb Z$,
define $\Pi_k$ to be a horizontal strip and $L_k$ to be a horizontal line by
\begin{equation}\label{eq-def-Pi-L}
\Pi_k = \{(x,y) \,:\, (k-1)\rho R < y < (k+1)\rho R\} \mbox{ and }
L_k = \{(x, k\rho R) \,:\, x \in \mathbb R\}\,.
\end{equation}
We first define a sequence $\mathbf w$, based on which we will define $\mathbf v$.

Let $\eta \in \mathcal H(R)$ be a curve with start point $a = (x_a,y_a)$ and end point $b  = (x_b,y_b)$.
We let $w_0 = a$ and $\eta'_0 = \eta$. For $i\geq 0$, as long as $w_i \neq b$, we have
$w_i \in y_a + L_{k_i}$ for some $k_i$. In this case, we let $w_{i+1}$ be the first point of $\eta'_{i}$ in
$y_a + \partial \Pi_{k_i}$ (or $b$ if no such point exists). We also let $\phi_{i+1}$ be the
segment of $\eta'_i$ from $w_i$ to $w_{i+1}$, and let $\eta'_{i+1}$ be the segment of
$\eta'_i$ from $w_{i+1}$ to $b$. Continuing this procedure until reaching $b$ (i.e. $w_n = b$ for some $n$)
produces a sequence of points $\mathbf w(\eta) = (w_0,\dots, w_{n})$ and a sequence of curves $(\phi_1,\dots,\phi_n)$
connecting these points. In addition, we have $w_0 = a$,
$w_{n} = b$, and for each $0 \leq i < n$ there exists $k_i$ such that
$w_i \in y_a + L_{k_i}$. See Figure~\ref{fig :: Induction V and W} for an illustration.
Let $(\eta',S_1,S_2)$ be a witness for $\eta$ (recall that each curve in $\mathcal H(R)$ is good), and $O_{i} = (y_a + \Pi_{k_{i-1}}) \cap S_2$ for $1 \leq i \leq n$.
Then  $(O_1,\ldots, O_n)$ and $(\phi_1,\ldots,\phi_n)$ satisfy the assumptions
of Lemma~\ref{lm :: splitting good curves} and therefore $(\phi_1, \ldots, \phi_n)$ are good curves.

\begin{figure}
	\centering
	\begin{subfigure}{.5\textwidth}
		\centering
		\includegraphics[width=.8\linewidth]{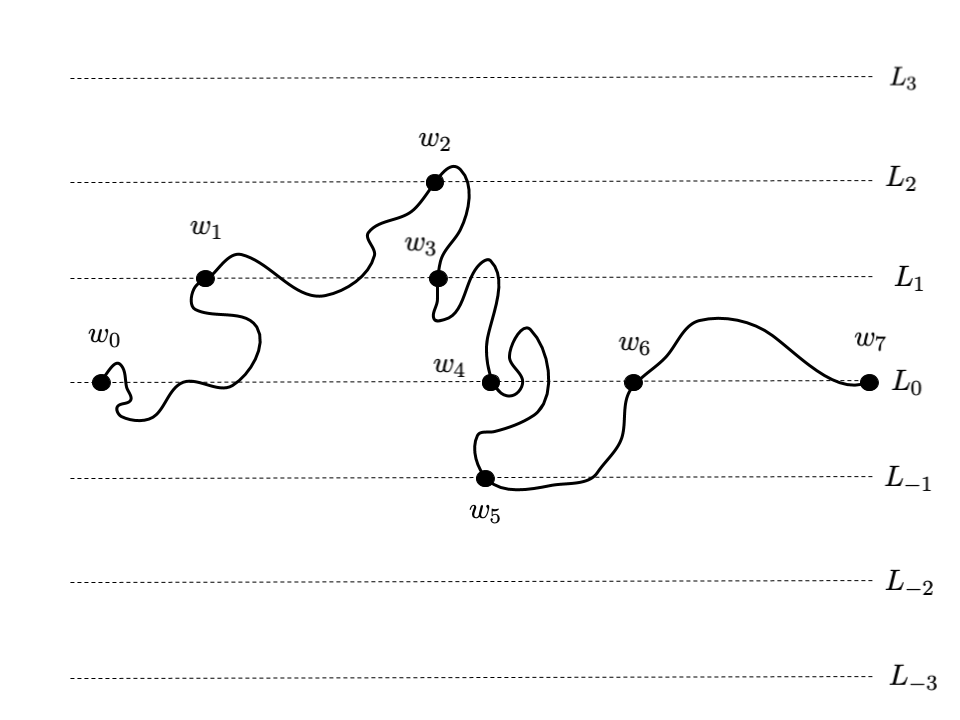}
	\end{subfigure}%
	\begin{subfigure}{.5\textwidth}
		\centering
		\includegraphics[width=.8\linewidth]{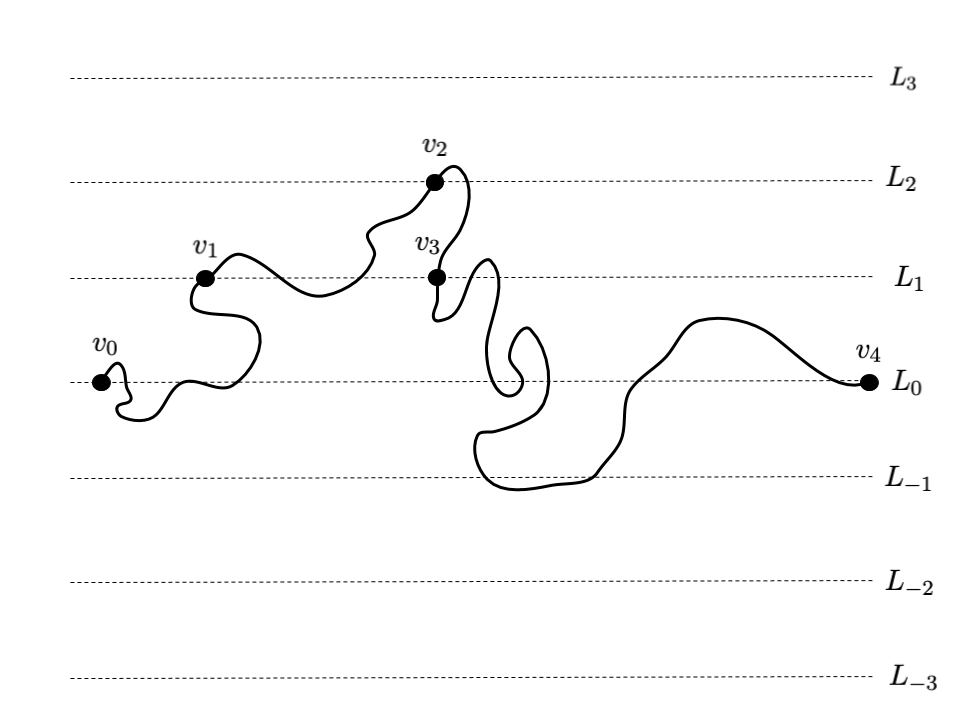}
	\end{subfigure}
	\caption{$\mathbf v(\eta)$ is obtained from $\mathbf w(\eta)$ by removing points. In the illustration $i^* = 2$.}
	\label{fig :: Induction V and W}
\end{figure}

To motivate the definition of $\mathbf v$, we first explain why we do not simply work with $\mathbf w$ in our analysis.
The issue is that if $w_{n-1} \in L_0$, then it is possible that $|w_n - w_{n-1}|$ is much smaller than $\rho R$ while the curve between $w_{n-1}$ and $b$ still has diameter of order $\rho R$.
Since our induction hypothesis applies to curves whose diameter is within a constant factor of the distance between its start and end points, this would complicate our analysis.
A naive solution to this problem is to let $\mathbf v = (w_0,\dots,w_{n-2},w_n)$ (i.e. to remove $w_{n-1}$) if $n > 1$ and $w_{n-1} \in L_0$ (and let $\mathbf v = \mathbf w$ otherwise).
However, the resulting decomposition of $\eta$ does not satisfy the assumptions of Lemma~\ref{lm :: splitting good curves} if $w_{n-3} \in L_0$.
To address this, we let $i^* = \max\{i \,:\, |k_i| = 2\}$ with the convention that $i^* = -1$ if $|k_i| \leq 1$ for $0 \leq i \leq n$, and let $\mathbf v(\eta) = (w_0,\dots,w_{i^*+1},w_n)$.
See Figure~\ref{fig :: Induction V and W} for an illustration.
We let $\kappa = i^*+2$ and for $1 \leq j \leq \kappa$, we let $\eta_j$ be the segment of $\eta$ from $v_{j-1}$ to $v_j$.
By an abuse of notation, for $0 \leq j < \kappa$, we let $k_j$ be such that $v_j \in y_a + L_{k_j}$ and $k_\kappa = 0$. As before, let $(\eta',S_1,S_2)$ be a witness for $\eta$.
For $1 \leq j \leq \kappa -1$, let $O_j = (y_a + \Pi_{k_{j-1}})\cap S_2$.
Finally, let $O_\kappa = (y_a + (\Pi_{-1} \cup \Pi_0 \cup \Pi_1)) \cap S_2$.
Then $(O_1,\ldots, O_\kappa)$ and $(\eta_1,\ldots,\eta_\kappa)$ satisfy the
assumptions of Lemma~\ref{lm :: splitting good curves} and therefore $(\eta_1,\ldots, \eta_\kappa)$ are good curves.
As mentioned at the beginning of the section, for $1 \leq j \leq \kappa$ we let $\gamma_j$
be the line segment from $v_{j-1}$ to $v_j$ and $\gamma(\eta) = \gamma_1 \ldots \gamma_\kappa$.

Having defined $\mathbf v$, we now specify the partition of $\mathcal H(R)$.
Let $s_\kappa = \frac{\epsilon^4\rho}{4\kappa}$ and for each $v \in s_\kappa R \cdot \mathbb Z^2$ let
$T_{\kappa,v}$ be the axis-aligned square of side-length $s_\kappa R$ centered at $v$. Note
that $\{T_{\kappa,v}\,:\, v \in s_\kappa R \cdot \mathbb Z^2\}$ is a tiling of $\mathbb R^2$.
For an integer $\kappa \geq 1$ and a sequence $\mathsf v = (\mathsf v_0,\ldots, \mathsf v_{\kappa})$
of points in $s_\kappa R \cdot \mathbb Z^2$, we let (by a slight abuse of notation)
$\mathcal H(R,\mathsf v)$ be the set of curves $\eta \in \mathcal H(R)$ such that
$\mathbf v(\eta) = (v_0,\ldots,v_{\kappa})$ satisfies $v_j \in T_{\kappa,\mathsf v_j}$ for
$0 \leq j \leq \kappa$. For $\kappa \geq 1$, we let $\mathcal  V_\kappa$ denote the set of sequences
$\mathsf v$ of length $\kappa+1$ such that $\mathcal H(R,\mathsf v)$ is not empty. We let
$\mathcal V = \cup_{\kappa \geq 1} \mathcal  V_\kappa$.

In order to conclude the proof of Lemma~\ref{lm :: lower bound inductive step}, we next bound
$R^{-1}\|X\|_{\mathcal H(R,\mathsf v)}$ for all $\mathsf v \in \mathcal V$ and then use
Lemma~\ref{lm :: generic bound for partitioned collection} to obtain a bound on
$R^{-1}\mathfrak X(R)$. To this end, we divide \eqref{eq :: induction decomposition} by $R$ to obtain
\begin{equation}\label{eq :: induction decomposition scaled}
\frac{X(\eta)}{R} = \frac{\epsilon \nu(\gamma(\eta))}{R} +
	\sum_{j = 1}^{\kappa} \frac{|\mathsf v_{j} - \mathsf v_{j-1}|}{R}\frac{X(\eta_j)}{|\mathsf v_{j}-\mathsf v_{j-1}|}\,.
\end{equation}

Next we will state two lemmas which provide bounds for the terms on the right-hand side
of \eqref{eq :: induction decomposition scaled} for $\eta \in \mathcal H(R,\mathsf v)$.
\begin{lemma}\label{lm :: one skeleton bound}
	There exists a constant $C_8 > 0$ such that the following holds. Let
	$\epsilon < C_8^{-1}$ and $R \geq 1$. For all integers $\kappa \geq 1$ and
	$\mathsf v \in \mathcal V_\kappa$,
	\begin{equation}\label{eq-explain-s_kappa}
	\mathbb E \big[
		\sup_{\eta \in \mathcal H(R,\mathsf v)} \nu(\gamma(\eta))
	\big] \leq C_8 \sqrt{s_\kappa} \kappa^{3/2} R \leq C_8 \epsilon^2 \sqrt{\rho} \kappa R\,.
	\end{equation}
\end{lemma}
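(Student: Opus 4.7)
The plan is to apply Lemma~\ref{lm :: max over simple collections} to the Gaussian process $\{\nu(\gamma(\eta)) : \eta \in \mathcal H(R,\mathsf v)\}$, viewed as indexed by the vertex sequences $\mathbf v(\eta) = (v_0,\ldots,v_\kappa)$. By construction each $v_j$ is constrained to the box $T_{\kappa,\mathsf v_j}$ of side $s_\kappa R$, so any two sequences $\mathbf v, \mathbf w$ arising from $\mathcal H(R,\mathsf v)$ satisfy $|v_j - w_j| \leq \sqrt{2}\, s_\kappa R$; every vertex also lies in the ambient rectangle of $\mathcal H(R)$, yielding the crude bounds $|v_j - v_{j-1}|, |v_\kappa - w_0| \leq CR$. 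We therefore apply Lemma~\ref{lm :: max over simple collections} with $n = \kappa$, $\Delta_1 = \Delta_2 = CR$, and $\Delta_3 = \sqrt{2}\, s_\kappa R$.

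The canonical distance $\sigma$ is bounded via Claim~\ref{clm :: canonical distance bound}: each of the $\kappa - 1$ interior summands contributes at most $\sqrt{\Delta_1 \Delta_3/2}$, while the two wraparound summands at $i \in \{0, \kappa\}$ involve distances of order $\Delta_2$ and contribute at most $\sqrt{\Delta_2 \Delta_3/2}$, so $\sigma \leq C\kappa R\sqrt{s_\kappa}$.

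The crucial computation is that the logarithmic factor in Lemma~\ref{lm :: max over simple collections} collapses to $O(1)$. Indeed $(\sigma/(\kappa+1))^2/\Delta_1 \leq C s_\kappa R$, so $\Delta_4 \leq C s_\kappa R$; since the exponents of $R$ inside
\[
\Delta_5 = \sqrt{2\pi\, \Delta_1^{(\kappa-1)/(\kappa+1)}\, \Delta_4}\; \Delta_2^{1/(\kappa+1)}
\]
sum to exactly $1$, we obtain $\Delta_5 \leq CR\sqrt{s_\kappa}$ and hence $\Delta_5(\kappa+1)/\sigma \leq C$. This cancellation is precisely what motivates the choice $s_\kappa = \epsilon^4\rho/(4\kappa)$; any other scaling of $s_\kappa$ would leave an uncancelled $\epsilon$-factor inside the logarithm. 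Feeding these bounds back into Lemma~\ref{lm :: max over simple collections},
\[
\mathbb E\bigl[\|\nu\|_{\mathcal H(R,\mathsf v)}\bigr] \leq C_4\, \sigma \sqrt{(\kappa+1)\log(\Delta_5(\kappa+1)/\sigma)} \leq C_8 \sqrt{s_\kappa}\, \kappa^{3/2} R,
\]
which is the first inequality of \eqref{eq-explain-s_kappa}; the second is an algebraic rewrite using $\sqrt{s_\kappa}\,\kappa^{3/2} = \tfrac{1}{2}\epsilon^2\sqrt{\rho}\,\kappa$.

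The main obstacle is the $\sigma$ bound itself. The skeleton $\gamma(\eta)$ is not in general a good curve --- self-intersections can occur as it weaves between consecutive horizontal strips --- so one cannot bound $\var[\nu(\gamma(\eta))]$ directly via enclosed area and a winding-number estimate. Claim~\ref{clm :: canonical distance bound}'s vertex-by-vertex interpolation circumvents this by producing an oriented-triangle decomposition in which the per-vertex triangle has area of order $\Delta_1 \Delta_3 = O(R \cdot s_\kappa R)$; taking square roots and summing over the $\kappa$ vertices yields precisely the combination $\kappa R\sqrt{s_\kappa}$ that pairs with the choice of $s_\kappa$ to annihilate the entropy logarithm.
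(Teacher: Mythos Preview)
Your proof is correct and follows the same approach as the paper: apply Lemma~\ref{lm :: max over simple collections} with $\Delta_1=\Delta_2$ of order $R$, $\Delta_3=\sqrt{2}\,s_\kappa R$, bound $\sigma$ via Claim~\ref{clm :: canonical distance bound} as $C\kappa R\sqrt{s_\kappa}$, and observe that $\Delta_5(\kappa+1)/\sigma$ is an absolute constant so the logarithm is $O(1)$.

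One small expository point: your remark that ``any other scaling of $s_\kappa$ would leave an uncancelled $\epsilon$-factor inside the logarithm'' is not quite right. The ratio $\Delta_5(\kappa+1)/\sigma$ is in fact independent of $s_\kappa$ (both $\Delta_4$ and $\sigma^2/(\kappa+1)^2$ scale linearly in $s_\kappa$, while $\Delta_1,\Delta_2$ do not involve it), so the logarithm collapses for \emph{any} choice of $s_\kappa$. The paper notes after the lemma that the choice of $s_\kappa$ is flexible; its role is to balance the bound here against the tile-count entropy appearing later in \eqref{eq-bound-V-kappa}, not to make the present argument close.
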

We note that Lemma~\ref{lm :: one skeleton bound} applies for a fixed vector $\mathsf v$, and that the
second inequality follows directly from the first by the definition of $s_\kappa$. The
$\epsilon^2 \sqrt{\rho}$ scaling is somewhat arbitrary in that we will only need the
fact that it is of smaller order than $\epsilon^{1/3} \log(\epsilon^{-1})$. If we increased the power of $\epsilon$ in $s_\kappa$, the term on the right hand side of \eqref{eq-explain-s_kappa} would shrink.
This would simultaneously increase the power of $\epsilon$ in the number of tiles, but this only contributes to a factor of $\log \epsilon^{-1}$ in later analysis where the power of $\epsilon$ only changes the constant in front of $\log \epsilon^{-1}$ (thus, we have flexibility in the choice of $s_\kappa$).

For $\kappa \geq 1$ and a sequence of points
$\mathsf v = (\mathsf v_0,\ldots, \mathsf v_{\kappa})$ in $s_\kappa R \cdot \mathbb Z^2$, let
\[
\mathcal H(R,\mathsf v,j) =
	\{\eta_j \,:\, \eta \in \mathcal H(R,\mathsf v)\}
\]
be the collection of all possible $j$-th segments for curves in
$\mathcal H(R, \mathsf v)$.
\begin{lemma}\label{lm :: strip bound}
	There exists a constant $C_{9} > 0$ such that the following holds. Let $\epsilon$
	and $R$ satisfy the assumptions of Lemma~\ref{lm :: lower bound inductive step}.
	For $\kappa \geq 1$, $\mathsf v = (\mathsf v_0,\ldots,\mathsf v_{\kappa})$,
	and $1 \leq j \leq \kappa$, we have
	\[
	\mathbb E\left[
		\frac{\|X\|_{\mathcal H(R,\mathsf v, j)}}{|\mathsf v_{j}-\mathsf v_{j-1}|}
	\right] \leq
		\mathbb E \left[
			\frac{\mathfrak X(10^{-1}R)}{10^{-1}R}
		\right] + C_{9}\frac{R}{|\mathsf v_{j} - \mathsf v_{j-1}|}\frac{\rho^2}{\log(\epsilon^{-1})}\,.
	\]
\end{lemma}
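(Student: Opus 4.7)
The proof will closely mirror that of Lemma~\ref{lm :: lower bound inductive step} with horizontal and vertical roles interchanged. Each $\eta_j \in \mathcal H(R,\mathsf v,j)$ is a good curve running between small neighborhoods of $\mathsf v_{j-1}$ and $\mathsf v_j$ that is confined to a horizontal strip of height at most $4\rho R$ (inherited from the strip $\Pi_{k_{j-1}}$ in the construction of $\mathbf v$). The plan is to further decompose each $\eta_j$ using \emph{vertical} strips of width $\rho R$, invoke the hypothesis $\mathbb E[\mathfrak X(10^{-1}R)/(10^{-1}R)] \leq -\tfrac{1}{2}$ on each sub-segment, and aggregate via Lemma~\ref{lm :: generic bound for partitioned collection}.

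Analogously to $\mathbf v(\eta)$, I would build $\mathbf u(\eta_j) = (u_0,\dots,u_\sigma)$ by tracking first exits of $\eta_j$ from consecutive vertical strips of width $\rho R$ on a deterministic grid. Lemma~\ref{lm :: splitting good curves} then certifies that the resulting segments $\eta_j^{(1)},\dots,\eta_j^{(\sigma)}$ are good curves, and each lies in a roughly $\rho R \times 4\rho R$ rectangle, so $|u_l - u_{l-1}| \in [\rho R, \sqrt{17}\,\rho R]$. Corollary~\ref{cor :: coarsening} then yields
\[
X(\eta_j) = \epsilon\,\nu(\gamma(\eta_j)) + \sum_{l=1}^{\sigma} X(\eta_j^{(l)}),
\]
where $\gamma(\eta_j)$ is the polyline through $u_0,\dots,u_\sigma$. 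After partitioning $\mathcal H(R,\mathsf v,j)$ by $\sigma$ and by the tile of a deterministic fine lattice (of side roughly $\epsilon^4 \rho R/\sigma$, analogous to $T_{\kappa,v}$) containing each $u_l$, I would bound $\sup \nu(\gamma(\eta_j))$ on each class by Lemma~\ref{lm :: max over simple collections} exactly as in Lemma~\ref{lm :: one skeleton bound}, producing a skeleton contribution of order $\epsilon^2 \sqrt\rho \, \sigma R$ to the final bound.

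For the main term, on each partition class $(\sigma,\mathsf u)$ I would verify (this is the role of the promised Claim~\ref{clm :: lower bound induction hypothesis}) that the restriction map $\eta_j \mapsto \eta_j^{(l)}$ sends the class into a subfamily of $\mathcal H(x_l,y_l)$ for some $x_l,y_l$ slightly perturbing $\mathsf u_{l-1},\mathsf u_l$. Routine geometry shows the three defining constraints of $\mathcal H$ are met: the bounding $\rho R \times 4\rho R$ box of $\eta_j^{(l)}$ fits inside the $19|x_l-y_l| \times 18|x_l-y_l|$ rectangle (using $|x_l-y_l| \geq \rho R$), the sides-of-length-at-least-$1$ condition is inherited from $\eta$, and the endpoint displacement is at most $\epsilon^4|x_l-y_l|$ since the tile sides are that small. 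Claim~\ref{clm :: f monotonicity} and the hypothesis then give $\mathbb E[\sup X(\eta_j^{(l)})] \leq -\tfrac{1}{2}|u_l-u_{l-1}|$. Summing in $l$, the ratio $\sum_l |u_l-u_{l-1}|/|\mathsf v_j-\mathsf v_{j-1}|$ equals $1 + O(\rho^2)$ by the triangle inequality on vertical and horizontal projections, and the discrepancy is absorbed into the error term. Finally, applying Lemma~\ref{lm :: generic bound for partitioned collection} over the choice of $(\sigma,\mathsf u)$ with $\alpha$ of order $\rho^2/\log(\epsilon^{-1})$ balanced against the skeleton variance and the entropy $\log |A_\sigma|$ will yield the stated inequality.

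The main obstacle will be calibrating the parameters in Lemma~\ref{lm :: generic bound for partitioned collection}: the hypothesis supplies only a linear-in-length saving of $|\mathsf v_j - \mathsf v_{j-1}|/2$, so both the skeleton contribution and the Gaussian-concentration correction must come in below $R\rho^2/\log(\epsilon^{-1})$. This works because $\rho = \epsilon^{2/3}\log(\epsilon^{-1})$ makes the strip narrow enough that $\sigma^2_{X,\mathcal H(R,\mathsf v,j)}$ is controlled by the strip area $\leq C\rho R \cdot |\mathsf v_j - \mathsf v_{j-1}|$ rather than by $R^2$, while the $\log(\epsilon^{-1})$ factor in $\rho$ covers the exponential-in-$\sigma$ entropy of the partition classes. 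The most delicate regime is $|\mathsf v_j - \mathsf v_{j-1}|$ near its minimum $\rho R$, and handling it cleanly is precisely why the error term in the statement carries the prefactor $R/|\mathsf v_j - \mathsf v_{j-1}|$.
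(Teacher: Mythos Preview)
Your overall architecture matches the paper's: decompose each $\eta_j\in\mathcal H(R,\mathsf v,j)$ by first exits from vertical strips, show the resulting pieces are good curves via Lemma~\ref{lm :: splitting good curves}, feed them into $\mathcal H(\mathsf u_{l-1},\mathsf u_l)$, bound the polyline term as in Lemma~\ref{lm :: one skeleton bound}, and aggregate with Lemma~\ref{lm :: generic bound for partitioned collection}. However, one concrete parameter choice breaks the argument.

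You take the vertical strips of width $\rho R$. The paper instead takes width $r=\min\{S/2,\,|\mathsf v_j-\mathsf v_{j-1}|\}$ with $S=10^{-1}R$, i.e.\ width of order $R$. With your choice, the \emph{minimal} number of segments is $\sigma_0\sim |\mathsf v_j-\mathsf v_{j-1}|/(\rho R)$, which can be as large as $20/\rho$ when $|\mathsf v_j-\mathsf v_{j-1}|$ is comparable to $R$. The baseline entropy is then $\log|A_0|\sim\sigma_0\log(\epsilon^{-1})\sim\log(\epsilon^{-1})/\rho$, while the variance stays at $\sigma_0^2\lesssim \epsilon^2\rho R^2/|\mathsf v_j-\mathsf v_{j-1}|^2$ (this comes from the horizontal strip of height $4\rho R$, not from your vertical strips). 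Hence at $n=0$ one gets
\[
\sigma_0\sqrt{\log|A_0|}\ \gtrsim\ \epsilon\sqrt{\tfrac{R}{|\mathsf v_j-\mathsf v_{j-1}|}\log(\epsilon^{-1})},
\]
and for $|\mathsf v_j-\mathsf v_{j-1}|\sim R$ this is $\sim\epsilon\sqrt{\log(\epsilon^{-1})}$. But the condition $\sigma_0\sqrt{\log|A_0|}\le\gamma/4$ in Lemma~\ref{lm :: generic bound for partitioned collection} forces $\gamma\gtrsim\epsilon\sqrt{\log(\epsilon^{-1})}$, whereas the target error term is $\rho^2/\log(\epsilon^{-1})\cdot R/|\mathsf v_j-\mathsf v_{j-1}|\sim\epsilon^{4/3}\log(\epsilon^{-1})$. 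The inequality $\epsilon\sqrt{\log(\epsilon^{-1})}\lesssim\epsilon^{4/3}\log(\epsilon^{-1})$ fails for small $\epsilon$, so the aggregation does not close. The paper's $r$ keeps $\chi_0\le400$ uniformly, so $\log|A_0|\sim\log(\epsilon^{-1})$ and the computation goes through with $\alpha$ a constant (not $\rho^2/\log(\epsilon^{-1})$ as you suggest) and $\gamma=C\rho^2/\log(\epsilon^{-1})\cdot R/|\mathsf v_j-\mathsf v_{j-1}|$.

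Two smaller points: you should carry $\mu_S=\mathbb E[\mathfrak X(S)/S]$ (not $-\tfrac12$) through the per-segment bound, since the statement asks for $\mu_S$ as the main term; and you need the ``$i^*$'' adjustment for the final segment so that $|u_\chi-u_{\chi-1}|$ is not accidentally small (otherwise the containment $\eta_j^{(\chi)}\in\mathcal H(\mathsf u_{\chi-1},\mathsf u_\chi)$ can fail). Both are handled in the paper exactly as in the construction of $\mathbf v$ from $\mathbf w$.
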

We note that $\rho^2/\log(\epsilon^{-1}) = \epsilon^{4/3} \log(\epsilon^{-1})$ which
is the desired order for the increase in $\mathfrak X$. The proofs of Lemmas~\ref{lm :: one skeleton bound} and \ref{lm :: strip bound} are deferred to the next subsection.

Next, assuming
Lemma~\ref{lm :: one skeleton bound} and \ref{lm :: strip bound}, we provide a bound on
$R^{-1}\|X\|_{\mathcal H(R,\mathsf v)}$. To simplify notation, we let $S = 10^{-1}R$
and
$\mu_S = \mathbb E (\mathfrak X(S)/S)$. Plugging the bounds from Lemmas~\ref{lm :: one skeleton bound} and
\ref{lm :: strip bound} into \eqref{eq :: induction decomposition scaled}, we obtain that
\begin{align*}
\mathbb E \left[ \frac{\|X\|_{\mathcal H(R, \mathsf v)}}{R}\right]
	&\leq \mu_S \sum_{j = 1}^\kappa \frac{|\mathsf v_j - \mathsf v_{j-1}|}{R} +
		\frac{C_{9} \rho^2}{\log(\epsilon^{-1})} \kappa + C_8 \epsilon^3\sqrt{\rho}\kappa \\
	&\leq \mu_S \sum_{j = 1}^\kappa \frac{|\mathsf v_{j} - \mathsf v_{j-1}|}{R} +
		\frac{C \rho^2}{\log(\epsilon^{-1})}\kappa\,,
			\numberthis \label{eq :: crude max on one skeleton}
\end{align*}
where the second inequality follows from the fact that $\epsilon^3 \sqrt{\rho}$ is of
lower order than $\rho^2/\log(\epsilon^{-1})$. If $\kappa = 1$, then
$|\mathsf v_1 - \mathsf v_0| \geq (1-2\epsilon^4)R - \sqrt{2}s_0R \geq (1-3\epsilon^4)R$ so
\eqref{eq :: crude max on one skeleton} gives
\begin{equation}\label{eq-kappa=1}
\mathbb E \left[ \frac{\|X\|_{\mathcal H(R, \mathsf v)}}{R}\right]  \leq
	(1 - 3\epsilon^4)\mu_S + \frac{C \rho^2}{\log(\epsilon^{-1})} \leq \mu_S  + \frac{C \rho^2}{\log(\epsilon^{-1})}\,,
\end{equation}
where the second inequality follows by absorbing the $-3 \epsilon^4 \mu_S$ term and adjusting the value of $C$ (note that $\mathcal H(x,y)$ contains the line segment from $x$ to $y$, and thus $\mathfrak X(x,y) \geq -1$
almost surely and in particular $\mu_S \geq -1$).
To treat the case $\kappa > 1$, we need the following geometric bound.
\begin{claim} \label{clm :: skeleton length bound}
There exists a constant $c_{10}$ such that for all $\epsilon < c_{10}$, $R \geq1$,
$\kappa > 1$, and $\mathsf v \in \mathcal V_\kappa$
\[
\sum_{j = 1}^\kappa \frac{|\mathsf v_{j} - \mathsf v_{j-1}|}{R} \geq 1 + c_{10} \rho^2 \kappa\,.
\]
\end{claim}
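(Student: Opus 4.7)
The plan is to exploit the fact that each segment $[v_{j-1}, v_j]$ of $\mathbf v(\eta)$ is forced to make a vertical jump of order $\rho R$, while the horizontal components must still sum to roughly $R$. The excess length over a straight horizontal path then contributes $\rho^2 R$ per segment, which is exactly the gain we need.

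First I would verify a uniform lower bound on the vertical displacement $|y_j - y_{j-1}|$. By construction, for $0 \leq j \leq \kappa-1$ the point $v_j = w_j$ lies on $y_a + L_{k_j}$, with $k_0 = 0$ and $k_{j} = k_{j-1} \pm 1$ (since $w_{j}$ is the first exit of the strip $y_a + \Pi_{k_{j-1}}$). Hence $|y_j - y_{j-1}| = \rho R$ exactly for $1 \leq j \leq \kappa-1$. For the terminal segment, note that $\kappa > 1$ forces $i^* \geq 2$ and $|k_{i^*}| = 2$, so because the walk $(k_i)$ moves by $\pm 1$, one has $|k_{\kappa-1}| = |k_{i^*+1}| = 1$; combined with $|y_a|, |y_b| \leq \epsilon^4 R$ this gives $|y_\kappa - y_{\kappa-1}| \geq \rho R - 2\epsilon^4 R \geq \rho R/2$ for small $\epsilon$. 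Next I would apply the elementary inequality $\sqrt{a^2+b^2} \geq |a| + b^2/\bigl(2\sqrt{a^2+b^2}\bigr)$, together with the crude bound $|v_j - v_{j-1}| \leq 30R$ coming from the enclosing $19R \times 18R$ rectangle, to obtain
\[
|v_j - v_{j-1}| \geq |x_j - x_{j-1}| + \frac{(\rho R/2)^2}{60 R} = |x_j - x_{j-1}| + \frac{\rho^2 R}{240}
\]
for every $1 \leq j \leq \kappa$. Summing over $j$ and using $\sum_j |x_j - x_{j-1}| \geq |x_b - x_a| \geq R(1 - 2\epsilon^4)$ yields $\sum_j |v_j - v_{j-1}|/R \geq 1 - 2\epsilon^4 + \kappa\rho^2/240$.

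Finally I would transfer the bound from $v$ to $\mathsf v$. Since $v_j \in T_{\kappa,\mathsf v_j}$, we have $|v_j - \mathsf v_j| \leq s_\kappa R/\sqrt{2}$, so by the triangle inequality $\sum_j |\mathsf v_j - \mathsf v_{j-1}| \geq \sum_j |v_j - v_{j-1}| - \sqrt{2}\kappa s_\kappa R$, and the correction is only $\sqrt{2}\epsilon^4 \rho R/4$ (using $\kappa s_\kappa = \epsilon^4\rho/4$). Combining,
\[
\sum_{j=1}^\kappa \frac{|\mathsf v_j - \mathsf v_{j-1}|}{R} \geq 1 - 2\epsilon^4 - \tfrac{\sqrt{2}}{4}\epsilon^4\rho + \frac{\kappa\rho^2}{240} \geq 1 + c_{10}\rho^2 \kappa
\]
for $c_{10} = 1/480$, since $\kappa \geq 2$ and $\rho^2 = \epsilon^{4/3}\log^2(\epsilon^{-1}) \gg \epsilon^4$ absorbs the error terms. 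There is no real obstacle here: the only mildly delicate point is checking that the \emph{last} segment $j = \kappa$ also contributes a full vertical jump, which is why we isolated the condition $|k_{i^*+1}| = 1$; the rest is an elementary convexity computation combined with accounting for the $s_\kappa R$-tiling error.
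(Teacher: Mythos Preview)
Your argument is correct and, in fact, more direct than the paper's. The paper proceeds by a three-step reduction: first passing from $\mathsf v$ to $\mathbf v(\eta)$ (your final step), then normalizing endpoints to $o$ and $(R,0)$, and finally proving the bound by \emph{induction} on $\kappa$---at each stage locating an index $j$ where the height sequence $(k_j)$ has a local extremum, removing the pair $(v_j,v_{j+1})$, and using the triangle inequality on the resulting triangle of base $\leq 20R$ and height $\rho R$ to gain $\rho^2/20$ per removed pair. Your route bypasses the induction entirely: you observe that every segment carries a vertical jump of size at least $\rho R/2$ (the verification that $|k_{i^*+1}|=1$, ruling out $|k_{i^*+1}|=3$ via maximality of $i^*$, is the one genuinely delicate point and you handle it correctly), apply the elementary bound $\sqrt{a^2+b^2}\geq |a|+b^2/(2\sqrt{a^2+b^2})$ with the crude diameter estimate $|v_j-v_{j-1}|\leq 30R$, and then sum the horizontal displacements telescopically. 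The paper's inductive peeling gives a somewhat better constant (of order $1/40$ rather than $1/480$), but your per-segment convexity estimate is shorter and avoids the auxiliary normalization step.
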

We defer the proof of Claim~\ref{clm :: skeleton length bound} to the next subsection and move on with the proof of Lemma~\ref{lm :: lower bound inductive step}.
Plugging Claim~\ref{clm :: skeleton length bound} into
\eqref{eq :: crude max on one skeleton} and using
the assumption that $\mu_S \leq - 1/2$ gives
\[
\mathbb E \left[ \frac{\|X\|_{\mathcal H(R, \mathbf v})}{R}\right]
	\leq \mu_S - c \rho^2 \kappa\,.
\]
Combined with \eqref{eq-kappa=1}, it yields that for an absolute constant $C_{11} > 0$
\begin{equation}\label{eq :: max on one skeleton}
\mathbb E \left[
	\frac{\|X\|_{\mathcal H(R, \mathbf v)}}{R}
	\right] \leq \mu_S +  \frac{C_{11}\rho^2}{\log(\epsilon^{-1})} - \frac{2\rho^2}{C_{11}} (\kappa-1)\quad \kappa \geq 1\,.
\end{equation}
To apply Lemma~\ref{lm :: generic bound for partitioned collection}, we need bounds
on $|\mathcal V_\kappa|$ and
\[
\sigma_\kappa^2 := R^{-2} \max_{\mathsf v \in \mathcal V_\kappa}
	\sup_{\eta \in \mathcal H(R,\mathsf v)} \var[X(\eta)]\,.
\]
We first bound $|\mathcal V_\kappa|$. For $0 < j < \kappa$, given $\mathsf v_{j-1}$
there are at most $10^4/s_\kappa$ possible choices for $\mathsf v_{j}$ since $T_{\kappa,\mathsf v_j}$ must intersect
the following set
$$\{(x_j,y_j)\in (-9R,10R)\times(-9R,9R):  |y_j - y_{j-1}| = \rho R \mbox{ for some }
(x_{j-1},y_{j-1}) \in T_{\kappa,\mathsf v_{j-1}}\}\,.$$
For $j = 0$ (respectively $j = \kappa$), there are at most $16 \epsilon^8/s_\kappa^2$ possible choices for $\mathsf v_j$ since
$T_{\kappa,\mathsf v_j}$ must intersect the ball of radius $R \epsilon^4$ around $o$ (respectively $(R,0)$).
Therefore, we have that $|\mathcal V_\kappa| \leq 16^2 \times 10^{4(\kappa-1)}\epsilon^{16}s_\kappa^{-(\kappa+3)}$ and thus we get
\begin{equation}\label{eq-bound-V-kappa}
\log(|\mathcal V(\epsilon,\kappa)|) \leq c (\kappa+3)\log(s_\kappa^{-1}) \leq
	c\kappa(\log(\kappa) + \log(\epsilon^{-1}))\,.
\end{equation}
We next bound $\sigma_\kappa$. For $\mathsf v \in \mathcal V_\kappa$ and
every $\eta \in \mathcal H(R,\mathsf v)$, there exists a box of height
$\rho R(\kappa+3) \leq 4\rho R \kappa$ and width $19R$ that contains $\eta$.
Together with \eqref{eq :: H variance bound} this gives
\begin{equation}\label{eq :: induction horizontal variance bound}
\sigma_\kappa^2 \leq 10^4 \epsilon^2\min\{\rho \kappa, 1\}.
\end{equation}
Combining \eqref{eq :: induction horizontal variance bound} with \eqref{eq-bound-V-kappa}, we get that for a universal constant $C_{12}>0$,
\begin{equation}\label{eq :: induction horizontal aggregation first step bound}
\sigma_\kappa \sqrt{\log(|\mathcal V(\kappa,\epsilon)|)} \leq C_{12} \epsilon \sqrt{\rho \log(\epsilon^{-1})} \kappa
	= \frac{C_{12} \rho^2}{\log(\epsilon^{-1})} \kappa\,,
\end{equation}
where for $\kappa \leq \rho^{-1}$ we used the first bound on the minimum in \eqref{eq :: induction horizontal variance bound} and for $\kappa >\rho^{-1}$ we used the second bound on the minimum in \eqref{eq :: induction horizontal variance bound}.

We are now ready to apply Lemma~\ref{lm :: generic bound for partitioned collection}
with $\mathcal G = \mathcal H(R)$ and $Y = R^{-1} X$. We let $A = \mathcal V$, and
for $n \geq 0$, $A_n = \mathcal V_{n+1}$. We let
\[
\mu = \mu_S + \frac{C_{11}\rho^2}{\log(\epsilon^{-1})}\,,
\]
and $\alpha = \rho^2/C_{11}$.  Then by \eqref{eq :: max on one skeleton}, for $n \geq 0$
\[
\mu_n:= \max_{\mathsf v\in \mathcal V_n} \mathbb E \left[
	\frac{\|X\|_{\mathcal H(R, \mathbf v})}{R}
	\right] \leq \mu - 2\alpha n\,.
\]
We let $\beta = 10^4\epsilon^2 \rho$. By \eqref{eq :: induction horizontal variance bound},
$\sigma_{n}^2 \leq \beta (n+1)$ for $n \geq 0$.
Finally, we let $\gamma = 4C_{12}\rho^2/\log(\epsilon^{-1})$.
If $\epsilon$ is smaller than some fixed constant, $\gamma \leq \alpha$ and so we obtain from \eqref{eq :: induction horizontal aggregation first step bound} that
\[
\sigma_n\sqrt{\log(|A_n|)} \leq \frac{\gamma(n+1)}{4} \leq \frac{\gamma + \alpha n}{4}\,.
\]
By Lemma~\ref{lm :: generic bound for partitioned collection}, we have
\[
\mathbb E \left[\frac{\mathfrak X(R)}{R}\right] \leq
	\mu + \gamma + \sqrt{2\pi \beta}  +
	4 \frac{\beta}{\alpha} \frac{e^{-\alpha^2/4\beta}}{1 - e^{-\alpha^2/4\beta}}\,.
\]
Note that
\[
\mu + \gamma =
	\mu_S + \frac{(C_{11} + 4C_{12})\rho^2}{\log(\epsilon^{-1})}\,.
\]
There exists a constant $C > 0$ such that
$\beta/\alpha \leq C \epsilon^2/\rho = C \epsilon^{4/3} / \log(\epsilon^{-1})$,
$\alpha^2/\beta \geq C^{-1} \rho^3/\epsilon^2 = C^{-1}(\log(\epsilon^{-1}))^3$, and
$\sqrt{\beta} \leq C \epsilon \sqrt{\rho} = C \epsilon^{4/3} \sqrt{\log(\epsilon^{-1})}$.
Therefore, for $\epsilon$ small enough,
\[
\sqrt{2\pi \beta}  +
	4 \frac{\beta}{\alpha} \frac{e^{-\alpha^2/4\beta}}{1 - e^{-\alpha^2/4\beta}} \leq
	\epsilon^{4/3} \log(\epsilon^{-1})\,,
\]
and thus
\[
\mathbb E \left[\frac{\mathfrak X(R)}{R}\right] \leq
	\mu_S + C'\epsilon^{4/3} \log(\epsilon^{-1})\,,
\]
for some universal constant $C' > 0$.
This concludes the proof of Lemma~\ref{lm :: lower bound inductive step}.

\subsection{Proofs for deferred lemmas}\label{sec :: lower deferred}

In this subsection we provide proofs for a few lemmas in Subsection~\ref{sec :: lower induction}.

\subsubsection{Proof of Claim~\ref{clm :: skeleton length bound}}

Our proof consists of three steps, where in the first two steps we reduce the claim to a collection of more tractable sequences and in the third step we prove the claim for this collection.

\noindent {\bf Step 1.}	It suffices to show that there exists a constant $c$ such
	that the following holds: for all $\kappa > 1$ and for all $\eta \in \mathcal H(R)$ such that
	$\mathbf v(\eta)  = (v_0, \ldots, v_{\kappa})$ has length $\kappa+1$, we have that
	\begin{equation}\label{eq :: skeleton length bound 2}
	\sum_{j = 1}^\kappa \frac{|v_{j} - v_{j-1}|}{R} \geq 1 + c \rho^2 \kappa\,.
	\end{equation}	
	Indeed, if $\eta \in \mathcal H(R,\mathsf v)$, then $|\mathsf v_{j} - v_j| \leq
	2^{-1/2} s_\kappa R$ for $0 \leq j \leq \kappa$. Therefore
	\[
	\sum_{j = 1}^\kappa \frac{|\mathsf v_{j} - \mathsf v_{j-1}|}{R} \geq
	\sum_{j = 1}^\kappa \frac{|v_{j} - v_{j-1}|}{R} - \sqrt{2}\kappa s_\kappa \geq
	\sum_{j = 1}^\kappa \frac{|v_{j} - v_{j-1}|}{R} - \rho \epsilon^4\,.
	\]
	Since $\rho \epsilon^4$ is of smaller order than $\rho^2$,
	Claim~\ref{clm :: skeleton length bound} follows after plugging
	\eqref{eq :: skeleton length bound 2} into the last display.
	
\noindent {\bf Step 2.} It suffices to show that \eqref{eq :: skeleton length bound 2} holds
	for all $\kappa > 1$ and sequences $\mathbf v = (v_0,\ldots,v_\kappa)$ that satisfy the following
	conditions (recall the definition of $L_k$ in \eqref{eq-def-Pi-L})
	\begin{equation}\label{eq-geometric-second-reduction-condition}
\begin{split}
		&v_0 = o \mbox{ and } v_\kappa = (R,0);\\
		&\mbox{there exists } k_j \mbox{ such that } v_j \in L_{k_j}, \mbox { for } 0 \leq j \leq \kappa;\\
		&|k_j - k_{j-1}| = 1 \mbox{ for } 1 \leq j \leq \kappa\,.
\end{split}
	\end{equation}
	We will denote the set of all sequences that satisfy these conditions by $A$,
	and the set of all such sequences of length $\kappa+1$ by $A_\kappa$. To see that it suffices
	to prove \eqref{eq :: skeleton length bound 2} for sequences in $A$, let $\eta \in
	\mathcal H(R)$ be such that $\mathbf v(\eta)$
	has length $\kappa+1$, and let $\mathbf w = (w_0,\ldots,w_\kappa)$ be the sequence given by $w_j =
	v_j - v_0$ for $0 \leq j < \kappa$ and $w_\kappa = (R,0)$. Then $\mathbf w \in A_\kappa$. Since
	$|v_0|$ and $|v_\kappa - (R,0)|$ are bounded by $R \epsilon^4$,
	\[
	\sum_{j = 1}^\kappa \frac{|v_j - v_{j-1}|}{R} \geq
	\sum_{j = 1}^\kappa \frac{|w_j - w_{j-1}|}{R} - 2 \epsilon^4\,,
	\]
which proves  Claim~\ref{clm :: skeleton length bound} provided \eqref{eq :: skeleton length bound 2} holds for all $\mathbf w \in A_\kappa$.

\noindent{\bf Step 3.} We now prove \eqref{eq :: skeleton length bound 2} holds for all $\mathbf w \in A_\kappa$ for $\kappa>1$ by induction.
	Note that $A_\kappa$ is empty unless $\kappa$ is even since for every $\mathbf v \in A_\kappa$, the
	corresponding sequence $(k_0,\ldots,k_\kappa)$ describes a walk with increments in
	$\{-1,1\}$ that starts and ends at 0 (and therefore has an even number of steps).
	
	We first consider the base case (i.e, $\kappa = 2$). If $\mathbf v \in A_2$, then $(v_0,v_1,v_2)$ forms a triangle with
	base $R$ and height $\rho R$. Recall that if $T$ is a triangle with base $b$
	and height $h$ such that $h \leq b$ then
	\[
	l(\partial T) - b \geq 2\sqrt{\frac{b^2}{4} + h^2}  \geq b + \frac{h^2}{b}\,.
	\]
	Therefore,
	\[
	\frac{|v_1 - v_0| + |v_2 - v_1|}{R} \geq 1 + \rho^2\,.
	\]

	Now, let $\kappa > 2$ be even and assume \eqref{eq :: skeleton length bound 2} holds for all
	sequences in $A_{\kappa-2}$. Let $\mathbf v$ be a sequence in $A_\kappa$ and $\mathbf k = (k_0, \ldots, k_{\kappa})$ be the sequence
	such that $v_j \in L_{k_j}$ for $0 \leq j \leq \kappa$. There exists $1 \leq j \leq \kappa-2$
	such that $|k_j| > |k_{j-1}|$ and $|k_j| > |k_{j+1}|$ (and thus $k_{j-1} = k_{j+1}$). Let
	$\mathbf w = (w_0,\ldots,w_{\kappa-2})$ be the sequence obtained by removing $v_j$ and $v_{j+1}$
	from $\mathbf v$. Note that $\mathbf w \in A_{\kappa-2}$. Since $k_{j+1} = k_{j-1}$ and
	$|v_{j+1} - v_{j-1}| \leq 20 R$ we have that $(v_{j-1}, v_j,v_{j+1})$ forms a triangle
	of height $\rho R$ and base at most $20R$. This gives
	\begin{equation}\label{eq-triangle-inequality-strict}
	  \frac{|v_{j} - v_{j-1}| + |v_{j+1} - v_j| - |v_{j+1} - v_{j-1}|}{R} \geq
	\frac{\rho^2}{20}\,.
	\end{equation}
Therefore, we have that
	\begin{align*}
	\sum_{j = 1}^{\kappa} \frac{|v_{j} - v_{j-1}|}{R} -
	\sum_{i = 1}^{\kappa-2} \frac{|w_{i} - w_{i-1} |}{R} &=
	\frac{|v_{j} - v_{j-1}| + |v_{j+1} - v_j| + |v_{j+2} - v_{j+1}| - |v_{j+2} - v_{j-1}|}{R} \\
	&\geq \frac{|v_{j} - v_{j-1}| + |v_{j+1} - v_j| - |v_{j+1} - v_{j-1}|}{R} \geq  \frac{\rho^2}{20}\,,
	\end{align*}
where the first inequality follows from the triangle inequality and the second inequality follows from
\eqref{eq-triangle-inequality-strict}. Combined with the induction hypothesis, this proves
\eqref{eq :: skeleton length bound 2} holds for $\mathbf v \in A_\kappa$ and concludes the proof.

\subsubsection{Proof of Lemma~\ref{lm :: one skeleton bound}}
The proof is an application of Lemma~\ref{lm :: max over simple collections}, which requires us to set parameters as in the context of Lemma~\ref{lm :: max over simple collections}. For
$\mathsf v \in \mathcal V_\kappa$, $\eta, \eta' \in \mathcal H(R,\mathsf v)$ and
$1 \leq j \leq \kappa$, the vertical distance between $v_{j}$ and $v_{j-1}$ is $\rho R$ and
the horizontal distance is bounded by $19R$. Thus, $|v_{j} - v_{j-1}| \leq 20 R$
provided we take $\epsilon$ small enough. In addition,
$|v_\kappa - v'_0| \leq (1 + 2\epsilon^4)R \leq 20 R$. Thus, we let
$\Delta_1 = \Delta_2 = 20R$. For $0 \leq j \leq \kappa$, we have $v_j \in T_{\kappa,\mathsf v_j}$, and so we let
$\Delta_3 = 2^{1/2}s_\kappa R$.
Furthermore, by Claim~\ref{clm :: canonical distance bound}, for $\eta,\eta' \in \mathcal H(R,\mathsf v)$ we have
\begin{align*}
d_\nu(\gamma(\eta),\gamma(\eta')) \leq
	\sum_{j = 0}^{\kappa} \sqrt{\frac{|v_j - v_{j-1}| + |v'_{j+1} - v'_j|}{2} |v_j - v'_j|} \leq
	\sqrt{\Delta_2\Delta_3} (\kappa+1) \,.
\end{align*}
Therefore, we can set $\sigma = \sqrt{\Delta_2\Delta_3} (\kappa+1) = C \sqrt{s_\kappa}(\kappa+1)R$. As a result, we have
\begin{align*}
\Delta_4& =
	\min\left(\Delta_1 + \frac{1}{2\Delta_1} \left(\frac{\sigma}{\kappa+1}\right)^2, \Delta_3\right) +
			\frac{1}{\Delta_1} \left(\frac{\sigma}{\kappa+1}\right)^2 = 2\Delta_3\,,\\
\Delta_5 &= \sqrt{2\pi \Delta_1^{(\kappa-1)/(\kappa+1)} \Delta_4} \Delta_2^{1/(\kappa+1)}
	= \sqrt{2\pi\Delta_1\Delta_4}
	= \frac{2\sqrt{\pi} \sigma}{\kappa+1}\,.
\end{align*}
Applying Lemma~\ref{lm :: max over simple collections} with the aforementioned choices of parameters, we conclude that
\[
\mathbb E \left[
	\sup_{\eta \in \mathcal H(R,\mathsf v)} \nu(\gamma(\eta))
\right] \leq
	C_4 \sigma\sqrt{(\kappa+1)\log\left(\frac{\Delta_5 (\kappa+1)}{\sigma}\right)}  \leq
	C \sqrt{s_\kappa} \kappa^{3/2} R\,.
\]

\subsubsection{Proof of Lemma~\ref{lm :: strip bound}}
In the proof of Lemma~\ref{lm :: lower bound inductive step}, we have decomposed the curve depending on its vertical oscillations so that each sub-curve in the decomposition has small (vertical) height.
However, these sub-curves may have large (horizontal) widths and thus we may not yet be able to apply our induction hypothesis.
Therefore, to prove Lemma~\ref{lm :: strip bound} we will have to further decompose the sub-curve so that each sub-sub-curve has small width (in addition to small height).
Naturally, the proof of Lemma~\ref{lm :: strip bound} follows a similar outline as the proof of Lemma~\ref{lm :: lower bound inductive step}.
In principle, it is possible to merge the two proofs into a single one.
We chose not to do so since it may further complicates the presentation by mixing the difficulties and results in more cumbersome notation. Thus, we compromise on the length of the arguments with the hope of improving the readability for a non-trivial inductive argument.

For ease of notation, we will write $\mathcal H'$ for $\mathcal H(R,\mathsf v, j)$.
For the rest of the section $\mathsf v$, $\kappa$, and $j$ will be fixed and refer to the values used to define $\mathcal H'$.
However, any constants appearing below do not depend on $\mathsf v$, $\kappa$, or $j$.
We also let $\mathcal R = (-9R,10R) \times (-9R,9R)$.
To avoid confusion, we will denote curves in $\mathcal H'$ by $\tau$ instead of $\eta$.
Unless specified otherwise, we will denote by $a = (x_a, y_a)$ and $b = (x_b, y_b)$ the start and end points for $\tau$ in what follows.
Write $\Pi = \cup_{i=-1}^1\Pi_{i}$.
Then we have that $\tau \in y_a + \Pi$.
Note that the height of $\Pi$ is $4\rho R$ and that the distance from $b$ to the top and bottom of $y_a + \Pi$ is at most $3\rho R$.

Let $S = 10^{-1}R$ and $r = \min\{S/2, |\mathsf v_{j} - \mathsf v_{j-1}|\}$.
By construction, $|y_a - y_b| = \rho R$ unless $\kappa = 1$ in which case $|b-a| \geq R/2$. Thus,
\begin{equation}\label{eq :: r lower bound}
|\mathsf v_{j} - \mathsf v_{j-1}| \geq |a-b| - \sqrt{2}s_\kappa R \geq (1 - \epsilon^4)\rho R\,.
\end{equation}
 Since $a,b \in (-9R,10R) \times (-9R,9R)$, we have $|a-b| \leq \sqrt{19^2 + \rho^2} R$ and therefore
\begin{equation}\label{eq :: segment total displacement upper bound}
|\mathsf v_{j} - \mathsf v_{j-1}| \leq |a-b| + \sqrt{2}s_\kappa R \leq 20R\,.
\end{equation}
Combined with \eqref{eq :: r lower bound}, this yields that
\begin{equation}\label{eq-r-lower-bound-combined}
r \geq \max\{(1 - \epsilon^4)\rho R,   400^{-1} |\mathsf v_j - \mathsf v_{j-1}|\}.
\end{equation}
For each $\tau \in \mathcal H'$ we wish to construct a sequence $\mathbf u = \mathbf u(\tau)$ that decomposes $\tau$ into segments with width of order $r$.
To this end, for $m \in \mathbb Z$ define $\Pi'_m$ and $L'_m$ be a vertical strip and a vertical line, given by
\[
\Pi'_m = \{(x,y) \,:\, (m-1) r < x < (m+1)r \} \mbox { and } L'_m = \{(mr, y) \,:\, y \in \mathbb R\}\,.
\]
We will follow a similar procedure as that used to define $\mathbf v$ in terms of $\mathbf w$ in the proof of Lemma~\ref{lm :: lower bound inductive step}.
We begin by defining for each $\tau$ a sequence $\mathbf z(\tau)$.
We let $z_0 = a$ and $\tau'_0 = \tau$.
For $i\geq 0$, as long as $z_i \neq b$ we have $z_i \in x_a + L'_{m_i}$ for some $m_i$.
In this case, we let $z_{i+1}$ be the first point of $\tau'_{i}$ on $x_a + \partial \Pi'_{m_i}$ (or $b$ if no such point exists).
We also let $\phi_{i+1}$ be the segment of $\tau'_i$ from $z_i$ to $z_{i+1}$, and let $\tau'_{i+1}$ be the segment of $\tau'_i$ from $z_{i+1}$ to $b$.
Continuing this construction until reaching $b$ produces a sequence of points $\mathbf z(\tau) = (z_0,\ldots, z_{n})$ and  a sequence of curves $(\phi_1,\ldots,\phi_n)$
connecting these points. In addition, we have $z_0 = a$, $z_{n} = b$, and for each $0 \leq i < n$ there exists
$m_i$ such that $z_i \in x_a + L'_{m_i}$.

We now construct $\mathbf u$ from $\mathbf z$.
Let $m_b$ be such that $(x_a + L'_{m_b})$ is closest to $b$ among all vertical lines of form $(x_a + L'_m)$.
Let $i^* = \max\{0 \leq i  < n \,:\, |m_i - m_b| = 2\}$ with the convention that $i^* = -1$ if $|m_i - m_b| \leq 1$ for $0 \leq i < n$.
Let $\mathbf u(\tau) = (z_0,\ldots,z_{i^*+1},z_n)$.
We let $\chi = i^*+2$ and for $1 \leq i \leq \chi$, we let $\tau_i$ be the segment of
$\tau$ from $u_{i-1}$ to $u_i$.
By an abuse of notation, for $0 \leq i < \chi$, we let $m_i$ be such that $u_i \in x_a + L'_{m_i}$ and $m_\chi = m_b$.
Let $(\tau', S_1, S_2)$ be a witness for $\tau$, let $O_i = (x_a + \Pi'_{m_{i-1}}) \cap S_2$ for $1 \leq i \leq \chi-1$, and let $O_\chi = (x_a + (\Pi'_{m_b-1} \cup \Pi'_{m_b} \cup \Pi'_{m_b+1})) \cap S_2$.
Then $(O_1,\ldots, O_\chi)$ and $(\tau_1,\ldots,\tau_\chi)$ satisfy the assumptions of Lemma~\ref{lm :: splitting good curves} and therefore $(\tau_1, \ldots, \tau_\chi)$ are good curves.
For $1 \leq i \leq \chi$ we let $\gamma'_i$ be the line segment from $u_{i-1}$ to $u_i$ and let $\gamma'(\tau) = \gamma'_1 \ldots \gamma'_\chi$.

We next partition
$\mathcal H'$ using $\mathbf u$. For $\chi \geq 1$, let $s'_\chi = \epsilon^4\rho/(4\chi)$ and
for each $\mathbf u \in s'_\chi R \cdot \mathbb Z^2$ let $T_{\chi, \mathbf u}$ be the axis-aligned square of
side-length $s'_\chi R$ centered at $\mathbf u$. For a sequence $\mathsf u = (\mathsf u_0,\ldots, \mathsf u_{\chi})$ of
points in $s'_\chi R \cdot \mathbb Z^2$, we let $\mathcal H'(\mathsf u)$ be the set of curves $\tau \in \mathcal H'$ such that
$\mathbf u(\tau)$ has length $\chi+1$ and satisfies $u_i \in T_{\chi, \mathsf u_i}$ for
$0 \leq i \leq \chi$. For $\chi \geq 1$, we let $\mathcal U_\chi$ denote the set of sequences
$\mathsf u$ of length $\chi+1$ such that $\mathcal H'(\mathsf u)$ is not empty. We let
$\mathcal U = \cup_{\chi \geq 1} \mathcal U_\chi$.

As in the proof of Lemma~\ref{lm :: lower bound inductive step}, the goal is to bound
$|\mathsf v_j - \mathsf v_{j-1}|^{-1}\|X\|_{\mathcal H'(\mathsf u)}$ for all
$\mathsf u \in \mathcal U$ and then use Lemma~\ref{lm :: generic bound for partitioned collection}
to obtain a bound on $|\mathsf v_j - \mathsf v_{j-1}|^{-1}\|X\|_{\mathcal H'}$. To this end, we decompose
$X(\tau)$ (as in \eqref{eq :: induction decomposition}) by
\begin{equation}\label{eq :: induction2 decomposition scaled}
\frac{X(\tau)}{|\mathsf v_j - \mathsf v_{j-1}|} =
	\frac{\epsilon \nu(\gamma'(\tau))}{|\mathsf v_j - \mathsf v_{j-1}|} +
		\sum_{i = 1}^{\chi} \frac{|\mathsf u_{i} - \mathsf u_{i-1}|}{|\mathsf v_j - \mathsf v_{j-1}|}\frac{X(\tau_i)}{|\mathsf u_{i}- \mathsf u_{i-1}|}\,.
\end{equation}
As before, to bound  $\|X\|_{\mathcal H'(\mathsf u)}$ we need bounds on
$\nu(\gamma'(\tau))$ and $X(\tau_i)$. The next lemma bounds $\nu(\gamma'(\tau))$, and after proving
it we turn to the bound for $X(\tau_i)$.
\begin{lemma}\label{lm :: bound strip skeleton}
	There exists a constant $C_{13} > 0$ such that the following holds. Let
	$\epsilon < C_{13}^{-1}$, $R \geq 1$, $\chi \geq 1$, and
	$\mathsf u \in \mathcal U_\chi$. Then
	\begin{equation}\label{eq :: bound strip skeleton}
	\mathbb E \big[
	|\mathsf v_{j} - \mathsf v_{j-1}|^{-1} \sup_{\tau \in \mathcal H'(\mathsf u)} \nu(\gamma'(\tau))
	\big] \leq C_{13} \epsilon^2 \chi \,.
	\end{equation}
\end{lemma}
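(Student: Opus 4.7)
The plan is to apply Lemma~\ref{lm :: max over simple collections} to the collection of sequences $\mathcal G = \{\mathbf u(\tau) : \tau \in \mathcal H'(\mathsf u)\}$, in close parallel with the proof of Lemma~\ref{lm :: one skeleton bound}. The task is to identify the right values of $\Delta_1, \Delta_2, \Delta_3, \sigma$ and verify that the logarithmic factor stays bounded.

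First I would identify the geometric parameters. Since each $u_i$ lies in the tile $T_{\chi, \mathsf u_i}$ of side $s'_\chi R = \epsilon^4 \rho R/(4\chi)$, any two sequences in $\mathcal G$ satisfy $|u_i - u'_i| \leq \sqrt 2 \, s'_\chi R$, so I set $\Delta_3 = \sqrt 2\, s'_\chi R$. By construction, consecutive points $u_{i-1}, u_i$ differ horizontally by $r$ and vertically by at most the height $4\rho R$ of the strip $y_a + \Pi$; combined with the lower bound $r \geq (1-\epsilon^4)\rho R$ from \eqref{eq-r-lower-bound-combined}, this gives $|u_i - u_{i-1}| \leq r + 4\rho R \leq 5r$, so I take $\Delta_1 = 5r$. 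For the wrap-around distance, $u_\chi$ is within $\sqrt 2\, s'_\chi R$ of $b$ and $u'_0$ is within $\sqrt 2\, s'_\chi R$ of $a$, so by \eqref{eq :: segment total displacement upper bound}, $|u_\chi - u'_0| \leq |a-b| + 2\sqrt 2\, s'_\chi R \leq 2|\mathsf v_j - \mathsf v_{j-1}|$; take $\Delta_2 = 2|\mathsf v_j - \mathsf v_{j-1}|$. An application of Claim~\ref{clm :: canonical distance bound} with the convention $u_{-1} = u'_\chi$, $u'_{\chi+1} = u_0$ then yields $d_\nu(\gamma'(\tau), \gamma'(\tau')) \leq (\chi+1)\sqrt{\Delta_2 \Delta_3}$, so I set $\sigma = (\chi+1)\sqrt{\Delta_2 \Delta_3}$, and thus $\sigma^2 \lesssim \chi \epsilon^4 \rho R \, |\mathsf v_j - \mathsf v_{j-1}|$.

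The main obstacle is verifying that the log factor $\log(\Delta_5(\chi+1)/\sigma)$ remains $O(1)$; this is where the specific choice $r = \min\{S/2, |\mathsf v_j - \mathsf v_{j-1}|\}$ becomes essential. Using $(\sigma/(\chi+1))^2 = \Delta_2 \Delta_3$ and the bound $\Delta_2/\Delta_1 \leq 80$ (which follows because \eqref{eq-r-lower-bound-combined} gives $r \geq |\mathsf v_j - \mathsf v_{j-1}|/400$), I compute
\[
\Delta_4 \;=\; \min\bigl(\Delta_1 + \tfrac{\Delta_2 \Delta_3}{2\Delta_1}, \Delta_3\bigr) + \tfrac{\Delta_2 \Delta_3}{\Delta_1} \;\lesssim\; \Delta_3\,.
\]
Then
\[
\frac{\Delta_5(\chi+1)}{\sigma} \;=\; \sqrt{\frac{2\pi \Delta_1^{(\chi-1)/(\chi+1)} \Delta_4\, \Delta_2^{2/(\chi+1)}}{\Delta_2 \Delta_3}} \;\lesssim\; (\Delta_1/\Delta_2)^{(\chi-1)/(2(\chi+1))} \;=\; O(1),
\]
since $\Delta_1$ and $\Delta_2$ are comparable in both regimes $|\mathsf v_j - \mathsf v_{j-1}| \lessgtr S/2$.

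Finally, substituting into Lemma~\ref{lm :: max over simple collections} gives
\[
\mathbb E\bigl[\sup_{\tau\in \mathcal H'(\mathsf u)} \nu(\gamma'(\tau))\bigr] \;\leq\; C_4 \sigma \sqrt{\chi+1} \;\lesssim\; \epsilon^2 (\chi+1) \sqrt{\rho R\, |\mathsf v_j - \mathsf v_{j-1}|}\,.
\]
Dividing by $|\mathsf v_j - \mathsf v_{j-1}|$ and using $|\mathsf v_j - \mathsf v_{j-1}| \geq (1-\epsilon^4)\rho R \geq \rho R/2$ from \eqref{eq :: r lower bound}, the factor $\sqrt{\rho R/|\mathsf v_j - \mathsf v_{j-1}|}$ is $O(1)$, which yields the desired bound $C_{13}\epsilon^2 \chi$ after absorbing the constant into $C_{13}$.
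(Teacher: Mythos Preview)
Your proposal is correct and follows essentially the same approach as the paper: apply Lemma~\ref{lm :: max over simple collections} with $\Delta_3 = \sqrt{2}\,s'_\chi R$ and $\sigma = (\chi+1)\sqrt{\Delta_2\Delta_3}$, then check that $\Delta_5(\chi+1)/\sigma = O(1)$ so the logarithm is bounded. One small technical point: with your choices $\Delta_1 = 5r$ and $\Delta_2 = 2|\mathsf v_j - \mathsf v_{j-1}|$, the hypothesis $\Delta_2 \geq \Delta_1$ of Lemma~\ref{lm :: max over simple collections} can fail (e.g.\ when $r = |\mathsf v_j - \mathsf v_{j-1}|$); the paper avoids this by simply taking $\Delta_1 = \Delta_2 = 6|\mathsf v_j - \mathsf v_{j-1}|$, which is valid since $r \leq |\mathsf v_j - \mathsf v_{j-1}|$ and the horizontal increment is at most $3r/2$ (not exactly $r$, as the last step can be longer). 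This cosmetic fix leaves your argument intact.
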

\begin{proof}
	The proof is an application of Lemma~\ref{lm :: max over simple collections}, which requires us to
	specify the parameters in Lemma~\ref{lm :: max over simple collections}. For
	$\mathsf u \in \mathcal U_\chi$, $\tau \in \mathcal H'(\mathsf u)$ and $1 \leq i \leq \chi$, the horizontal
	distance between $u_{i}$ and $u_{i-1}$ is at most $3r/2 \leq 3 |\mathsf v_j - \mathsf v_{j-1}|/2$,
	and the vertical distance is at most
	\[
	4\rho R \leq 4(1- \epsilon^4)^{-1}|\mathsf v_j - \mathsf v_{j-1}|\,.
	\]
	Thus, $|u_{i} - u_{i-1}| \leq 6|\mathsf v_j - \mathsf v_{j-1}|$ for $\epsilon < 1/2$.
	Additionally,
	\[
	|u_\chi - u_0| \leq |\mathsf v_j - \mathsf v_{j-1}| + \sqrt{2} s'_\chi R \leq 2|\mathsf v_j - \mathsf v_{j-1}|.
	\]
	In light of these, we let $\Delta_1 = \Delta_2 = 6|\mathsf v_j - \mathsf v_{j-1}|$.
	For $0 \leq i \leq \chi$, we have $u_i \in T_{\chi,\mathsf u_i}$ and thus we let
	$\Delta_3 = 2^{1/2}s'_\chi R \leq \epsilon^4 |\mathsf v_j - \mathsf v_{j-1}|/(2\chi)$.
	By Claim~\ref{clm :: canonical distance bound}, for $\tau,\tau' \in \mathcal H'(\mathsf u)$ we have (below we write $\mathbf u' = (u'_0, \ldots, u'_{\chi}) = \mathbf u(\tau')$)
	\begin{align*}
	d_\nu(\gamma'(\tau),\gamma'(\tau')) \leq
	\sum_{i = 0}^{\chi} \sqrt{\frac{|u_i - u_{i-1}| + |u'_{i+1} - u'_i|}{2} |u_i - u'_i|} \leq
	\sqrt{\Delta_2\Delta_3} (\chi+1)\,.
	\end{align*}
	Therefore, we let
	$\sigma = \sqrt{\Delta_2\Delta_3} (\chi+1) \leq c \epsilon^2 \sqrt{\chi}|\mathsf v_j - \mathsf v_{j-1}|$.
	We have
	\begin{align*}
	\Delta_4 &=
	\min\Big(\Delta_1 + \frac{1}{2\Delta_1} \Big(\frac{\sigma}{\chi+1}\Big)^2, \Delta_3\Big) +
	\frac{1}{\Delta_1} \Big(\frac{\sigma}{\chi+1}\Big)^2 = 2 \Delta_3\,,\\
	\Delta_5 &= \sqrt{2\pi \Delta_1^{(\chi-1)/(\chi+1)} \Delta_4} \Delta_2^{1/(\chi+1)}
	= \sqrt{2\pi\Delta_1\Delta_4}
	= \frac{2\sqrt{\pi} \sigma}{\chi+1}\,.
	\end{align*}
Applying	Lemma~\ref{lm :: max over simple collections}, we then conclude that
	\begin{equation*}
	\mathbb E \big[
	\sup_{\eta \in \mathcal H(R,\mathsf v)} \nu(\gamma(\eta))
	\big] \leq
		C_4 \sigma\sqrt{(\chi+1)\log\big(\tfrac{\Delta_5 (\chi+1)}{\sigma}\big)}  \leq
		C \epsilon^2 \chi |\mathsf v_j - \mathsf v_{j-1}| \,. \qedhere
	\end{equation*}
\end{proof}

To bound $X(\tau_i)$, we introduce some notation.
For $\mathsf u \in \mathcal U_\chi$ and $1 \leq i \leq \chi$, let
\[
\mathcal H'(\mathsf u,i) = \{\tau_i \,:\, \tau \in \mathcal H'(\mathsf u)\}
\]
be the set of possible values of $\tau_i$ among curves in $\mathcal H'(\mathsf u)$.
To apply the induction hypothesis we need to show that $|\mathsf u_i - \mathsf u_{i-1}| \leq S$ and
$\mathcal H'(\mathsf u,i) \subset \mathcal H(\mathsf u_{i-1}, \mathsf u_i)$ for all $\chi\geq 1$,
$\mathsf u \in \mathcal U_\chi$, and $1\leq i\leq \chi$. To prove the first
statement we recall that by construction, if $\tau \in \mathcal H'(\mathsf u)$,
the horizontal distance between $u_i$ and $u_{i-1}$ is at most $3r/2$. Since $r \leq S/2$
and $\tau$ is contained in a horizontal strip of height $4\rho R$, we conclude
$|u_i - u_{i-1}| \leq 7 S/8$. Furthermore, since $|\mathsf u_i - u_i| \leq 2^{-1/2} s'_\chi R$
and the same holds for $i-1$, we conclude $|\mathsf u_i - \mathsf u_{i-1}| \leq S$. It remains
to prove that $\mathcal H'(\mathsf u,i) \subset \mathcal H(\mathsf u_{i-1}, \mathsf u_i)$.
\begin{claim}\label{clm :: lower bound induction hypothesis}
We have
$\mathcal H'(\mathsf u, i) \subset \mathcal H(\mathsf u_i, \mathsf u_{i-1})$ for all $\chi \geq 1$, $1 \leq i \leq \chi$ and $\mathsf u \in \mathcal U_\chi$,
\end{claim}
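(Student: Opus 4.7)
My plan is to fix $\tau \in \mathcal H'(\mathsf u)$ and verify the four defining conditions of $\mathcal H(\mathsf u_{i-1}, \mathsf u_i)$ for its $i$-th sub-segment $\tau_i$: (i) $\tau_i$ is a good curve; (ii) every side of $\tau_i$ except possibly the first and last has length at least $1$; (iii) $|u_{i-1} - \mathsf u_{i-1}|$ and $|u_i - \mathsf u_i|$ are both at most $\epsilon^4 |\mathsf u_i - \mathsf u_{i-1}|$; and (iv) $\tau_i$ is contained in the $19|\mathsf u_i - \mathsf u_{i-1}| \times 18|\mathsf u_i - \mathsf u_{i-1}|$ rectangle centered along the segment $\mathsf u_{i-1}\mathsf u_i$. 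Conditions (i) and (ii) are essentially immediate: (i) was already certified while defining the decomposition via Lemma~\ref{lm :: splitting good curves} applied to $(O_1,\ldots,O_\chi)$, and (ii) follows since the decomposition introduces new vertices only at the points $u_k$, so every interior side of $\tau_i$ is an interior side of the parent curve $\tau \in \mathcal H(R)$, whose non-extremal sides all have length at least $1$.

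The central geometric input for (iii) and (iv) is the lower bound $|\mathsf u_i - \mathsf u_{i-1}| \gtrsim r$, which I plan to obtain through a case analysis of the horizontal separation $|x_{u_i} - x_{u_{i-1}}|$. For $\chi = 1$ we have $u_0u_1 = ab$ and \eqref{eq :: r lower bound} yields $|a-b| \geq (1-\epsilon^4)\rho R$; for $\chi \geq 2$ and $1 \leq i \leq \chi-1$, the construction of $\mathbf z$ forces this separation to equal $r$ exactly; and for $i = \chi \geq 2$ the critical observation is that the maximality of $i^*$, combined with the $\pm 1$-step structure of $(m_j)$ and the constraint $|m_{\chi-1} - m_b| \leq 1$, forces $|m_{\chi-1} - m_b| = 1$ (since $|m_{i^*} - m_b| = 2$ and $m_{i^*+1} = m_{i^*} \pm 1$ exclude the possibility $m_{i^*+1} = m_b$), so the horizontal separation between $u_{\chi-1}$ and $b$ lies in $[r/2, 3r/2]$. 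Since $s'_\chi R = \epsilon^4 \rho R/(4\chi)$, the tile-rounding error is negligible and one gets $|\mathsf u_i - \mathsf u_{i-1}| \geq r/2 - \sqrt{2}\, s'_\chi R \geq (1 - O(\epsilon^4))r/2$, from which (iii) is immediate because $|u_i - \mathsf u_i| \leq 2^{-1/2} s'_\chi R = O(\epsilon^4 \rho R/\chi)$.

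The main obstacle is (iv), where I have to reconcile the slab geometry of $\tau_i$---which lives in a horizontal strip of width $2r$ for $1 \leq i \leq \chi-1$ and width $4r$ for $i=\chi$ (spanning $x_a + (\Pi'_{m_b-1} \cup \Pi'_{m_b} \cup \Pi'_{m_b+1})$)---with the orientation of $\mathsf u_{i-1}\mathsf u_i$. I plan to argue via the inscribed disc: whenever $|\mathsf u_i - \mathsf u_{i-1}| \geq \rho R$, the disc of radius $9|\mathsf u_i - \mathsf u_{i-1}|$ centered at the midpoint of $\mathsf u_{i-1}\mathsf u_i$ already covers $\tau_i$, because the latter lies within $\sqrt{(2.75 r)^2 + (4\rho R)^2} < 5r$ of that midpoint. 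The only borderline case is $i = \chi$ with a nearly-horizontal segment of length $\approx r/2$, where the target rectangle is essentially axis-aligned with half-dimensions $4.75r$ horizontal and $4.5r$ vertical; I will verify directly that $4.75r$ absorbs the horizontal extent $\leq 2.75r$ of $\tau_\chi$ and that $4.5r$ absorbs its vertical extent $4\rho R \leq 4r$, which completes the proof of (iv).
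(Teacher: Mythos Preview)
Your treatment of (i)--(iii) is correct and matches the paper. The gap is in (iv), specifically for $i=\chi$.

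Your inscribed-disc argument needs $9|\mathsf u_i-\mathsf u_{i-1}|$ to exceed the maximal distance from the midpoint to a point of $\tau_i$. For $i<\chi$ this is fine since $|\mathsf u_i-\mathsf u_{i-1}|\geq r(1-O(\epsilon^4))$, so $9|\mathsf u_i-\mathsf u_{i-1}|\approx 9r$ comfortably exceeds your $5r$ bound. But for $i=\chi$ the horizontal separation can be as small as $r/2$, and when $r$ is near its minimum $r\approx\rho R$ (which occurs when $|\mathsf v_j-\mathsf v_{j-1}|\approx\rho R$), your distance bound $\sqrt{(2.75r)^2+(4\rho R)^2}\approx 4.85r$ exceeds $9\cdot r/2=4.5r$. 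Your stated threshold ``$|\mathsf u_i-\mathsf u_{i-1}|\geq\rho R$'' does not salvage this: $r$ ranges over $[(1-\epsilon^4)\rho R,\, R/20]$, so $\rho R$ is simply the wrong scale to compare against $5r$. Your fallback ``essentially axis-aligned'' analysis does not close the gap either: when the disc argument just fails (total length between $r/2$ and $5r/9$, horizontal component $r/2$), the segment $\mathsf u_{\chi-1}\mathsf u_\chi$ can tilt up to $\arctan(0.48)\approx 26^\circ$ from horizontal, so the separate horizontal/vertical check you propose does not go through.

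The paper sidesteps this by centering the covering ball at the \emph{endpoint} $u_\chi=b$ rather than the midpoint. The extra input is the observation (recorded just before the claim) that $b$ lies within $3\rho R$ of each horizontal edge of $y_a+\Pi$ and within $5r/2$ of each vertical edge of $x_a+(\Pi'_{m_b-1}\cup\Pi'_{m_b}\cup\Pi'_{m_b+1})$; this gives $\tau_\chi\subset B\big(\sqrt{(5r/2)^2+(3\rho R)^2},\,u_\chi\big)\subset B(4r,u_\chi)$, and then $4r\leq 8\cdot(r/2)\leq 9|\mathsf u_\chi-\mathsf u_{\chi-1}|$ holds directly with no borderline case. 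Your midpoint approach can also be repaired by importing this $3\rho R$ constraint on $b$: it forces the midpoint to lie within $1.5\rho R$ of the center of the $y$-strip, sharpening the vertical extent to $3.5\rho R$ and the required radius to $\sqrt{2.75^2+3.5^2}\,r\approx 4.45r<4.5r$.
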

\begin{proof}
It suffices to show that for $1\leq i\leq \chi$ every  $\tau_i \in \mathcal H'(\mathsf u,i)$ satisfies the following:
\begin{itemize}
	\item The balls of radius $\epsilon^4|\mathsf u_{i} - \mathsf u_{i-1}|$ centered at $\mathsf u_i$ and
	$\mathsf u_{i-1}$ contain $T_{\chi, \mathsf u_i}$ and $T_{\chi, \mathsf u_{i-1}}$, respectively;
\item $\tau_i$ is contained in the union of the balls of radius
	$9|\mathsf u_{i} - \mathsf u_{i-1}|$ centered at $\mathsf u_i$ and $\mathsf u_{i-1}$.
\end{itemize}
We now prove the first claim. By construction, for $\tau \in \mathcal H'(\mathsf u)$, we have
$|u_i - u_{i-1}| \geq r/2$ and $|u_i - \mathsf u_i| \leq 2^{-1/2}s'_\chi R$ for
$0 \leq i \leq \chi$. Combined with \eqref{eq-r-lower-bound-combined}, this gives that
\[
|\mathsf u_i - \mathsf u_{i-1}| \geq \frac{(1 - \epsilon^4)\rho R}{2} - \sqrt{2} s'_1 R
\geq \frac{(1 - 2 \epsilon^4)\rho R}{2}\geq \frac{\rho R}{4}\,.
\]
This implies that $\epsilon^4 |\mathsf u_{i} - \mathsf u_{i-1}| \geq s'_1R$, completing the verification of the first claim.

It remains to prove the second claim. To simplify notation, we let $B(t,z)$ be
the ball of radius $t$ centered at $z$. We consider the case $i< \chi$ and $i = \chi$ separately.

If $i < \chi$, we have $\tau_i \subset x_a + \Pi'_{m_{i-1}}$. It follows that
$\tau_i \subset B(\sqrt{r^2 + 16\rho^2 R^2}, u_{i-1})$.
By \eqref{eq :: r lower bound}, we have $\rho R \leq (1-\epsilon^4)^{-1}r$. Taking $\epsilon$
small enough, we conclude $\tau_i \subset B(5r,u_{i-1})$. By construction,
$|u_i - u_{i-1}| \geq r$ and as we argued above
\[
|\mathsf u_i - \mathsf u_{i-1}| \geq |u_i - u_{i-1}| - \sqrt{2}s'_1 R \geq (1- \epsilon^4)r\,,
\]
and $|\mathsf u_{i-1} - u_{i-1}| \leq \epsilon^4 r$. Therefore,
$\tau_i \subset B(9|\mathsf u_i - \mathsf u_{i-1}|, \mathsf u_{i-1})$.

For $i = \chi$,  we see that $\tau_\chi$ is contained in
the vertical strip $x_a + (\Pi'_{m_\chi - 1} \cup \Pi'_{m_\chi} \cup \Pi'_{m_\chi+1})$ which
has width $4r$. By our choice of $m_\chi$, the distance from $u_\chi = b$ to either side
of this strip is at most $5 r /2$. Recall that $y_a + \Pi$ contains $\tau$ and the distance from $b$ to either side of the strip is at most $3\rho R$. Therefore,
\[
\tau_\chi \subset B\left(\sqrt{\frac{25 r^2}{4} + 9\rho^2 R^2}, u_\chi\right)\,.
\]
By \eqref{eq :: r lower bound}, we have $\rho R \leq (1-\epsilon^4)^{-1}r$, so taking $\epsilon$ small enough we get that $\tau_\chi \subset B(4r, u_\chi)$.
By construction, $|u_{\chi-1} - u_\chi| \geq r/2$ so we conclude (once again taking $\epsilon$ small enough) $\tau_\chi \subset B(9|\mathsf u_\chi - \mathsf u_{\chi-1}|, \mathsf u_\chi)$.
\end{proof}
It follows from Claim~\ref{clm :: lower bound induction hypothesis} and the bound
$|\mathsf u_i - \mathsf u_{i-1}| \leq S$ that
\begin{equation}\label{eq-X-bound-S}
\mathbb E \left[
	\frac{\|X\|_{\mathcal H'(\mathsf u, i)}}{|\mathsf u_{i} - \mathsf u_{i-1}|}
\right] \leq \mathbb E \left[\frac{\mathfrak X(S)}{S}\right] = : \mu_S\,.
\end{equation}
Plugging \eqref{eq :: bound strip skeleton} and \eqref{eq-X-bound-S} into
\eqref{eq :: induction2 decomposition scaled} gives that
\begin{equation}\label{eq :: trivial bound on one strip skeleton}
\mathbb E \left[ \frac{\|X\|_{\mathcal H'(\mathsf u)}}{|\mathsf v_{j} - \mathsf v_{j-1}|}\right] \leq
	\mu_S \sum_{i = 1}^\chi \frac{|\mathsf u_{i} - \mathsf u_{i-1}|}{|\mathsf v_{j} - \mathsf v_{j-1}|} +
		C_{13} \epsilon^3 \chi\,.
\end{equation}
As in the proof of Lemma~\ref{lm :: lower bound inductive step}, we need to bound
the sum of $|\mathsf u_i - \mathsf u_{i-1}|$, as incorporated in the next claim.
Let $D$ be the horizontal distance between $\mathsf v_j$ and $\mathsf v_{j-1}$. Throughout the rest of the section, we let $(x)_+ = \max(x,0)$ and $\chi_0= 1 + \lfloor (D - s_\kappa R)/r\rfloor$. Note that if $\chi < \chi_0$ then $\mathcal U_\chi$ is empty.
If $|\mathsf v_j - \mathsf v_{j-1}| \leq S/2$, then $r = |\mathsf v_j - \mathsf v_{j-1}|$ and $\chi_0 = 1$;
otherwise, $r = S/2 = R/20$ and $\chi_0 \leq 400$ (since $\mathcal R$ has width $19R$).
\begin{claim}\label{clm :: strip skeleton length bound}
There exists a constant $C_{14} > 0$ such that for all $\epsilon < C_{14}^{-1}$,
$\chi \geq \chi_0$, and $\mathsf u \in \mathcal U_\chi$,
\[
\sum_{i = 1}^\chi \frac{|\mathsf u_{i} - \mathsf u_{i-1}|}{|\mathsf v_{j} - \mathsf v_{j-1}|}
	\geq 1 - C_{14}\epsilon^4 + \frac{5(\chi-\chi_0-500)_+}{C_{14}}\,.
\]
\end{claim}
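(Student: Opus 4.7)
The plan is to bound $L := \sum_{i=1}^{\chi} |\mathsf u_i - \mathsf u_{i-1}|$ from below by two independent elementary estimates and then combine them.  As a preliminary step, I would pass from the discretized points $\mathsf u_i$ to the actual points $u_i$: since $|\mathsf u_i - u_i| \leq \tfrac{\sqrt{2}}{2} s'_\chi R$ with $s'_\chi R = \epsilon^4 \rho R/(4\chi)$, and since \eqref{eq :: r lower bound} gives $|\mathsf v_j - \mathsf v_{j-1}| \geq (1-\epsilon^4)\rho R$, telescoping the triangle inequality yields $L \geq \sum_{i=1}^\chi |u_i - u_{i-1}| - C\epsilon^4 |\mathsf v_j - \mathsf v_{j-1}|$ for a universal $C$.

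The first lower bound is the triangle inequality applied to $u_0,\dots,u_\chi$: using $u_0 = a = v_{j-1}$ and $u_\chi = b = v_j$, we obtain $\sum_{i=1}^\chi |u_i - u_{i-1}| \geq |v_j - v_{j-1}|$.  Converting $v_i$ back to $\mathsf v_i$ via $|v_i - \mathsf v_i| \leq \tfrac{\sqrt{2}}{2} s_\kappa R$ then gives
\[
\frac{L}{|\mathsf v_j - \mathsf v_{j-1}|} \geq 1 - C\epsilon^4
\]
for a possibly larger universal $C$.  This alone handles the range $\chi \leq \chi_0 + 500$, where the penalty term in the target vanishes, provided $C_{14} \geq C$.

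The second lower bound uses the step structure of $\mathbf u$: by construction, for $1 \leq i \leq \chi - 1$ the points $u_{i-1}$ and $u_i$ lie on two vertical lines at horizontal distance exactly $r$, so $|u_i - u_{i-1}| \geq r$ and hence $\sum_{i=1}^\chi |u_i - u_{i-1}| \geq (\chi-1) r$.  Combined with the universal estimate $r/|\mathsf v_j - \mathsf v_{j-1}| \geq 1/400$, which follows from $r = |\mathsf v_j - \mathsf v_{j-1}|$ when $|\mathsf v_j - \mathsf v_{j-1}| \leq S/2$ and from $r = R/20$ together with \eqref{eq :: segment total displacement upper bound} otherwise, we conclude
\[
\frac{L}{|\mathsf v_j - \mathsf v_{j-1}|} \geq \frac{\chi - 1}{400} - C\epsilon^4.
\]

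Finally, for $\chi > \chi_0 + 500$ (which forces $\chi > 500$, since $\chi_0 \geq 0$), I would set $N = \chi - \chi_0 - 500 \geq 0$ and apply the second bound: after rearrangement, the target reduces, up to the $O(\epsilon^4)$ slack, to $(\chi_0 + 99 + N)/400 \geq 5 N /C_{14}$.  Since $\chi_0 \geq 0$, choosing $C_{14} \geq 2000$ makes the left-hand side dominate for every $N \geq 0$; taking $C_{14}$ slightly larger absorbs the $\epsilon^4$ slack and completes the argument.  No step here presents a real obstacle: the key observation is that the naive step-count bound $(\chi-1) r$ is already strong enough to yield a linear penalty in $\chi$, so no peak-removal induction in the spirit of Claim~\ref{clm :: skeleton length bound} is needed.
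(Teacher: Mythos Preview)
Your proposal is correct and follows essentially the same approach as the paper. Both arguments split into the same two cases: for $\chi \leq \chi_0 + 500$ the triangle inequality (plus the $O(\epsilon^4)$ discretization error) suffices since the penalty term vanishes; for larger $\chi$ one uses that $|u_i - u_{i-1}| \geq r$ for $1 \leq i \leq \chi-1$ to get $\sum_i |u_i - u_{i-1}| \geq (\chi-1)r$, combined with $r \geq |\mathsf v_j - \mathsf v_{j-1}|/400$ from \eqref{eq-r-lower-bound-combined}. The paper even remarks explicitly that the sharper induction of Claim~\ref{clm :: skeleton length bound} is unnecessary here, which is exactly your concluding observation.
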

\begin{proof}
We could have proved a stronger version of the claim where we replace $(\chi-\chi_0-500)$ by $(\chi-\chi_0-1)$,
by essentially the same argument as in the proof of Claim~\ref{clm :: skeleton length bound}.
We chose to present a weaker bound as it suffices and is almost obvious.
For $\chi \leq \chi_0+500$, this simply follows form triangle inequality together
 with the fact that $|\mathsf u_i - u_i| \leq s'_\chi R \leq R \epsilon^4 \rho/(4\chi)$.
 For $\chi \geq \chi_0+500$, this follows since
 \begin{align*}
 \sum_{i = 1}^\chi |\mathsf u_{i} - \mathsf u_{i-1}| &\geq r(\chi-1) - 2 \sum_{i = 0}^\chi |\mathsf u_{i} - u_{i}| \\
 	&\geq r(\chi-1) - \sqrt{2}(\chi + 1) s'_\chi R \\
 	&\geq |\mathsf v_{j} - \mathsf v_{j-1}| + \frac{r(\chi-\chi_0-500)}{2}\,,
 \end{align*}
where the last inequality follows from \eqref{eq-r-lower-bound-combined} and $s'_\chi = \epsilon^4\rho/(4\chi)$.
This implies the claim (by \eqref{eq-r-lower-bound-combined} again).
\end{proof}

Recall that $-1 \leq \mu_S \leq -1/2$ and $\chi_0 \leq 400$. Combining Claim~\ref{clm :: strip skeleton length bound} and
\eqref{eq :: trivial bound on one strip skeleton} we get that for $\epsilon$ small
enough and some constant $C_{15} > 0$,
\begin{equation}\label{eq :: max on one strip skeleton}
\mathbb E \left[ \frac{\|X\|_{\mathcal H'(\mathsf u)}}{|\mathsf v_{j} - \mathsf v_{j-1}|}\right] \leq
	\mu_S + C_{15} \epsilon^{3} - \frac{2(\chi-\chi_0-500)_+}{C_{15}}\,.
\end{equation}
Therefore, we will apply Lemma~\ref{lm :: generic bound for partitioned collection} with
$Y = |\mathsf v_j - \mathsf v_{j-1}|^{-1} X$, $\mathcal G = \mathcal H'$, $A = \mathcal U$,
$A_0 = \cup_{i=0}^{500}\mathcal U_{\chi_0+i}$, and $A_n = \mathcal U_{\chi_0+500+n}$ for $n \geq 1$.
Write $\mu = \mu_S + C_{15} \epsilon^3$ and $\alpha = C_{15}^{-1}$. By
\eqref{eq :: max on one strip skeleton} we get that
\[
\mu_n := \sup_{\mathsf u\in A_n} \mathbb E \left[ \frac{\|X\|_{\mathcal H'(\mathsf u)}}{|\mathsf v_{j} - \mathsf v_{j-1}|}\right]
	\leq \mu - 2 \alpha n \,.
\]
To conclude, we need bounds
on $|A_n|$ and
\[
\sigma_n^2 := |\mathsf v_j - \mathsf v_{j-1}|^{-2} \max_{\mathsf u \in \mathcal A_n}
\sup_{\tau \in \mathcal H'(\mathsf u)} \var[X(\tau)]\,.
\]
We first bound $|A_n|$ (by bounding $|\mathcal U_\chi|$). Note that for $0 < i < \chi$, given $\mathsf u_{i-1}$ there
are at most $100 \rho /s'_\chi$ possible values for $\mathsf u_i$ since $T_{\chi, \mathsf u_i}$ must intersect
the following set:
\[
\{(x_1,y_1): |x_1 - x_2| = r \mbox{ and }
|y_1 - y_2| \leq 4\rho R  \mbox{ for some }
	(x_2,y_2) \in T_{\chi,\mathsf u_{i-1}}\}\,.
\]
 For $i \in \{0,\chi\}$,
there are at most $9\chi^2$ possible values for $\mathsf u_i$ since $T_{\chi, \mathsf u_i}$ must intersect $T_{\kappa,\mathsf v_{j-1}}$
if $i = 0$ and $T_{\kappa,\mathsf v_{j}}$ if $i = \chi$ (recall that $s_\kappa/s'_\chi \leq \chi$). Therefore, for $\epsilon$ small enough,
\[
\log(|\mathcal U_\chi|) \leq 5(\chi+3) [\log(\chi) + \log(\epsilon^{-1})]\,.
\]
Using once again the fact that $\chi_0 \leq 400$, we conclude that there exists $C > 0$
such that
\begin{equation}\label{eq-bound-A-n}
\log(|A_n|) \leq C(n+1)[\log(n+1) + \log(\epsilon^{-1})] \mbox{ for all } n\geq 0\,.
\end{equation}
To bound $\sigma_n$, note that all curves in $\mathcal H'$ are contained in a box of height $4\rho R$ and width $19 R$ and thus
\begin{equation}\label{eq :: sigma_n bound 2}
\sigma^2_n \leq \frac{10^4 \epsilon^2 \rho R^2}{|\mathsf v_{j} - \mathsf v_{j-1}|^2}\,.
\end{equation}
Therefore, we let $\beta = 10^4 \epsilon^2 \rho R^2/|\mathsf v_{j} - \mathsf v_{j-1}|^2$.
Combining \eqref{eq-bound-A-n}  with \eqref{eq :: sigma_n bound 2}, we obtain that (recall $\rho = \epsilon^{2/3}\log(\epsilon^{-1})$)
\[
\sigma_n \sqrt{\log(|A_n|) } \leq C' \epsilon \sqrt{\rho} \sqrt{\frac{\log(\max\{n,\epsilon^{-1}\})}{n+1}} \frac{R}{|\mathsf v_j - \mathsf v_{j-1}|} (n+1)
	\leq C' \frac{\rho^2}{\log(\epsilon^{-1})} \frac{R}{|\mathsf v_j - \mathsf v_{j-1}|} (n+1)\,.
\]
By \eqref{eq :: r lower bound}, $R/|\mathsf v_j - \mathsf v_{j-1}| \leq 2/\rho$.
Therefore, there exists a constant $C_{16} > 0$ such that
\[
\sigma_n \sqrt{\log(|A_n|) } \leq \frac{\gamma + \alpha n}{4} \quad \mbox{ for all } n \geq 0\,.
\]
where $\gamma = \frac{C_{16} \rho^2}{\log(\epsilon^{-1})} \frac{R}{|\mathsf v_j - \mathsf v_{j-1}|}$.
Finally, we apply Lemma~\ref{lm :: generic bound for partitioned collection} to obtain
\begin{align*}
\mathbb E \left[ \frac{\|X\|_{\mathcal H'}}{|\mathsf v_j - \mathsf v_{j-1}|}\right]
	&\leq \mu + \gamma + \sqrt{2\pi \beta}  +
		4 \frac{\beta}{\alpha} \frac{e^{-\alpha^2/4\beta}}{1 - e^{-\alpha^2/4\beta}}\,.
\end{align*}
Note that
\begin{align*}
\mu+\gamma \leq \mu_S + \frac{C\rho^2}{\log(\epsilon^{-1})} \frac{R}{|\mathsf v_j - \mathsf v_{j-1}|}\,,
	\mbox{ and }
\sqrt{\beta} \leq C \epsilon \sqrt{\rho} \frac{R}{|\mathsf v_j - \mathsf v_{j-1}|}
	\leq \frac{C\rho^2}{\log(\epsilon^{-1})} \frac{R}{|\mathsf v_j - \mathsf v_{j-1}|} \,.
\end{align*}
Further, $\beta/\alpha \leq C \epsilon^2 R/|\mathsf v_j - \mathsf v_{j-1}|$ and $\alpha^2/ \beta \geq c \rho \epsilon^{-2}$. Therefore,
\[
4 \frac{\beta}{\alpha} \frac{e^{-\alpha^2/4\beta}}{1 - e^{-\alpha^2/4\beta}} \leq
	C \epsilon^2 \exp(-c\epsilon^{-4/3}) \frac{R}{|\mathsf v_j - \mathsf v_{j-1}|}\,.
\]
Combining the last two displays concludes the proof of Lemma~\ref{lm :: strip bound}.

\medskip

\noindent {\bf Acknowledgement.} We thank Ron Peled for reinforcing our interest in the correlation length of RFIM, and for pointing out references \cite{bovier2006, Chalker83, SKBP14}.
We thank Jiaming Xia for discussions at an early stage of the project. Much of the work was carried out when J.D. was a faculty member at University of Pennsylvania.

\small


\begin{thebibliography}{10}

\bibitem{Adler90}
R.~J. Adler.
\newblock {\em An introduction to continuity, extrema, and related topics for
  general {G}aussian processes}, volume~12 of {\em Institute of Mathematical
  Statistics Lecture Notes---Monograph Series}.
\newblock Institute of Mathematical Statistics, Hayward, CA, 1990.

\bibitem{AHP20}
M.~Aizenman, M.~Harel, and R.~Peled.
\newblock Exponential decay of correlations in the {$2D$} random field {I}sing
  model.
\newblock {\em J. Statist. Phys.}, 180:304--331, 2020.

\bibitem{AizenmanPeled19}
M.~Aizenman and R.~Peled.
\newblock A power-law upper bound on the correlations in the {$2D$} random
  field {I}sing model.
\newblock {\em Comm. Math. Phys.}, 372(3):865--892, 2019.

\bibitem{AizenmanWehr89}
M.~Aizenman and J.~Wehr.
\newblock Rounding of first-order phase transitions in systems with quenched
  disorder.
\newblock {\em Phys. Rev. Lett.}, 62(21):2503--2506, 1989.

\bibitem{AizenmanWehr90}
M.~Aizenman and J.~Wehr.
\newblock Rounding effects of quenched randomness on first-order phase
  transitions.
\newblock {\em Comm. Math. Phys.}, 130(3):489--528, 1990.

\bibitem{AKT84}
M.~Ajtai, J.~Koml\'{o}s and G.~Tusn\'{a}dy.
\newblock On optimal matchings
\newblock {\em Combinatorica}, 4 (1984), no. 4, 259--264.

\bibitem{BN22}
Y.~Bar-Nir.
\newblock Upper and Lower Bounds for the Correlation Length of the Two-Dimensional Random-Field Ising Model.
\newblock {\em Preprint}, arXiv:2205.01522. 

\bibitem{Berretti85}
A.~Berretti.
\newblock Some properties of random {I}sing models.
\newblock {\em J. Statist. Phys.}, 38(3-4):483--496, 1985.



\bibitem{Binder83}
K.~Binder.
\newblock Random-field induced interface widths in ising systems.
\newblock {\em Zeitschrift für Physik B Condensed Matter}, 50:343--352, 1983.

\bibitem{Borell75}
C.~Borell.
\newblock The {B}runn-{M}inkowski inequality in {G}auss space.
\newblock {\em Invent. Math.}, 30(2):207--216, 1975.

\bibitem{bovier2006}
A.~Bovier.
\newblock {\em Statistical Mechanics of Disordered Systems: A Mathematical
  Perspective}.
\newblock Cambridge Series in Statistical and Probabilistic Mathematics.
  Cambridge University Press, 2006.

\bibitem{Bray_1985}
A.~J. Bray and M.~A. Moore.
\newblock Scaling theory of the random-field ising model.
\newblock {\em Journal of Physics C: Solid State Physics}, 18(28):L927--L933,
  oct 1985.

\bibitem{BricmontKupiainen88}
J.~Bricmont and A.~Kupiainen.
\newblock The hierarchical random field {I}sing model.
\newblock {\em J. Statist. Phys.}, 51(5-6):1021--1032, 1988.
\newblock New directions in statistical mechanics (Santa Barbara, CA, 1987).

\bibitem{BK88}
J.~Bricmont and A.~Kupiainen.
\newblock Phase transition in the $3$d random field {I}sing model.
\newblock {\em Comm. Math. Phys.}, 116(4):539--572, 1988.

\bibitem{Cacoullos82}
T.~Cacoullos.
\newblock On upper and lower bounds for the variance of a function of a random
  variable.
\newblock {\em Ann. Probab.}, 10(3):799--809, 1982.

\bibitem{CJN18}
F.~Camia, J.~Jiang, and C.~M. Newman.
\newblock A note on exponential decay in the random field {I}sing model.
\newblock {\em J. Stat. Phys.}, 173(2):268--284, 2018.

\bibitem{Chalker83}
J. Chalker.
\newblock On the lower critical dimensionality of the Ising model in a random field.
\newblock{\em J. Phys. C}, 16 (34): 6615--6622, 1983.


\bibitem{Chatterjee18}
S.~Chatterjee.
\newblock On the decay of correlations in the random field {I}sing model.
\newblock {\em Comm. Math. Phys.}, 362(1):253--267, 2018.

\bibitem{CGGK93}
J.~T. Cox, A.~Gandolfi, P.~S. Griffin, and H.~Kesten.
\newblock Greedy lattice animals. {I}. {U}pper bounds.
\newblock {\em Ann. Appl. Probab.}, 3(4):1151--1169, 1993.

\bibitem{DGK01}
A.~Dembo, A.~Gandolfi, and H.~Kesten.
\newblock Greedy lattice animals: negative values and unconstrained maxima.
\newblock {\em Ann. Probab.}, 29(1):205--241, 2001.

\bibitem{DS84}
B.~Derrida and Y.~Shnidman.
\newblock Possible line of critical points for a random field ising model in
  dimension 2.
\newblock {\em J. Physique Lett.}, 45(12):577--581, 1984.

\bibitem{DG19}
J.~Ding and S.~Goswami.
\newblock Upper bounds on {L}iouville first-passage percolation and
  {W}atabiki's prediction.
\newblock {\em Comm. Pure Appl. Math.}, 72(11):2331--2384, 2019.

\bibitem{DSS22}
J.~Ding, J.~Song and R.~Sun.
\newblock A New Correlation Inequality for Ising Models with External Fields.
\newblock {\em Probab. Theory Relat. Fields}, 2022.


\bibitem{DingXia19}
J.~Ding and J.~Xia.
\newblock Exponential decay of correlations in the two-dimensional random field
  {I}sing model.
\newblock {\em Inventiones}, 224:999–-1045 (2021).

\bibitem{DZ22}
J.~Ding and Z.~Zhuang
\newblock Long range order for random field Ising and Potts models.
\newblock {\em Communications on Pure and Applied Mathematics}, accepted.

\bibitem{Dudley67}
R.~M. Dudley.
\newblock The sizes of compact subsets of {H}ilbert space and continuity of
  {G}aussian processes.
\newblock {\em J. Functional Analysis}, 1:290--330, 1967.

\bibitem{Fernique71}
X.~Fernique.
\newblock R\'{e}gularit\'{e} de processus gaussiens.
\newblock {\em Invent. Math.}, 12:304--320, 1971.

\bibitem{FFS84}
D.~S. Fisher, J.~Fr\"{o}hlich, and T.~Spencer.
\newblock The {I}sing model in a random magnetic field.
\newblock {\em J. Statist. Phys.}, 34(5-6):863--870, 1984.

\bibitem{FKG}
C.~M. Fortuin, P.~W. Kasteleyn, and J.~Ginibre.
\newblock Correlation inequalities on some partially ordered sets.
\newblock {\em Comm. Math. Phys.}, 22:89--103, 1971.

\bibitem{FI84}
J.~Fr\"{o}hlich and J.~Z. Imbrie.
\newblock Improved perturbation expansion for disordered systems: beating
  {G}riffiths singularities.
\newblock {\em Comm. Math. Phys.}, 96(2):145--180, 1984.

\bibitem{FV99}
C.~Frontera and E.~Vives.
\newblock Numerical signs for a transition in the two-dimensional random field
  ising model at $t=0$.
\newblock {\em Phys. Rev. E}, 59:R1295--R1298, 1999.

\bibitem{GK94}
A.~Gandolfi and H.~Kesten.
\newblock Greedy lattice animals. {II}. {L}inear growth.
\newblock {\em Ann. Appl. Probab.}, 4(1):76--107, 1994.

\bibitem{GristenMa82}
G.~Grinstein and S.-K. Ma.
\newblock Roughening and lower critical dimension in the random-field ising
  model.
\newblock {\em Phys. Rev. Lett.}, 49:685--688, 1982.

\bibitem{GM83}
G.~Grinstein and S.-K. Ma.
\newblock Surface tension, roughening, and lower critical dimension in the
  random-field ising model.
\newblock {\em Phys. Rev. B}, 28:2588--2601, 1983.

\bibitem{Hammond06}
A.~Hammond.
\newblock Greedy lattice animals: geometry and criticality.
\newblock {\em Ann. Probab.}, 34(2):593--637, 2006.

\bibitem{Imbrie85}
J.~Z. Imbrie.
\newblock The ground state of the three-dimensional random-field {I}sing model.
\newblock {\em Comm. Math. Phys.}, 98(2):145--176, 1985.

\bibitem{ImryMa75}
Y.~Imry and S.-K. Ma.
\newblock Random-field instability of the ordered state of continuous symmetry.
\newblock {\em Phys. Rev. Lett.}, 35:1399--1401, 1975.

\bibitem{Lee93}
S.~Lee.
\newblock An inequality for greedy lattice animals.
\newblock {\em Ann. Appl. Probab.}, 3(4):1170--1188, 1993.

\bibitem{Lee97}
S.~Lee.
\newblock The continuity of {$M$} and {$N$} in greedy lattice animals.
\newblock {\em J. Theoret. Probab.}, 10(1):87--100, 1997.

\bibitem{Lee97b}
S.~Lee.
\newblock The power laws of {$M$} and {$N$} in greedy lattice animals.
\newblock {\em Stochastic Process. Appl.}, 69(2):275--287, 1997.

\bibitem{LS89}
T.~Leighton and P.~Shor.
\newblock Tight bounds for minimax grid matching with applications to the average case analysis of algorithms.
\newblock {\em Combinatorica}, 9 (1989), no. 2, 161--187.
\bibitem{Martin02}
J.~B. Martin.
\newblock Linear growth for greedy lattice animals.
\newblock {\em Stochastic Process. Appl.}, 98(1):43--66, 2002.

\bibitem{PS02}
G.~Parisi and N.~Sourlas.
\newblock Scale invariance in disordered systems: The example of the
  random-field ising model.
\newblock {\em Phys. Rev. Lett.}, 89:257204, 2002.

\bibitem{PIM81}
E.~Pytte, Y.~Imry, and D.~Mukamel.
\newblock Lower critical dimension and the roughening transition of the
  random-field ising model.
\newblock {\em Phys. Rev. Lett.}, 46:1173--1177, 1981.

\bibitem{Rigger95}
H.~Rieger.
\newblock Critical behavior of the three-dimensional random-field ising model:
  Two-exponent scaling and discontinuous transition.
\newblock {\em Phys. Rev. B}, 52:6659--6667, 1995.

\bibitem{Rieger_1993}
H.~Rieger and A.~P. Young.
\newblock Critical exponents of the three-dimensional random field ising model.
\newblock {\em Journal of Physics A: Mathematical and General},
  26(20):5279--5284, 1993.

\bibitem{Rigollet15}
P.~Rigollet and J.~H\"{u}ter.
\newblock {\em High Dimensional Statistics}.
\newblock 2017.
\newblock Lecture notes in progress, available at
  \textit{http://www-math.mit.edu/~rigollet/PDFs/RigNotes17.pdf}.

\bibitem{Rudin87}
W.~Rudin.
\newblock {\em Real and complex analysis}.
\newblock McGraw-Hill Book Co., New York, third edition, 1987.

\bibitem{SA01}
E.~T. Sepp\"al\"a and M.~J. Alava.
\newblock Susceptibility and percolation in two-dimensional random field ising
  magnets.
\newblock {\em Phys. Rev. E}, 63:066109, 2001.

\bibitem{SPA98}
E.~T. Sepp\"al\"a, V.~Pet\"aj\"a, and M.~J. Alava.
\newblock Disorder, order, and domain wall roughening in the two-dimensional
  random field ising model.
\newblock {\em Phys. Rev. E}, 58:R5217--R5220, 1998.

\bibitem{SKBP14}
G.~P. Shrivastav, M.~Kumar, V.~Banerjee, and S.~Puri.
\newblock Ground-state morphologies in the random-field ising model: Scaling
  properties and non-porod behavior.
\newblock {\em Phys. Rev. E}, 90:032140, Sep 2014.

\bibitem{SudakovTsirelson74}
V.~N. Sudakov and B.~S. Tsirel'son.
\newblock Extremal properties of half-spaces for spherically invariant
  measures.
\newblock {\em J. Sov. Math}, 9:9--18, 1978.

\bibitem{Talagrand87}
M.~Talagrand.
\newblock Regularity of {G}aussian processes.
\newblock {\em Acta Math.}, 159(1-2):99--149, 1987.

\bibitem{Talagrand14}
M.~Talagrand.
\newblock {\em Upper and Lower Bounds for Stochastic Processes}.
\newblock Springer-Verlag, Berlin Heidelberg, first edition, 2014.

\bibitem{Van16}
R.~van Handel.
\newblock {\em Probability in High Dimension}.
\newblock Lecture notes in progress, available at
  \textit{https://web.math.princeton.edu/rvan/APC550.pdf}.

\bibitem{vDHKP95}
H.~von Dreifus, A.~Klein, and J.~F. Perez.
\newblock Taming {G}riffiths' singularities: infinite differentiability of
  quenched correlation functions.
\newblock {\em Comm. Math. Phys.}, 170(1):21--39, 1995.


\bibitem{Yoav22}
B.~Yoav.
\newblock Upper and Lower Bounds for the Correlation Length of the Two-Dimensional Random-Field Ising Model.
\newblock {\em Preprint}, arXiv:2205.01522. 

\bibitem{YN85}
A.~P. Young and M.~Nauenberg.
\newblock Quasicritical behavior and first-order transition in the $d=3$
  random-field ising model.
\newblock {\em Phys. Rev. Lett.}, 54:2429--2432, 1985.

\end{thebibliography}
\end{document}